\newtheorem{theorem}{Theorem}[section]
\newtheorem{lm}[theorem]{Lemma}
\newtheorem{tr}[theorem]{Theorem}
\newtheorem{cor}[theorem]{Corollary}
\newtheorem{rem}[theorem]{Remark}
\newtheorem{pr}[theorem]{Proposition}
\newtheorem{question}[theorem]{Question}
\begin{document}
	
	\title{Automorphism group functors of algebraic superschemes}
	\author{}
	\address{}
	\email{}
	\author{A. N. Zubkov}
	\address{Department of Mathematical Science, UAEU, Al-Ain, United Arabic Emirates; Sobolev Institute of Mathematics, Omsk Branch, Pevtzova 13, 644043 Omsk, Russian Federation}
	\email{a.zubkov@yahoo.com}
	\begin{abstract}
	The famous theorem of Matsumura-Oort states that if $X$ is a proper scheme, then the automorphism group functor $\mathfrak{Aut}(X)$ of $X$ is a locally algebraic group scheme.
	In this paper we generalize this theorem to the category of superschemes, that is if $\mathbb{X}$ is a proper superscheme, then the automorphism group functor $\mathfrak{Aut}(\mathbb{X})$ of $\mathbb{X}$ is a locally algebraic group superscheme. Moreover, we also show that if $H^1(X, \mathcal{T}_X)=0$, where $X$ is the geometric counterpart of $\mathbb{X}$ and $\mathcal{T}_X$ is the tangent sheaf of $X$, then $\mathfrak{Aut}(\mathbb{X})$ is a smooth group superscheme.		
	\end{abstract}
	\maketitle
	
	\section*{introduction}
	
	The purpose of this paper is to generalize the fundamental result from \cite{mats-oort} that automorphism group functors of proper schemes are representable by locally algebraic group schemes to superschemes. A close result has been recently proven in \cite{brp}. More precisely, they showed that if $X$ is superscheme that is superprojective and flat over a Noetherian superscheme $Y$, then the automorphism group functor of $X$ over $Y$ is representable by a group superscheme over $Y$. This result is a super analog of the well-known theorem of Grothendieck (cf. \cite{gr}) and their approach is parallel to the Grothendieck's one, i.e. it is based on the proof of the existence of Hilbert superscheme in the given setting. Note that both Grothendieck's theorem and its superized version from \cite{brp} do not imply either the main result of \cite{mats-oort} or our theorem respectively, since a proper (super)scheme is not necessary (super)projective. In addition, the paper \cite{brp} covers a much wider range of problems than just the representability of the automorphism group functor of (superprojective) superscheme.
	
	In this paper, all superschemes are defined over a field $\Bbbk$ of zero or odd characteristic. All (associative and unital) superalgebras are assumed to be super-commutative, unless stated otherwise. The category of superalgebras with graded morphisms is denoted by $\mathsf{SAlg}_{\Bbbk}$. Its full subcategory consisting of purely even (super)algebras is denoted by $\mathsf{Alg}_{\Bbbk}$.
	
	Let $X$ be a superscheme. Let $\mathbb{X}$ be a $\Bbbk$-functor of points of $X$, 
	i.e. $\mathbb{X}(A)=\mathrm{Mor}(\mathrm{SSpec}(A), X)$ for arbitrary $A\in\mathsf{SAlg}_{\Bbbk}$. Then the automorphism group functor $\mathfrak{Aut}(\mathbb{X})$ can be defined by two equivalent ways. The first one is to determine
	\[\mathfrak{Aut}(\mathbb{X})(A)=\{\mbox{invertible natural transformations of the functor} \ \mathbb{X}_A \},\] 
	where $\mathbb{X}_A$ is the restriction of $\mathbb{X}$ on the subcategory of $\mathsf{SAlg}_{\Bbbk}$ that consists of $A$-superalgebras with graded $A$-linear morphisms (cf. \cite{jan}). The second one is to determine 
	\[\mathfrak{Aut}(\mathbb{X})(A)=\{\mbox{invertible endomorphisms of the superscheme} \ X\times\mathrm{SSpec}(A) \] \[\mbox{over the affine superscheme} \ \mathrm{SSpec}(A)\},\]
	similarly to \cite{mats-oort}. 
	
Our approach to the problem of representability of 	$\mathfrak{Aut}(\mathbb{X})$ is inspired by the description of locally algebraic group superschemes, that was recently obtained in \cite{maszub2}. We will briefly recall the main points of this description. If $\mathbb{G}$ is a locally algebraic group superscheme, then $\mathbb{G}=\mathbb{G}_{ev}\mathbb{N}_e(\mathbb{G})$, where $\mathbb{G}_{ev}$ is the largest purely even group supersubscheme of $\mathbb{G}$, and $\mathbb{N}_e(\mathbb{G})$ is a normal group subfunctor of $\mathbb{G}$, called the \emph{formal neighborhhod of the identity}, which is strictly pro-representable by a complete local Noetherian Hopf superalgebra. In the nineth section we give an equivalent description of the group subfunctor $\mathbb{N}_e(\mathbb{G})$ as :
	\[\mathbb{N}_e(\mathbb{G})(A)=\ker(\mathbb{G}(A)\to\mathbb{G}(\widetilde{A}))\] 
	for arbitrary superalgebra $A$, where $\widetilde{A}=A/\mathsf{nil}(A)$ (see Proposition \ref{final about neighborhood} and Corollary \ref{normality of N_e} below). As a $\Bbbk$-functor, $\mathbb{N}_e(\mathbb{G})$ is isomorphic to $(\mathbb{N}_e(\mathbb{G})\cap \mathbb{G}_{ev})\times \mathrm{SSp}(\Lambda(V))$, where $V=\mathfrak{g}_1^*$ is dual to the odd component of the Lie superalgebra $\mathfrak{g}$ of $\mathbb{G}$.
	
	In turn, $\mathbb{G}$ is isomorphic to $\mathbb{G}_{ev}\times \mathrm{SSp}(\Lambda(V))$ as a superscheme. The group structure of $\mathbb{G}$ is uniquely defined by the adjoint action of $\mathbb{G}_{ev}$ on $\mathfrak{g}$, whence on $\mathfrak{g}_1^*$, and by a list of
	commutator relations on certain "generators" of $\mathrm{SSpec}(\Lambda(V))$ (see \cite[Section 12]{maszub2} for more details).
	
	Returning to the group functor $\mathfrak{Aut}(\mathbb{X})$, let $\mathfrak{Aut}(\mathbb{X})_{ev}$ denote the largest purely even group subfunctor of $\mathfrak{Aut}(\mathbb{X})$, i.e. $\mathfrak{Aut}(\mathbb{X})_{ev}(A)=\mathfrak{Aut}(\mathbb{X})(A_0)$ for any superalgebra $A$. If $X$ is proper, then using the representability criteria from \cite{mats-oort} we show that
	$\mathfrak{Aut}(\mathbb{X})_{ev}$ is a locally algebraic group scheme. Note that there is a natural morphism $\mathfrak{Aut}(\mathbb{X})_{ev}\to \mathfrak{Aut}(\mathbb{X}_{ev})$ of group schemes, that is not isomorphism in general. We prove that the kernel of this morphism is a nilpotent locally algebraic group scheme and its connected component is an unipotent algebraic group.

	Further, one can define the formal neighborhood of the identity in $\mathfrak{Aut}(\mathbb{X})$ as (see Proposition \ref{nice property of T}) :
	\[\mathfrak{T}(A)=\ker(\mathfrak{Aut}(\mathbb{X})(A)\to \mathfrak{Aut}(\mathbb{X})(\widetilde{A})), A\in\mathsf{SAlg}_{\Bbbk}.\]
	We show that $\mathfrak{T}$ is strictly pro-representable by  a complete local Noetherian Hopf superalgebra, provided $X$ is proper.
	 
	We describe the structure of arbitrary strictly pro-representable group $\Bbbk$-functor. Namely, if $\mathbb{X}$ is such a functor, then we show that
	$\mathbb{X}\simeq\mathbb{X}_{ev}\times\mathrm{SSpec}(\Lambda(V))$, where the space $V$ is dual to the odd component of Lie superalgebra of $\mathbb{X}$. The group structure
	on this direct product of functors is uniquely defined by the conjugation action of $\mathbb{X}_{ev}$ on $V$ and by the commutator relations on the certain "generators"
	of $\mathrm{SSpec}(\Lambda(V))$.
	
	Further, let $\mathbb{G}$ be a group $\Bbbk$-functor. Similarly to the above, one can determine the normal group subfunctor $\mathfrak{T}$ of $\mathbb{G}$, such that $\mathfrak{T}(A)=\ker(\mathbb{G}(\pi_A)), A\in\mathsf{SAlg}_{\Bbbk}$. It is clear that $\mathbb{G}_{ev}\mathfrak{T}$ is a group subfunctor of $\mathbb{G}$. 
	Moreover, if $\mathfrak{T}$ is strictly pro-representable, then using the decomposition $\mathfrak{T}\simeq \mathfrak{T}_{ev}\times\mathrm{SSpec}(\Lambda(V))$ one sees that $\mathbb{G}_{ev}\mathfrak{T}\simeq\mathbb{G}_{ev}\times \mathrm{SSpec}(\Lambda(V))$.  In particular, $\mathbb{G}_{ev}\mathfrak{T}$ is a locally algebraic group superscheme if and only if $\mathbb{G}_{ev}$ is a locally algebraic group scheme. Furthermore, \cite[Theorem 12.5]{maszub2} immediately implies that if $\mathbb{G}_{ev}$ is a locally algebraic group scheme, then $\mathbb{G}_{ev}\mathfrak{T}$ is the largest group subfunctor of $\mathbb{G}$ that is a locally algebaic group superscheme as well. 
	
	Summarizing all of the above, $\mathbb{G}$ is a locally algebraic group superscheme, whenever $\mathbb{G}_{ev}$ is a locally algebraic group scheme, $\mathfrak{T}$ is strictly pro-representable and $\mathbb{G}=\mathbb{G}_{ev}\mathfrak{T}$. Superizing \cite[Lemma (1.2)]{mats-oort}, we show in Proposition \ref{super criterion} that all these conditions are realized if and only if
	$\mathbb{G}_{ev}$ is a locally algebraic group scheme and $\mathbb{G}$ satisfies the super analogs of the conditions $P_i, 1\leq i\leq 5$, from \cite{mats-oort}.
	As a consequence, we immediately derive that $\mathfrak{Aut}(\mathbb{X})$ is a locally algebraic group superscheme, whenever $X$ is proper.
		
	The paper is organized as follows. In the first section we remind the definitions of superschemes as $\Bbbk$-functors and as geometric superspaces. Due \emph{Comparison Theorem} (see \cite[Theorem 5.14]{maszub1}, or \cite[I, \S 1, 4.4]{gabdem}) these categories are equivalent each to other, and we freely use them throughout. 
	
	In the second section we remind the definitions 
	and the standard properties of separated, proper and smooth morphisms of superschemes. In the third, fourth and fifth section we develope a fragment of the theory of Noetherian formal superschemes. In most cases the proofs are similar to the purely even case, but in some cases the so-called \emph{even reducibility} is used. For example, one can prove that 
	a Noetherian formal superscheme $\mathfrak{X}$ is proper over another Noetherian formal superscheme $\mathfrak{Y}$ if and only if the Noetherian formal scheme $\mathfrak{X}_0$ is proper over	the Noetherian formal scheme $\mathfrak{Y}_0$ if and only if the Noetherian formal scheme $\mathfrak{X}_{ev}$ is proper over the Noetherian formal scheme $\mathfrak{Y}_{ev}$. This reduction of the properties of superschemes to the properties of schemes allows the use of theorems from algebraic geometry to prove their super analogues.
	
	In the sixth section we prove some auxiliary properties of $\mathcal{O}_X$-supermodules, those will be needed later. In the seventh section we introduce a superized version of \v{C}ech cohomologies and prove some standrad theorems about. In the eighth section we recall Levelt's theorem on pro-representable functors and apply it to the category of superalgebras. In the ninth section we recall the definition of \emph{formal neighborhood} $\mathbb{N}_x(\mathbb{X})$ of a $\Bbbk$-point $x$ of superscheme $\mathbb{X}$, regarded as a $\Bbbk$-subfunctor of
	$\mathbb{X}$. It is nothing else but the functor of points of the formal completion of $X$ along the closed supersubscheme $Y\simeq\mathrm{SSpec}(\Bbbk)$, such that $|Y|=\{x\}$.
	
	The tenth section is devoted to a (probably) folklore result that the automorphism group functor of functor on site, is again a functor on the same site.  
	
	In the eleventh section we start to study the properties of the automorphism group functor $\mathfrak{Aut}(\mathbb{X})$ of superscheme $\mathbb{X}$. In particular, we prove that
	$\mathfrak{Aut}(\mathbb{X})_{ev}$ is a group subfunctor of $\mathfrak{Aut}(\mathbb{X})$, and we obtain a preliminary description of the formal neighborhood of the identity $\mathfrak{T}$ of $\mathfrak{Aut}(\mathbb{X})$.
	
	Our strategy is to prove that $\mathfrak{Aut}(\mathbb{X})$ satisfies the super analogs of the conditions $P_1-P_5$ from \cite{mats-oort}, regarded as super-$P_i, 1\leq i\leq 5$.
	In particular, if they take place, then $\mathfrak{Aut}(\mathbb{X})_{ev}$ satisfies $P_1-P_5$. Furthermore, using an elementary trick we show that $\mathfrak{Aut}(\mathbb{X})_{ev}$ satisfies $P_{orb}$, hence it is a locally algebraic group scheme by \cite[Theorem 3.7]{mats-oort}. This is realized in the sections 12, 13 and 14. 
	
	In the fifteenth section we describe the structure of $\mathfrak{Aut}(\mathbb{X})_{ev}$. More precisely, there is the natural morphism $\mathfrak{Aut}(\mathbb{X})_{ev}\to \mathfrak{Aut}(\mathbb{X}_{ev})$ of group schemes and we show that its kernel $\mathfrak{N}$ contains almost unipotent, locally algebraic group scheme $\mathfrak{R}$. The word "almost" means that its connected component $\mathfrak{R}^0$  is unipotent, but the etale group scheme $\mathfrak{R}/\mathfrak{R}^0$ is not necessary finite. Moreover, $\mathfrak{N}/\mathfrak{R}$ is embedded into $\mathbb{GL}_{\mathcal{O}_{X_{ev}}}(J_X/J_X^2)$ as a group functor. Here, for arbitrary scheme $M$ and a coherent $\mathcal{O}_M$-module $\mathcal{F}$, $\mathbb{GL}_{\mathcal{O}_M}(\mathcal{F})$ denotes the group functor $A\mapsto (\mathrm{End}_{\mathcal{O}_M}(\mathcal{F})(|M|)\otimes A)^{\times}$, $A\in\mathsf{Alg}_{\Bbbk}$.
	
	In sexteenth section the structure of arbitrary pro-representable group functor $\mathbb{X}$ is described. Indeed, we prove that $\mathbb{X}\simeq\mathbb{X}_{ev}\times\mathrm{SSpec}(\Lambda(V))$, where $\mathbb{X}_{ev}$ acts on $\mathrm{SSpec}(\Lambda(V))$ by conjugations, and we uniquely determine its group structure by the list of commutator relations on certain "generators" of  $\mathrm{SSpec}(\Lambda(V))$. 
	
	Using the results of sexteenth section, in the seventeenth section we show that $\mathbb{G}$ is a locally algebraic group superscheme if and only if 
	$\mathbb{G}_{ev}$ is a locally algebraic group scheme  and $\mathbb{G}$ satisfies the conditions super-$P_i, 1\leq i\leq 5$. Since in the previous sections it has been proved
	that the group functor $\mathfrak{Aut}(\mathbb{X})$ satisfies all these conditions, hence it is a locally algebraic group superscheme. 
		
	In the last section we prove that if $X$ is proper and $\mathrm{H}^1(X, \mathcal{T}_X)=0$, where $\mathcal{T}_X$ is the \emph{tangent sheaf} of $X$, then the group superscheme $\mathfrak{Aut}(\mathbb{X})$ is smooth. The proof is based on the superized version of  \cite[Proposition III.5.1]{sga 1} and the results of seventh section. 
	
	\section{Superschemes and geometric superschemes}
	
	For the content of this section we refer to \cite{brp, CCF, jan, Man, maszub1, zub1, zub2}.
	
	Let $\Bbbk$ be a field of zero or odd characteristic. Remind that $\mathsf{SAlg}_{\Bbbk}$ denotes the category of supercommutative $\Bbbk$-superalgebras with graded morphisms. Let $\mathsf{Alg}_{\Bbbk}$ denote the full subcategory of  $\mathsf{SAlg}_{\Bbbk}$ consisting of purely even superalgebras. 
	
	If $A\in\mathsf{SAlg}_{\Bbbk}$, then the superideal $AA_1$ is denoted by $J_A$. The algebra $A/J_A\simeq A_0/A_1^2$  is denoted by $\overline{A}$. Any \emph{prime} (\emph{maximal}) superideal of $A$ has a form $\mathfrak{P}=\mathfrak{p}\oplus A_1$,
	where $\mathfrak{p}$ is a prime (respectively, maximal) ideal of $A_0$.
	
	Let $\mathsf{gr}(A)$ denote the \emph{graded} superalgebra $\oplus_{n\geq 0} J_A^n/J_A^{n+1}$. Then $0$-th and $1$-th components of $\mathsf{gr}(A)$ are
	$\overline{A}$ and $J_A/J_A^2\simeq A_1/A_1^3$ respectively. 
	
	If $A$ is a local superalgebra with the maximal superideal $\mathfrak{M}$, then its \emph{residue field} $A/\mathfrak{M}\simeq A_0/\mathfrak{m}$ is denoted by $\Bbbk(A)$. 
	
	A $\Bbbk$-functor $\mathbb{X}$ is a functor from the category $\mathsf{SAlg}_{\Bbbk}$ to the category $\mathsf{Sets}$. Let $\mathcal{F}$ denote the category of $\Bbbk$-functors. The morphisms in $\mathcal{F}$ are denoted by bold letters $\bf f, g, \ldots $.
	A $\Bbbk$-functor $\mathbb{X}$, that is representable by a superalgebra $A$, is called an \emph{affine superscheme}. We denote $\mathbb{X}$ by
	$\mathrm{SSp}(A)$, so that
	\[ \mathrm{SSp}(A)(B)=\mathrm{Hom}_{\mathsf{SAlg}_{\Bbbk}}(A, B), B\in \mathsf{Alg}_{\Bbbk}. \]
	A \emph{closed supersubscheme} $\mathbb{Y}$ of $\mathrm{SSp}(A)$ is defined as
	\[\mathbb{Y}(B)=\{ \phi\in \mathrm{SSp}(A)(B)\mid \phi(I)=0\}, B\in\mathsf{SAlg}_{\Bbbk},  \]
	where $I$ is a superideal of $A$. It is clear that $\mathbb{Y}\simeq\mathrm{SSp}(A/I)$ is again affine superscheme.
	Similarly, an \emph{open supersubscheme} $\mathbb{U}$ of $\mathrm{SSp}(A)$ is defined as 
	\[\mathbb{U}(B)= \{ \phi\in \mathrm{SSp}(A)(B)\mid \phi(I)B=B\}, B\in\mathsf{SAlg}_{\Bbbk}, \] 
	where $I$ is a superideal of $A$. Note that if $I=Aa, a\in A_0$, then the corresponding open supersubscheme is isomorphic to
	$\mathrm{SSp}(A_a)$, and it is called a \emph{principal open supersubscheme}. More generally, a subfunctor $\mathbb{Y}$ of $\Bbbk$-functor $\mathbb{X}$ is called closed (open) if for any
	morphism ${\bf f} : \mathrm{SSp}(A)\to \mathbb{X}$, the subfunctor ${\bf f}^{-1}(\mathbb{Y})$ is closed (open) in $\mathrm{SSp}(A)$.
	
	Finally, a collection of open subfunctors $\mathbb{Y}_i$ of $\mathbb{X}$ form an \emph{open covering} (of $\mathbb{X}$) if for any field extension 
	$\Bbbk\subseteq L$ there is $\mathbb{X}(L)=\cup_{i\in I} \mathbb{Y}_i(L)$.
	
	Let $\mathcal{A}$ denote the full subcategory of $\mathcal{F}$ consisting of affine superschemes. Note that $\mathsf{SAlg}_{\Bbbk}^{op}$ is naturally isomorphic to $\mathcal{A}$. 
	
	The category $\mathcal{A}$
	can be regarded as a site (see Section 6 below, or \cite[Definition 2.24]{fundalg}) with respect to the topology of \emph{local coverings}
	\[ \{\mathrm{SSp}(A_{a_i})\to\mathrm{SSp}(A)\}, \sum_i A_0 a_i=A_0,  \]
	and to the topology of \emph{fpqc coverings}
	\[ \{\mathrm{SSp}(A_i)\to\mathrm{SSp}(A)\}, \ \prod_i A_i \ \mbox{is a faithfully flat} \ A-\mbox{supermodule}.  \]
	A $\Bbbk$-functor $\mathbb{X}$ is called \emph{local} if it is a sheaf on $\mathsf{SAlg}_{\Bbbk}\simeq\mathcal{A}^{op}$ with respect to the topology of local coverings. A local $\Bbbk$-functor $\mathbb{X}$ is said to be a \emph{superscheme}, provided $\mathbb{X}$ has an open covering by affine supersubschemes. The obvious examples of superschemes are affine superschemes and their open supersubschemes.
	
	Note that any superscheme is also a sheaf on  $\mathsf{SAlg}_{\Bbbk}\simeq\mathcal{A}^{op}$ with respect to the topology of fpqc coverings.
	
	The category of superschemes is equivalent to the category of \emph{geometric superschemes}. More precisely, a \emph{geometric superspace} $X$ consists of a topological space $|X|$ and a sheaf of super-commutative superalgebras $\mathcal{O}_X$ on $|X|$, such that all stalks $\mathcal{O}_{X, x}$ for $x\in |X|$ are local superalgebras. A morphism of superspaces $f : X\to Y$ is a pair $(|f|, f^{\sharp})$, where $|f|: |X|\to |Y|$ is a morphism of topological spaces and $f^{\sharp} : \mathcal{O}_Y\to |f|_*\mathcal{O}_X$ is a morphism of sheaves such that the induced morphism of stalks $f^{\sharp}_x : \mathcal{O}_{Y,|f|(x)} \to \mathcal{O}_{X, x}$ is local for any $x\in |X|$. 
	
	In what follows, the residue field $\Bbbk(\mathcal{O}_{X, x})$ is denoted by $\Bbbk(x), x\in |X|$. For a field extension $\Bbbk\subseteq L$ we say that a point $x\in |X|$ is an
	\emph{$L$-point}, if $\Bbbk(x)\simeq L$.
	
	A geometric superspace $X$ is called a \emph{geometric superscheme} if it can be covered by open supersubspaces, each of wich is isomorphic to an \emph{affine geometric superscheme} $\mathrm{SSpec}(A)$. Recall that the underlying topological space of $\mathrm{SSpec}(A)$ coincides with the prime spectrum of $A_0$ and for any open subset $U\subseteq |\mathrm{Sp}(A_0)|$, the super-ring $\mathcal{O}_{\mathrm{SSpec}(A)}(U)$ consists of all locally constant functions $h : U\to \sqcup_{\mathfrak{p}\in U} A_{\mathfrak{p}}$ such that $h(\mathfrak{p})\in A_{\mathfrak{p}}, \mathfrak{p}\in U$. 
	
	A morphism $f : X\to Y$ of geometric superschemes is called \emph{open (closed) immersion}, if $|f|$ is a homeomorphism of $|X|$ onto an open subset $U\subseteq |Y|$ (respectively, a homeomorphism onto a closed subset $Z\subseteq |Y|$), such that $f^{\sharp}$ is an isomorphism of sheaves (respectively, $f^{\sharp}_x$ is surjective for any $x\in |X|$). A composition of closed and open immersions is called just \emph{immersion}.      
	
	The equivalence of categories is given by $X\mapsto\mathbb{X}$, where for any superalgebra $A$ there is $\mathbb{X}(A)=\mathrm{Mor}(\mathrm{SSpec}(A), X)$. This equivalence takes open (closed)
	immersions in the category of geometric superschemes to the open (closed) embeddings in the category of superschemes. 
	
	In what follows we use the term \emph{superscheme} for both geometric superschemes and just superschemes, but we distinguish them by notations, i.e. geometric superschemes and their morphisms are denoted by $X, Y, \ldots $, and $f, g, \ldots$ , and just superschemes and their morphisms (natural transformations of $\Bbbk$-functors) are denoted by $\mathbb{X}, \mathbb{Y}, \ldots$, and ${\bf f}, {\bf g}, \ldots$, respectively.
	
	Let $\mathcal{P}$ be a property of geometric superscheme or a property of morphisms of geometric superschemes, such that for any $X\simeq X'$ or any commutative diagram
	\[\begin{array}{ccc}
		X & \stackrel{f}{\to} & Y \\
		\downarrow & & \downarrow \\
		X' & \stackrel{f'}{\to} & Y'
	\end{array},\]
	whose vertical arrows are isomorphisms, $X$ or $f$ satisfy $\mathcal{P}$ if and only if $X'$ or $f'$ do.
	Due the above equivalence, the property $\mathcal{P}$ can be translated to to category of just superschemes (and vice versa). In other words,  
	a superscheme $\mathbb{X}$ or a morphism ${\bf f} : \mathbb{X}\to\mathbb{Y}$ satisfy $\mathcal{P}$, provided the corresponding $X$ or $f : X\to Y$ do. 
	
	Any superscheme $X$ contains a largest purely even, closed supersubscheme $X_{ev}$, such that $|X_{ev}|=|X|$ and $\mathcal{O}_{X_{ev}}=\mathcal{O}_X/\mathcal{J}_X$, where $\mathcal{J}_X$ is the superideal sheaf of $\mathcal{O}_X$, generated by $(\mathcal{O}_X)_1$. The corresponding closed supersubscheme of $\mathbb{X}$ can be defined as $\mathbb{X}_{ev}(A)=\mathbb{X}(i)(A_0)$, where $A\in\mathsf{SAlg}_{\Bbbk}$ and $i : A_0\to A$ is the natural algebra embedding.  Similarly, one can define a purely even superscheme $X_0$, such that $|X_0|=|X|$ and $\mathcal{O}_{X_0}=(\mathcal{O}_X)_0$. It is clear that there is a natural morphism $X\to X_0$ of geometric superschemes, such that any morphism
	from $X$ to a purely even superscheme $Y$ factors through $X\to X_0$. Translating this property to the category of superschemes, we define the superscheme 
	$\mathbb{X}_0$.
	
	Recall that a morphism $f : X\to Y$ is said to be \emph{locally of finite type}, if there is a covering of $Y$ by open supersubschemes $V_i\simeq \mathrm{SSpec}(B_i)$ such that for every index $i$, the open supersubscheme $f^{-1}(V_i)$ is covered by open supersubschemes $U_{ij}\simeq \mathrm{SSpec}(A_{ij})$, where each $A_{ij}$ is a finitely generated $B_i$-superalgebra.
	If each $f^{-1}(V_i)$ can be covered by a finite number of $U_{ij}$, then $f$ is said to be a morphism of finite type (cf. \cite[II.3]{hart}). 
	For example, if $Y=\mathrm{SSpec}(\Bbbk)$, then $X$ is called a \emph{locally algebraic} or \emph{algebraic} superscheme respectively. Observe that any algebraic superscheme is \emph{Noetherian} (cf. \cite{hart, zub2}).	
	
	Finally, if $S$ is a superscheme and $X\to S$ is a morphism of superschemes, then $X$ is said to be an \emph{$S$-superscheme}, or \emph{superscheme over} $S$.
	Let $Y$ be another $S$-superscheme. Then a morphism $X\to Y$ is called an \emph{$S$-morphism}, if the diagram
	\[\begin{array}{ccc}
		X &  \to  & Y \\
		\searrow & & \swarrow \\
		& S &
	\end{array}\]  
	is commutative.	
	
	\section{Proper and smooth morphisms of superschemes}
	
	For the content of this section we refer to \cite[A.5, A.6]{brp},  \cite {maszub2}, \cite[Appendix A]{maszub3} and \cite{maszub4, zub2}. 
	
	Recall that a superscheme morphism $f : X\to Y$ is said to be \emph{separated}, if the diagonal morphism $\delta_f : X\to X\times_Y X$ is a closed immersion. Respectively, a superscheme is called \emph{separated}, if the canonical morphism $X\to\mathrm{SSpec}(\Bbbk)$ is separated. Both properties are \emph{even reducible}, i.e. $f : X\to Y$ or $X$ are separated
	if and only if the induced scheme morphism $f_{ev} : X_{ev}\to Y_{ev}$ or the scheme $X_{ev}$ are (see \cite[Proposition A.13]{brp}, \cite[Proposition 2.4]{maszub2} or Lemma \ref{reducibility} below). 
	
	The following property of superschemes, which are separated over an affine superscheme, will be in use throughout.
	\begin{lm}\label{Ex.II.4.3}(see \cite[Exercise II.4.3]{hart})
		Let $X$ be a superscheme, separated over an affine superscheme $S$. Then for any open affine supersubschemes $U$ and $V$ in $X$, the superscheme $U\cap V$ is affine as well.	
	\end{lm}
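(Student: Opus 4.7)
The plan is to superize the classical proof of Hartshorne Exercise II.4.3, which proceeds via the diagonal morphism. Let $f : X \to S$ be the structure morphism and $\delta_f : X \to X \times_S X$ its diagonal, which is a closed immersion by the hypothesis that $X$ is separated over $S$. Let $U \simeq \mathrm{SSpec}(A)$ and $V \simeq \mathrm{SSpec}(B)$ be the given open affine supersubschemes, and write $S \simeq \mathrm{SSpec}(R)$.

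First, I would form the open supersubscheme $U \times_S V$ of $X \times_S X$. Since open immersions are stable under base change and products, and since the fibered product of affine superschemes over an affine base corresponds to the tensor product of the coordinate superalgebras, one has $U \times_S V \simeq \mathrm{SSpec}(A \otimes_R B)$, in particular $U \times_S V$ is affine.

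Next, I would verify the identity $U \cap V = \delta_f^{-1}(U \times_S V)$. This is most easily done on the level of $T$-points for an arbitrary $T \in \mathsf{SAlg}_{\Bbbk}$: an element $x \in \mathbb{X}(T)$ lies in $\delta_f^{-1}(U \times_S V)$ iff the pair $(x,x)$ lies in $\mathbb{U}(T) \times \mathbb{V}(T)$, which happens iff $x \in \mathbb{U}(T) \cap \mathbb{V}(T)$. Base-changing the closed immersion $\delta_f$ along the (open) inclusion $U \times_S V \hookrightarrow X \times_S X$ yields a closed immersion $U \cap V \hookrightarrow U \times_S V$, since closed immersions are stable under base change in the category of superschemes.

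Finally, a closed supersubscheme of an affine superscheme is itself affine, corresponding to a quotient $(A \otimes_R B)/I$ for some superideal $I$; thus $U \cap V$ is affine. The only potential obstacles are the three basic facts one imports from the super setting: the existence and explicit description of fibered products of affine superschemes (tensor product of coordinate superalgebras), the stability of closed immersions under base change, and the affineness of closed supersubschemes of affine superschemes. All three are standard in the references cited at the start of this section, so no new super-specific argument is required and the proof is essentially formal.
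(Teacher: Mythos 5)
Your proposal is correct and follows essentially the same route as the paper: both identify $U\cap V$ with the preimage of the open affine $U\times_S V\simeq\mathrm{SSpec}(A\otimes_R B)$ under the diagonal closed immersion $\delta_f$, and conclude that $U\cap V$ is a closed supersubscheme of an affine superscheme, hence affine. The paper states this more tersely (writing $U\cap V=X\cap\mathrm{pr}_1^{-1}(U)\cap\mathrm{pr}_2^{-1}(V)$ after identifying $X$ with its diagonal image), but the underlying argument is identical.
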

	\begin{proof}
		Let $\mathrm{pr}_1$ and $\mathrm{pr}_2$ denote the canonical projections of $X\times_S X$ to the first and second factors respectively. 
		Since $X$ is identified with a closed supersubscheme in $X\times_S X$, we have \[U\cap V=X\cap  \mathrm{pr}_1^{-1}(U)\cap\mathrm{pr}_2(V),\] and $\mathrm{pr}_1^{-1}(U)\cap\mathrm{pr}_2(V)\simeq U\times_S V$ is an open affine supersubscheme of $X\times_S X$. 	
	\end{proof}
	Further, a morphism $f : X\to Y$ is said to be \emph{proper}, if it is of finite type, separated and universally closed. Respectively, a superscheme $X$ is caled \emph{proper}, if the canonical morphism $X\to\mathrm{SSpec}(\Bbbk)$ is proper. 

	The \emph{Valuative Criterion of Separatedness and Properness} (see \cite[Theorem II.4.3 and Theorem II.4.7]{hart}) are still valid for superschemes (cf. \cite[Coollary A.14]{brp}). 
	\begin{lm}\label{reducibility}
Assume that $Z$ and $T$ are closed supersubschemes of $X$ and $Y$ respectively, their defining superideal sheaves are locally nilpotent, and $f$ takes $Z$ to $T$. Then $X$ is separated/proper over $Y$ if and only if $Z$ is separated/proper over $T$. In particular, $X$ is separated/proper over $Y$ if and only $X_{ev}$ is separated/proper over $Y_{ev}$ (as a scheme) if and only if $X_0$ is separated/proper over $Y_0$ (as a scheme as well).
	\end{lm}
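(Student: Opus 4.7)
The plan is to establish even reducibility of separatedness and properness by exploiting that closed immersions defined by locally nilpotent superideal sheaves induce homeomorphisms on underlying topological spaces; this makes every topological condition, and every condition controlled by topology plus local data insensitive to nilpotents, descend automatically. The argument essentially translates the classical even case to the super setting, modulo the verification that the locally nilpotent hypothesis propagates through the relevant constructions.

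As a preparatory observation, since a locally nilpotent section is contained in every prime superideal, one has $|Z|=|X|$ and $|T|=|Y|$. Moreover, for any morphism $S \to Y$ the pullback of a locally nilpotent superideal sheaf is locally nilpotent, so the canonical closed immersion $Z\times_T (T\times_Y S) \hookrightarrow X \times_Y S$ still induces a homeomorphism on topological spaces. With $S = X$ this gives $|Z\times_T Z| = |X \times_Y X|$; the general case supplies the base-change versions needed later for universal closedness. For separatedness itself, I would use that $\delta_f \colon X \to X \times_Y X$ is always a locally closed immersion (affine-locally the surjective multiplication $A \otimes_B A \to A$), so $\delta_f$ is a closed immersion if and only if $|\delta_f|(|X|)$ is closed in $|X \times_Y X|$. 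The topological identifications above transfer this closedness condition verbatim between $\delta_f$ and $\delta_{f|_Z}$.

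For properness I treat the three constituent conditions in turn. Finite type is a local condition: one direction is passage to a quotient by a superideal sheaf, the other lifts a finite set of algebra generators from $\mathcal{O}_Z$ to $\mathcal{O}_X$ and adjoins local generators of the locally nilpotent defining superideal. Separatedness was just handled. Universal closedness is purely topological, and the base-change version of the topological identification above yields its even reducibility directly. Combining the three conditions gives properness of $f$ if and only if properness of $f|_Z$.

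Finally, the ``In particular'' statement follows from two applications of the main equivalence: first with $Z = X_{ev}$, $T = Y_{ev}$, whose defining superideal sheaves $\mathcal{J}_X, \mathcal{J}_Y$ are locally nilpotent because they are generated by odd sections that square to zero; and second, within purely even schemes, with the closed embedding $X_{ev} \hookrightarrow X_0$ whose defining ideal $(\mathcal{O}_X)_1^2 \subseteq (\mathcal{O}_X)_0 = \mathcal{O}_{X_0}$ is again locally nilpotent. The main technical delicacy is the finite type step: lifting generators from $Z$ to $X$ modulo a locally nilpotent superideal recovers $\mathcal{O}_X$ only when that superideal is locally finitely generated, which is automatic in the locally algebraic setting to which the rest of the paper reduces but may require a short separate argument in full generality.
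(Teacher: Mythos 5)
Your argument is correct in substance but follows a genuinely different route from the paper's. The paper's entire proof is one sentence: it invokes the valuative criteria of separatedness and properness (recalled just before the lemma as valid for superschemes, cf.\ \cite[Corollary A.14]{brp}) and observes that any morphism $\mathrm{SSpec}(D)\to X$ or $\mathrm{SSpec}(D)\to Y$ with $D$ a purely even integral domain --- in particular a valuation ring or its fraction field --- factors through $Z$, respectively $T$, because $D$ has no nonzero nilpotents while the defining superideal sheaves are locally nilpotent; hence the lifting problems occurring in the valuative criteria for $X/Y$ and for $Z/T$ are literally identical. You instead unwind the definitions: identify the underlying spaces of $X$ and $Z$, of $X\times_Y X$ and $Z\times_T Z$, and of the base-changed fibre products, then transfer closedness of the diagonal and universal closedness across these homeomorphisms, handling finite type separately. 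Your route is more elementary and self-contained (it does not use the super valuative criteria as a black box), at the cost of length and of having to verify in the super setting that the diagonal is an immersion and that an immersion with closed image is a closed immersion. It also has the merit of surfacing the one genuine subtlety, which you correctly flag: the implication ``$Z\to T$ of finite type $\Rightarrow X\to Y$ of finite type'' fails for an arbitrary locally nilpotent defining superideal (for an infinite Grassmann algebra over $\Bbbk$ one has $X_{ev}=\mathrm{SSpec}(\Bbbk)$ proper while $X$ is not of finite type), so the properness half of the lemma implicitly requires the defining superideal sheaf to be locally finitely generated, or the superschemes to be locally Noetherian --- a hypothesis satisfied in all of the paper's applications, and one that the valuative-criterion proof also uses silently, since that criterion takes finite type as an input.
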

\begin{proof}
Just observe that any morphism $\mathrm{SSpec}(D)\to X$ (or $\mathrm{SSpec}(D)\to Y$), where $D$ is a purely even integral domain, factors through the closed immersion $Z\to X$ (respectively, through $T\to Y$).	
\end{proof}	
	We say that a morphism $f : X\to Y$, locally of finite type,  is \emph{smooth}, whenever it is \emph{formally smooth}. The latter means that for any affine superscheme $\mathrm{SSpec}(A)$ over $Y$ and for any nilpotent superideal $I$ in $A$, the natural map $\mathbb{X}_{\mathbb{Y}}(A)\to \mathbb{X}_{\mathbb{Y}}(A/I)$ is surjective, where
	$\mathbb{X}_{\mathbb{Y}}(R)$ denote the set of all $Y$-morphism $\mathrm{SSpec}(R)\to X$. 
	
	Respectively, a locally algebraic superscheme $X$ is smooth, if the  canonical morphism $X\to\mathrm{SSpec}(\Bbbk)$ is smooth (for more details, see \cite[Appendix A]{maszub3} or \cite[A.6]{brp}).  
	
	With any superscheme $X$ one can associate a superscheme $\mathsf{gr}(X)$ such that $|\mathsf{gr}(X)|=|X|$ and $\mathcal{O}_{\mathsf{gr}(X)}$ is isomorphic to the sheafification 
	of the presheaf
	\[U\mapsto \oplus_{n\geq 0}\mathcal{J}_X(U)^n/\mathcal{J}_X(U)^{n+1}, \ U\subseteq |X|. \]
	Obviously, $X\mapsto\mathsf{gr}(X)$ is an endofunctor of the category of locally algebraic superschemes.	
	Note also that if $X\simeq\mathrm{SSpec}(A)$, then $\mathsf{gr}(X)\simeq\mathrm{SSpec}(\mathsf{gr}(A))$. Furthermore, for a superalgebra morphism $\phi : A\to B$ the induced superscheme morphism $\mathsf{gr}(\mathrm{SSpec}(\phi))$ is naturally identified with $\mathrm{SSpec}(\mathsf{gr}(\phi))$.

	As in \cite{zubbov} we call a superscheme $X$ \emph{graded}, provided $X\simeq\mathsf{gr}(Y)$ for some superscheme $Y$. Note that $X$ is Noetherian if and only if $\mathsf{gr}(X)$ is. Moreover, in the latter case $\mathcal{J}_X^t=0$ for sufficiently large $t$, hence $\mathcal{O}_{\mathsf{gr}(X)}=\oplus_{n\geq 0}\mathcal{J}_X^n/\mathcal{J}_X^{n+1}$. 
	In general, $\mathcal{O}_{\mathsf{gr}(X)}$ is a certain subsheaf of the sheaf $\prod_{n\geq 0}\mathcal{J}_X^n/\mathcal{J}_X^{n+1}$ that is not equal to $\oplus_{n\geq 0}\mathcal{J}_X^n/\mathcal{J}_X^{n+1}$. 
	
	\section{Noetherian formal superschemes}
	
	For the content of this section we refer to \cite[Section 0.7 and Section I.10]{EGA I}, \cite[Section III.5]{EGA  III} and \cite[Section II.9]{hart}.
	
	Let $X$ be a Noetherian superscheme and $Y$ be a closed supersubscheme of $X$, defined by a superideal sheaf $\mathcal{I}_Y$. Recall that the \emph{formal completion of $X$ along $Y$}  
	is defined as the geometric superspace $\widehat{X}=(|Y|, \varprojlim_{n\geq 0} \mathcal{O}_X/\mathcal{I}_Y^{n+1})$. Note that $\widehat{X}$ is covered by open supersubspaces $\widehat{U}$ (formal completion of $U$ along $Y\cap U$), where $U$ runs over an open covering of $X$. If $X=\mathrm{SSpec}(A)$ and $Y=\mathrm{SSpec}(A/I)$, then $\widehat{X}$ is denoted by $\mathrm{SSpecf}(\widehat{A})$, where $\widehat{A}=\varprojlim_n A/I^n$.  The topological space  $|\mathrm{SSpecf}(\widehat{A})|$ can be identified with the set of open prime superideals of $\widehat{A}$. Indeed, an open
	prime superideal of $\widehat{A}$ contains $\widehat{I}=\widehat{A} I$. Since $\widehat{A}/\widehat{I}\simeq A/I$, it has a form $\widehat{\mathfrak{P}}=\widehat{A}\mathfrak{P}$, where $\mathfrak{P}\in |Y|$ (cf. \cite[Lemma 1.7]{maszub4}). Since $\widehat{\mathfrak{P}}\cap A=\mathfrak{P}$, the map $\mathfrak{P}\to\widehat{\mathfrak{P}}$ is bijective. 
	Note also that $\mathcal{O}(\mathrm{SSpecf}(\widehat{A}))\simeq\widehat{A}$. 
	
	Let $B$ be a Noetherian superalgebra and $J$ be a superideal of $B$. If $B$ is Hausdorff and complete in the $J$-adic topology, then we say that $B$ is \emph{$J$-adic}, or just \emph{adic}, superalgebra (cf. \cite[Definition 0.7.1.9]{EGA I}). The superideal $J$ is said to be a \emph{superideal of definition} of $B$. For any multiplicative set $S\subseteq B_0$,
	let $B\{S^{-1}\}$ denote the completion of $S^{-1}B$ in the $S^{-1}J$-adic topology. If $S=\{g^n\mid n\geq 0\}$, then $B\{S^{-1}\}$ is denoted by $B_{(g)}$. 
	\begin{lm}\label{locality of some stalks}
		Let $\widehat{X}$ be a completion of $X=\mathrm{SSpec}(A)$ along $Y=\mathrm{SSpec}(A/I)$, where $A$ is a Noetherian superalgebra. 
		For any open superideal $\widehat{\mathfrak{P}}\in\mathrm{SSpecf}(\widehat{A})$ the stalk $\mathcal{O}_{\widehat{X}, \widehat{\mathfrak{P}}}$ is a local superalgebra, such that its residue field is isomorphic to $\Bbbk(A_{\mathfrak{P}})$.	
	\end{lm}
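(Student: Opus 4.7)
The plan is to reduce locality of the stalk to the known locality of the purely ordinary stalk of $Y$ at $\mathfrak{P}$, by exhibiting a natural surjection whose kernel lies in the Jacobson radical because the completion makes $1+k$ a convergent geometric series whenever $k$ is in the image of $I$.

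First I would identify the sections of $\mathcal{O}_{\widehat{X}}$ on a basic neighbourhood. Since $|\widehat{X}|=|Y|$ inherits its topology from $|X|$, the principal opens $D(g)\cap|Y|$ with $g\in A_0\setminus\mathfrak{p}$ form a basis of neighbourhoods of $\widehat{\mathfrak{P}}$. On such an open, the inverse limit sheaf $\varprojlim_n\mathcal{O}_X/\mathcal{I}_Y^{n+1}$ gives
\[\varprojlim_n (A_g/I^{n+1}A_g)=A_{(g)},\]
i.e.\ the $IA_g$-adic completion of $A_g$, using the notation introduced before the lemma. Hence
\[\mathcal{O}_{\widehat{X},\widehat{\mathfrak{P}}}\simeq \varinjlim_{g\in A_0\setminus\mathfrak{p}} A_{(g)}.\]

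Next I would construct a canonical surjection $\pi:\mathcal{O}_{\widehat{X},\widehat{\mathfrak{P}}}\twoheadrightarrow(A/I)_{\mathfrak{P}}$ by taking the first term of the inverse system, i.e.\ by sheafifying the quotient $\mathcal{O}_{\widehat{X}}\to\mathcal{O}_X/\mathcal{I}_Y=\mathcal{O}_Y$, and passing to stalks; the target is the stalk $\mathcal{O}_{Y,\mathfrak{P}}=(A/I)_{\mathfrak{P}}$, which is already a local superalgebra with maximal superideal $\mathfrak{m}=\mathfrak{P}(A/I)_{\mathfrak{P}}$ and residue field $\Bbbk(A_\mathfrak{P})$. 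The kernel $K$ of $\pi$ is the superideal of $\mathcal{O}_{\widehat{X},\widehat{\mathfrak{P}}}$ generated by the image of $I$, since at each level $A_{(g)}/IA_{(g)}\simeq A_g/IA_g$.

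The key point is then that $K$ lies in the Jacobson radical. For any $g\in A_0\setminus\mathfrak{p}$ and any $k\in IA_{(g)}$, the series $\sum_{n\ge 0}(-k)^n$ converges in the $IA_g$-adically complete superalgebra $A_{(g)}$, so $1+k$ is invertible in $A_{(g)}$, and this passes to the colimit. Consequently, any $f\in\mathcal{O}_{\widehat{X},\widehat{\mathfrak{P}}}$ with $\pi(f)\notin\mathfrak{m}$ admits $h$ with $\pi(f)\pi(h)=1$, whence $fh=1+k$ for some $k\in K$; thus $fh$, and therefore $f$, is invertible. Consequently $\pi^{-1}(\mathfrak{m})$ is the unique maximal superideal of $\mathcal{O}_{\widehat{X},\widehat{\mathfrak{P}}}$, and its residue field is $(A/I)_{\mathfrak{P}}/\mathfrak{m}=\Bbbk(A_\mathfrak{P})$.

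The main technical obstacle I expect is Step~1: justifying that the sections of $\varprojlim_n\mathcal{O}_X/\mathcal{I}_Y^{n+1}$ on $D(g)\cap|Y|$ really coincide with $A_{(g)}$ rather than just with the inverse limit of sections termwise (one must check that the presheaf inverse limit is already a sheaf on a Noetherian base, using that $A$ is Noetherian so that $I^{n+1}A_g=(IA_g)^{n+1}$ and localisation commutes with the finite generations needed to compare). Once this identification is in place, the super-commutative setting adds nothing over the classical argument, because the geometric-series inversion of $1+k$ takes place inside each $A_{(g)}$ and is insensitive to the $\mathbb{Z}_2$-grading.
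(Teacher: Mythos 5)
Your proof is correct, and it begins from the same identification the paper uses, namely $\mathcal{O}_{\widehat{X},\widehat{\mathfrak{P}}}\simeq\varinjlim_{g\in A_0\setminus\mathfrak{p}}\widehat{A_g}$, but from that point the two arguments genuinely diverge. The paper reduces to the purely even case: since the $I$-adic and $I_0$-adic topologies on $A$ coincide, the even component of the stalk is $\varinjlim_{f}\widehat{(A_0)_f}$, which is then identified with a ring of the form $\widehat{A_0}_{(g)}$ so that locality and the computation of the residue field can be quoted directly from EGA I (Corollary 0.7.6.3 and Proposition 0.7.6.17). You instead give a self-contained argument: project onto the first term $(A/I)_{\mathfrak{P}}$ of the inverse system, which is already local with the right residue field, and show the kernel lies in the Jacobson radical because $1+k$ is inverted by a geometric series converging in the filtration $\ker\bigl(\widehat{A_g}\to A_g/I^{n}A_g\bigr)$. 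Your route is more elementary and visibly insensitive to the $\mathbb{Z}_2$-grading, at the price of redoing by hand what the paper outsources; the paper's route is shorter but rests on the even-reduction lemma and the external citation. Two small points to tighten, neither of which affects validity: (i) the obstacle you anticipate in Step 1 is not really there, since an inverse limit of sheaves is computed sectionwise and the only input is quasi-coherence, i.e.\ $(\mathcal{O}_X/\mathcal{I}_Y^{n+1})(D(g)\cap|Y|)=A_g/I^{n+1}A_g$; and (ii) in a supercommutative superalgebra a one-sided inverse does not formally yield a two-sided one, so in the final step you should either argue symmetrically or pass to the commutative quotient modulo the (locally nilpotent) ideal generated by the odd part to conclude that $f$ is a genuine unit.
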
	
	\begin{proof}	
		Set $S=A_0\setminus\mathfrak{p}$. By \cite[Lemma 1.5]{maszub4} the completion of $A$ in the $I$-adic topology coincides with the completion of $A$ in the $I_0$-adic topology.
		Thus the even component of $\mathcal{O}_{\widehat{X}, \widehat{\mathfrak{P}}}=\varinjlim_{f\in S} \widehat{A_f}$ is isomorphic to
		$\varinjlim_{f\in S}\widehat{(A_0)_f}$.  For any $g\in \widehat{A_0}\setminus\widehat{\mathfrak{p}}$ there is $f\in S$ such that $g\equiv f\pmod{\widehat{I_0}}$. Since $\frac{g}{f}=1+x$, where $x$ is topologically nilpotent in $\widehat{(A_0)_f}$, we have $\widehat{(A_0)_f}\simeq \widehat{A_0}_{(g)}$ and \cite[Corollary 0.7.6.3 and Proposition 0.7.6.17]{EGA I} imply the statement. 	
	\end{proof}
	This lemma shows that the formal completion of arbitrary Noetherian superscheme along its closed supersubscheme is a geometric superspace indeed.  
	
	A geometric superspace $\mathfrak{X}$ is said to be a \emph{Noetherian formal superscheme} (briefly, N.f. superscheme), if there is a finite open covering $\{\mathfrak{X}_i\}$ of $\mathfrak{X}$, such that each $\mathfrak{X}_i$ is isomorphic to the completion
	of some Noetherian superscheme $X_i$ along its closed supersubscheme $Y_i$. By the above remark, any N.f. superscheme has a finite open covering by the affine N.f. supersubschemes $\mathfrak{X}_i\simeq\mathrm{SSpecf}(A_i)$, where each $A_i$ is a Noetherian superalgebra, that is complete in the $J_i$-adic topology for some superideal $J_i$ of $A_i$. N.f. superschemes form a full subcategory of the category of geometric superspaces, that contains all Noetherian superschemes. 
	
Recall that a sheaf $\mathfrak{F}$ of $\mathcal{O}_{\mathfrak{X}}$-supermodules is said to be \emph{coherent}, if for some open covering $\{\mathfrak{X}_i\}$ as above, each sheaf $\mathfrak{F}|_{\mathfrak{X}_i}$ is isomorphic to $\widehat{\mathcal{F}_i}=\varprojlim_{n\geq 0}\mathcal{F}_i/\mathcal{F}_i \mathcal{I}^{n+1}_{Y_i}$, the completion of some coherent sheaf of $\mathcal{O}_{X_i}$-supermodules $\mathcal{F}_i$ along $Y_i$. 

Assume that $\mathfrak{X}=\widehat{X}$ is the formal completion of $X=\mathrm{SSpec}(A)$ along $Y=\mathrm{SSpec}(A/I)$, i.e. $\mathfrak{X}\simeq\mathrm{SSpecf}(\widehat{A})$. Let $M$ be a finitely generated $A$-supermodule. Following \cite[Section II.9]{hart}, we denote by $M^{\triangle}$ the completion of the coherent $\mathcal{O}_X$-supermodule $\widetilde{M}$ along $Y$.	
The following proposition superizes \cite[Proposition II.9.4]{hart}.   
\begin{pr}\label{some standard prop-s of coherent sheaves over affine}
	We have :
	\begin{enumerate}
		\item $\mathfrak{J}=I^{\triangle}$ is a superideal sheaf in $\mathcal{O}_{\mathfrak{X}}$, such that $\mathcal{O}_{\mathfrak{X}}/\mathfrak{I}^n$ is isomorphic to $\widetilde{A/I^n}$ as a sheaf on $|Y|$, for any $n\geq 0$;
		\item  if $M$ is a fintely generated $A$-supermodule, then $M^{\triangle}\simeq \widetilde{M}\otimes_{\mathcal{O}_X}\mathcal{O}_{\mathfrak{X}}$;
		\item the functor $M\mapsto M^{\triangle}$ is an exact functor from the category of finitely generated $A$-supermodules to the category of coherent $\mathcal{O}_{\mathfrak{X}}$-supermodules;
		\item for any integer $n> 0$ there is $(I^{\triangle})^n=(I^n)^{\triangle}$.
	\end{enumerate}	
\end{pr}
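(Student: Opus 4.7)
The plan is to superize the proof of \cite[Proposition II.9.4]{hart} essentially verbatim, verifying each of the four claims by computing sections on the principal open subsets $\mathfrak{D}(g) = D(g) \cap |Y|$ of $|\mathfrak{X}| = |Y|$ with $g \in A_0$, which form a basis of the topology of $|\mathfrak{X}|$. The only additional input beyond the classical proof is that the standard Noetherian commutative-algebra lemmas used there transfer cleanly to the supercommutative setting.

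The first step is to identify the local model. Arguing as in Lemma \ref{locality of some stalks}, one obtains $\mathcal{O}_{\mathfrak{X}}(\mathfrak{D}(g)) \simeq \widehat{A_g}$, the completion of $A_g$ in the $IA_g$-adic topology, and $M^{\triangle}(\mathfrak{D}(g)) = \varprojlim_n (M/I^{n+1}M)_g \simeq \widehat{M_g}$ for any finitely generated $A$-supermodule $M$. By \cite[Lemma 1.5]{maszub4}, the $IA_g$-adic topology coincides with the $I_0 A_g$-adic one, so each super-completion agrees with the corresponding even completion and the standard Noetherian facts apply component-wise.

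Given this local description, (1) reduces to two observations on each $\mathfrak{D}(g)$: that $\mathfrak{J}|_{\mathfrak{D}(g)} = I\widehat{A_g}$ is a superideal of $\widehat{A_g}$, which is manifest; and that $\widehat{A_g}/(I\widehat{A_g})^n \simeq A_g/I^n A_g = (A/I^n)_g$, which follows from closedness of $I^n A_g$ in the $IA_g$-adic topology (super Artin--Rees, reducible to the even case) together with completeness. Part (2) reduces to the classical identity $\widehat{M_g} \simeq M_g \otimes_{A_g} \widehat{A_g}$ for finitely generated modules over a Noetherian ring, which transfers to the super setting by the even reduction above; passing to sections on basic opens then yields $M^{\triangle} \simeq \widetilde{M} \otimes_{\mathcal{O}_X} \mathcal{O}_{\mathfrak{X}}$. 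Part (3) reduces to exactness of $M_g \mapsto \widehat{M_g}$ on finitely generated $A_g$-supermodules, which again follows from exactness of the even completion applied component-wise, or equivalently from the flatness of $\widehat{A_g}$ over $A_g$ deduced from (2). Part (4) is then immediate: on $\mathfrak{D}(g)$ one computes $(I^{\triangle})^n|_{\mathfrak{D}(g)} = (I\widehat{A_g})^n = I^n \widehat{A_g} = (I^n)^{\triangle}|_{\mathfrak{D}(g)}$, the middle equality being a trivial consequence of the definition of powers of the ideal generated by $I$.

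The main obstacle is purely bookkeeping: one must verify that the Noetherian completion machinery (Artin--Rees closedness, the completion-tensor identity for finitely generated modules, exactness of completion on finitely generated modules) carries over to supercommutative Noetherian superalgebras. Thanks to \cite[Lemma 1.5]{maszub4}, which identifies the $J$-adic and $J_0$-adic topologies on any Noetherian supercommutative superalgebra, each of these super-statements reduces to its purely even counterpart applied to the underlying Noetherian ring $A_0$, so no new ideas beyond careful reduction are required.
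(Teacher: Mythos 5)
Your proof is correct, and the overall strategy coincides with the paper's: both superize \cite[Proposition II.9.4]{hart} by identifying sections of $M^{\triangle}$ over affine opens with $I$-adic completions of finitely generated supermodules and then invoking Noetherian completion machinery. The routes differ in two secondary respects. First, where you reduce every completion statement (Artin--Rees closedness, $\widehat{M_g}\simeq M_g\otimes_{A_g}\widehat{A_g}$, exactness of completion) to the purely even case component-wise via \cite[Lemma 1.5]{maszub4}, the paper instead cites the super versions directly from \cite[Lemma 6.1]{kolzub}; your reduction is sound because a finitely generated $A$-supermodule is finitely generated over $A_0$ and the $I$-adic and $I_0$-adic topologies agree, so each homogeneous component is handled by the classical theory over the Noetherian ring $A_0$. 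Second, for part (4) the paper describes $(I^{\triangle})^n$ as the sheafification of the presheaf $U\mapsto\widetilde{I}(U)^n\otimes_{\mathcal{O}_X(U)}\mathcal{O}_{\mathfrak{X}}(U)$ and appeals to \cite[Lemma 17.16.1]{stack}, whereas you compute $(I\widehat{A_g})^n=I^n\widehat{A_g}$ directly on basic opens; this is more elementary, though you should make explicit the one non-trivial step you gloss over, namely that the sections of the sheaf power $(I^{\triangle})^n$ over $\mathfrak{D}(g)$ agree with the naive presheaf power $(I\widehat{A_g})^n$ --- this holds precisely because the presheaf values on the basis coincide with those of the already-coherent sheaf $(I^n)^{\triangle}$, so sheafification changes nothing. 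With that remark added, your argument is complete.
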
	
\begin{proof}
	If $U$ is an affine supersubscheme of $X$, then \cite[Proposition 2.1(1), Proposition 3.1 and Lemma 3.2]{zub2} imply that the functor $M\mapsto \widetilde{M}(|U|)$ is exact, where 
	$N=\widetilde{M}(|U|)$ is a finitely generated supermodule over the Noetherian superalgebra $\mathcal{O}_X(|U|)$. Let $J$ denote the superideal $\widetilde{I}(|U|)$. Then
	\[M^{\triangle}(|U|)\simeq\widehat{N}=\varprojlim_{n\geq 0} N/J^{n+1}N\] and \cite[Lemma 6.1]{kolzub} infers $(3)$. Now, the same arguments as in \cite[Proposition II.9.4]{hart}, combined with \cite[Lemma 6.1(1)]{kolzub} prove $(1)$ and $(2)$.  Finally, $(I^{\triangle})^n$ is the sheafifcation of the presheaf
	\[U\mapsto \widetilde{I}(U)^n\otimes_{\mathcal{O}_X(U)}\mathcal{O}_{\mathfrak{X}}(U),\] 
and $\widetilde{I}^n=\widetilde{I^n}$ is the sheafifcation of the presheaf $U\mapsto \widetilde{I}(U)^n, U\subseteq |Y|$.	Then \cite[Lemma 17.16.1]{stack} concludes the proof.
\end{proof}	
\begin{pr}\label{characterization of formal via even}
	A geometric superspace $\mathfrak{X}$ is a N.f. superscheme if and only if $\mathfrak{X}_0$ is a N.f. scheme and $(\mathcal{O}_{\mathfrak{X}})_1$ is a coherent sheaf of $\mathcal{O}_{\mathfrak{X}_0}$-modules. 
\end{pr}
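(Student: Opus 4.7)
The plan is to prove both implications by reducing to the affine situation, using Lemma 1.5 of \cite{maszub4} (equating $I$-adic and $I_0$-adic completion in the even direction) together with the local correspondence between coherent sheaves on affine N.f.\ schemes and finitely generated modules over Noetherian adic rings.

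For the forward direction, take a finite cover of $\mathfrak{X}$ by affines $\mathfrak{X}_i \simeq \mathrm{SSpecf}(A_i)$ with $A_i$ a Noetherian $I_i$-adic superalgebra. In any super-commutative Noetherian superalgebra the even part is Noetherian and the odd part is a finitely generated module over it: any ideal $J \subseteq A_0$ equals the even part of $AJ$ (so Noetherianness descends from $A$ to $A_0$), and the $A$-ideal $AA_1 = A_1 + A_1^2$ admits a purely odd generating set (since even generators can be replaced by their odd-odd factors inside $A_1^2$), so taking odd parts gives $A_1 = \sum A_0 x_k$. Combined with Lemma 1.5 of \cite{maszub4}, which implies that $(A_i)_0$ is $(I_i)_0$-adically complete, the cover $\{\mathrm{Specf}((A_i)_0)\}$ realizes $\mathfrak{X}_0$ as a N.f.\ scheme, while $(\mathcal{O}_{\mathfrak{X}})_1|_{\mathfrak{X}_i} \simeq (A_{i,1})^{\triangle}$ is coherent over $\mathcal{O}_{\mathfrak{X}_0}$ by Proposition \ref{some standard prop-s of coherent sheaves over affine}.

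Conversely, cover $\mathfrak{X}_0$ by $\mathrm{Specf}(B_i)$ with $B_i$ Noetherian $J_i$-adic, and let $M_i$ denote the finitely generated $B_i$-module with $(\mathcal{O}_{\mathfrak{X}})_1|_{\mathrm{Specf}(B_i)} \simeq M_i^{\triangle}$, obtained from coherence. Set $A_i := B_i \oplus M_i$ with the superalgebra structure inherited from $\mathcal{O}_{\mathfrak{X}}$, the odd-odd pairing $M_i \otimes_{B_i} M_i \to B_i$ being the restriction of the product in $\mathcal{O}_{\mathfrak{X}}$. Being finitely generated over the Noetherian ring $B_i$, the superalgebra $A_i$ is itself Noetherian, and the superideal $J_i A_i = J_i \oplus J_i M_i$ makes it adic: since $(J_i A_i)^n = J_i^n A_i = J_i^n \oplus J_i^n M_i$, the completeness of $A_i$ in this topology reduces to the $J_i$-adic completeness of $B_i$ and of the finitely generated $B_i$-module $M_i$. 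The open prime superideals of $A_i$ all have the form $\mathfrak{p} \oplus M_i$ with $\mathfrak{p} \in \mathrm{Specf}(B_i)$ open, yielding $|\mathrm{SSpecf}(A_i)| \cong |\mathrm{Specf}(B_i)|$; comparing sections on basic opens via Proposition \ref{some standard prop-s of coherent sheaves over affine}(2) identifies $\mathfrak{X}|_{\mathrm{Specf}(B_i)}$ with $\mathrm{SSpecf}(A_i)$, so $\mathfrak{X}$ is a N.f.\ superscheme.

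The main obstacle is the sheaf-level identification in the $(\Leftarrow)$ direction: showing that localization-completion on the constructed super side commutes with the $\triangle$ functor applied to $M_i$, and that the resulting local sections split canonically as the direct sum of the given even and odd data. Both reduce to Proposition \ref{some standard prop-s of coherent sheaves over affine} once one observes that the natural topology used on $M_i$ is the $J_i$-adic one, which coincides with the restriction of the $J_i A_i$-adic topology to the odd summand.
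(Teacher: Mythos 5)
Your proposal is correct and follows essentially the same route as the paper: the forward direction reduces to the affine case via \cite[Lemma 1.5]{maszub4} (identifying the $I$-adic and $I_0$-adic completions), and the converse reconstructs the affine pieces as $B_i\oplus M_i$ with the odd--odd pairing recovered from the sheaf multiplication through the equivalence between coherent sheaves on $\mathrm{Specf}(B_i)$ and finitely generated $B_i$-modules, exactly as in the paper's appeal to \cite[Theorem II.9.7]{hart}. The extra detail you supply (Noetherianity of $A_0$, odd generation of $A_1$ over $A_0$, and the comparison of adic topologies on the odd summand) only makes explicit what the paper leaves implicit.
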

\begin{proof}
	Note that if $X$ is a Noetherian superscheme and $\mathcal{I}$ is a coherent superideal sheaf of $\mathcal{O}_X$, then there is a nonnegative integer $n$ such that $\mathcal{I}^n\subseteq \mathcal{O}_X\mathcal{I}_0$ (use \cite[Lemma 1.5]{maszub4}). 
	Let $\widehat{X}$ be a completion of $X$ along a closed supersubscheme $Y$. Let $Y'$ denote $(|Y|, \mathcal{O}_X/\mathcal{O}_X (\mathcal{I}_Y)_0)$. The above remark immediately infers that  $\widehat{X}$ and $(\widehat{X})_0$ are the completions of $X$ and $X_0$ along $Y'$ and $Y_0$ respectively, hence the part "if".
	
	Assume that $\mathfrak{X}$ is a geometric superspace, that satisfies the conditions of proposition. Without loss of a generality, one can assume that $\mathfrak{X}_0\simeq\mathrm{Specf}(A)$, where $A$ is an adic Noetherian algebra. By \cite[Proposition II.9.4]{hart} the $\mathcal{O}_{\mathfrak{X}_0}$-module
	$(\mathcal{O}_{\mathfrak{X}})_1$ is isomorphic to $M^{\triangle}\simeq \widetilde{M}\otimes_{\mathcal{O}_{X_0}} \mathcal{O}_{\mathfrak{X}_0}$, where $X_0=\mathrm{Spec}(A)$ and $M$ is a finitely generated $A$-module. The superalgebra sheaf structure of $\mathcal{O}_{\mathfrak{X}}$ is uniquely defined by an $\mathcal{O}_{\mathfrak{X}_0}$-linear morphism
	\[M^{\triangle}\otimes_{\mathcal{O}_{\mathfrak{X}_0}} M^{\triangle}\to \mathcal{O}_{\mathfrak{X}_0}\simeq A^{\triangle}.\]
	We have 
	\[M^{\triangle}\otimes_{\mathcal{O}_{\mathfrak{X}_0}} M^{\triangle}\simeq (\widetilde{M}\otimes_{\mathcal{O}_{X_0}}\widetilde{M})\otimes_{\mathcal{O}_{X_0}}\mathcal{O}_{\mathfrak{X}_0}   \simeq \widetilde{M\otimes_A M}\otimes_{\mathcal{O}_{X_0}} \mathcal{O}_{\mathfrak{X}_0}\simeq (M\otimes_A M)^{\triangle},\]
	hence by \cite[Theorem II.9.7]{hart} the above  $\mathcal{O}_{\mathfrak{X}_0}$-linear morphism is uniquely defined by a morphism $M\otimes_A M\to A$ of $A$-modules. Thus $B=A\oplus M$ has the natural superalgebra structure, and $\mathfrak{X}\simeq\mathrm{SSpecf}(B)$. 
\end{proof}
	
Let $\mathfrak{X}$ be a N.f. superscheme. A superideal sheaf $\mathfrak{I}$ of $\mathcal{O}_{\mathfrak{X}}$ is said to be a \emph{superideal of definition}, if
\[\mathrm{Supp}(\mathcal{O}_{\mathfrak{X} }/\mathfrak{I})=\{x\in |\mathfrak{X}| \mid (\mathcal{O}_{\mathfrak{X} }/\mathfrak{I})_x\neq 0  \}=|\mathfrak{X}|  \]
and $(|\mathfrak{X}|, \mathcal{O}_{\mathfrak{X} }/\mathfrak{I})$ is a Noetherian superscheme.
\begin{pr}\label{superideal of definition}(compare with \cite[Proposition II.9.5]{hart})
Let $\mathfrak{X}$ be a N.f. superscheme. Then :
\begin{enumerate}
	\item[(a)] if $\mathfrak{I}_1$ and  $\mathfrak{I}_2$ are two superideals of definition, then there are positive integers $m, n$, such that $\mathfrak{I}^m_1\subseteq \mathfrak{I}_2$
	and  $\mathfrak{I}^n_2\subseteq \mathfrak{I}_1$;
	\item[(b)] if $\mathfrak{X}\simeq\mathrm{SSpecf}(A)$, where $A$ is an $I$-adic Noetherian superalgebra, and $\mathfrak{J}$ is a superideal of definition of $\mathfrak{X}$, then there is a superideal of definition $J$ of $A$, such that $\mathfrak{J}=J^{\triangle}$;
	\item[(c)] if $\mathfrak{I}$ is a superideal of definition, then for any positive integer $n$, $\mathfrak{I}^n$ is as well;
	\item[(d)] if $\mathfrak{J}$ is an ideal of definition of $\mathfrak{X}_0$, then $\mathcal{O}_{\mathfrak{X}}\mathfrak{J}$ is a superideal of definition of $\mathfrak{X}$.
\end{enumerate}	
\end{pr}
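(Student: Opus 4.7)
The plan is to prove this as the super analog of \cite[Proposition II.9.5]{hart} (see also \cite[Section I.10]{EGA I}), leaning on two tools already developed in the paper: Proposition \ref{some standard prop-s of coherent sheaves over affine} for the correspondence between coherent sheaves on $\mathrm{SSpecf}(A)$ and finitely generated $A$-supermodules, and \cite[Lemma 1.5]{maszub4}, which equates the $I$-adic and $I_0$-adic topologies on a Noetherian superalgebra. The natural order is to prove (b) first; then (a), (c) and (d) all reduce to it on a finite affine open cover.

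For (b), I would first argue $\mathfrak{J}$ is coherent: because $(|\mathfrak{X}|, \mathcal{O}_{\mathfrak{X}}/\mathfrak{J})$ is a Noetherian superscheme the quotient $\mathcal{O}_{\mathfrak{X}}/\mathfrak{J}$ is coherent, hence so is $\mathfrak{J}$, and Proposition \ref{some standard prop-s of coherent sheaves over affine} supplies a unique superideal $J\subseteq A$ with $\mathfrak{J} = J^{\triangle}$. The next step extracts topological content. Stalk by stalk, the support hypothesis $\mathrm{Supp}(\mathcal{O}_{\mathfrak{X}}/\mathfrak{J}) = |\mathfrak{X}|$ says $J_0\subseteq \mathfrak{p}$ for every $\mathfrak{p}\in V(I_0)$, i.e.\ $J_0\subseteq\sqrt{I_0}$. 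In the reverse direction, the description $(A/J)^{\triangle} = \varprojlim_n \widetilde{A/(J + I^{n+1})}$ shows that $(|\mathfrak{X}|, \mathcal{O}_{\mathfrak{X}}/\mathfrak{J})$ is a genuine (non-formal) Noetherian superscheme only if the tower stabilizes, forcing $I^{N}\subseteq J$ for some $N$; since odd elements are automatically in any radical, this is equivalent to $I_0\subseteq\sqrt{J_0}$. Combining these two inclusions gives $\sqrt{J_0} = \sqrt{I_0}$, and the Noetherianity of $A_0$ then yields exponents with $J_0^m\subseteq I_0$ and $I_0^n\subseteq J_0$. Passing from even parts to full superalgebras via \cite[Lemma 1.5]{maszub4}, the $J$- and $I$-adic topologies on $A$ coincide, so $A$ is $J$-adic and $J$ is a superideal of definition.

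Part (a) then follows by covering $\mathfrak{X}$ with finitely many affine pieces $\mathrm{SSpecf}(A_\alpha)$, restricting $\mathfrak{I}_1,\mathfrak{I}_2$ to each (their support and Noetherian superscheme conditions localize), applying (b) on each patch to obtain superideals $J_j^{(\alpha)}$ with common radical $\sqrt{I_{\alpha,0}}$, extracting local exponents $m_\alpha, n_\alpha$ by Noetherianity, and taking the maximum over the finite cover. Part (c) is immediate: by Proposition \ref{some standard prop-s of coherent sheaves over affine}(4), $\mathfrak{I}^n$ is locally $(J^n)^{\triangle}$; the surjection $\mathcal{O}_{\mathfrak{X}}/\mathfrak{I}^n\twoheadrightarrow\mathcal{O}_{\mathfrak{X}}/\mathfrak{I}$ preserves full support, and $A/J^n$ is Noetherian. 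Part (d) combines (b) with Proposition \ref{characterization of formal via even}: locally $\mathfrak{J}$ corresponds to an ideal of definition $J_0\subseteq A_0$ satisfying $\sqrt{J_0} = \sqrt{I_0}$, so the superideal $AJ_0$ has even part with the same radical, and the argument of (b) applied in reverse confirms that $AJ_0$ is a superideal of definition of $A$, whence $\mathcal{O}_{\mathfrak{X}}\mathfrak{J} = (AJ_0)^{\triangle}$ is the sought superideal of definition of $\mathfrak{X}$.

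The principal technical hurdle I anticipate is isolating the precise superschematic (as opposed to formal) content of the definition in part (b) --- specifically, the step forcing $I^N\subseteq J$ and hence $\sqrt{I_0}\subseteq\sqrt{J_0}$ from the bare assumption that $(|\mathfrak{X}|, \mathcal{O}_{\mathfrak{X}}/\mathfrak{J})$ is a Noetherian superscheme rather than merely a formal one. Once this radical equality is secured, the reduction from $I$-adic completeness to $J$-adic completeness is handled cleanly by \cite[Lemma 1.5]{maszub4} and standard Noetherian comparisons, and the remaining parts become essentially bookkeeping with the super affine-formal correspondence already established.
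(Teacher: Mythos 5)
Your plan inverts the paper's logical order: the paper proves (a) first (by transporting Hartshorne's II.9.5(a) argument verbatim) and then uses (a) as the engine for (b), whereas you try to prove (b) directly and derive (a) from it. This inversion creates a genuine gap, concentrated in two places in your (b). First, you assert that $\mathcal{O}_{\mathfrak{X}}/\mathfrak{J}$ is coherent because $(|\mathfrak{X}|,\mathcal{O}_{\mathfrak{X}}/\mathfrak{J})$ is a Noetherian superscheme, and then invoke Proposition \ref{some standard prop-s of coherent sheaves over affine} to produce $J$ with $\mathfrak{J}=J^{\triangle}$. But ``coherent'' on a N.f.\ superscheme means (by the paper's definition) \emph{locally a completion $\widehat{\mathcal{F}}$ of a coherent sheaf}, and being the structure sheaf of \emph{some} Noetherian superscheme on $|\mathfrak{X}|$ does not by itself put $\mathcal{O}_{\mathfrak{X}}/\mathfrak{J}$ in that class (nor even guarantee that $A\to\Gamma(\mathcal{O}_{\mathfrak{X}}/\mathfrak{J})$ is surjective, since taking global sections is not right exact). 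Second, the step you yourself flag as the hurdle --- forcing $I^{N}\subseteq J$ from the hypothesis that the quotient is a genuine superscheme --- is left as an assertion that ``the tower stabilizes''; this is not an argument, and it is precisely the nontrivial content. In the paper both difficulties evaporate because (a), applied to the pair $\mathfrak{J}$ and $I^{\triangle}$, hands you $(I^{\triangle})^{n}\subseteq\mathfrak{J}$ outright; then $\mathcal{O}_{\mathfrak{X}}/\mathfrak{J}$ becomes a quotient of $\mathcal{O}_{\mathfrak{X}}/(I^{n})^{\triangle}\simeq\widetilde{A/I^{n}}$, so $(|\mathfrak{X}|,\mathcal{O}_{\mathfrak{X}}/\mathfrak{J})$ is a closed supersubscheme of the \emph{honest} superscheme $\mathrm{SSpec}(A/I^{n})$, defined by an honest superideal $J\supseteq I^{n}$, and both the existence of $J$ and the radical comparison $J^{m}\subseteq I$ follow from ordinary superscheme theory. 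Hartshorne's proof of (a) itself works at the level of stalks and supports and does not presuppose (b), which is why the order (a)$\Rightarrow$(b) is not circular while your order (b)$\Rightarrow$(a) leaves (b) unsupported.

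Your treatments of (c) and of the reduction of (a) to affine patches are fine modulo the above, but in (d) you also pass too quickly over a sheaf-theoretic point: the identity $\mathcal{O}_{\mathfrak{X}}\mathfrak{J}=(AJ)^{\triangle}$ is not automatic, because products of ideal sheaves involve images of tensor products and these do not obviously commute with completion. The paper devotes the bulk of its proof of (d) to exactly this, using the flatness/exactness statement \cite[Lemma 17.16.3]{stack} to identify $(\mathcal{O}_{\mathfrak{X}})_{1}\mathfrak{J}$ and hence the quotient $\mathcal{O}_{\mathfrak{X}}/\mathcal{O}_{\mathfrak{X}}\mathfrak{J}$ with $\widetilde{A/JA}$ in the appropriate sense. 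I would recommend restoring the paper's order --- prove (a) by the stalkwise Hartshorne argument, then deduce (b) as above --- and supplying the exact-sequence argument in (d).
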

\begin{proof}
The proof of  (a) can be copied from \cite[Proposition II.9.5(a)]{hart} verbatim. Thus, under the conditions of (b) there is $(I^n)^{\triangle}= (I^{\triangle})^n\subseteq\mathfrak{J}$ for some integer $n>0$. Without loss of generality, one can assume that $I^{\triangle}\subseteq\mathfrak{J}$. 
Since \[\mathcal{O}_{\mathrm{SSpec}(A/I)}\simeq\mathcal{O}_{\mathfrak{X}}/I^{\triangle}\to\mathcal{O}_{\mathfrak{X}}/\mathfrak{J}\] is a surjective morphism of superalgebra sheaves, 	
$Z=(|\mathfrak{X}|, \mathcal{O}_{\mathfrak{X} }/\mathfrak{J})$ is a closed supersubscheme of $\mathrm{SSpec}(A/I)$, defined by a superideal $J\supseteq I$. Moreover, $|Z|=|\mathrm{SSpec}(A/I)|$ implies $J^m\subseteq I$ for some integer $m>0$, and 
\[J^{\triangle}/I^{\triangle}\simeq (J/I)^{\triangle}\simeq\widetilde{J/I}\simeq\widetilde{J}/\widetilde{I}\]
infers $\mathfrak{J}=J^{\triangle}$.
 	
The statement (d) is obviously local, i.e. one can assume that $\mathfrak{X}\simeq\mathrm{SSpecf}(A)$ and $\mathfrak{J}=J^{\triangle}$ for some ideal of definition of $A_0$. 
Note that \[(\mathcal{O}_{\mathfrak{X}})_1\mathfrak{J}\subseteq (\mathcal{O}_{\mathfrak{X}})_1\simeq \widetilde{A_1}\otimes_{\mathcal{O}_{X_0}}\mathcal{O}_{\mathfrak{X}_0}\] is the natural image of the sheaf $\widetilde{A_1}\otimes_{\mathcal{O}_{X_0}}\mathfrak{J}$. By \cite[Lemma 17.16.3]{stack} we have
the exact sequence 
\[0\to (\mathcal{O}_{\mathfrak{X}})_1\mathfrak{J}\to \widetilde{A_1}\otimes_{\mathcal{O}_{X_0}}\mathcal{O}_{\mathfrak{X}_0}\to \widetilde{A_1}\otimes_{\mathcal{O}_{X_0}}(\mathcal{O}_{\mathfrak{X}_0}/\mathfrak{J})\to 0.\]
Since $\mathcal{O}_{\mathfrak{X}_0}/\mathfrak{J}\simeq\widetilde{A/J}$, $\widetilde{A_1}\otimes_{\mathcal{O}_{X_0}}(\mathcal{O}_{\mathfrak{X}_0}/\mathfrak{J})$
is isomorphic to $\widetilde{A_1/JA_1}$ as $\mathcal{O}_{\mathrm{Spec}(A/J)}$-module, thus (d) follows. 
Finally, the statement (c) is also local, hence it immediately follows by Proposition \ref{some standard prop-s of coherent sheaves over affine}(1, 4).
\end{proof}
\begin{lm}\label{theorem II.9.3}
Let $A$ be a Noetherian superalgebra and $I$ be a superideal of $A$. If $\{ M_n, \phi_{n, m} : M_n\to M_m,  0\leq m\leq n\}$ is an inverse system, where each $M_n$ is a finitely generated $A/I^n$-supermodule and each map $\phi_{n, m}$ is surjective with $\ker\phi_{n, m}=I^m M_n$, then $M=\varprojlim_n M_n$ is a finitely generated $\widehat{A}$-supermodule, such that $M/I^n M\simeq M_n$ for any $n\geq 0$.  	
\end{lm}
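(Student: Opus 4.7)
The plan is to mimic the proof of Hartshorne's Theorem II.9.3 (or Atiyah–Macdonald Proposition 10.12) in the super setting. The $\mathbb{Z}/2$-grading plays only a bookkeeping role: because every $\phi_{n,m}$ is a graded surjection of supermodules, all lifts in the argument can be chosen homogeneous, so generators and coefficients carry a consistent parity throughout. Concretely, I would carry out three steps.

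First, produce $\widehat{A}$-generators of $M$. Since $A/I$ is super-Noetherian and $M_1 \simeq M_n/IM_n$ is finitely generated, pick homogeneous generators $\bar{x}_1,\dots,\bar{x}_r$ of $M_1$. Using graded surjectivity of the $\phi_{n,n-1}$, inductively lift each $\bar{x}_i$ to a compatible homogeneous system $x_i=(x_i^{(n)})\in M$ of matching parity. A graded Nakayama argument, valid because $I^n M_n=0$, shows that $x_1^{(n)},\dots,x_r^{(n)}$ generate $M_n$ over $A/I^n$ for every $n$. Let $F=\widehat{A}^{(r_0\mid r_1)}$ be the free graded $\widehat{A}$-supermodule with parities matching those of the $x_i$, and define $\psi:F\to M$ by sending the basis to $(x_i)$.

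Second, show $\psi$ is surjective by iterative approximation. Given $y\in M$, build homogeneous $c_i^{(k)}\in I^{k-1}$ of parity $|x_i|$ so that at stage $k$, the partial sums $\sum_{\ell\le k}c_i^{(\ell)}$ reproduce $y_k$ in $M_k$ via the $x_i^{(k)}$. The step $k\rightsquigarrow k+1$ works because after subtraction the residual lies in $I^k M_{k+1}$, which is generated over $A/I^{k+1}$ by $I^k$-multiples of the $x_i^{(k+1)}$, and homogeneous lifts are available at each stage. The partial sums $c_i^{(1)}+c_i^{(2)}+\cdots$ converge in $\widehat{A}$ to elements $c_i$ with $\psi((c_i))=y$. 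In particular $M$ is a finitely generated $\widehat{A}$-supermodule.

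Third, identify $M/I^n M\simeq M_n$. The map $M\to M_n$ is surjective by Mittag-Leffler (transitions $\phi_{m,n}$ are surjective), it kills $I^n M$, so it induces a surjection $\alpha_n:M/I^n M\twoheadrightarrow M_n$; write $L_n=\ker\alpha_n$. The system $\{L_n\}$ has surjective transitions: given a representative $y\in M$ of a class in $L_n$, write $y_{n+1}=\sum_j a_j m_j\in I^n M_{n+1}$ with $a_j\in I^n$, lift each $m_j$ to $Z_j\in M$ via Mittag-Leffler, and replace $y$ by $y-\sum_j a_j Z_j$, which now lies in $L_{n+1}$ and reduces to the original element in $L_n$. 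Because $\widehat{A}$ is a Noetherian superalgebra that is $I\widehat{A}$-adically complete (its even part is the classical $I_0$-adic completion, and the odd part is finitely generated over it), the finitely generated $\widehat{A}$-supermodule $M$ is itself $I\widehat{A}$-adically separated and complete, so $M\simeq\varprojlim M/I^n M$. Comparing with $M=\varprojlim M_n$ via the natural transformation $\{M/I^n M\}\to\{M_n\}$ forces $\varprojlim L_n=0$, and combined with the surjective transitions this forces $L_n=0$ for every $n$.

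The main obstacle is the last step: promoting the isomorphism of inverse limits to a level-wise isomorphism $M/I^n M\simeq M_n$. This hinges on two ingredients that must be established in the super setting, namely the surjectivity of the transition maps in $\{L_n\}$ and the $I\widehat{A}$-adic completeness of the finitely generated $\widehat{A}$-supermodule $M$; together they upgrade the equality of limits to the desired level-wise identification.
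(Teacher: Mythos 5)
Your proposal is correct and follows essentially the same route as the paper: lift homogeneous generators of $M_1$ compatibly through the tower, use graded Nakayama to see they generate each $M_n$, and then obtain $\widehat{A}$-generation of $M$ by the standard successive-approximation/convergence argument. The paper's own proof is just a terser version of this (it outsources the convergence step to a citation and leaves the identification $M/I^nM\simeq M_n$ implicit), whereas you supply those details explicitly; nothing in your argument diverges in substance.
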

\begin{proof}
Assume that the elements $m^{(1)}_1, \ldots, m^{(1)}_s$ generate $M_1$. Since $M_1\simeq M_2/IM_2$, the elements $m^{(2)}_1, \ldots, m^{(2)}_s$, such that $\phi_{2, 1}(m^{(2)}_i)=m^{(1)}_i$ for any $i$, generate $M_2$. Similarly, $M_1\simeq M_3/IM_3$ infers that $m^{(3)}_1, \ldots, m^{(3)}_s$, such that  $\phi_{3, 2}(m^{(3)}_i)=m^{(2)}_i$ for any $i$, generate $M_3$, and so on. Set $m_i=\varprojlim_n m_i^{(n)}, 1\leq i\leq s$. Then $M=\sum_{1\leq i\leq s}\widehat{A}m_i$ by \cite[Lemma 1.7]{maszub4}.    	
\end{proof}
The following proposition superizes \cite[Proposition II.9.6]{hart}.
\begin{pr}\label{supermodule over formal as a proj limit}
Let $\mathfrak{X}$ be a N.f. superscheme. Assume that $\mathfrak{I}$ is a superideal of definition of $\mathfrak{X}$. Let $X_n$ denote the Noetherian superscheme
$(|\mathfrak{X}|, \mathcal{O}_{\mathfrak{X}}/\mathfrak{I}^n), n\geq 0$. Then :
\begin{enumerate}
	\item if $\mathfrak{F}$ is a coherent sheaf of $\mathcal{O}_{\mathfrak{X}}$-supermodules, then for any $n\geq 0$, $\mathcal{F}_n=\mathfrak{F}/\mathfrak{I}^n\mathfrak{F}$ is a coherent sheaf of $X_n$-supermodules, so that $\mathfrak{F}\simeq\varprojlim_n \mathcal{F}_n$;
	\item if $\{\mathcal{F}_n, \phi_{n, m} : \mathcal{F}_n\to\mathcal{F}_m, n>m\geq 0\}$ is an inverse system, where each $\mathcal{F}_n$ is a coherent sheaf of $\mathcal{O}_{X_n}$-supermodules and each morphism $\phi_{n, m}$ is surjective with $\ker\phi_{n, m}=\mathfrak{I}^m\mathcal{F}_n$, then $\mathfrak{F}=\varprojlim_n\mathcal{F}_n$
	is a coherent sheaf of $\mathcal{O}_{\mathfrak{X}}$-supermodules, such that $\mathcal{F}_n\simeq\mathfrak{F}/\mathfrak{I}^n\mathfrak{F}$ for any $n\geq 0$;
	\item a sheaf $\mathfrak{F}$ of $\mathcal{O}_{\mathfrak{X}}$-supermodules is coherent if and only if the sheaf $\mathfrak{F}|_{\mathfrak{X}_0}$ of $\mathcal{O}_{\mathfrak{X}_0}$-modules is.
\end{enumerate}	
\begin{proof}
For the first two statements use Lemma \ref{theorem II.9.3} and copy the proof of  \cite[Proposition II.9.6]{hart}. To prove the third one, let $\mathfrak{J}=\mathcal{O}_{\mathfrak{X} }\mathfrak{J}_0$. Then by \cite[Proposition 3.1]{zub2} each $\mathcal{F}_n\simeq \mathfrak{F}/\mathfrak{J}_0^n\mathfrak{F}$ is a coherent	$\mathcal{O}_{X_n}$-supermodule if and only if 
each $\mathcal{F}_n|_{\mathcal{O}_{(X_n)_0}}$ is a coherent $\mathcal{O}_{(X_n)_0}$-module if and only if $\mathfrak{F}|_{\mathfrak{X}_0}$ is a coherent $\mathcal{O}_{\mathfrak{X}_0}$-module.
\end{proof}
\end{pr}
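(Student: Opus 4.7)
The plan is to reduce to the affine case $\mathfrak{X}\simeq\mathrm{SSpecf}(A)$ with $A$ a $J$-adic Noetherian superalgebra and $\mathfrak{I}=J^{\triangle}$ (Proposition \ref{superideal of definition}(b)), and then run the argument of \cite[Proposition II.9.6]{hart} step by step, substituting each classical input by its super-analogue already established in this section.

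For (1), locally $\mathfrak{F}\simeq M^{\triangle}$ for a finitely generated $A$-supermodule $M$. Exactness of $(-)^{\triangle}$ together with Proposition \ref{some standard prop-s of coherent sheaves over affine}(3,4) gives $\mathcal{F}_n\simeq (M/J^n M)^{\sim}$, which is a coherent $\mathcal{O}_{X_n}$-supermodule. Since $M$ is finitely generated over the $J$-adically complete superalgebra $A$, it is itself $J$-adically complete (this uses \cite[Lemma 1.7]{maszub4} via the same argument as in Lemma \ref{theorem II.9.3}), so $M\simeq\varprojlim_n M/J^n M$ and hence $\mathfrak{F}\simeq\varprojlim_n\mathcal{F}_n$ on each affine piece; the isomorphism globalises because both sides are sheaves.

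For (2), I apply Lemma \ref{theorem II.9.3} to the inverse system $M_n=\mathcal{F}_n(|\mathfrak{X}|)$ in the affine local model to produce a finitely generated $A$-supermodule $M=\varprojlim_n M_n$ with $M/J^n M\simeq M_n$. Setting $\mathfrak{F}=M^{\triangle}$, Proposition \ref{some standard prop-s of coherent sheaves over affine}(2,3) together with part (1) yields $\mathfrak{F}/\mathfrak{I}^n\mathfrak{F}\simeq\mathcal{F}_n$, so $\varprojlim_n\mathcal{F}_n\simeq\mathfrak{F}$ is coherent. Compatibility across the affine cover is harmless: on principal open supersubsets $\mathrm{SSpecf}(A_{(g)})$ the formation of $M^{\triangle}$ commutes with the $(g)$-completion of finitely generated modules over a Noetherian superalgebra, which is exactly the content invoked in the proof of Proposition \ref{some standard prop-s of coherent sheaves over affine}.

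For (3), I fix an ideal of definition $\mathfrak{J}_0$ of the Noetherian formal scheme $\mathfrak{X}_0$ and use Proposition \ref{superideal of definition}(d) to produce the superideal of definition $\mathfrak{I}=\mathcal{O}_{\mathfrak{X}}\mathfrak{J}_0$ of $\mathfrak{X}$. The quotient $X_n=(|\mathfrak{X}|,\mathcal{O}_{\mathfrak{X}}/\mathfrak{I}^n)$ is then a Noetherian superscheme whose even part is precisely $(X_n)_0=(|\mathfrak{X}|,\mathcal{O}_{\mathfrak{X}_0}/\mathfrak{J}_0^n)$. By \cite[Proposition 3.1]{zub2}, coherence of $\mathfrak{F}/\mathfrak{I}^n\mathfrak{F}$ on $X_n$ is equivalent to coherence of its restriction to $(X_n)_0$. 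Combining this term-by-term equivalence with parts (1) and (2), applied on both sides of the even/super comparison, gives coherence of $\mathfrak{F}$ if and only if coherence of $\mathfrak{F}|_{\mathfrak{X}_0}$.

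The main obstacle is the super bookkeeping: one must verify that $\mathfrak{I}=J^{\triangle}$, the induced inverse systems, and the affine-to-global gluing all remain in the $\mathbb{Z}/2$-graded category, so that tensor products and quotients are taken with the correct parity. Once this is checked, the availability of Proposition \ref{some standard prop-s of coherent sheaves over affine} and Lemma \ref{theorem II.9.3} means the classical Hartshorne proof transfers with only formal modifications.
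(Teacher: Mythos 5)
Your proposal follows essentially the same route as the paper: parts (1) and (2) are handled by reducing to the affine model, invoking the functor $M\mapsto M^{\triangle}$ from Proposition \ref{some standard prop-s of coherent sheaves over affine} together with Lemma \ref{theorem II.9.3}, and transporting Hartshorne's II.9.6 argument, while part (3) uses the superideal of definition $\mathcal{O}_{\mathfrak{X}}\mathfrak{J}_0$ from Proposition \ref{superideal of definition}(d) and the term-by-term even reduction via \cite[Proposition 3.1]{zub2}, exactly as in the paper's proof. Your write-up merely makes explicit the steps the paper leaves to the reader, and it is correct.
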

\begin{pr}\label{equivalence over affine formal}
Let $\mathfrak{X}\simeq\mathrm{SSpecf}(A)$, where $A$ is an adic Noetherian superalgebra. Then $M\mapsto M^{\triangle}$ is an equivalence of the category of finitely generated $A$-supermodules to the category of coherent $\mathcal{O}_{\mathfrak{X}}$-supermodules, and its quasi-inverse is the exact functor $\mathfrak{F}\mapsto \mathfrak{F}(|\mathfrak{X}|)$. 	
\end{pr}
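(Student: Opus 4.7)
The plan is to combine Proposition \ref{some standard prop-s of coherent sheaves over affine} with Proposition \ref{supermodule over formal as a proj limit} and Lemma \ref{theorem II.9.3}, modelling the argument on the classical non-super statement \cite[Theorem II.9.7]{hart}. Without loss of generality, by Proposition \ref{superideal of definition}(a) I would first reduce to the case where $\mathfrak{I}=I^\triangle$ is the canonical superideal of definition of $\mathfrak{X}$. Proposition \ref{some standard prop-s of coherent sheaves over affine}(1) then identifies $X_n=(|\mathfrak{X}|,\mathcal{O}_\mathfrak{X}/\mathfrak{I}^n)$ with the affine Noetherian superscheme $\mathrm{SSpec}(A/I^n)$, so every coherent $\mathcal{O}_{X_n}$-supermodule is of the form $\widetilde{M_n}$ for a uniquely determined finitely generated $A/I^n$-supermodule $M_n$.

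For essential surjectivity, given a coherent $\mathcal{O}_\mathfrak{X}$-supermodule $\mathfrak{F}$, Proposition \ref{supermodule over formal as a proj limit}(1) yields $\mathfrak{F}\simeq\varprojlim_n\mathcal{F}_n$ with $\mathcal{F}_n=\mathfrak{F}/\mathfrak{I}^n\mathfrak{F}$ coherent on $X_n$, and with surjective transition morphisms whose kernels are $\mathfrak{I}^m\mathcal{F}_n$. Writing $\mathcal{F}_n=\widetilde{M_n}$, the system $\{M_n,\phi_{n,m}\}$ satisfies the hypotheses of Lemma \ref{theorem II.9.3}, which produces a finitely generated $A$-supermodule $M=\varprojlim_n M_n$ with $M/I^nM\simeq M_n$. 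Sheafifying gives $M^\triangle\simeq\varprojlim_n\widetilde{M/I^nM}\simeq\varprojlim_n\mathcal{F}_n\simeq\mathfrak{F}$. Applying global sections termwise then yields $\mathfrak{F}(|\mathfrak{X}|)\simeq\varprojlim_n\mathcal{F}_n(|\mathfrak{X}|)\simeq\varprojlim_n M_n\simeq M$.

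This simultaneously shows that $\mathfrak{F}\mapsto\mathfrak{F}(|\mathfrak{X}|)$ sends coherent sheaves to finitely generated $A$-supermodules and that it is a quasi-inverse to $M\mapsto M^\triangle$: the computation above gives $\mathfrak{F}(|\mathfrak{X}|)^\triangle\simeq M^\triangle\simeq\mathfrak{F}$, while for a finitely generated $M$ the completeness of $M$ in the $I$-adic topology (which follows from $A$ being $I$-adic Noetherian together with \cite[Lemma 1.7]{maszub4} applied to a finite presentation of $M$) yields $M^\triangle(|\mathfrak{X}|)\simeq\varprojlim_n M/I^nM\simeq M$. Exactness of $\mathfrak{F}\mapsto\mathfrak{F}(|\mathfrak{X}|)$ will then follow immediately from it being quasi-inverse to the exact functor $M\mapsto M^\triangle$ of Proposition \ref{some standard prop-s of coherent sheaves over affine}(3).

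The main obstacle I anticipate is the commutation of inverse limits with the global sections functor on $|\mathfrak{X}|$: the sheaf inverse limit $\varprojlim_n\mathcal{F}_n$ is a priori the sheafification of the presheaf inverse limit, and the desired identification requires a Mittag-Leffler style argument, which here is available because the transition maps are surjective. An alternative, more concrete route is to imitate the generator-lifting step in the proof of Lemma \ref{theorem II.9.3}, lifting a finite set of generators of $M_1$ level by level and invoking \cite[Lemma 1.7]{maszub4}; this gives direct control over the identification $\varprojlim_n\mathcal{F}_n(|\mathfrak{X}|)\simeq M$ without needing a general sheaf-theoretic commutation principle, and avoids any hidden dependence on whether graded/even reducibility assumptions are being used.
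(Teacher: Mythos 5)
Your proposal is correct and takes essentially the same route as the paper's proof: essential surjectivity is obtained from Proposition \ref{supermodule over formal as a proj limit} together with Lemma \ref{theorem II.9.3}, and the identification $M^{\triangle}(|\mathfrak{X}|)\simeq M$ comes from the affine computations of Proposition \ref{some standard prop-s of coherent sheaves over affine}. The only minor divergences are that you deduce exactness of $\mathfrak{F}\mapsto\mathfrak{F}(|\mathfrak{X}|)$ formally from the established equivalence and the exactness of $M\mapsto M^{\triangle}$ (the paper instead transplants the direct argument of Hartshorne, Theorem II.9.7), and that the ``obstacle'' you anticipate is not one: inverse limits of sheaves, unlike colimits, are computed sectionwise, so no Mittag-Leffler or sheafification issue arises there.
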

\begin{proof}
Proposition \ref{some standard prop-s of coherent sheaves over affine}(2) implies $M^{\triangle}(|\mathfrak{X}|)=M$. Conversely, assume that $\mathfrak{F}$ is a coherent sheaf of $\mathcal{O}_{\mathfrak{X}}$-supermodules. Set $\mathfrak{I}=I^{\triangle}$, where $I$ is a superideal of definition of $A$. By Proposition \ref{supermodule over formal as a proj limit}, for each $n\geq 1$ we have  $\mathfrak{F}/\mathfrak{I}^n\mathfrak{F}\simeq \widetilde{M_n}$, where $M_n$ is a finitely generated $A/I^n$-supermodule. Moreover, $M=\varprojlim_n M_n$ is a finitely generated $A$-supermodule and $\mathfrak{F}\simeq M^{\triangle}$. The proof of the exactness of the functor $\mathfrak{F}\mapsto \mathfrak{F}(|\mathfrak{X}|)$ can be copied from \cite[Theorem II.9.7]{hart}.
\end{proof}
Let $\mathsf{Top-SAlg}_{\Bbbk}$ denote the subcategory of topological superalgebras with graded continuous morphisms. 
\begin{lm}\label{morphisms to affine}(see \cite[Proposition I.10.4.6]{EGA I})
The natural map \[\mathrm{Mor}(\mathfrak{X}, \mathrm{SSpecf}(A))\to\mathrm{Hom}_{\mathsf{Top-SAlg}_{\Bbbk}}(A, \mathcal{O}_{\mathfrak{X}}(|\mathfrak{X}|))\] is bijective. 	
\end{lm}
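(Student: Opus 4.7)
The plan is to mimic the classical construction of \cite[Proposition I.10.4.6]{EGA I}, leaning on the super machinery developed so far. The forward map sends $\mathbf{f}:\mathfrak{X}\to\mathrm{SSpecf}(A)$ to the graded morphism $A = \mathcal{O}_{\mathrm{SSpecf}(A)}(|\mathrm{SSpecf}(A)|) \to \mathcal{O}_{\mathfrak{X}}(|\mathfrak{X}|)$ induced by $\mathbf{f}^{\sharp}$ on global sections; it is continuous because $\mathbf{f}^{\sharp}$ commutes with the projective systems $\mathcal{O}_{\mathrm{SSpecf}(A)}=\varprojlim_n \mathcal{O}_{\mathrm{SSpecf}(A)}/(I^{\triangle})^{n+1}$ provided by Proposition \ref{supermodule over formal as a proj limit}. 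The real task is therefore to construct a well-defined inverse.

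First I would treat the affine case. Assume $\mathfrak{X}\simeq\mathrm{SSpecf}(B)$ for a $J$-adic Noetherian superalgebra $B$, and let $\phi:A\to B$ be a continuous graded morphism, where $A$ is $I$-adic. Continuity gives $\phi(I^{m_n})\subseteq J^n$ for a suitable sequence $m_n$, so in particular $\phi_0:A_0\to B_0$ pulls open prime ideals back to open prime ideals, which by the identification of underlying spaces discussed before Lemma \ref{locality of some stalks} induces a continuous map $|\mathbf{f}|:|\mathfrak{X}|\to|\mathrm{SSpecf}(A)|$. For each principal open $\mathrm{SSpecf}(A_{(g)})$ with $g\in A_0$, the element $\phi(g)\in B_0$ is such that $\phi$ extends uniquely and continuously to $A_{(g)}\to B_{(\phi(g))}$ by the universal property of the completed localization $B\{S^{-1}\}$. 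These extensions are compatible on overlaps and determine $\mathbf{f}^{\sharp}$ on a base of opens covering $|\mathrm{SSpecf}(A)|$; Proposition \ref{equivalence over affine formal} glues them into a morphism of structure sheaves, and Lemma \ref{locality of some stalks} ensures that each induced stalk map is local, since on residue fields it reduces to $\Bbbk(A_{\mathfrak{P}})\to \Bbbk(B_{\mathfrak{Q}})$.

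Next I would globalize. For a general N.f.\ superscheme $\mathfrak{X}$ and continuous $\phi:A\to\mathcal{O}_{\mathfrak{X}}(|\mathfrak{X}|)$, fix a finite open cover $\mathfrak{X}=\bigcup_i \mathfrak{U}_i$ with $\mathfrak{U}_i\simeq\mathrm{SSpecf}(B_i)$. Each restriction $\mathcal{O}_{\mathfrak{X}}(|\mathfrak{X}|)\to B_i$ is continuous (induced by the compatible projective systems of Proposition \ref{supermodule over formal as a proj limit}(1)), so composing produces continuous $\phi_i:A\to B_i$, and the affine case yields $\mathbf{f}_i:\mathfrak{U}_i\to\mathrm{SSpecf}(A)$. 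On $\mathfrak{U}_i\cap\mathfrak{U}_j$ the two restrictions of $\mathbf{f}_i$ and $\mathbf{f}_j$ induce the same map on global sections, hence by affine uniqueness they agree, and the local pieces glue to the desired $\mathbf{f}$. Injectivity of the forward map is automatic, since $\mathbf{f}^{\sharp}$ is determined on a base of principal opens by its values on global sections; surjectivity is exactly the construction just described.

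The main obstacle is the affine step: one must carefully verify that a continuous graded morphism between adic Noetherian superalgebras really does extend to the completed localizations compatibly, and that the extended maps assemble to a sheaf morphism respecting the odd structure. Proposition \ref{characterization of formal via even} reduces much of this to the underlying even formal scheme, where \cite[Proposition I.10.4.6]{EGA I} applies verbatim, with the odd component handled through the compatibility $(\mathcal{O}_{\mathfrak{X}})_1\simeq M^{\triangle}$ coming from Proposition \ref{equivalence over affine formal}; nonetheless, gradedness and continuity must be tracked carefully through every gluing.
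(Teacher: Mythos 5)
Your overall architecture (reduce to the affine case, build $f^{\sharp}$ on the base of principal opens $\mathrm{SSpecf}(\widehat{A_x})$ via the completed localizations, then glue over an affine cover of $\mathfrak{X}$) matches the paper's proof, which simply says ``without loss of generality $\mathfrak{X}\simeq\mathrm{SSpecf}(B)$'' and works with the opens $\mathrm{SSpecf}(\widehat{A_x})\mapsto\mathrm{SSpecf}(\widehat{B_{\phi(x)}})$. However, there is one genuine gap, and it sits exactly at the point the paper spends most of its effort on: the claim that the forward map lands in \emph{continuous} morphisms. You justify this by saying that $\mathbf{f}^{\sharp}$ ``commutes with the projective systems'' of Proposition \ref{supermodule over formal as a proj limit}. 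But a morphism of N.f.\ superschemes is, by the paper's definition, merely a morphism of geometric superspaces; nothing in that definition forces $f^{\sharp}$ to respect the adic filtrations, so asserting compatibility with the projective systems is circular --- it is precisely the continuity you are trying to establish. The paper's actual argument is topological: since $|f|$ maps $|\mathrm{SSpecf}(B)|$ into $|\mathrm{SSpecf}(A)|$ and the stalk maps are local, every open prime $\mathfrak{Q}$ of $B$ satisfies $\phi^{-1}(\mathfrak{Q})\supseteq I$, hence $\phi(I)\subseteq\bigcap_{\mathfrak{Q}}\mathfrak{Q}$; this intersection is nilpotent modulo $J$ because $B/J$ is Noetherian, so $\phi(I)^n\subseteq J$ for some $n$, which gives continuity. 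Without this step (or an equivalent one) your forward map is not well defined, and your inverse construction --- which genuinely uses $\phi(I^{m_n})\subseteq J^n$ --- cannot be checked to be a two-sided inverse.

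Two smaller points. First, in the gluing step the intersections $\mathfrak{U}_i\cap\mathfrak{U}_j$ need not be affine, so ``affine uniqueness'' does not apply directly; you should cover each intersection by affine opens and invoke injectivity there (this is routine but should be said). Second, your injectivity claim that $f^{\sharp}$ on $\mathrm{SSpecf}(\widehat{A_x})$ is determined by the global-sections map tacitly uses that $A_x$ is dense in $\widehat{A_x}$ and that the induced map into $\widehat{B_{\phi(x)}}$ is continuous, which again feeds back into the continuity issue above. Once the continuity of $\phi$ is established as in the paper, the rest of your outline goes through.
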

\begin{proof}
Without loss of a generality, one can assume that $\mathfrak{X}\simeq\mathrm{SSpecf}(B)$. Suppose that $I$ and $J$ are superideals of definition in $A$ and $B$ respectively.
Let $f : \mathrm{SSpecf}(B)\to \mathrm{SSpecf}(A)$ be a morphism of formal superschemes.  The dual superalgebra morphism $A\to B$ is denoted by $\phi$. Since $|\mathrm{SSpecf}(B)|=|\mathrm{Specf}(B_0)|$ as well as $|\mathrm{SSpecf}(A)|=|\mathrm{Specf}(A_0)|$, and the morphism $f_0$ is uniquely defined by $\phi_0 : A_0\to B_0$, there is $|f|(\mathfrak{Q})=\phi^{-1}(\mathfrak{Q})$ for any $\mathfrak{Q}\in |\mathrm{SSpecf}(B)|$. Furthermore, the superideal $\cap_{\mathfrak{Q}\in |\mathrm{SSpecf}(B)|}\mathfrak{Q}$ is nilpotent over $J$, that implies $\phi(I)^n\subseteq J$ for a positive integer $n$, whence $\phi$ is continuous.

Any open supersubspace $\mathfrak{U}$ in $\mathrm{SSpecf}(A)$ is covered by some $\mathrm{SSpecf}(\widehat{A_x}), x\in A_0$.  Since $f^{-1}(\mathrm{SSpecf}(\widehat{A_x}))=\mathrm{SSpecf}(\widehat{B_{\phi(x)}})$, and the superalgebra morphism $f^{\sharp}(\mathfrak{U}) : \mathcal{O}(\mathfrak{U})\to \mathcal{O}(f^{-1}(\mathfrak{U}))$ is uniquely defined by its restrictions
\[f^{\sharp}(\mathrm{SSpecf}(\widehat{A_x})) : \widehat{A_x}\simeq \mathcal{O}(\mathrm{SSpecf}(\widehat{A_x}))\to \mathcal{O}(\mathrm{SSpecf}(\widehat{B_{\phi(x)}}))\simeq \widehat{B_{\phi(x)}},\]
which are, in turn, uniquely defined by $\phi$, thus our lemma follows.
\end{proof}
\begin{cor}\label{fiber product for affine}
Assume that $\mathrm{SSpecf}(A)$ and $\mathrm{SSpecf}(B)$ are affine N.f. superschemes over an affine N.f. superscheme $\mathrm{SSpecf}(C)$. 	
Then $\mathrm{SSpecf}(\widehat{A\otimes_C B})$ is isomorphic to the fiber product $\mathrm{SSpecf}(A)\times_{\mathrm{SSpecf}(C)} \mathrm{SSpecf}(B)$ in the category of N.f. superschemes.
\end{cor}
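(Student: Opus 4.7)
The plan is to verify the universal property of the fiber product directly, by translating it via Lemma \ref{morphisms to affine} into an algebraic statement about continuous graded morphisms, and then invoking the universal property of the completed super-tensor product as a coproduct in the category of topological super-commutative $C$-superalgebras.

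First, I would equip $A\otimes_C B$ with the linear topology whose fundamental system of open neighborhoods of zero consists of the superideals $I^n\otimes_C B + A\otimes_C J^n$, $n\geq 1$, where $I$ and $J$ are superideals of definition in $A$ and $B$, and form the completion $\widehat{A\otimes_C B}$. One needs to verify that this completion is a Noetherian adic superalgebra, so that $\mathrm{SSpecf}(\widehat{A\otimes_C B})$ is a genuine affine N.f. superscheme. By Proposition \ref{characterization of formal via even} this reduces to the corresponding statement for the purely even parts: $\widehat{A_0\otimes_{C_0} B_0}$ is Noetherian adic (a classical fact, see \cite[Section 0.7.7]{EGA I}), combined with the fact that $\widehat{A\otimes_C B}_1$ is a finitely generated module over it, since $A_1$ and $B_1$ are finitely generated over $A_0$ and $B_0$ respectively.

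Next, I fix an arbitrary N.f. superscheme $\mathfrak{X}$ together with morphisms $\mathbf{f}\colon\mathfrak{X}\to\mathrm{SSpecf}(A)$ and $\mathbf{g}\colon\mathfrak{X}\to\mathrm{SSpecf}(B)$ whose compositions with the structure maps to $\mathrm{SSpecf}(C)$ coincide. Setting $R=\mathcal{O}_{\mathfrak{X}}(|\mathfrak{X}|)$ with its natural topology, Lemma \ref{morphisms to affine} identifies $\mathbf{f}$ and $\mathbf{g}$ with a pair of continuous graded morphisms $\alpha\colon A\to R$ and $\beta\colon B\to R$ whose restrictions along $C\to A$ and $C\to B$ agree. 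The purely algebraic universal property of the super-tensor product in the category of super-commutative $C$-superalgebras then produces a unique graded $C$-morphism $\gamma\colon A\otimes_C B\to R$ with $\gamma|_A=\alpha$ and $\gamma|_B=\beta$, and continuity of $\alpha$ and $\beta$ immediately forces continuity of $\gamma$ with respect to the tensor-product topology described above.

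Finally, because $R$ is separated and complete (for a finite affine cover of $\mathfrak{X}$ by $\mathrm{SSpecf}(D_i)$ with each $D_i$ adic complete, the sheaf axiom realizes $R$ as a closed subspace of the finite product $\prod D_i$), the morphism $\gamma$ extends uniquely to a continuous graded morphism $\widehat{\gamma}\colon\widehat{A\otimes_C B}\to R$. Applying Lemma \ref{morphisms to affine} in the reverse direction yields the desired unique morphism $\mathfrak{X}\to\mathrm{SSpecf}(\widehat{A\otimes_C B})$ compatible with the projections to $\mathrm{SSpecf}(A)$ and $\mathrm{SSpecf}(B)$, which establishes the universal property. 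The main obstacle is verifying that $\widehat{A\otimes_C B}$ is Noetherian adic in the super setting; however, Proposition \ref{characterization of formal via even} reduces this to the classical scheme-theoretic statement, so no genuinely new difficulty arises.
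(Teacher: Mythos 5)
Your proof is correct and follows essentially the route the paper intends: the corollary is stated without proof immediately after Lemma \ref{morphisms to affine}, precisely because the universal property reduces, via that lemma, to the universal property of the completed super-tensor product in the category of topological superalgebras. You additionally verify that $\widehat{A\otimes_C B}$ is Noetherian adic (reducing to the even part via Proposition \ref{characterization of formal via even}), a point the paper leaves implicit, so your write-up is, if anything, more complete than the source.
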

\begin{pr}\label{fibered product for fromal}
The fiber products exist in the category of N.f. superschemes.	
\end{pr}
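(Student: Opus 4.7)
The approach is to construct the fiber product by gluing affine pieces built via Corollary \ref{fiber product for affine}. The plan proceeds in two reduction steps followed by gluing.

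First, I would reduce to the case where the base is affine. Fix morphisms ${\bf f}: \mathfrak{X}\to\mathfrak{S}$ and ${\bf g}: \mathfrak{Y}\to\mathfrak{S}$. Choose a finite cover of $\mathfrak{S}$ by affine open sub-N.f.-superschemes $\mathfrak{S}_k\simeq\mathrm{SSpecf}(C_k)$ and set $\mathfrak{X}_k={\bf f}^{-1}(\mathfrak{S}_k)$, $\mathfrak{Y}_k={\bf g}^{-1}(\mathfrak{S}_k)$, which are open sub-N.f.-superschemes of $\mathfrak{X}$ and $\mathfrak{Y}$. If each $\mathfrak{Z}_k=\mathfrak{X}_k\times_{\mathfrak{S}_k}\mathfrak{Y}_k$ is constructed, then on overlaps $\mathfrak{S}_k\cap\mathfrak{S}_\ell$ the open pieces of $\mathfrak{Z}_k$ and $\mathfrak{Z}_\ell$ lying over $\mathfrak{S}_k\cap\mathfrak{S}_\ell$ agree by the universal property of fiber products, hence glue to the desired $\mathfrak{Z}$.

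Second, with $\mathfrak{S}=\mathrm{SSpecf}(C)$ affine, cover $\mathfrak{X}$ and $\mathfrak{Y}$ by affine open covers $\{\mathrm{SSpecf}(A_i)\}$ and $\{\mathrm{SSpecf}(B_j)\}$ and apply Corollary \ref{fiber product for affine} to form affine pieces $\mathfrak{Z}_{ij}=\mathrm{SSpecf}(\widehat{A_i\otimes_C B_j})$ equipped with canonical projections to $\mathrm{SSpecf}(A_i)$ and $\mathrm{SSpecf}(B_j)$. These should glue along overlaps in the two factors to produce $\mathfrak{Z}$. The resulting geometric superspace admits a finite affine cover by N.f. superschemes, so Proposition \ref{characterization of formal via even} (together with Proposition \ref{supermodule over formal as a proj limit}(3)) confirms that $\mathfrak{Z}$ is itself a N.f. superscheme.

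The main obstacle is the gluing step, because overlaps such as $\mathrm{SSpecf}(A_i)\cap\mathrm{SSpecf}(A_{i'})$ are not in general affine in $\mathfrak{X}$. The key technical claim to establish is compatibility with principal open restriction: if $\mathfrak{U}\simeq\mathrm{SSpecf}(A_{(x)})$ for $x\in A_0$ is a principal affine open in $\mathrm{SSpecf}(A)$, then the preimage of $\mathfrak{U}$ under the first projection $\mathrm{SSpecf}(\widehat{A\otimes_C B})\to\mathrm{SSpecf}(A)$ is canonically isomorphic to $\mathrm{SSpecf}((A\otimes_C B)_{(x)})$, and the latter represents the fiber product $\mathfrak{U}\times_{\mathrm{SSpecf}(C)}\mathrm{SSpecf}(B)$. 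This follows from naturality of the completion functor in the principal localization and from Lemma \ref{morphisms to affine}, applied to both candidates: they represent the same functor of continuous $C$-superalgebra pairs into $\mathcal{O}(|\cdot|)$. Once this localization-compatibility is in hand, the cocycle identities on the triple overlaps reduce to naturality of the universal property on the affine pieces, so the gluing goes through. Finally, the universal property of $\mathfrak{Z}$ is verified by taking any test morphism from an affine N.f. superscheme, pulling back the cover $\{\mathfrak{Z}_{ij}\}$, and applying Lemma \ref{morphisms to affine} locally.
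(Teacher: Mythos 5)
Your proposal is correct and follows essentially the same route as the paper: cover the base and the two factors by compatible affine N.f.~supersubschemes, build the affine fiber products via Corollary \ref{fiber product for affine}, and glue along overlaps as in the standard scheme-theoretic argument (the paper delegates the gluing details, including the localization-compatibility you single out, to \cite[Theorem I.3.2.6 and Proposition I.10.7.3]{EGA I} and \cite[Theorem II.3.3]{hart}). Your extra attention to the principal-open compatibility step is exactly the point those references handle, so nothing is missing.
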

\begin{proof}
We briefly outline the main steps of the proof. For a couple of morphisms $f : \mathfrak{X}\to \mathfrak{Z}$ and $g : \mathfrak{Y}\to \mathfrak{Z}$ of N.f. superschemes,
we choose open affine coverings  $\{\mathfrak{U}_{ij}\}, \{\mathfrak{V}_{ik}\}$ and $\{\mathfrak{W}_i\}$ of $\mathfrak{X}, \mathfrak{Y}$ and $\mathfrak{Z}$ respectively, such that 
$f(\mathfrak{U}_{ij})\subseteq \mathfrak{W}_i$ and $f(\mathfrak{V}_{ik})\subseteq \mathfrak{W}_i$ for each triple of indices $i, j, k$. Then by Corollary \ref{fiber product for affine},
for each triple of indices $i, j, k$ there exists the fiber product  $\mathfrak{U}_{ij}\times_{\mathfrak{W}_{i}} \mathfrak{V}_{ik}$, so that $\mathfrak{X}\times_{\mathfrak{Z}}\mathfrak{Y}$ can be glued from these affine N.f. superschemes  (for more details, see  \cite[Theorem I.3.2.6 and Proposition I.10.7.3]{EGA I}, or \cite[Theorem II.3.3]{hart}).	
\end{proof}

\section{Separated and proper morphisms of formal superschemes}

\begin{lm}
Let $\mathfrak{X}$ be a N.f. superscheme and $\mathfrak{J}$ be a coherent superideal sheaf in $\mathcal{O}_{\mathfrak{X}}$. Set $Y=\mathrm{Supp}(\mathcal{O}_{\mathfrak{X}}/\mathfrak{J})$. 
Then $Y$ is a closed subset of $|\mathfrak{X}|$ and $\mathfrak{Y}=(Y, \mathcal{O}_{\mathfrak{X}}/\mathfrak{J})$ is a N.f. superscheme. 
\end{lm}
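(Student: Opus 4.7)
The approach is to reduce, via Proposition \ref{characterization of formal via even}, to the purely even version of this lemma plus a coherence check on the odd part. The statement is local, so we may pass to an open affine chart $\mathfrak{X}\simeq\mathrm{SSpecf}(A)$, with $A$ an $I$-adic Noetherian superalgebra; by Proposition \ref{equivalence over affine formal} we may write $\mathfrak{J}=J^{\triangle}$ for a finitely generated superideal $J=\mathfrak{J}(|\mathfrak{X}|)$ of $A$.

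I would first identify $Y$. Each stalk $\mathcal{O}_{\mathfrak{X},x}$ is a local superalgebra whose maximal superideal is $\mathfrak{m}_x\oplus(\mathcal{O}_{\mathfrak{X}})_{1,x}$, so $(\mathcal{O}_{\mathfrak{X}}/\mathfrak{J})_x\neq 0$ if and only if $(\mathfrak{J}_0)_x\subseteq\mathfrak{m}_x$. Thus, as a subset of $|\mathfrak{X}|=|\mathfrak{X}_0|$, one has $Y=\mathrm{Supp}(\mathcal{O}_{\mathfrak{X}_0}/\mathfrak{J}_0)$. In the affine chart $\mathfrak{J}_0=J_0^{\triangle}$ is coherent on $\mathfrak{X}_0$ and its support equals $V(J_0)\cap V(I_0)$, which is closed in $|\mathfrak{X}|$; moreover $(Y,\mathcal{O}_{\mathfrak{X}_0}/\mathfrak{J}_0)$ coincides with $\mathrm{Specf}(A_0/J_0)$ equipped with its $(I_0+J_0)/J_0$-adic topology, which is a N.f. scheme by the classical EGA/Hartshorne result (completeness of the quotient uses closedness of finitely generated ideals in adic Noetherian rings).

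For the odd part, $(\mathcal{O}_{\mathfrak{Y}})_1=(\mathcal{O}_{\mathfrak{X}})_1/\mathfrak{J}_1$ is a quotient of the coherent $\mathcal{O}_{\mathfrak{X}_0}$-module $(\mathcal{O}_{\mathfrak{X}})_1$ (coherent by Proposition \ref{characterization of formal via even}) by the sub-$\mathcal{O}_{\mathfrak{X}_0}$-module $\mathfrak{J}_1$, which is in turn coherent by Proposition \ref{supermodule over formal as a proj limit}(3) applied to the coherent super-sheaf $\mathfrak{J}$. Since $\mathfrak{X}_0$ is locally Noetherian, coherent modules are stable under taking quotients, so $(\mathcal{O}_{\mathfrak{Y}})_1$ is coherent over $\mathcal{O}_{\mathfrak{X}_0}$ and, by restriction, over $\mathcal{O}_{\mathfrak{Y}_0}$. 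The "if" direction of Proposition \ref{characterization of formal via even} then yields that $\mathfrak{Y}$ is a N.f. superscheme.

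The main obstacle is the clean translation between the super and even settings: specifically, verifying that the even and odd components of a coherent $\mathcal{O}_{\mathfrak{X}}$-super\-module are coherent as $\mathcal{O}_{\mathfrak{X}_0}$-modules, a fact implicit in Proposition \ref{supermodule over formal as a proj limit}(3) that should be spelled out once. Everything else is a routine application of the affine completion-module dictionary of Propositions \ref{some standard prop-s of coherent sheaves over affine} and \ref{equivalence over affine formal}, together with the classical purely even counterpart of the lemma.
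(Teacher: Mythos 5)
Your proof is correct and follows essentially the same route as the paper: identify $\mathrm{Supp}(\mathcal{O}_{\mathfrak{X}}/\mathfrak{J})$ with $\mathrm{Supp}(\mathcal{O}_{\mathfrak{X}_0}/\mathfrak{J}_0)$ so that the purely even statement (EGA I.10.14.1) gives closedness of $Y$ and that $\mathfrak{Y}_0$ is a N.f.\ scheme, check that $(\mathcal{O}_{\mathfrak{X}})_1/\mathfrak{J}_1$ is a coherent $\mathcal{O}_{\mathfrak{X}_0}$-module (the paper cites \cite[Corollary II.9.9]{hart} where you combine Proposition \ref{supermodule over formal as a proj limit}(3) with stability of coherence under quotients, which amounts to the same thing), and conclude via Proposition \ref{characterization of formal via even}. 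Your version merely spells out the affine reduction and the stalk computation that the paper leaves implicit.
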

\begin{proof}
Since
$\mathrm{Supp}(\mathcal{O}_{\mathfrak{X} }/\mathfrak{J})=\mathrm{Supp}(\mathcal{O}_{\mathfrak{X}_0}/\mathfrak{J}_0)$, $Y$ is a closed subset of $|\mathfrak{X}|$ and $\mathfrak{X}_0/\mathfrak{J}_0$ is a Noetherian formal scheme (see \cite[I.10.14.1]{EGA I}). Moreover, $(\mathcal{O}_{\mathfrak{X}})_1/\mathfrak{J}_1$ is a coherent
$\mathcal{O}_{\mathfrak{X}_0}$-module by \cite[Corollary II.9.9]{hart}, and Proposition \ref{characterization of formal via even} concludes the proof.	
\end{proof}
The formal superscheme $\mathfrak{Y}$ is said to be a \emph{closed formal supersubscheme} of $\mathfrak{X}$ . A morphism $f : \mathfrak{Z}\to\mathfrak{X}$ of (Noetherian) formal superschemes is called a \emph{closed immersion}, if $f$ is an isomorphism onto a closed formal supersubscheme $\mathfrak{Y}$ of $\mathfrak{X}$. 

Let $f : \mathfrak{X}\to\mathfrak{Y}$ be a morphism of N.f. superschemes. Let $\delta_f$ denote  the corresponding diagonal morphism 
$\mathfrak{X}\to \mathfrak{X}\times_{\mathfrak{Y}}\mathfrak{X}$. Then $f$ is said to be \emph{separated}, provided $|\delta_f|$ is a homeomorphism of $|X|$ onto a closed subset of 
$|\mathfrak{X}\times_{\mathfrak{Y}}\mathfrak{X}|$. Later, we will show that $f$ is separated if and only if $\delta_f$ is a closed immersion. 

A morphism of N.f. superschemes $f : \mathfrak{X}\to\mathfrak{Y}$ is called \emph{adic}, if there is a superideal of definition $\mathfrak{J}$ of $\mathfrak{Y}$ such that
$\mathcal{O}_{\mathfrak{X}} f^{\sharp}(\mathfrak{J})$ is a superideal of definition of $\mathfrak{X}$. We also say that $\mathfrak{X}$ is adic over $\mathfrak{Y}$. For any integer $n>0$ the morphism $f$ induces the morphism $f_n : X_n\to Y_n$ of Noetherian superschemes. 

Note that by Proposition \ref{superideal of definition}, if $f$ is adic, then $f$ is adic with respect to arbitrary superideal of definition of $\mathfrak{Y}$. 
\begin{lm}\label{closed immersion for affine}
Let $f : \mathfrak{X}\to\mathfrak{Y}$ be a morphism of affine N.f. superschemes. Then $f$ is a closed immersion if and only if the superalgebra morphism $f^{\sharp} : \mathcal{O}(\mathfrak{Y})\to \mathcal{O}(\mathfrak{X})$ is surjective. 
\end{lm}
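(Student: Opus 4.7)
I would prove each implication separately. For the forward direction, write $\mathfrak{Y} \simeq \mathrm{SSpecf}(A)$. If $f$ is a closed immersion, then $\mathfrak{X}$ is isomorphic via $f$ to a closed formal supersubscheme of the form $(|Y|, \mathcal{O}_{\mathfrak{Y}}/\mathfrak{J})$, where $\mathfrak{J} \subseteq \mathcal{O}_{\mathfrak{Y}}$ is a coherent superideal sheaf. The equivalence of categories from Proposition \ref{equivalence over affine formal} identifies $\mathfrak{J}$ with $J^{\triangle}$ for a unique superideal $J \subseteq A$, and the exactness of $(-)^{\triangle}$ from Proposition \ref{some standard prop-s of coherent sheaves over affine}(3) then gives $\mathcal{O}_{\mathfrak{Y}}/\mathfrak{J} \simeq (A/J)^{\triangle}$. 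Taking global sections identifies $f^{\sharp}$ with the canonical quotient map $A \to A/J$, which is surjective.

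For the reverse direction, I would set $\phi := f^{\sharp}$, $J := \ker \phi$, $\mathfrak{J} := J^{\triangle}$, and form $\mathfrak{Z} := (\mathrm{Supp}(\mathcal{O}_{\mathfrak{Y}}/\mathfrak{J}), \mathcal{O}_{\mathfrak{Y}}/\mathfrak{J})$, which is a closed formal supersubscheme by the preceding lemma. The same chain of identifications yields $\mathcal{O}(\mathfrak{Z}) \simeq A/J$. The induced superalgebra isomorphism $\bar\phi : A/J \to B$ then furnishes, via Lemma \ref{morphisms to affine}, a morphism $g : \mathfrak{X} \to \mathfrak{Z}$ with $g^{\sharp} = \bar\phi$, and the factorization $\phi = \bar\phi \circ \pi$ (with $\pi : A \to A/J$ the canonical projection) translates to $f = \iota \circ g$, where $\iota : \mathfrak{Z} \hookrightarrow \mathfrak{Y}$ is the canonical closed immersion.

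The main obstacle is verifying that $g$ is an isomorphism of N.f. superschemes, which by Lemma \ref{morphisms to affine} reduces to showing that $\bar\phi$ is a topological isomorphism rather than merely an algebraic one. Continuity of $\phi$ supplies $\phi(I)^n \subseteq K$ for some $n$, where $I$ and $K$ are superideals of definition of $A$ and $B$ respectively, giving one containment of topologies on $B$. For the reverse containment I would observe that $\phi(I)$ is itself a superideal of definition of $B$: since $\phi$ is surjective, the quotient $B/\phi(I) \simeq A/(I+J)$ is a Noetherian discrete superalgebra, so Proposition \ref{superideal of definition}(a) applied to the pair of superideals of definition $K$ and $\phi(I)$ of $\mathrm{SSpecf}(B)$ forces $K^m \subseteq \phi(I)$ for some $m$. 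Hence the two topologies on $B$ coincide, $\bar\phi$ is a topological isomorphism, and $g$ is an isomorphism of N.f. superschemes; therefore $f = \iota \circ g$ is a closed immersion.
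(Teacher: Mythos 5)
Your proof is correct and follows essentially the same route as the paper: the forward direction via the equivalence of Proposition \ref{equivalence over affine formal}, and the reverse direction by identifying $\mathcal{O}_{\mathfrak{X}}$ with $\mathcal{O}_{\mathfrak{Y}}/J^{\triangle}$ for $J=\ker f^{\sharp}$. The only difference is that you carefully justify the step the paper merely asserts --- that $f^{\sharp}(I)\mathcal{O}(\mathfrak{X})$ is a superideal of definition of $\mathcal{O}(\mathfrak{X})$, which you deduce from continuity of $f^{\sharp}$ together with Proposition \ref{superideal of definition}(a) --- and this is exactly the right justification.
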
 
\begin{proof}
The part "if" is obvious by Proposition \ref{equivalence over affine formal}. Conversely, if $f^{\sharp}$ is surjective and $I$ is a superidel of definition of $\mathcal{O}(\mathfrak{Y})$, then $IB$ is the superideal of definition of $\mathcal{O}(\mathfrak{X})$. Again, by Proposition \ref{equivalence over affine formal}, $\mathcal{O}_{\mathfrak{X}}$ is naturally isomorphic to $(\mathcal{O}(\mathfrak{Y})/J)^{\triangle}\simeq\mathcal{O}_{\mathfrak{Y}}/J^{\triangle}$ as an $\mathcal{O}_{\mathfrak{Y}}$-supermodule.
\end{proof}
\begin{lm}\label{a criteria for a morphism to be separated}
Let $f : \mathfrak{X}\to\mathfrak{Y}$ be an adic morphism of N.f. superschemes. Then $f$ is separated if and only if $f_1$ is.  	
\end{lm}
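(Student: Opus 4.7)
The plan is to reduce the statement to the observation that both the underlying topological spaces and the topological behavior of the diagonal are preserved when passing from the formal setup $f\colon\mathfrak{X}\to\mathfrak{Y}$ to its ``level one'' reduction $f_1\colon X_1\to Y_1$. Since adicness is precisely what guarantees that the chosen superideals of definition are compatible on $\mathfrak{X}, \mathfrak{Y}$ and (as I will argue) on $\mathfrak{X}\times_{\mathfrak{Y}}\mathfrak{X}$, the whole separatedness question boils down to the reduction.

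First I would fix a superideal of definition $\mathfrak{J}$ of $\mathfrak{Y}$ such that $\mathfrak{I}=\mathcal{O}_{\mathfrak{X}}f^{\sharp}(\mathfrak{J})$ is a superideal of definition of $\mathfrak{X}$ (this is allowed by Proposition \ref{superideal of definition}(a,b)). By construction $|X_1|=|\mathfrak{X}|$ and $|Y_1|=|\mathfrak{Y}|$, so only the target of the diagonal is at issue. Working affine-locally and using Corollary \ref{fiber product for affine}, if $\mathfrak{X}\simeq\mathrm{SSpecf}(A)$ and $\mathfrak{Y}\simeq\mathrm{SSpecf}(B)$ with $I\subseteq B$ the superideal corresponding to $\mathfrak{J}$, then adicness reads $IA$ is a superideal of definition of $A$, and $\mathfrak{X}\times_{\mathfrak{Y}}\mathfrak{X}\simeq\mathrm{SSpecf}(\widehat{A\otimes_B A})$ with superideal of definition $K=I(A\otimes_B A)=IA\otimes_B A+A\otimes_B IA$. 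Its reduction is
\[
(A\otimes_B A)/K\;\simeq\;(A/IA)\otimes_{B/I}(A/IA),
\]
so the Noetherian superscheme $(\mathfrak{X}\times_{\mathfrak{Y}}\mathfrak{X})_1$ is canonically identified with $X_1\times_{Y_1}X_1$, and in particular $|\mathfrak{X}\times_{\mathfrak{Y}}\mathfrak{X}|=|X_1\times_{Y_1}X_1|$. These affine identifications glue, and under them $|\delta_f|$ is literally equal to $|\delta_{f_1}|$.

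Next I would conclude. The formal-scheme definition requires that $|\delta_f|$ be a homeomorphism of $|\mathfrak{X}|$ onto a closed subset of $|\mathfrak{X}\times_{\mathfrak{Y}}\mathfrak{X}|$; by the identification just established this is exactly the statement that $|\delta_{f_1}|$ is a homeomorphism of $|X_1|$ onto a closed subset of $|X_1\times_{Y_1}X_1|$. For the morphism $f_1$ of Noetherian superschemes, the diagonal $\delta_{f_1}$ is an immersion (the standard argument, that locally on affine opens $U\times_V U\to U\cap U=U$ is induced by a surjective multiplication map, carries over verbatim to superschemes via Lemma \ref{closed immersion for affine} combined with Lemma \ref{Ex.II.4.3}); hence $\delta_{f_1}$ is a closed immersion precisely when its image is closed. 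Combined with the topological identification, this is the equivalence claimed.

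The only real subtlety, and what I expect to be the main obstacle, is the globalization of the affine calculation of $|\mathfrak{X}\times_{\mathfrak{Y}}\mathfrak{X}|$: one must check that the local identifications of $(\mathfrak{X}\times_{\mathfrak{Y}}\mathfrak{X})_1$ with pieces of $X_1\times_{Y_1}X_1$ are compatible with the gluing described in the proof of Proposition \ref{fibered product for fromal}. Adicness is what makes this work uniformly, since it guarantees that no matter which affine cover refining $f$ we pick, reducing modulo the superideal of definition on the formal product yields fiber products of the reductions.
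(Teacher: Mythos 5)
Your proof is correct and follows the same overall strategy as the paper: fix compatible superideals of definition, identify $(\mathfrak{X}\times_{\mathfrak{Y}}\mathfrak{X})_1$ with $X_1\times_{Y_1}X_1$, and use $|\mathfrak{S}|=|S_1|$ to translate the topological condition on $\delta_f$ into the corresponding one on $\delta_{f_1}$. The one tactical difference is how that identification is obtained. You compute it affine-locally via Corollary \ref{fiber product for affine} (the reduction $(A\otimes_B A)/I(A\otimes_B A)\simeq (A/IA)\otimes_{B/I}(A/IA)$) and then flag, rightly, the compatibility of these identifications with the gluing of Proposition \ref{fibered product for fromal} as the main obstacle; the paper sidesteps gluing entirely with a universal-property argument: morphisms $Z\to X_1$ over $Y_1$ correspond bijectively to morphisms $Z\to\mathfrak{X}$ over $\mathfrak{Y}$ whose structure map factors through $Y_1$, so a pair of such morphisms factors uniquely through $(\mathfrak{X}\times_{\mathfrak{Y}}\mathfrak{X})_1$, which therefore represents $X_1\times_{Y_1}X_1$; this makes the local identifications canonical and hence automatically compatible, and is the cleaner way to close the gap you point out. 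A genuine plus of your write-up is that you make explicit the final step, which the paper leaves implicit: that for the morphism of Noetherian superschemes $f_1$ the diagonal is always an immersion, so closedness of its image is equivalent to its being a closed immersion. One caveat there: Lemma \ref{Ex.II.4.3} assumes separatedness and so cannot be invoked at that point; but the standard argument needs only that the opens $U\times_V U$ cover a neighborhood of the image of $\delta_{f_1}$ and that each $\mathcal{O}(U)\otimes_{\mathcal{O}(V)}\mathcal{O}(U)\to\mathcal{O}(U)$ is surjective, which carries over to superschemes without it.
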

\begin{proof}
Let $\mathfrak{J}$ be a superideal of definition of $\mathfrak{Y}$, such that $\mathfrak{I}=\mathcal{O}_{\mathfrak{X}}f^{\sharp}(\mathfrak{J})$ is a superideal of definition of $\mathfrak{X}$.	
Note that both projections and the structural morphism $\mathfrak{X}\times_{\mathfrak{Y}}\mathfrak{X}\to\mathfrak{Y}$ are adic as well. More precisely,  $\mathcal{O}_{\mathfrak{X}\times_{\mathfrak{Y}}\mathfrak{X}}\delta_f^{\sharp}(\mathfrak{J})$ is a superideal of definition of $\mathfrak{X}\times_{\mathfrak{Y}}\mathfrak{X}$.

The induced morphism $(\delta_f)_1$ is naturally identified with $\delta_{f_1}$. In fact, there is a canonical bijection between morphisms $Z\to X_1$ (of Noetherian superschemes) over $Y_1$, and morphisms  $Z\to\mathfrak{X}$ of (N.f. superschemes) over $\mathfrak{Y}$, such that $Z\to \mathfrak{Y}$ factors through $Z\to Y_1$. Thus for any couple of such morphisms from $Z$ to $X_1$, there is the unique morphism $Z\to \mathfrak{X}\times_{\mathfrak{Y}}\mathfrak{X}$, that obviously factors through $(\mathfrak{X}\times_{\mathfrak{Y}}\mathfrak{X})_1\to \mathfrak{X}\times_{\mathfrak{Y}}\mathfrak{X}$, hence $(\mathfrak{X}\times_{\mathfrak{Y}}\mathfrak{X})_1\simeq X_1\times_{Y_1} X_1$. It remains to note that $|\mathfrak{S}|=|S_1|$   
for any N.f. superscheme $\mathfrak{S}$. 
\end{proof}
\begin{cor}\label{properties of separated morphisms}
All standard properties of separated morphisms of N.f. schemes, formulated in \cite[Proposition I.10.15.13]{EGA I}, are valid for adic separated morphisms of N.f. superschemes.
\end{cor}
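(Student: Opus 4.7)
The plan is to deduce each item by a two-step reduction to the classical case of N.f. schemes. For an adic morphism $f : \mathfrak{X} \to \mathfrak{Y}$ of N.f. superschemes, Lemma \ref{a criteria for a morphism to be separated} shows that $f$ is separated if and only if the induced morphism $f_1 : X_1 \to Y_1$ of ordinary Noetherian superschemes is separated. Applying Lemma \ref{reducibility} then reduces separatedness of $f_1$ to separatedness of the underlying Noetherian scheme morphism $(f_1)_{ev}$. Thus every statement about separatedness in the adic setting translates into the same statement for Noetherian schemes, where the original Proposition I.10.15.13 of \cite{EGA I} applies.

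To carry out this translation, I would first verify that the class of adic morphisms is closed under the operations appearing in \cite[Proposition I.10.15.13]{EGA I}, such as composition, base change along an adic morphism, and the formation of graphs and diagonals. For compositions this is immediate from Proposition \ref{superideal of definition} by extending scalars twice along a chosen superideal of definition of the base; for base change it follows from Corollary \ref{fiber product for affine} together with the description of superideals of definition in tensor products. Closed immersions are adic by Lemma \ref{closed immersion for affine}, since a surjective $f^{\sharp}$ carries a superideal of definition of $\mathcal{O}(\mathfrak{Y})$ onto one of $\mathcal{O}(\mathfrak{X})$.

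Next I would record the compatibilities $(\mathfrak{X} \times_{\mathfrak{Y}} \mathfrak{X})_1 \simeq X_1 \times_{Y_1} X_1$ and $(g f)_1 = g_1 \circ f_1$, both of which were already invoked in the proof of Lemma \ref{a criteria for a morphism to be separated} via the universal property of fiber products and the identification $|\mathfrak{S}| = |S_1|$ valid for any N.f. superscheme $\mathfrak{S}$. Under these, the diagonal $\delta_f$ reduces to $\delta_{f_1}$, and the standard diagram chases behind Proposition I.10.15.13, for instance the factorization of $\delta_{g f}$ through $\mathfrak{X} \times_{\mathfrak{Y}} \mathfrak{X}$, commute with the passage to $(\cdot)_1$. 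One then runs through the EGA list item by item and concludes each one by invoking the known scheme-theoretic assertion for $(f_1)_{ev}$, $(g_1)_{ev}$, and so on.

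The main obstacle is expected to be purely organizational: carefully checking that every operation appearing in the EGA list preserves adicness and commutes with the two successive reductions $f \mapsto f_1 \mapsto (f_1)_{ev}$. Some extra care is needed for cancellation-type statements (for example, if $g f$ is separated then $f$ is separated), which require both $f$ and $g$ to be adic so that the hypothesis can be interpreted through the reductions; this parallels the scheme case exactly and should be built into the hypotheses of each item.
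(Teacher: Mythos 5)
Your proposal is correct and follows essentially the same route as the paper: the key step in both is the compatibility $(\mathfrak{X}\times_{\mathfrak{Y}}\mathfrak{X})_1\simeq X_1\times_{Y_1}X_1$ from Lemma \ref{a criteria for a morphism to be separated}, which reduces each assertion to the corresponding one for the Noetherian (super)schemes $X_1, Y_1$. The paper then simply cites the superscheme analogue of the EGA list (\cite[Corollary 2.5]{maszub2}) where you descend one further level to ordinary schemes via Lemma \ref{reducibility}; your extra bookkeeping on stability of adicness under composition and base change is sound and is implicit in the paper's one-line argument.
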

\begin{proof}
Use the above isomorphism $(\mathfrak{X}\times_{\mathfrak{Y}}\mathfrak{X})_1\simeq X_1\times_{Y_1} X_1$ and refer to \cite[Corollary 2.5]{maszub2}.	
\end{proof}
Let $A$ be an adic  Noetherian superalgebra, and let $I$ be a superideal of definition of $A$. Let $A[t_1, \ldots, t_n| z_1, \ldots , z_m]$ be a \emph{polynomial superalgebra over $A$}, freely generated by the even indeterminants $t_1, \ldots, t_n$ and by the odd indeterminants $z_1, \ldots, z_m$. Then the superalgebra 
\[A\{t_1, \ldots, t_n| z_1, \ldots , z_m\}=\varprojlim_{k} A/I^k[t_1, \ldots, t_n| z_1, \ldots , z_m]\] 
is said to be a \emph{superalgebra of restricted formal series} over $A$. 
It is clear that $A\{t_1, \ldots, t_n| z_1, \ldots , z_m\}$ is supersubalgebra of 
\[A[[t_1, \ldots, t_n| z_1, \ldots , z_m]]\simeq A[[t_1, \ldots, t_n]][z_1, \ldots , z_m]\] consisting of all formal series 
\[\sum_{\alpha, M}c_{\alpha, M}t^{\alpha} z^M, \alpha\in\mathbb{N}^n, M\subseteq \{1, \ldots, m \}, \]  
such that for any $k\geq 0$ almost all coefficients $c_{\alpha, M}$ belongs to $I^k$. We also use the shorter notation $A\{\underline{t}\mid\underline{z} \}$, instead of $A\{t_1, \ldots, t_n| z_1, \ldots , z_m\}$.

Finally, a topological superalgebra $B$ is said to be \emph{formally finite} over an $I$-adic superalgebra $A$, if $B$ is isomorphic to the quotient of some $A\{\underline{t} \mid \underline{z} \}$ over a closed superideal. Equivalently, $B$ is formally finite over $A$ if and only if $B$ is $IB$-adic, and $B/IB$ is finite over $A/I$. In fact, the part "if" is trivial. Conversely, we choose a finite set of $n$ even and $m$ odd generators of $B/IB$ over $A/IA$. Then for any positive integers $k\geq l$ we have a commutative diagram (of $A$-superalgebras)
\[\begin{array}{ccccccccc}
0 & \to & R_k & \to & A/I^k A[\underline{t}\mid \underline{z}] & \to & B/I^k B & \to & 0 	\\
  &     & \downarrow & & \downarrow & & \downarrow & & \\
0 & \to & R_l & \to & A/I^l A[\underline{t}\mid \underline{z}] & \to & B/I^l B & \to & 0  
\end{array},\]
where $\underline{t}=\{t_1, \ldots, t_n\}, \underline{z}=\{z_1, \ldots, z_m\}$. Since $\ker(B/I^k B\to B/I^l B)=I^l(B/I^k B)$ and
the preimage of $I^l(B/I^k B)$ is equal to $I^l(A/I^k A[\underline{t}\mid \underline{z}])+R_k$, each superalgebra morphism $R_k\to R_l$ is surjective, hence the inverse system $\{ R_k\to R_l, k\geq l\}$ satisfies the \emph{Mittag-Leffler condition}. Thus $B\simeq A\{\underline{t}\mid \underline{z}\}/R, R=\varprojlim_k R_k$. 
\begin{lm}\label{transitive}
The following statements hold :
\begin{enumerate}
\item the property to be formally finite is transitive;
\item if $B$ is an $A'$-superalgebra, $A'$ is an $A$-superalgebra, $A$ is $I$-adic and $A'$ is $IA'$-adic, and $B$ is formally finite over $A$, then $B$ is formally finite over $A'$.
\end{enumerate}
\end{lm}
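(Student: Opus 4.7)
My plan is to reduce both parts to the equivalent characterization of formal finiteness stated immediately before the lemma: $B$ is formally finite over an $I$-adic superalgebra $A$ if and only if $B$ is $IB$-adic and $B/IB$ is finite over $A/I$. This reformulation removes the need to manipulate restricted power series algebras or closed superideals directly, and reduces both statements to bookkeeping on residue quotients together with the obvious identity $(IA')B=IB$ coming from the structure morphisms $A\to A'\to B$.

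For part (1), suppose $A$ is $I$-adic, $A'$ is formally finite over $A$, and $B$ is formally finite over $A'$. By the criterion, $A'$ is $IA'$-adic with $A'/IA'$ finite over $A/I$, and (using $IA'$ as the ideal of definition of $A'$) $B$ is $(IA')B$-adic with $B/(IA')B$ finite over $A'/IA'$. Because the structure morphisms $A\to A'\to B$ force $(IA')B=IB$, the superalgebra $B$ is $IB$-adic. Transitivity of finite generation applied to the tower $A/I\to A'/IA'\to B/IB$ then gives that $B/IB$ is a finitely generated $A/I$-module, and the criterion concludes that $B$ is formally finite over $A$.

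For part (2), assume $A$ is $I$-adic, $A'$ is $IA'$-adic, and $B$ is formally finite over $A$. Then $B$ is $IB$-adic and $B/IB$ is finite over $A/I$. The same identity $(IA')B=IB$ shows $B$ is $(IA')B$-adic, and any finite $A/I$-generating set for $B/IB$ remains a generating set over $A'/IA'$ via the canonical morphism $A/I\to A'/IA'$, so $B/(IA')B\simeq B/IB$ is finite over $A'/IA'$. The criterion then yields that $B$ is formally finite over $A'$.

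The argument contains no real obstacle; everything is a short consequence of the equivalent criterion. The only point worth flagging is that the criterion demands Hausdorff-completeness of $B$ in the correct topology, which is automatic here precisely because the two candidate topologies coincide, $(IA')B=IB$. If one preferred to work directly with the original definition via $A\{\underline{t}\mid\underline{z}\}/R$, one would instead have to carry out a Mittag-Leffler argument analogous to the one appearing in the paragraph above the lemma, but going through the residue criterion avoids this entirely.
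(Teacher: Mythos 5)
Your proof is correct and follows essentially the same route as the paper's, which likewise reduces both parts to the residue criterion stated just before the lemma (the paper merely works with an arbitrary superideal of definition $J$ of the middle superalgebra and compares it with $IB$ via $J^n\subseteq IB$, rather than fixing $IA'$ outright as you do). One small terminological slip: in this paper ``finite over'' in that criterion means finitely generated as a superalgebra (the mod-$I$ reduction of a quotient of $A\{\underline{t}\mid\underline{z}\}$ is a quotient of a polynomial superalgebra), not module-finite, so your ``finitely generated $A/I$-module'' should read ``finitely generated $A/I$-superalgebra''; since transitivity holds for either notion, nothing in your argument breaks.
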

\begin{proof}
Assume that $C$ is formally finite over $B$ and $B$ is formally finite over $A$. Then there are superideals of definition $I$ and $J$ of $A$ and $B$ respectively, such that $B/IB$ is finite over $A/I$ and $C/JC$ is finite over $B/J$. There is an integer $n>0$, such that $J^n\subseteq IB$. Thus, if the (homogeneous) elements $c_1, \ldots, c_l$ generate $B/J$-superalgebra $C/JC$, then they obviously generate $C/IC$ over $B/IB$, and $(1)$ follows. The second statement is obvious.  
\end{proof}	
Superizing \cite[Definition I.10.13.3]{EGA I}, we say that a morphism $f : \mathfrak{X}\to\mathfrak{Y}$ of N.f. superschemes is of \emph{finite type}, if there are finite open coverings $\{\mathfrak{V}_i\}$ and $\{\mathfrak{U}_{ij}\}$ of $\mathfrak{Y}$ and $\mathfrak{X}$ respectively, such that for each couple of indices $i, j$ there is $f(\mathfrak{U}_{ij})\subseteq \mathfrak{V}_i$, and $\mathcal{O}(\mathfrak{U}_{ij})$ is formally finite over $\mathcal{O}(\mathfrak{V}_i)$.
In particular, any morphism of finite type is obviously adic.  
\begin{lm}\label{finite type for formal}
In the above notations, $\mathfrak{X}$ is of finite type over $\mathfrak{Y}$ if and only if $\mathfrak{X}_0$ is of finite type over $\mathfrak{Y}_0$ if and only if 	
$\mathfrak{X}_{ev}$ is of finite type over $\mathfrak{Y}_{ev}$ if and only if the morphism $f$ is adic and $X_1$ is of finite type over $Y_1$.
\end{lm}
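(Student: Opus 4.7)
The plan is to reduce to the affine case $\mathfrak{Y}\simeq\mathrm{SSpecf}(A)$ and $\mathfrak{X}\simeq\mathrm{SSpecf}(B)$, with $A$ an $I$-adic Noetherian superalgebra and $B$ an adic Noetherian superalgebra carrying a superideal of definition $J$, since being of finite type is by definition local on both source and target. In this setting the four conditions become algebraic assertions about $A$, $B$, and the continuous superalgebra morphism $\phi=f^{\sharp}:A\to B$.

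The equivalence of ``$\mathfrak{X}$ of finite type over $\mathfrak{Y}$'' with ``$f$ adic and $X_1$ of finite type over $Y_1$'' is essentially tautological from the equivalent characterization of formal finiteness established earlier in this section: $B$ is formally finite over $A$ iff $B$ is $IB$-adic (which is precisely adicness of $f$) and $B/IB$ is a finitely generated $A/I$-superalgebra (which is precisely $X_1=\mathrm{SSpec}(B/IB)$ being of finite type over $Y_1=\mathrm{SSpec}(A/I)$).

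The heart of the proof is the comparison with the $\mathfrak{X}_0/\mathfrak{Y}_0$ and $\mathfrak{X}_{ev}/\mathfrak{Y}_{ev}$ conditions. Two algebraic facts drive it. First, super-commutativity gives $I_1\cdot I_1\subseteq I_0$, hence $I^2\subseteq I_0 A$, so that the $I$-adic and $I_0 A$-adic topologies on $A$ (and analogously the $J$-adic and $J_0 B$-adic topologies on $B$) coincide. Second, $A_1$ and $B_1$ are finitely generated modules over $A_0$ and $B_0$ respectively, a standard consequence of Noetherianness in the super-commutative setting. Combining these, adicness of $\phi$ immediately restricts to adicness of $\phi_0:A_0\to B_0$, and conversely the equality of the $I_0 B_0$-adic and $J_0 B_0$-adic topologies on $B_0$ propagates to $B_1$ (since two ideal-adic topologies on a Noetherian ring that agree there also agree on every finitely generated module), and hence to all of $B$. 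On the finite-generation side, a finite set of super-algebra generators of $B/IB$ over $A/I$ produces, after absorbing odd--odd products and $A_1$-multiples of odd generators, a finite set of algebra generators of $B_0/I_0 B_0$ over $A_0/I_0$; conversely, adjoining the finitely many $B_0$-module generators of $B_1$ together with $A_0$-module generators of $I_1$ recovers $B/IB$ from $B_0/I_0 B_0$. The analogous argument with $A/J_A$ and $B/J_B$ in place of $A_0$ and $B_0$ handles the $\mathfrak{X}_{ev}/\mathfrak{Y}_{ev}$ equivalence.

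The main obstacle I anticipate is bookkeeping rather than conceptual: one must verify that the generating sets constructed in both directions above are truly finite after passage to the correct quotient, and that the Artin--Rees-type step propagating topologies from $B_0$ to $B$ really uses only the finite generation of $B_1$ over $B_0$. Once this is pinned down, Proposition \ref{characterization of formal via even} ensures the super-Noetherian structure is preserved throughout, and the lemma follows by chaining the equivalences.
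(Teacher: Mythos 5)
Your proposal is correct in substance and rests on the same two pillars as the paper's argument---the coincidence of the $I$-adic and $I_0A$-adic topologies (so adicness can be tested on even parts) and the finite generation of $B_1$ over $B_0$ for Noetherian $B$---but it is organized quite differently. The paper routes the first equivalence entirely through the transitivity of formal finiteness (Lemma \ref{transitive}): a Noetherian adic superalgebra $C$ is a finite $C_0$-module, hence formally finite over $C_0$, so $B$ formally finite over $A$ forces $B$ formally finite over $A_0$ and hence $B_0$ formally finite over $A_0$; conversely $B_0$ formally finite over $A_0$ gives $B$ formally finite over $A_0$ (finite module) and then over $A$ by Lemma \ref{transitive}(2) combined with the equality of topologies. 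This spares essentially all of the generator bookkeeping that your direct verification requires, while your identification of the fourth condition with the very definition of formal finiteness matches the paper's ``the third equivalence is obvious.'' One point you should not dismiss as ``analogous'': the $\mathfrak{X}_{ev}/\mathfrak{Y}_{ev}$ equivalence is not a mirror image of the $\mathfrak{X}_0/\mathfrak{Y}_0$ one, because $\overline{B}=B/J_B$ is a quotient by a nilpotent superideal rather than a direct summand, so there is no splitting $B=B_0\oplus B_1$ to exploit. The paper handles the adic condition there by using $J_B^l=0$ to convert congruences modulo $J_B$ into honest inclusions (producing $J^{nl}\subseteq IB$), and handles finiteness by lifting generators through the finite filtration by the $J_B^k/J_B^{k+1}$, each graded piece being a finite module over a polynomial superalgebra. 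A lifting-modulo-nilpotents step of exactly this kind is also hidden in your even-part argument: taking even parts of $B/IB$ yields finite generation of $B_0/(I_0B_0+I_1B_1)$, not of $B_0/I_0B_0$, since $(IB)\cap B_0=I_0B_0+I_1B_1$ is in general strictly larger than $I_0B_0$; the repair is that $(I_1B_1)^2\subseteq I_0B_0$, so the discrepancy is a finitely generated square-zero ideal and generators lift. With that step made explicit your proof closes; as written it is a genuine, if standard, omission rather than mere bookkeeping.
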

\begin{proof}
Note that a Noetherian superalgebra $C$ is a finitely generated $C_0$-supermodule (cf. \cite[Lemma 1.4]{maszub4}). Therefore, if $C$ is $I$-adic, then $C$ is formally finite over $C_0$ (regarded as an $I_0$-adic algebra). Furthermore, if $B$ is formally finite over $A$, then $B$ is formally finite over $A_0$ by Lemma \ref{transitive}(1), hence $B_0$ is formally finite over $A_0$. Conversely, if $B_0$ is formally finite over $A_0$, then $B$ is formally finite over $A_0$, and Lemma \ref{transitive}(2), combined with \cite[Lemma 1.5]{maszub4}, imply the first equivalence. 

If $\overline{B}$ is formally finite over $\overline{A}$, then there are superideals of definition $J$ and $I$ of $B$ and $A$ respectively, such that $J^n+J_B\subseteq IB+J_B$ and $I^m B+J_B\subseteq J+J_B$ for some integers $m, n>0$. On the other hand, $J_B$ is nilpotent, say $J_B^l=0$, that infers $J^{nl}\subseteq IB$ and $I^{ml} B\subseteq J$, hence $B$ is $IB$-adic. It remains to note that if $C$ is a Noetherian superalgebra over a superalgebra $D$, and $\overline{C}$ is finite over $\overline{D}$, then each quotient $J_C^k/J_C^{k+1}$ is a finitely generated supermodule over a polynomial superalgebra $A[\underline{t}]$, hence $C$ is finite over $D$.  
The third equivalence is obvious. 
\end{proof}
\begin{pr}\label{graph morphism}
Let $f : \mathfrak{X}\to\mathfrak{Y}$ be a morphism of N.f. superschemes over a N.f. superscheme $\mathfrak{Z}$. Assume that both $\mathfrak{X}$ and $\mathfrak{Y}$ are of finite type and adic over $\mathfrak{Z}$, and $\mathfrak{Y}$ is also separated over $\mathfrak{Z}$. Then the graph morphism $\Gamma_f : \mathfrak{X}\to \mathfrak{X}\times_{\mathfrak{Z}} \mathfrak{Y}$, induced by $\mathrm{id}_{\mathfrak{X}}$ and $f$, is a closed immersion.
\end{pr}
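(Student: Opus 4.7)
The plan is to adapt the classical graph-morphism argument from scheme theory: realize $\Gamma_f$ as a base change of the diagonal $\delta_{\mathfrak{Y}/\mathfrak{Z}}$, show the latter is a closed immersion using separatedness of $\mathfrak{Y}/\mathfrak{Z}$, and conclude via stability of closed immersions under base change.

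The first step is to verify that the square
\[
\begin{array}{ccc}
\mathfrak{X} & \stackrel{\Gamma_f}{\to} & \mathfrak{X}\times_{\mathfrak{Z}}\mathfrak{Y} \\
\downarrow f & & \downarrow f\times_{\mathfrak{Z}}\mathrm{id}_{\mathfrak{Y}} \\
\mathfrak{Y} & \stackrel{\delta_{\mathfrak{Y}/\mathfrak{Z}}}{\to} & \mathfrak{Y}\times_{\mathfrak{Z}}\mathfrak{Y}
\end{array}
\]
is cartesian; this is a routine universal-property check, since a test datum $(g,h):\mathfrak{W}\to\mathfrak{X}\times_{\mathfrak{Z}}\mathfrak{Y}$ and $k:\mathfrak{W}\to\mathfrak{Y}$ with $(f\times\mathrm{id})\circ(g,h)=\delta_{\mathfrak{Y}/\mathfrak{Z}}\circ k$ forces $h=k=f\circ g$ and reduces to the single morphism $g:\mathfrak{W}\to\mathfrak{X}$. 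The second step is to show that $\delta_{\mathfrak{Y}/\mathfrak{Z}}$ is a closed immersion. Covering $\mathfrak{Z}$ by affines $\mathfrak{V}_i\simeq\mathrm{SSpecf}(C_i)$ and $\mathfrak{Y}$ by affines $\mathfrak{W}_{ij}\simeq\mathrm{SSpecf}(B_{ij})$ lying above $\mathfrak{V}_i$, one obtains an affine open cover of $\mathfrak{Y}\times_{\mathfrak{Z}}\mathfrak{Y}$ by $\mathrm{SSpecf}(B_{ij}\widehat{\otimes}_{C_i}B_{ik})$ via Corollary \ref{fiber product for affine}. The preimage of such a patch under $\delta_{\mathfrak{Y}/\mathfrak{Z}}$ is $\mathfrak{W}_{ij}\cap\mathfrak{W}_{ik}$, which is affine by separatedness of $\mathfrak{Y}/\mathfrak{Z}$ (the formal analog of Lemma \ref{Ex.II.4.3}, accessible through Corollary \ref{properties of separated morphisms}), and the induced map of structure superalgebras $B_{ij}\widehat{\otimes}_{C_i}B_{ik}\to\mathcal{O}(\mathfrak{W}_{ij}\cap\mathfrak{W}_{ik})$ is surjective (the case $j=k$ reduces to the manifest multiplication $B\widehat{\otimes}_C B\to B$). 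Lemma \ref{closed immersion for affine} then promotes each patch to a closed immersion. The third step is stability of closed immersions under base change in the N.f.\ superscheme category: affine-locally, this amounts to preservation of surjectivity under the completed tensor product of adic Noetherian superalgebras. Combining the three steps gives $\Gamma_f$ as the base change of a closed immersion, hence itself a closed immersion.

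The main obstacle lies in the second step, namely promoting the topological version of separatedness built into Lemma \ref{a criteria for a morphism to be separated} to an actual closed-immersion statement for $\delta_{\mathfrak{Y}/\mathfrak{Z}}$. The formal-superscheme analog of Lemma \ref{Ex.II.4.3} --- that intersections of affine opens in a separated formal superscheme are themselves affine --- has to be verified, along with the surjectivity of the dual structure-sheaf map over such intersections. Both are adaptations of standard scheme-theoretic facts, but require care in the adic Noetherian super setting because one manipulates completed tensor products rather than ordinary ones; the Nakayama-type inputs from \cite[Lemma 1.7]{maszub4} and Proposition \ref{equivalence over affine formal} are essential here. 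Once that affine bookkeeping is in place, the conclusion is formal.
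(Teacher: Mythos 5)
Your strategy --- write $\Gamma_f$ as the base change of $\delta_{\mathfrak{Y}/\mathfrak{Z}}$ along $f\times_{\mathfrak{Z}}\mathrm{id}_{\mathfrak{Y}}$, prove the diagonal is a closed immersion, and conclude by stability under base change --- is the classical scheme-theoretic route and is genuinely different from the paper's. The paper never passes through the diagonal of $\mathfrak{Y}$. It first establishes the purely topological statement that $|\Gamma_f|$ is a homeomorphism onto a closed subset, by identifying $|\Gamma_f|$ with $|\Gamma_{f_1}|$ and descending to the Noetherian superscheme (and then scheme) level; the separatedness of $\mathfrak{Y}$ over $\mathfrak{Z}$ is consumed entirely in this step. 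It then verifies the sheaf-level condition only on the affine patches $\mathfrak{U}\times_{\mathfrak{W}}\mathfrak{V}$ with $f(\mathfrak{U})\subseteq\mathfrak{V}$, which cover the image of $\Gamma_f$ and on which $\Gamma_{f|_{\mathfrak{U}}}$ is dual to $a\otimes b\mapsto a\,f^{\sharp}(b)$ --- manifestly surjective, so Lemma \ref{closed immersion for affine} applies. The payoff of that decomposition is that the only ring maps whose surjectivity must be checked are of this trivial ``multiplication'' type.

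This is exactly where your proposal has a genuine gap. In your second step you need, for $j\neq k$, that $\mathfrak{W}_{ij}\cap\mathfrak{W}_{ik}$ is affine and that $B_{ij}\widehat{\otimes}_{C_i}B_{ik}\to\mathcal{O}(\mathfrak{W}_{ij}\cap\mathfrak{W}_{ik})$ is surjective. You flag this as the main obstacle, but the references you offer do not deliver it: Lemma \ref{Ex.II.4.3} is proved only for ordinary superschemes separated over an affine superscheme, and its proof already uses that $X$ is a \emph{closed} supersubscheme of $X\times_S X$ --- i.e.\ precisely the statement about $\delta$ you are trying to establish --- while Corollary \ref{properties of separated morphisms} records permanence properties of separated morphisms, not affineness of intersections. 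Note also that ``separated'' for N.f.\ superschemes is defined in this paper only as a topological condition on $|\delta_f|$; the equivalence with ``$\delta_f$ is a closed immersion'' is only announced at the point of the definition, so it cannot be quoted here. The correct way to close the gap is to first show $\delta_{\mathfrak{Y}/\mathfrak{Z}}$ is an immersion using only the patches $\mathfrak{W}_{ij}\times_{\mathfrak{V}_i}\mathfrak{W}_{ij}$ (where surjectivity is the multiplication map) and then combine this with the topological closedness of the image; the $j\neq k$ surjectivity and the affineness of the intersections are then consequences rather than inputs. But at that point you have reproduced the ``topology first, easy affine patches second'' architecture of the paper's own proof, merely applied to $\delta$ instead of directly to $\Gamma_f$. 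Your steps 1 and 3 are sound modulo routine remarks (surjectivity survives $\widehat{\otimes}$ because completion of Noetherian adic superalgebras preserves surjections), though you should also justify why a closed immersion --- defined globally as an isomorphism onto a c.f.\ supersubscheme --- may be checked on an open cover of the target; that is the role played by \cite[Lemma I.10.14.4]{EGA I} in the paper's argument.
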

\begin{proof}
Arguing as in Lemma \ref{a criteria for a morphism to be separated}, $|\mathfrak{X}|=|X_1|, |\mathfrak{X}\times_{\mathfrak{Z}} \mathfrak{Y}|=|X_1\times_{Z_1} Y_1|$ and $|\Gamma_f|$ can be naturally identified with
$|\Gamma_{f_1}|$. Combining \cite[Lemma 2.3 and Proposition 2.4]{maszub2} with \cite[Proposition I.10.15.4]{EGA I}, one sees that $|\Gamma_f|$ is a homeomorphism onto a closed subset of
$|\mathfrak{X}\times_{\mathfrak{Z}} \mathfrak{Y}|$. Further, $\Gamma_f$ can be glued from the morphisms
\[\Gamma_f|_{\mathfrak{U}}=\Gamma_{f|_{\mathfrak{U}}} : \mathfrak{U}\to \mathfrak{U}\times_{\mathfrak{W}} \mathfrak{V},\]
where $f(\mathfrak{U})\subseteq \mathfrak{V}$, $\mathfrak{U}, \mathfrak{V}$ and $\mathfrak{W}$ are open affine N.f. supersubschemes of $\mathfrak{X}, \mathfrak{Y}$ and $\mathfrak{Z}$
respectively, such that  $f|_{\mathfrak{U}} : \mathfrak{U}\to\mathfrak{V}$ is a morphism over $\mathfrak{W}$. The same arguments as in \cite[Lemma I.10.14.4]{EGA I} show that  $\Gamma_f$ is a closed immersion if and only if each $\Gamma_{f|_{\mathfrak{U}}}$ is, so that Lemma \ref{closed immersion for affine} concludes the proof.
\end{proof}

Similarly to \cite[III.3.4.1]{EGA  III}, a morphism $f : \mathfrak{X}\to\mathfrak{Y}$ of N.f. superschemes is called \emph{proper}, if $f$ is of finite type and for some superideal of definition $\mathfrak{J}$ of $\mathfrak{Y}$, the induced morphism $X_1=(|\mathfrak{X}|, \mathcal{O}_{\mathfrak{X}}/\mathcal{O}_{\mathfrak{X}}f^{\sharp}(\mathfrak{J}))\to Y_1=(|\mathfrak{Y}|, \mathcal{O}_{\mathfrak{Y}}/\mathfrak{J})$ is proper. Thus, by Lemma \ref{reducibility} and Proposition \ref{superideal of definition}(a), this property does not depend on the choice of $\mathfrak{J}$ and $X_n$ is proper over $Y_n$ for arbitrary $n\geq 1$. 
\begin{pr}\label{properness over formal}
Let $f : \mathfrak{X}\to \mathfrak{Y}$ be a morphism of N.f. superschemes. Then $f$ is proper if and only if $f_0$ is if and only if $f_{ev}$ is if and only if the morphism $f$ is adic and $f_1$ is proper. 	
\end{pr}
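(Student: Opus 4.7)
The plan is to unwind the definition of properness and reduce every equivalence to the previously established \emph{even-reducibility} statements. By the definition just given and the remark immediately after it (using Proposition \ref{superideal of definition}(a) together with Lemma \ref{reducibility}), $f$ is proper if and only if $f$ is of finite type and, for any/some superideal of definition $\mathfrak{J}$ of $\mathfrak{Y}$, the induced morphism $f_1 : X_1\to Y_1$ of Noetherian superschemes is proper. Since any proper morphism is of finite type, Lemma \ref{finite type for formal} shows that ``$f$ of finite type and $f_1$ proper'' is equivalent to ``$f$ adic and $f_1$ proper'', which is precisely condition (4). This handles (1)$\Leftrightarrow$(4) right away.

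For the remaining equivalences I would first install the two bookkeeping identities
\[(f_0)_1=(f_1)_0 \quad\text{and}\quad (f_{ev})_1=(f_1)_{ev}.\]
Fix an ideal of definition $\mathfrak{J}$ of $\mathfrak{Y}$. Its even component $\mathfrak{J}_0$ is an ideal of definition of the formal scheme $\mathfrak{Y}_0$: indeed $(\mathcal{O}_\mathfrak{Y}/\mathfrak{J})_0=\mathcal{O}_{\mathfrak{Y}_0}/\mathfrak{J}_0$ is the purely even part of a Noetherian superscheme, hence a Noetherian scheme, and the support condition is clear. Unwinding the construction of $f_1$ then identifies $(f_0)_1$ with the morphism of Noetherian schemes $(X_1)_0\to (Y_1)_0$, which is $(f_1)_0$. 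The analogous statement for $\mathfrak{X}_{ev}, \mathfrak{Y}_{ev}$ is verified by replacing $\mathfrak{J}_0$ with the image of $\mathfrak{J}$ in $\mathcal{O}_\mathfrak{Y}/\mathcal{J}_\mathfrak{Y}$, using Proposition \ref{superideal of definition}(d) to see that this image is again a superideal of definition.

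With these identifications in hand, Lemma \ref{reducibility} applied to the morphism $f_1$ of Noetherian superschemes gives
\[f_1 \text{ proper}\iff (f_1)_0 \text{ proper}\iff (f_1)_{ev} \text{ proper},\]
i.e. $f_1$ proper $\iff (f_0)_1$ proper $\iff (f_{ev})_1$ proper. Combining with Lemma \ref{finite type for formal}, which says $f$ is of finite type iff $f_0$ (resp.\ $f_{ev}$) is, I chain:
\[f \text{ proper}\iff f\text{ of finite type}\wedge f_1\text{ proper}\iff f_0\text{ of finite type}\wedge(f_0)_1\text{ proper}\iff f_0\text{ proper},\]
and the same for $f_{ev}$.

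I expect the only real obstacle to be the verification of the two compatibilities $(f_0)_1=(f_1)_0$ and $(f_{ev})_1=(f_1)_{ev}$; everything else is a straight quote of Lemma \ref{finite type for formal}, Lemma \ref{reducibility}, and the independence of the choice of superideal of definition in the definition of properness. These compatibilities are really just bookkeeping about how taking the purely even part interacts with moding out by an ideal of definition, and they work out because both operations amount to quotienting the structure sheaf by a superideal, hence commute.
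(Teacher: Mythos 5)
Your argument is correct and is essentially the paper's own proof written out in full: the paper likewise treats the equivalence with ``$f$ adic and $f_1$ proper'' as immediate from the definition and derives the $f_0$ and $f_{ev}$ equivalences from Lemma \ref{reducibility}, Proposition \ref{superideal of definition}(d) and Lemma \ref{finite type for formal}. The only caveat is that $(f_0)_1$ and $(f_1)_0$ need not literally coincide (the even part of $\mathcal{O}_{\mathfrak{X}}f^{\sharp}(\mathfrak{J})$ can be strictly larger than $\mathcal{O}_{\mathfrak{X}_0}f_0^{\sharp}(\mathfrak{J}_0)$ because of products of odd sections), but the two schemes differ by a locally nilpotent ideal on the same underlying space, so Lemma \ref{reducibility} --- which you are already invoking --- covers that discrepancy.
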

\begin{proof}
The third statement is trivial. The first and second statements follow by Lemma \ref{reducibility}, Proposition \ref{superideal of definition}(d) and Lemma \ref{finite type for formal}. 
\end{proof}

\section{Comparison of superscheme morphisms and their formal prolongations}

	Let $A$ be a Noetherian superalgebra and $I$ be its superideal. Let $S$ denote the affine superscheme $\mathrm{SSpec}(A)$ and $S'\simeq\mathrm{SSpec}(A/I)$ is its closed supersubscheme, defined by $I$.	
	
	Assume that $X$ is a superscheme of finite type over $S$. In partiular, $X$ is Noetherian. The preimage of $S'$ in $X$ is a closed supersubscheme $X'$. Let $\widehat{X}$ and $\widehat{S}$
	denote the formal completions of $X$ and $S$ along $X'$ and $S'$ respectively. Observe that for any open afine supersubscheme $U\simeq\mathrm{SSpec}(B)$ of $X$ there is
	$U'=U\cap X'\simeq\mathrm{SSpec}(B/B I)$. Thus $\mathcal{O}_{\widehat{X}}\simeq \varprojlim_{n\geq 0} \mathcal{O}_X/\mathcal{O}_X I^{n+1}$ and $\widehat{X}$ is a N.f. superscheme  of finite type over $\widehat{S}=\mathrm{SSpecf}(\widehat{A})$. Besides, $X\mapsto \widehat{X}$ is a functor from the category of superschemes of finite type over $S$ to the category of N.f. superschemes of finite type over $\widehat{S}$.
	
	Assume that $Y$ is another superscheme of finite type over $S$. Let $Z$ denote the superscheme $X\times_S Y$. Then $Z$ is of finite type over $S$ as well, hence Noetherian.
	The proof of the following lemma is similar to the proof of Proposition \ref{fibered product for fromal}.
	\begin{lm}\label{completion of fibered product}
		The superspace $\widehat{Z}$ is isomorphic to the fiber product $\widehat{X}\times_{\widehat{S}}\widehat{Y}$ in the category of N.f. superschemes, so that, the canonical projections are identified with $\widehat{\mathrm{pr}_1}$ and  $\widehat{\mathrm{pr}_2}$
		respectively.	
	\end{lm}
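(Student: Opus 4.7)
The plan is to reduce to the affine case by means of compatible open coverings, exactly as in the proof of Proposition \ref{fibered product for fromal}. First, I would choose finite open affine coverings $\{W_k \simeq \mathrm{SSpec}(A_k)\}$ of $S$, $\{U_{ki} \simeq \mathrm{SSpec}(B_{ki})\}$ of $X$ refining the preimage of $\{W_k\}$, and $\{V_{kj} \simeq \mathrm{SSpec}(C_{kj})\}$ of $Y$ refining the preimage of $\{W_k\}$, so that each $B_{ki}$ and $C_{kj}$ is finitely generated over $A_k$. Then $\{U_{ki} \times_{W_k} V_{kj} \simeq \mathrm{SSpec}(B_{ki} \otimes_{A_k} C_{kj})\}$ is a finite open affine covering of $Z$.

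Next, I would verify the statement in the affine case. Namely, for $U_{ki}, V_{kj}, W_k$ as above, the formal completion $\widehat{U_{ki} \times_{W_k} V_{kj}}$ is $\mathrm{SSpecf}$ of the completion of $B_{ki} \otimes_{A_k} C_{kj}$ with respect to the superideal $(B_{ki} \otimes_{A_k} C_{kj}) I$. On the other hand, by Corollary \ref{fiber product for affine}, the fiber product $\widehat{U_{ki}} \times_{\widehat{W_k}} \widehat{V_{kj}}$ is $\mathrm{SSpecf}$ of the completion of $\widehat{B_{ki}} \otimes_{\widehat{A_k}} \widehat{C_{kj}}$ in the natural adic topology. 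The two resulting topological superalgebras are canonically isomorphic, because both are equal to
\[\varprojlim_{n \geq 0} (B_{ki} \otimes_{A_k} C_{kj})/I^{n+1}(B_{ki} \otimes_{A_k} C_{kj}),\]
which follows from the fact that $(B_{ki} \otimes_{A_k} C_{kj})/I^{n+1}(B_{ki} \otimes_{A_k} C_{kj}) \simeq (B_{ki}/I^{n+1} B_{ki}) \otimes_{A_k/I^{n+1}} (C_{kj}/I^{n+1} C_{kj})$ together with cofinality of the two inverse systems defining the completions.

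Finally, I would glue. For each pair of indexed patches, the isomorphisms constructed in the previous step agree on overlaps because completion along a closed supersubscheme and the formation of fiber products both commute with restriction to open supersubschemes (cf.\ the compatibility used in Proposition \ref{fibered product for fromal} and \cite[Proposition I.10.7.3]{EGA I}). Moreover, the canonical projections on each affine patch are, by construction, the completions of the affine canonical projections of $U_{ki} \times_{W_k} V_{kj}$, so after gluing they become $\widehat{\mathrm{pr}_1}$ and $\widehat{\mathrm{pr}_2}$, as required.

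The only non-routine point is checking the affine assertion that the ordinary completion of $B \otimes_A C$ coincides with the adic completion of $\widehat{B} \otimes_{\widehat{A}} \widehat{C}$; the rest is the same bookkeeping of coverings and overlaps used in Proposition \ref{fibered product for fromal}, and I expect this cofinality argument, rather than the gluing, to be the main (but modest) obstacle.
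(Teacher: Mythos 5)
Your proposal is correct and follows exactly the route the paper intends: the paper gives no written proof beyond the remark that it is "similar to the proof of Proposition \ref{fibered product for fromal}", i.e. reduction to compatible affine patches, the affine identification via Corollary \ref{fiber product for affine}, and gluing. Your affine computation, identifying both completions with $\varprojlim_n (B\otimes_A C)/I^{n+1}(B\otimes_A C)$ via $(B\otimes_A C)/I^{n+1}(B\otimes_A C)\simeq (B/I^{n+1}B)\otimes_{A/I^{n+1}}(C/I^{n+1}C)$, is the standard fact that makes this work, and the gluing and identification of projections are handled as in that proposition.
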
	
\begin{pr}\label{closed supersubschemes in completion}
Assume that the above morphism $X\to S$ is separated and the superalgebra $A$ is $I$-adic. Then the map $T\mapsto\widehat{T}$ is a bijection from the set of closed supersubschemes of $X$, proper over $S$, to the set of closed N.f. supersubschemes of $\widehat{X}$, proper over $\widehat{S}$.  	
\end{pr}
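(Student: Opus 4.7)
The plan is to reduce the statement to the classical scheme-theoretic Grothendieck existence theorem for closed subschemes proper over an adic base (cf. \cite[III.5.4.5]{EGA III}), by leveraging the even-reducibility of properness (Lemma \ref{reducibility}, Proposition \ref{properness over formal}) and the characterization of N.f. superschemes via their even part together with a coherent odd sheaf (Proposition \ref{characterization of formal via even}).

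For injectivity, I would take two closed supersubschemes $T_1, T_2 \subseteq X$ proper over $S$ with $\widehat{T_1} = \widehat{T_2}$. Passing to even parts, Lemma \ref{reducibility} implies each $(T_i)_0$ is a closed subscheme of $X_0$ proper over $S_0$, and an argument as in the proof of Proposition \ref{characterization of formal via even} identifies the formal completion of $(T_i)_0$ along $(T_i)_0 \cap X'_0$ with $(\widehat{T_i})_0$. The classical bijection of \cite[III.5.4.5]{EGA III} applied to $X_0 \to S_0$ then forces $(T_1)_0 = (T_2)_0 =: Y$. Writing $\mathcal{I}_Y \subseteq \mathcal{O}_{X_0}$ for the common ideal, the inclusion $\mathcal{I}_Y \cdot (\mathcal{O}_X)_1 \subseteq (\mathcal{I}_{T_i})_1$ shows that the odd quotients $(\mathcal{O}_{T_i})_1$ are coherent $\mathcal{O}_Y$-modules with identified completions along $Y \cap X'_0$. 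Since $Y$ is proper over $S_0$, the classical coherent version of Grothendieck's existence theorem (cf. \cite[III.5.1.4]{EGA III}) yields $(\mathcal{O}_{T_1})_1 = (\mathcal{O}_{T_2})_1$, hence $T_1 = T_2$.

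For surjectivity, I would start from a closed N.f. supersubscheme $\mathfrak{T}$ of $\widehat{X}$ proper over $\widehat{S}$. By Proposition \ref{properness over formal}, the even part $\mathfrak{T}_0 \subseteq \widehat{X}_0$ is a closed formal subscheme proper over $\widehat{S}_0$, and \cite[III.5.4.5]{EGA III} produces a unique proper closed subscheme $Y \subseteq X_0$ with $\widehat{Y} = \mathfrak{T}_0$. The odd component $(\mathcal{O}_{\mathfrak{T}})_1$ is a coherent quotient of the completion of $(\mathcal{O}_X)_1 \otimes_{\mathcal{O}_{X_0}} \mathcal{O}_Y$; by the coherent version of Grothendieck's existence theorem on the proper $Y$, it algebraizes uniquely to a coherent quotient $\mathcal{N}$ on $Y$, and the supermultiplication $(\mathcal{O}_{\mathfrak{T}})_1 \otimes (\mathcal{O}_{\mathfrak{T}})_1 \to \mathcal{O}_{\mathfrak{T}_0}$ algebraizes to a compatible $\mathcal{O}_Y$-bilinear pairing $\mathcal{N} \otimes_{\mathcal{O}_Y} \mathcal{N} \to \mathcal{O}_Y$. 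Assembling this data via the superscheme analog of Proposition \ref{characterization of formal via even} produces a super-commutative algebra quotient of $\mathcal{O}_X$, and hence a closed supersubscheme $T \subseteq X$ with $T_0 = Y$ and $\widehat{T} \simeq \mathfrak{T}$; properness of $T$ over $S$ then follows from Lemma \ref{reducibility}.

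The main obstacle will be verifying that the supermultiplication transfers faithfully through the algebraization of the odd part, i.e., that the bilinear pairing on $\mathcal{N}$ is well-defined and produces the correct super-commutative quotient of $\mathcal{O}_X$. This should reduce to a routine local verification: combining the exactness of $\mathcal{F} \mapsto \mathcal{F}^\triangle$ (Proposition \ref{some standard prop-s of coherent sheaves over affine}(3)) with the equivalence of categories from Proposition \ref{equivalence over affine formal}, each compatibility becomes an identity of morphisms between finitely generated supermodules over the adic superalgebra $A$, where the desired matching of pairings holds by the faithfulness of completion on coherent sheaves proper over $S$.
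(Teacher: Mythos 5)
Your proposal is correct in outline, but it takes a more granular route than the paper. The paper does not split the data into an even part and an odd part: it observes (via Proposition \ref{supermodule over formal as a proj limit}(3)) that a coherent $\mathcal{O}_{\widehat{X}}$-supermodule is the same thing as a coherent $\mathcal{O}_{\widehat{X}_0}$-module, applies the classical algebraization statement \cite[Corollaire III.5.1.6]{EGA  III} \emph{once} to the whole defining superideal sheaf $\mathfrak{J}\subseteq\mathcal{O}_{\widehat{X}}$ viewed as an $\mathcal{O}_{\widehat{X}_0}$-submodule, notes that $\widehat{\mathcal{O}_X\mathcal{J}}=\mathfrak{J}$ so the algebraized submodule may be taken to be an $\mathcal{O}_X$-superideal, and sets $T=(\mathrm{Supp}(\mathcal{O}_X/\mathcal{J}),\mathcal{O}_X/\mathcal{J})$; properness in both directions is handled exactly as you do, via Lemma \ref{reducibility}, Proposition \ref{properness over formal} and \cite[Corollaire III.5.1.8]{EGA  III}. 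Because the ideal is algebraized \emph{inside} $\mathcal{O}_X$ from the start, the quotient automatically inherits the superalgebra structure, so the paper never has to transfer the multiplication --- which is precisely the step you correctly single out as the main obstacle of your decomposition into $Y=(T)_0$ (handled by \cite[III.5.4.5]{EGA  III}) plus a coherent odd quotient $\mathcal{N}$ plus an algebraized pairing $\mathcal{N}\otimes_{\mathcal{O}_Y}\mathcal{N}\to\mathcal{O}_Y$. Your version buys transparency (it reduces literally to the two best-known classical statements and mirrors Proposition \ref{characterization of formal via even}), at the cost of these extra compatibility checks; if you carry them out, do note that the existence theorem only applies to coherent sheaves with support proper over the base, so all the sheaves you algebraize (the odd quotients, the pairings, and the $\mathcal{H}om$-sheaves used to compare two morphisms) must first be pushed onto $Y$, resp. $\mathfrak{T}_0$ --- your phrasing ``quotient of $(\mathcal{O}_X)_1\otimes_{\mathcal{O}_{X_0}}\mathcal{O}_Y$'' already does this correctly, but the purely local verification you sketch at the end is not quite enough on its own: the faithfulness you need is the global full faithfulness of completion on coherent sheaves with proper support, not just injectivity of $B\to\widehat{B}$ on affine charts.
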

\begin{proof}
Without loss of generality, one can assume that $I=I_0A$. If $T$ satisfies the conditions of lemma, then $T_0$ is closed subscheme of $X_0$, proper over $S_0$, and by \cite[Corollary III.5.1.8]{EGA  III}, $\widehat{T_0}\simeq \widehat{T}_0$ is proper over $\widehat{S_0}\simeq\widehat{S}_0$. By Proposition \ref{properness over formal}, $\widehat{T}$ is proper over $\widehat{S}$. Moreover, if $\mathcal{O}_T=\mathcal{O}_X/\mathcal{J}$, where $\mathcal{J}$ is a coherent sheaf of $\mathcal{O}_X$-supermodules,
then $\mathcal{O}_{\widehat{T}}=\widehat{\mathcal{O}_X}/\widehat{\mathcal{J}}$. Since $\mathcal{J}|_{\mathcal{O}_{X_0}}$ is a coherent $\mathcal{O}_{X_0}$-module, \cite[Corollary III.5.1.6]{EGA  III} implies that $\widehat{\mathcal{J}}|_{\mathcal{O}_{\widehat{X}_0}}$ is a coherent $\mathcal{O}_{\widehat{X}_0}$-module, hence by 	Proposition \ref{supermodule over formal as a proj limit}(3),
$\widehat{\mathcal{J}}$ is a coherent $\mathcal{O}_{\widehat{X}}$-supermodule and $\widehat{T}$ is closed in $\widehat{X}$.

Conversely, if $\mathfrak{T}$ is a closed supersubscheme of $\widehat{X}$, proper over $\widehat{S}$, then $\mathcal{O}_{\mathfrak{T}}=\mathcal{O}_{\widehat{X}}/\mathfrak{J}$, where
$\mathfrak{J}$ is a coherent $\mathcal{O}_{\widehat{X}}$-supermodule, hence a coherent $\mathcal{O}_{\widehat{X}_0}$-module as well. Again, by \cite[Corollary III.5.1.6]{EGA  III}, 
$\mathfrak{J}=\widehat{\mathcal{J}}$ for some coherent $\mathcal{O}_{X_0}$-supermodule of $\mathcal{O}_X$. Note that $\widehat{\mathcal{O}_X\mathcal{J}}=\mathfrak{J}$, that is $\mathcal{J}$ is a coherent $\mathcal{O}_X$-superideal. Set $T=(\mathrm{Supp}(\mathcal{O}_X/\mathcal{J}), \mathcal{O}_X/\mathcal{J})$. It is clear that $\widehat{T}=\mathfrak{T}$, and since $\mathfrak{T}_0$ is proper over $\widehat{S_0}$, $T_0$ is proper over $S_0$, and in its turn, $T$ is proper over $S$. 
\end{proof}
	Let $\mathrm{Mor}_S(X, Y)$ (respectively, $\mathrm{Mor}_{\widehat{S}}(\widehat{X}, \widehat{Y})$) denote the set of morphisms $X\to Y$ of $S$-superschemes (respectively, the set of morphisms $\widehat{X}\to\widehat{Y}$ of N.f. $\widehat{S}$-superschemes). It is clear that all morphisms from $\mathrm{Mor}_{\widehat{S}}(\widehat{X}, \widehat{Y})$ are adic. From now on $A$ is supposed to be an $I$-adic superalgebra. Let also $I=I_0 A$.
	
	The following proposition is a superization of \cite[Theorem III.5.4.1]{EGA  III}. 
	\begin{pr}\label{endomorphisms of completions}
		The natural map $\mathrm{Mor}_S(X, Y)\to \mathrm{Mor}_{\widehat{S}}(\widehat{X}, \widehat{Y})$ is bijective, provided $X$ is proper over $S$ and $Y$ is separated over $S$.
	\end{pr}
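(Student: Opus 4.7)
My plan is to identify $S$-morphisms with their graphs and reduce both injectivity and surjectivity to the bijection of closed (formal) supersubschemes provided by Proposition \ref{closed supersubschemes in completion}. Given an $S$-morphism $f : X \to Y$, its graph $\Gamma_f \subseteq Z := X \times_S Y$ is a closed supersubscheme (since $Y$ is separated over $S$), isomorphic to $X$ via the first projection, and hence proper over $S$; one recovers $f$ as $\mathrm{pr}_2 \circ (\mathrm{pr}_1|_{\Gamma_f})^{-1}$. On the formal side, any $\phi : \widehat{X} \to \widehat{Y}$ over $\widehat{S}$ is automatically adic; by Lemma \ref{completion of fibered product} we have $\widehat{Z} \simeq \widehat{X} \times_{\widehat{S}} \widehat{Y}$, and by Proposition \ref{graph morphism} (together with Proposition \ref{properness over formal}, which ensures $\widehat{X}$ is proper over $\widehat{S}$) the graph $\Gamma_\phi$ is a closed formal supersubscheme of $\widehat{Z}$ proper over $\widehat{S}$. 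Moreover $\widehat{\Gamma_f}$ is naturally identified with $\Gamma_{\widehat{f}}$.

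Injectivity is then immediate: if $\widehat{f} = \widehat{g}$, then $\widehat{\Gamma_f} = \widehat{\Gamma_g}$ in $\widehat{Z}$, so by Proposition \ref{closed supersubschemes in completion} one has $\Gamma_f = \Gamma_g$, whence $f = g$.

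For surjectivity, given $\phi$, Proposition \ref{closed supersubschemes in completion} produces a unique closed supersubscheme $T \subseteq Z$ proper over $S$ with $\widehat{T} = \Gamma_\phi$. It suffices to show that $p := \mathrm{pr}_1|_T : T \to X$ is an isomorphism, for then $f := \mathrm{pr}_2|_T \circ p^{-1}$ satisfies $\widehat{f} = \phi$ by functoriality of completion. Note that $\widehat{p}$, being the first projection from the graph $\Gamma_\phi$, is already an isomorphism of N.f.\ superschemes.

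The main obstacle is therefore the following isomorphism criterion: a morphism $p : T \to X$ between superschemes proper over $S$ whose formal completion $\widehat{p}$ is an isomorphism must itself be an isomorphism. To prove this, I would argue that $\widehat{p}$ being an isomorphism forces each reduction $p_n : T_n \to X_n$ to be an isomorphism, where $X_n := X \times_S \mathrm{SSpec}(A/I^{n+1})$ and similarly $T_n$. Combining formal \'etaleness at each point of $|X'|$ with Zariski's main theorem (proper plus quasi-finite equals finite) shows that the locus $U \subseteq |X|$ on which $p$ is an isomorphism is open and contains $|X'|$. Since $A$ is Noetherian and $I$-adic, a standard geometric-series argument places $I_0$ inside the Jacobson radical of $A_0$, so every maximal ideal of $A_0$ contains $I_0$, i.e., every closed point of $|S|$ lies in $|S'|$; consequently every non-empty closed subset of $|S|$ meets $|S'|$. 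Properness of $X$ over $S$ then forces every non-empty closed subset of $|X|$ to meet $|X'|$. Therefore $|X| \setminus U$, being closed and disjoint from $|X'|$, must be empty; hence $p$ is an isomorphism globally, which completes the proof.
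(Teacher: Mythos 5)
Your proposal is correct in outline and, for the surjectivity half, follows the same strategy as the paper (Lemma \ref{surjectivity}): pass to the graph $\Gamma_\phi$, which is a closed formal supersubscheme of $\widehat{Z}$ proper over $\widehat{S}$ by Proposition \ref{graph morphism}, algebraize it via Proposition \ref{closed supersubschemes in completion}, and reduce to the criterion that a morphism of superschemes proper over $S$ whose completion is an isomorphism is itself an isomorphism. The differences are twofold. First, for injectivity the paper (Lemma \ref{injectivity}) argues directly and affine-locally, using the Krull intersection theorem to show that two morphisms with equal completions agree on an open set containing $X'$ and then that this open set is all of $X$; you instead encode morphisms by their graphs and invoke the uniqueness half of Proposition \ref{closed supersubschemes in completion}. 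Your route is shorter but leans on the (harder) algebraization statement even for the easy direction, whereas the paper's argument is elementary. Second, for the isomorphism criterion the paper reduces to the purely even case and cites EGA III, Proposition 4.6.8, then upgrades to the super setting by the coherent-module comparison; you re-prove the criterion from scratch via the quasi-finite locus, Zariski's main theorem, and the observation that $I_0$ lies in the Jacobson radical of $A_0$ so that every nonempty closed subset of $|X|$ meets $|X'|$ --- which is exactly the closed-points argument underlying the EGA proof, and is also the mechanism the paper uses in Lemma \ref{injectivity}. Two spots in your sketch would need fleshing out to be a complete super-proof: (i) ``proper plus quasi-finite equals finite'' must be checked for superschemes (it reduces to the even case since finiteness, affineness and the underlying topology are even-reducible, in the spirit of Lemma \ref{reducibility}); and (ii) after finiteness is established, ``formal \'etaleness along $|X'|$'' by itself does not yield an open isomorphism locus --- one must compare the coherent sheaves $\mathcal{O}_X\to p_*\mathcal{O}_T$, use faithful flatness of $I$-adic completion to see that the kernel and cokernel have supports disjoint from $|X'|$, and only then apply the closed-points argument. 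Both gaps are fillable with the tools already developed in Sections 3--5, so the proposal stands as a valid, somewhat more self-contained alternative to the paper's citation of EGA.
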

	The proof will be given in two lemmas.
	\begin{lm}\label{injectivity}
		The map $\mathrm{Mor}_S(X, Y)\to \mathrm{Mor}_{\widehat{S}}(\widehat{X}, \widehat{Y})$ is injective.	
	\end{lm}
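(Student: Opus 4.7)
The plan is to use the standard graph-of-morphism trick, reducing injectivity to the bijection between closed supersubschemes proper over $S$ and closed N.f. supersubschemes proper over $\widehat{S}$ established in Proposition \ref{closed supersubschemes in completion}. Suppose $f,g : X \to Y$ are two $S$-morphisms with $\widehat{f} = \widehat{g}$. I want to manufacture closed supersubschemes of $X\times_S Y$ whose formal completions coincide.

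Consider the graph morphisms $\Gamma_f, \Gamma_g : X \to X\times_S Y$, induced by $\mathrm{id}_X$ and $f$ (resp.\ $g$). Since $Y$ is separated over $S$ and $X$ is of finite type over $S$, the scheme analog of Proposition \ref{graph morphism} (which follows verbatim, replacing \cite[Proposition I.10.15.4]{EGA I} by the usual scheme statement) shows that $\Gamma_f$ and $\Gamma_g$ are closed immersions. Let $T_f$ and $T_g$ denote the closed supersubschemes of $X\times_S Y$ which are their respective scheme-theoretic images. Since $\mathrm{pr}_1\circ \Gamma_f=\mathrm{id}_X$, each $T_f$ is isomorphic to $X$ as an $S$-superscheme, and in particular $T_f$ and $T_g$ are proper over $S$.

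Now apply the completion functor. By Lemma \ref{completion of fibered product}, there is a canonical identification $\widehat{X\times_S Y} \simeq \widehat{X}\times_{\widehat{S}}\widehat{Y}$ under which $\widehat{\mathrm{pr}_1},\widehat{\mathrm{pr}_2}$ become the canonical projections. Functoriality of formal completion, together with the uniqueness property of morphisms into a fiber product, yields $\widehat{\Gamma_f}=\Gamma_{\widehat{f}}$ and $\widehat{\Gamma_g}=\Gamma_{\widehat{g}}$; consequently $\widehat{T_f}$ and $\widehat{T_g}$ are the closed N.f. supersubschemes of $\widehat{X}\times_{\widehat{S}}\widehat{Y}$ cut out by the graphs of $\widehat{f}$ and $\widehat{g}$. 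Since $\widehat{f}=\widehat{g}$ by assumption, $\Gamma_{\widehat{f}}=\Gamma_{\widehat{g}}$, and therefore $\widehat{T_f}=\widehat{T_g}$ as closed N.f. supersubschemes of $\widehat{X\times_S Y}$.

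Proposition \ref{closed supersubschemes in completion} now applies to $X\times_S Y$ (which is of finite type, hence Noetherian, over $S$, and separated over $S$ since both factors are): the map $T\mapsto\widehat{T}$ from closed supersubschemes of $X\times_S Y$ proper over $S$ to closed N.f.\ supersubschemes of $\widehat{X\times_S Y}$ proper over $\widehat{S}$ is injective. Hence $T_f=T_g$ as closed supersubschemes. Finally, $\Gamma_f$ and $\Gamma_g$ are both sections of $\mathrm{pr}_1$ with the same underlying closed supersubscheme, so they coincide, and therefore $f=\mathrm{pr}_2\circ\Gamma_f=\mathrm{pr}_2\circ\Gamma_g=g$. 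The only step that requires genuine care is the identification $\widehat{\Gamma_f}=\Gamma_{\widehat{f}}$ (and the induced identification of images $\widehat{T_f}$ with the formal graph); this is routine functoriality once Lemma \ref{completion of fibered product} is in hand, but it is the pivot on which the argument turns.
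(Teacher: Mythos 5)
Your proof is correct, but it takes a genuinely different route from the paper's. The paper argues directly: assuming $\widehat{\phi}=\widehat{\psi}$, it reduces to affine charts and uses the Krull intersection theorem to show that $\phi$ and $\psi$ agree on an open neighbourhood $U$ of $X'$, and then a point-set consequence of properness of $X_{ev}\to S_{ev}$ (the lemma of EGA III quoted there) forces $U=X$. You instead run the graph-of-morphism argument, which is precisely the machinery the paper reserves for the surjectivity half (Lemma \ref{surjectivity}): you pass to the scheme-theoretic images $T_f$, $T_g$ of the graphs, observe that $\widehat{T_f}=\widehat{T_g}$ because $\Gamma_{\widehat{f}}=\Gamma_{\widehat{g}}$, and invoke the injectivity half of the bijection of Proposition \ref{closed supersubschemes in completion}. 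All the ingredients you need are available and non-circular: Lemma \ref{completion of fibered product} and Proposition \ref{closed supersubschemes in completion} precede Proposition \ref{endomorphisms of completions}, the identification $\widehat{\Gamma_f}=\Gamma_{\widehat{f}}$ is forced by the universal property of the completed fibre product, and the fact that $\Gamma_f$ is a closed immersion is the standard base change of $\delta_{Y/S}$ along $f\times\mathrm{id}_Y$, which is valid for superschemes since closed immersions are stable under base change; properness of $X$ enters exactly where it must, namely to make $T_f\simeq X$ proper over $S$ so that Proposition \ref{closed supersubschemes in completion} applies. The trade-off is clear: your argument is more uniform, since both halves of Proposition \ref{endomorphisms of completions} become consequences of the single correspondence between proper closed supersubschemes of $X\times_S Y$ and of its completion, but it leans on the full GFGA-type comparison underlying that correspondence, whereas the paper's injectivity proof is lighter, needing only the Krull intersection theorem and an elementary topological property of proper morphisms.
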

	\begin{proof}
		Assume that $\widehat{\phi}=\widehat{\psi}$ for some $\phi, \psi\in \mathrm{Mor}_S(X, Y)$. Then $|\phi||_{|X'|}=|\psi||_{|X'|}$. We show that there is an open supersubscheme $U$ of $X$ such that $X'\subseteq U$ and $\phi|_U=\psi|_U$. Since $X_{ev}\to S_{ev}$ is a proper morphism of schemes and $(X')_{ev}\subseteq U_{ev}$ coincides with the preimage of $(S')_{ev}=S'\cap S_{ev}=(S_{ev})'$ in $X_{ev}$, \cite[Lemma III.5.1.3.1]{EGA  III} imlplies $X_{ev}=U_{ev}$. From $|X|=|X_{ev}|=|U_{ev}|=|U|$ we immediately derive that $U=X$. 
		
		Choose finite open affine coverings $\{U_i\}, \{V_{ij}\}$ of $Y$ and $X$ respectively, such that 
		\[\phi^{-1}(Y'\cap U_i)=\cup_{j} (V_{ij}\cap X') =\psi^{-1}(Y'\cap U_i)\]
		for each index $i$. Then for arbitrary couple of indices $i, j$, we have $\widehat{\phi|_{V_{ij}}}=\widehat{\psi|_{V_{ij}}}$. In other words, one can assume that $X$ and $Y$ are affine, say
		$X\simeq\mathrm{SSpec}(B), Y\simeq\mathrm{SSpec}(C)$. Let $u :  C\to B$ and $v : C\to B$ denote the superalgebra morphisms, those are dual to $\phi$ and $\psi$ respectively.
		Then the induced superalgebra morphisms $\widehat{u}$ and $\widehat{v}$ are equal each to other. Thus it follows that $(u-v)(c)\in \cap_{n\geq 0} C I^{n+1}$ for any $c\in C$. Since $C$ is a finitely generated $A$-superalgebra, by \emph{Krull intersection theorem} (cf. \cite[Proposition 1.10]{maszub4}) there is $x\in C_0 I_0+C_1 I_1$, such that for any $c\in C$ there is $n\geq 0$ with $(1-x)^n(u-v)(c)=0$. In other words, $\phi|_{X_{1-x_0}}=
		\psi|_{X_{1-x_0}}$, where $X_{1-x_0}\simeq\mathrm{SSpec}(C_{1-x_0})$ and $X'\subseteq X_{1-x_0}$. 
	\end{proof}
	\begin{lm}\label{surjectivity}
		The map $\mathrm{Mor}_S(X, Y)\to \mathrm{Mor}_{\widehat{S}}(\widehat{X}, \widehat{Y})$ is surjective.	
	\end{lm}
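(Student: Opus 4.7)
The plan is to follow the graph argument of \cite[Theorem III.5.4.1]{EGA  III} in the super-setting. Starting from $f \in \mathrm{Mor}_{\widehat{S}}(\widehat{X}, \widehat{Y})$, I first transfer the hypotheses to the formal side: since $Y \to S$ is separated and of finite type, so is $Y_1 \to S_1$, and Lemmas \ref{a criteria for a morphism to be separated} and \ref{finite type for formal} give that $\widehat{Y} \to \widehat{S}$ is adic, separated and of finite type; similarly, using properness of $X_1$ over $S_1$ together with Proposition \ref{properness over formal}, the morphism $\widehat{X} \to \widehat{S}$ is proper. The formal analog of Proposition \ref{graph morphism} then ensures that the graph morphism $\Gamma_f : \widehat{X} \to \widehat{X} \times_{\widehat{S}} \widehat{Y}$ is a closed immersion.

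Identifying $\widehat{X} \times_{\widehat{S}} \widehat{Y} \simeq \widehat{X \times_S Y}$ via Lemma \ref{completion of fibered product}, the image of $\Gamma_f$ becomes a closed N.f.\ supersubscheme $\mathfrak{T} \subseteq \widehat{X \times_S Y}$, isomorphic to $\widehat{X}$ and hence proper over $\widehat{S}$. Proposition \ref{closed supersubschemes in completion} then algebraizes $\mathfrak{T}$ to a unique closed supersubscheme $T \subseteq X \times_S Y$, proper over $S$, with $\widehat{T} = \mathfrak{T}$. The proof reduces to showing that the first projection $g := \mathrm{pr}_1|_T : T \to X$ is an isomorphism; granting this, $\phi := \mathrm{pr}_2 \circ g^{-1} : X \to Y$ is the desired $S$-morphism, and Lemma \ref{injectivity} forces $\widehat{\phi} = f$.

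The main obstacle is this last step. By construction, $\widehat{g}$ is an isomorphism (its inverse is the restriction of the canonical projection $\widehat{X \times_S Y} \to \widehat{X}$ to $\mathfrak{T}$), and $g$ itself is proper, being an $S$-morphism from a proper $S$-superscheme $T$ to a separated one $X$. To upgrade $\widehat{g}$ being an isomorphism to $g$ being an isomorphism, I would invoke a super-analog of Grothendieck's formal GAGA on the proper $S$-superscheme $X$: the natural map $\mathcal{O}_X \to g_*\mathcal{O}_T$ is a morphism of coherent $\mathcal{O}_X$-superalgebras (the coherence of $g_*\mathcal{O}_T$ coming from the super proper mapping theorem), whose completion is an isomorphism, so by faithfulness of the completion functor on coherent sheaves over $X/S$ it is already an isomorphism, i.e.\ $g_*\mathcal{O}_T = \mathcal{O}_X$. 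Combined with a Stein-factorization argument and the fact that $\widehat{g}$ is a homeomorphism on underlying spaces (so that $g$ is set-theoretically bijective with reduced single-point fibers over $|X'|$), one concludes that $g$ is an isomorphism, completing the construction of $\phi$.
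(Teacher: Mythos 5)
Your construction follows the paper's proof step for step up to the point where one must show that the projection $u=\mathrm{pr}_1\circ i\colon T\to X$ (your $g$) is an isomorphism: graph morphism and closed immersion via Proposition \ref{graph morphism}, identification $\widehat{X}\times_{\widehat{S}}\widehat{Y}\simeq\widehat{Z}$, algebraization of the closed formal supersubscheme via Proposition \ref{closed supersubschemes in completion}. The divergence, and the gap, is in how you finish. You claim that since $\widehat{g}$ is a homeomorphism on underlying spaces, $g$ is ``set-theoretically bijective with reduced single-point fibers over $|X'|$.'' This inference fails: $|\widehat{T}|=|T'|$ and $|\widehat{X}|=|X'|$ are only the closed subsets lying over $S'$, so the homeomorphism says nothing about points of $T$ or $X$ outside these loci, and in particular gives neither surjectivity of $g$ nor quasi-finiteness of $g$ away from $X'$ --- both of which your Stein-factorization argument needs. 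The passage from ``isomorphism after completion'' to ``isomorphism everywhere'' is exactly where one must use that $A$ is $I$-adic, hence $I$ lies in the Jacobson radical of $A$, so that $S'$ contains every closed point of $S$ and, by properness, $X'$ meets every nonempty closed subset of $X$ (and likewise for $T$). The paper packages this entirely into the classical \cite[Proposition III.4.6.8]{EGA  III}, applied to the purely even proper morphism $u_0\colon T_0\to X_0$, and only afterwards upgrades the sheaf map.

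Relatedly, the two black boxes you invoke --- a super proper mapping theorem giving coherence of $g_*\mathcal{O}_T$, and faithfulness of the completion functor on coherent $\mathcal{O}_X$-supermodules over $X/S$ --- are established nowhere in the paper and are not needed in this form. Once $u_0$ is known to be an isomorphism, one identifies $|T|$ with $|X|$ and $\mathcal{O}_{T_0}$ with $\mathcal{O}_{X_0}$; then $u^{\sharp}\colon\mathcal{O}_X\to\mathcal{O}_T$ is a morphism of coherent $\mathcal{O}_{X_0}$-modules whose completion is an isomorphism, and the purely even \cite[Corollary III.5.1.6]{EGA  III} applies directly. Your route could presumably be repaired by the same even-reduction that the paper uses throughout, but as written the key step --- controlling $T$ away from the formal locus --- is missing.
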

	\begin{proof}
Let $h\in \mathrm{Mor}_{\widehat{S}}(\widehat{X}, \widehat{Y})$. By Lemma \ref{a criteria for a morphism to be separated} the N.f. superscheme $\widehat{Y}$ is separated over $\widehat{S}$, and by Proposition \ref{graph morphism} the graph morphism $\Gamma_h : \widehat{X}\to \widehat{Z}$ is a closed immersion, such that $\widehat{\mathrm{pr}_1}\Gamma_h=\mathrm{id}_{\widehat{X}}$.
		Let $\mathfrak{T}$ denote the closed N.f. supersubscheme of $\widehat{Z}$, such that $\Gamma_h$ induces the isomorphism $\widehat{X}\simeq\mathfrak{T}$ (over $\widehat{S}$). 
		In particular, $\mathfrak{T}$ is proper over $\widehat{S}$. By Proposition \ref{closed supersubschemes in completion} there is a closed supersubscheme $T$ in $Z$, such that $\widehat{T}=\mathfrak{T}$. Let $i$ denote the canonical embedding $T\to Z$. Then $\widehat{\mathrm{pr}_1}\widehat{i}$ is an isomorphism $\widehat{T}\to\widehat{X}$, induced by
		$u=\mathrm{pr}_1 i : T\to X$. By \cite[Proposition III.4.6.8]{EGA  III}, $u_0 : T_0\to X_0$ is an isomorphism.  Identifying $\mathcal{O}_{T_0}$ with $\mathcal{O}_{X_0}$, and using again  \cite[Corollary III.5.1.6]{EGA  III}, one sees that the morphism $u^{\sharp} : \mathcal{O}_X\to \mathcal{O}_T$ of coherent $\mathcal{O}_{X_0}$-(super)modules is an isomorphism, whence $u$ is. The rest of the proof can be copied from \cite[Theorem III.5.4.1]{EGA  III}.
	\end{proof}

	\section{Some auxiliary properties of $\mathcal{O}_X$-supermodules}
	
	Recall that if $\mathcal{F}$ and $\mathcal{G}$ are sheaves of vector $\Bbbk$-superspaces on a topological space $X$, then $\mathrm{Hom}_{\Bbbk}(\mathcal{F}, \mathcal{G})$ is a sheaf 
	\[U\mapsto \{\mbox{morphisms of} \ \Bbbk-\mbox{sheaves} \ \mathcal{F}|_U\to\mathcal{G}|_U \}, \]
	where $U$ runs over open subsets of $X$. Furthermore, $\mathrm{Hom}_{\Bbbk}(\mathcal{F}, \mathcal{G})$ has a natural $\mathbb{Z}_2$-grading by
	\[\mathrm{Hom}_{\Bbbk}(\mathcal{F}, \mathcal{G})_{i}=\oplus_{j\in\mathbb{Z}_2}\mathrm{Hom}_{\Bbbk}(\mathcal{F}_j, \mathcal{G}_{i+j}), i\in\mathbb{Z}_2, \]
	i.e. it is again a sheaf of $\Bbbk$-superspaces.
	
	If $\mathcal{F}=\mathcal{G}$, then $\mathrm{Hom}_{\Bbbk}(\mathcal{F}, \mathcal{F})$ is denoted by $\mathrm{End}_{\Bbbk}(\mathcal{F})$. 
	
	Let $X$ be a superscheme. In what follows a sheaf of $\mathcal{O}_X$-supermodules is a sheaf of left  $\mathcal{O}_X$-supermodules, unless stated otherwise.
	If  $\mathcal{F}$ and $\mathcal{G}$ are sheaves of $\mathcal{O}_X$-supermodules, then $\mathrm{Hom}_{\mathcal{O}_X}(\mathcal{F}, \mathcal{G})$ is a subsheaf of
	$\mathrm{Hom}_{\Bbbk}(\mathcal{F}, \mathcal{G})$ such that for any open subsets $V\subseteq U\subseteq |X|$ a section $s\in \mathrm{Hom}_{\Bbbk}(\mathcal{F}, \mathcal{G})(U)$ belongs to
	$\mathrm{Hom}_{\mathcal{O}_X}(\mathcal{F}, \mathcal{G})(U)$ if and only if
	\[fs(V)(g)=(-1)^{|s||f|} s(V)(fg), f\in\mathcal{O}(V),  g\in\mathcal{F}(V).\]
	\begin{rem}\label{left is right}
		The category of left $\mathcal{O}_X$-supermodules is equivalent to the category of right $\mathcal{O}_X$-supermodules. More pecisely, if $\mathcal{F}$ is a left $\mathcal{O}_X$-supermodule, then it is the right $\mathcal{O}_X$-supermodule via 
		\[ fa=(-1)^{|f||a|} af, a\in\mathcal{O}_X(U), f\in\mathcal{F}(U), U\subseteq |X|. \]	
		Moreover, if $s\in\mathrm{Hom}_{\mathcal{O}_X}(\mathcal{F}, \mathcal{G} )(U)$, then $s$ is the morphism of the right $\mathcal{O}_X|_U$-supermodules via
		\[ (f)s(V)=(-1)^{|s||f|}s(V)(f), \mathcal{F}(V), V\subseteq U\subseteq |X|.\]	
	\end{rem}		
	\begin{lm}\label{dual of coherent}
		Assume that $X$ is a Noetherian superscheme. If $\mathcal{F}$ and $\mathcal{G}$ are coherent sheaves of $\mathcal{O}_X$-supermodule, then $\mathrm{Hom}_{\mathcal{O}_X}(\mathcal{F}, \mathcal{G})$ is coherent as well. 	
	\end{lm}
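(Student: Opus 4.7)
The claim is local on $|X|$, so without loss of generality one may assume $X\simeq\mathrm{SSpec}(A)$, where $A$ is a Noetherian superalgebra. Under the super-analog of Serre's equivalence between the category of coherent $\mathcal{O}_X$-supermodules and the category of finitely generated $A$-supermodules (cf. \cite{zub2}), one writes $\mathcal{F}\simeq\widetilde{M}$ and $\mathcal{G}\simeq\widetilde{N}$ for finitely generated $A$-supermodules $M$ and $N$.

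Since $A$ is Noetherian, $M$ admits a finite presentation in the category of $A$-supermodules,
\[F_1\to F_0\to M\to 0, \quad F_i=A^{m_i}\oplus \Pi A^{n_i},\]
where $\Pi$ denotes the parity shift. The contravariant functor $\mathrm{Hom}_{\mathcal{O}_X}(-,\widetilde{N})$ is left exact by construction, so applying it yields an exact sequence
\[0\to \mathrm{Hom}_{\mathcal{O}_X}(\widetilde{M},\widetilde{N})\to \mathrm{Hom}_{\mathcal{O}_X}(\widetilde{F_0},\widetilde{N})\to \mathrm{Hom}_{\mathcal{O}_X}(\widetilde{F_1},\widetilde{N}).\]
Using the natural isomorphisms $\mathrm{Hom}_{\mathcal{O}_X}(\mathcal{O}_X,\mathcal{H})\simeq\mathcal{H}$ and $\mathrm{Hom}_{\mathcal{O}_X}(\Pi\mathcal{O}_X,\mathcal{H})\simeq\Pi\mathcal{H}$ (obtained by evaluation at $1$), one identifies each $\mathrm{Hom}_{\mathcal{O}_X}(\widetilde{F_i},\widetilde{N})$ with a finite direct sum of copies of $\widetilde{N}$ and $\Pi\widetilde{N}$, so that these sheaves are coherent.

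Since the kernel of a morphism between coherent $\mathcal{O}_X$-supermodules on a Noetherian superscheme is coherent (cf. \cite{zub2}), it follows that $\mathrm{Hom}_{\mathcal{O}_X}(\widetilde{M},\widetilde{N})$ is coherent as well. The delicate point of the argument is checking that the evaluation-at-$1$ isomorphism $\mathrm{Hom}_{\mathcal{O}_X}(\mathcal{O}_X,\mathcal{H})\simeq\mathcal{H}$ is compatible with the $\mathbb{Z}_2$-grading and the super-sign convention $fs(V)(g)=(-1)^{|s||f|}s(V)(fg)$ used above to define $\mathrm{Hom}_{\mathcal{O}_X}$; concretely, one must verify that under this identification the $\mathcal{O}_X$-action on the left-hand side transported from the left-module structure on $\mathcal{O}_X$ indeed agrees (with the correct signs) with the given left-module structure on $\mathcal{H}$, and similarly that the parity-shifted free module produces $\Pi\mathcal{H}$. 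Once this bookkeeping is in place, the remainder of the argument parallels the classical proof verbatim (cf. \cite[Exercise II.5.7(b)]{hart}).
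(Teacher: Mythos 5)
Your proposal is correct, but it follows a different route from the paper's. The paper also reduces to an affine chart $U$ with $\mathcal{F}|_U\simeq\widetilde{M}$, $\mathcal{G}|_U\simeq\widetilde{N}$, but then invokes \cite[Proposition 2.1(1)]{zub2} to identify $\mathrm{Hom}_{\mathcal{O}_X}(\mathcal{F},\mathcal{G})(U)$ directly with $\widetilde{\mathrm{Hom}_{\mathcal{O}(U)}(M,N)}$, and establishes finite generation of $\mathrm{Hom}_{\mathcal{O}(U)}(M,N)$ by the elementary observation that choosing $m$ even and $n$ odd generators of $M$ embeds it as an $\mathcal{O}(U)$-supersubmodule of $N^{m+n|m+n}$, which is finitely generated since $\mathcal{O}(U)$ is Noetherian. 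You instead run the classical homological argument: a finite presentation $F_1\to F_0\to M\to 0$ by free supermodules, left exactness of the contravariant sheaf-Hom, the identifications $\mathrm{Hom}_{\mathcal{O}_X}(\mathcal{O}_X,\mathcal{H})\simeq\mathcal{H}$ and $\mathrm{Hom}_{\mathcal{O}_X}(\Pi\mathcal{O}_X,\mathcal{H})\simeq\Pi\mathcal{H}$, and closure of coherence under kernels. The trade-off is that your argument never needs the identification of sheaf-Hom with the tilde of module-Hom (it only needs it for free modules, where it is evaluation at generators), but it does need that kernels of morphisms of coherent supermodule sheaves are coherent (available in \cite{zub2}) and the sign bookkeeping for the evaluation isomorphism that you correctly flag; the paper's argument needs the stronger compatibility statement from \cite{zub2} up front but then finishes with only a one-line Noetherian submodule argument. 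Both are complete proofs; yours is closer to \cite[Exercise II.5.7]{hart}, the paper's is closer to a direct module-theoretic verification.
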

	\begin{proof}
		Choose a finite covering of $X$ by affine open supersubschemes $U$. Then $\mathcal{F}(U)\simeq\widetilde{M}$ and $\mathcal{G}(U)\simeq\widetilde{N}$ for each $U$ (see the proof of  \cite[Proposition 3.1]{zub2}), where $M$ and $N$ are finitely generated $\mathcal{O}(U)$-supermodule. Then \cite[Proposition 2.1(1)]{zub2} implies
		\[\mathrm{Hom}_{\mathcal{O}_X}(\mathcal{F}, \mathcal{G})(U)\simeq \widetilde{\mathrm{Hom}_{\mathcal{O}(U)}(M, N) }\]
		for each $U$. 
		Since $\mathcal{O}(U)$ is a Noetherian superalgebra, it remains to note that $\mathrm{Hom}_{\mathcal{O}(U)}(M, N)$ can be naturally identified with an $\mathcal{O}(U)$-supersubmodule of the supermodule
		$N^{m|n}\oplus \Pi N^{m|n}\simeq N^{m+n|m+n}$, where $m$ and $n$ are the numbers of even and odd generators of $M$.
	\end{proof}
	\begin{pr}\label{global sections are finite} (compare with \cite[Corollary 2.36]{brp})
		Let $X$ be a proper superscheme and let $\mathcal{F}$ be a coherent sheaf of $\mathcal{O}_X$-supermodules. 
		Then $\mathrm{H}^i(X, \mathcal{F})$ is a finite dimensional superspace for any $i\geq 0$. 	
	\end{pr}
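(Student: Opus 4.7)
The plan is to reduce to the classical Serre finiteness theorem for proper schemes via the natural filtration by powers of the odd ideal sheaf. Let $\mathcal{J}_X$ denote the superideal sheaf of $\mathcal{O}_X$ generated by $(\mathcal{O}_X)_1$, so that $\mathcal{O}_{X_{ev}}=\mathcal{O}_X/\mathcal{J}_X$. Since $X$ is proper, in particular Noetherian (cf.\ the finiteness of type condition in Section 2), locally on an open affine supersubscheme $\mathrm{SSpec}(A)$ of $X$ the ideal $J_A=AA_1$ satisfies $J_A^{2\dim A_1+1}=0$, so there exists an integer $N$ with $\mathcal{J}_X^N=0$, and therefore $\mathcal{J}_X^N\mathcal{F}=0$. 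This gives a finite filtration
\[
\mathcal{F}\supseteq \mathcal{J}_X\mathcal{F}\supseteq \mathcal{J}_X^2\mathcal{F}\supseteq \cdots \supseteq \mathcal{J}_X^N\mathcal{F}=0.
\]

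First I would verify that each successive quotient $\mathcal{Q}_n=\mathcal{J}_X^n\mathcal{F}/\mathcal{J}_X^{n+1}\mathcal{F}$ is a coherent sheaf of $\mathcal{O}_{X_{ev}}$-modules. The $\mathcal{O}_{X_{ev}}$-module structure is automatic because $\mathcal{J}_X$ annihilates $\mathcal{Q}_n$; coherence over $\mathcal{O}_{X_{ev}}$ follows locally, since on $\mathrm{SSpec}(A)$ the $A$-supermodule $M=\mathcal{F}(\mathrm{SSpec}(A))$ is finitely generated, whence so is $J_A^nM/J_A^{n+1}M$ over $A/J_A\simeq\overline{A}$ (this is the same argument used in Proposition \ref{characterization of formal via even}, combined with \cite[Proposition 3.1]{zub2}).

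Next, by Lemma \ref{reducibility} the scheme $X_{ev}$ is proper over $\mathrm{Spec}(\Bbbk)$, so the classical Serre finiteness theorem implies that $\mathrm{H}^i(X_{ev},\mathcal{Q}_n)$ is a finite dimensional $\Bbbk$-vector space for each $i\geq 0$ and each $n$. Since $|X|=|X_{ev}|$ as topological spaces and the sheaves $\mathcal{Q}_n$ are supported on this common space with the same sheaves of abelian groups, the superized \v{C}ech computation of Section 7 (applied to a finite affine open cover of $X$, whose underlying cover of $X_{ev}$ is affine by Lemma \ref{Ex.II.4.3}) identifies $\mathrm{H}^i(X,\mathcal{Q}_n)$ with $\mathrm{H}^i(X_{ev},\mathcal{Q}_n)$; both are therefore finite dimensional. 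Finally, the short exact sequences
\[
0\to \mathcal{J}_X^{n+1}\mathcal{F}\to \mathcal{J}_X^n\mathcal{F}\to \mathcal{Q}_n\to 0
\]
give long exact sequences of cohomology, and a descending induction on $n$ from $n=N$ (where $\mathcal{J}_X^N\mathcal{F}=0$ so finiteness is trivial) down to $n=0$ shows that every $\mathrm{H}^i(X,\mathcal{J}_X^n\mathcal{F})$ is finite dimensional, with $n=0$ giving the desired conclusion.

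The only genuinely delicate point is the identification of $\mathrm{H}^i(X,-)$ with $\mathrm{H}^i(X_{ev},-)$ for sheaves annihilated by $\mathcal{J}_X$; this is where one needs the super-\v{C}ech machinery of Section 7 to confirm that an affine open cover of $X$ is acyclic for coherent sheaves, so that both cohomologies are computed by the same \v{C}ech complex. Everything else is bookkeeping around a standard d\'evissage.
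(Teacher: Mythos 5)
Your proof is correct and follows essentially the same d\'evissage as the paper: filter $\mathcal{F}$ by powers of $\mathcal{J}_X$, use the long exact cohomology sequences to reduce to coherent sheaves on the proper purely even scheme underlying $X$, and conclude by classical Serre finiteness (the paper cites \cite[Theorem III.3.2.1]{EGA  III}, inducting on the nilpotency depth of $\mathcal{F}$ and working with $X_0$ in place of $X_{ev}$, which is immaterial). The one point you flag as delicate --- identifying $\mathrm{H}^i(X,\mathcal{Q}_n)$ with $\mathrm{H}^i(X_{ev},\mathcal{Q}_n)$ --- is in fact immediate, since sheaf cohomology depends only on the underlying topological space and the sheaf of abelian groups, and $|X|=|X_{ev}|$.
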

	\begin{proof}
		The superideal sheaf $\mathcal{J}_X$ is nilpotent. Thus there is an nonnegative integer $k$ such that $\mathcal{J}_X^k\mathcal{F}\neq 0$ but $\mathcal{J}_X^{k+1}\mathcal{F}=0$. We call $k$ a \emph{nilpotency depth} of $\mathcal{F}$. The sheaves $\mathcal{J}_X\mathcal{F}$ and $\mathcal{F}/\mathcal{J}_X\mathcal{F}$ are coherent (cf. \cite[Corollary 3.2]{zub2}) and both are of nilpotency depth strictly less than $k$. The long exact sequence of cohomologies 
		\[ \ldots \to \mathrm{H}^i(X, \mathcal{J}_X\mathcal{F}) \to \mathrm{H}^i(X, \mathcal{F}) \to \mathrm{H}^i(X, \mathcal{F}/\mathcal{J}_X\mathcal{F}) \to \ldots\]
		and the induction on $k$ allows us to consider the case $k=0$ only. In this case $\mathcal{F}$ is a direct sum of two coherent $\mathcal{O}_{X_0}$-modules $\mathcal{F}_0$ and $\mathcal{F}_1$, hence  \cite[Theorem III.3.2.1]{EGA  III} concludes the proof. 
	\end{proof}
	\begin{lm}\label{represented by a space}
		Let $\mathcal{F}$ and $\mathcal{G}$ be coherent sheaves of $\mathcal{O}_N$-supermodules, where $N$ is a proper superscheme. Then the functor \[A\mapsto \mathrm{Hom}_{\mathcal{O}_{N\times\mathrm{Spec}(A)}}(\mathrm{pr}_1^*\mathcal{F}, \mathrm{pr}_1^*\mathcal{G})(|N\times\mathrm{Spec}(A)|), A\in\mathsf{Alg}_{\Bbbk},\] 
		where $\mathrm{pr}_1$ denote the canonical projection $N\times\mathrm{Spec}(A)\to N$, is isomorphic to a functor $A\mapsto V\otimes A$ for
		some finite dimensional vector superspace $V$.				
	\end{lm}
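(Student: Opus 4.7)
The only candidate for $V$ is forced by setting $A=\Bbbk$: take $V := \mathrm{Hom}_{\mathcal{O}_N}(\mathcal{F}, \mathcal{G})(|N|)$. By Lemma \ref{dual of coherent} the internal Hom sheaf $\mathrm{Hom}_{\mathcal{O}_N}(\mathcal{F}, \mathcal{G})$ is coherent on the proper Noetherian superscheme $N$, and by Proposition \ref{global sections are finite} its space of global sections $V$ is a finite-dimensional $\Bbbk$-superspace.

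To produce the natural isomorphism, I would cover $N$ by finitely many affine opens $U_i \simeq \mathrm{SSpec}(B_i)$; since $N$ is proper and hence separated, Lemma \ref{Ex.II.4.3} guarantees that the pairwise intersections $U_{ij} = U_i \cap U_j \simeq \mathrm{SSpec}(B_{ij})$ are again affine. Over each $U_i$ the coherent sheaves $\mathcal{F}, \mathcal{G}$ correspond to finitely generated $B_i$-supermodules $M_i, L_i$ (finitely presented, since $B_i$ is Noetherian), and $U_i \times \mathrm{Spec}(A) \simeq \mathrm{SSpec}(B_i \otimes_{\Bbbk} A)$. An element of $\mathrm{Hom}_{\mathcal{O}_{N\times\mathrm{Spec}(A)}}(\mathrm{pr}_1^*\mathcal{F}, \mathrm{pr}_1^*\mathcal{G})(|N\times\mathrm{Spec}(A)|)$ is then a compatible family of $B_i \otimes_{\Bbbk} A$-linear maps $M_i \otimes_{\Bbbk} A \to L_i \otimes_{\Bbbk} A$, i.e.\ an element of the equalizer
\[
\mathrm{Eq}\Bigl(\textstyle\prod_i \mathrm{Hom}_{B_i \otimes_{\Bbbk} A}(M_i \otimes_{\Bbbk} A,\, L_i \otimes_{\Bbbk} A) \;\rightrightarrows\; \prod_{i,j} \mathrm{Hom}_{B_{ij} \otimes_{\Bbbk} A}(M_{ij} \otimes_{\Bbbk} A,\, L_{ij} \otimes_{\Bbbk} A)\Bigr).
\]

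The crucial input is the standard isomorphism $\mathrm{Hom}_B(M, L) \otimes_{\Bbbk} A \simeq \mathrm{Hom}_{B \otimes_{\Bbbk} A}(M \otimes_{\Bbbk} A,\, L \otimes_{\Bbbk} A)$, valid whenever $M$ is finitely presented over $B$ and $A$ is $\Bbbk$-flat (here automatic, since $\Bbbk$ is a field). Applying it term by term and noting that $-\otimes_{\Bbbk} A$ is exact, hence commutes with finite products and equalizers, the displayed equalizer becomes $V \otimes_{\Bbbk} A$ where $V$ is the same equalizer for $A=\Bbbk$, namely $\mathrm{Hom}_{\mathcal{O}_N}(\mathcal{F}, \mathcal{G})(|N|)$. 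Functoriality in $A$ is manifest from the construction.

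The only real obstacle is verifying the finitely-presented/flat Hom–base-change identity in the supermodule setting. I would handle it by choosing a finite free presentation $B^{p|q} \to B^{r|s} \to M \to 0$ (available because $B$ is Noetherian and $M$ is finitely generated), tensoring with $A$ over $\Bbbk$, and comparing the two resulting left-exact sequences via the five-lemma after reducing to the elementary case $M = B$, where the identity is tautological. Alternatively, one decomposes along the $\mathbb{Z}_2$-grading and invokes the classical module-theoretic fact on each even/odd component.
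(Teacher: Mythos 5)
Your proposal is correct and follows essentially the same route as the paper: reduce to a finite affine cover (with affine overlaps handled via separatedness), use the base-change isomorphism $\mathrm{Hom}_B(M,L)\otimes_{\Bbbk}A\simeq\mathrm{Hom}_{B\otimes A}(M\otimes A,L\otimes A)$ for finitely generated modules over Noetherian superalgebras, pass to the equalizer using exactness of $-\otimes_{\Bbbk}A$, and conclude finite-dimensionality of $V=\mathrm{Hom}_{\mathcal{O}_N}(\mathcal{F},\mathcal{G})(|N|)$ from Lemma \ref{dual of coherent} and Proposition \ref{global sections are finite}. The only cosmetic difference is that the paper covers each intersection $U_i\cap U_j$ by affines rather than invoking Lemma \ref{Ex.II.4.3}; both work.
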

	\begin{proof}
		Let $\{U_i\}$ be a finite covering of $N$ by open affine supersubschemes. For each couple of indices $i, j$ we also choose a finite covering $\{V_{ijk}\}$ of $U_i\cap U_j$ by open affine supersubschemes. Then any global section from $\mathrm{Hom}_{\mathcal{O}_{N\times\mathrm{Spec}(A)}}(\mathrm{pr}^*_1\mathcal{F}, \mathrm{pr}^*_1\mathcal{G}_A)$ is uniquely defined by its restrictions on 	
		\[\mathrm{Hom}_{\mathcal{O}_{N\times\mathrm{Spec}(A)}}(\mathrm{pr}_1^*\mathcal{F}, \mathrm{pr}_1^*\mathcal{G})(|U_i\times\mathrm{Spec}(A)|)\]\[\simeq\mathrm{Hom}_{\mathcal{O}_{U_i\times\mathrm{Spec}(A)}}(\mathrm{pr}_1^*\mathcal{F}|_{|U_i\times\mathrm{Spec}(A)|}, \mathrm{pr}_1^*\mathcal{G}|_{|U_i\times\mathrm{Spec}(A)|})\simeq \]
		\[\mathrm{Hom}_{\mathcal{O}(U_i)\otimes A}(\mathcal{F}(U_i)\otimes A, \mathcal{G}(U_i)\otimes A)\simeq \mathrm{Hom}_{\mathcal{O}(U_i)}(\mathcal{F}(U_i), \mathcal{G}(U_i)\otimes A) ,  \]
		those are compatible being restricted on the corresponding $V_{ijk}$.
		
		Since any $\mathcal{F}(U_i)$ is a finitely generated $\mathcal{O}(U_i)$-supermodule, the latter $A$-supermodules are isomorphic to
		$\mathrm{Hom}_{\mathcal{O}(U_i)}(\mathcal{F}(U_i), \mathcal{G}(U_i))\otimes A$. Moreover, these isomorphisms are obviously compatible with the restrictions to $V_{ijk}$. Thus it follows easily that
		\[\mathrm{Hom}_{\mathcal{O}_{N\times\mathrm{Spec}(A)}}(\mathrm{pr}_1^*\mathcal{F}, \mathrm{pr}_1^*\mathcal{G})(|N\times\mathrm{Spec}(A)|)\simeq \mathrm{Hom}_{\mathcal{O}_N}(\mathcal{F}, \mathcal{G})(|N|)\otimes A. \]
		Proposition \ref{global sections are finite} concludes the proof.
	\end{proof}	
	
	\section{\v{C}ech cohomology}
	
	Let $X$ be a topological space and let $\mathcal{F}$ be a sheaf of $\mathbb{Z}_2$-graded abelian groups. Following \cite[III.4]{hart}, with any open covering $\mathfrak{U}=\{U_i \}_{i\in I}$ of $X$ one can associate a complex of $\mathbb{Z}_2$-graded abelian groups $C^{\bullet}(\mathfrak{U}, \mathcal{F})=\{ C^p(\mathfrak{U}, \mathcal{F})\}_{p\geq 0}$. Without loss of generality, one can asume that $I$ is totally ordered. Then 
	\[C^p(\mathfrak{U}, \mathcal{F})=\prod_{i_0< \ldots < i_p} \mathcal{F}(U_{i_0}\cap\ldots\cap U_{i_p}) \]
	and the coboundary map $d : C^p(\mathfrak{U}, \mathcal{F})\to C^{p+1}(\mathfrak{U}, \mathcal{F})$ is defined as
	\[d(\alpha)_{i_0 , \ldots ,i_{p+1}}=\sum_{0\leq k\leq p+1}(-1)^k\alpha_{i_0, \ldots \widehat{i_k}, \ldots , i_{p+1} }|_{U_{i_0}\cap\ldots\cap U_{i_{p+1}} }, \]
	where $\alpha=\prod_{i_0< \ldots < i_p}\alpha_{i_0, \ldots, i_p}$. Since $d$ preserves $\mathbb{Z}_2$-grading, the \emph{\v{C}ech cohomology groups} $\mathrm{\check{H}}^p(\mathfrak{U}, \mathcal{F})$ are $\mathbb{Z}_2$-graded as well. 
	
	We have a natural resolution $0\to\mathcal{F}\stackrel{\epsilon}{\to}  \mathcal{C}^{\bullet}(\mathfrak{U}, \mathcal{F})$, i.e. the exact sequence of sheaves
	\[0\to\mathcal{F}\stackrel{\epsilon}{\to}\mathcal{C}^0(\mathfrak{U}, \mathcal{F})\to\mathcal{C}^1(\mathfrak{U}, \mathcal{F})\to\ldots , \]
	where  $\mathcal{C}^p(\mathfrak{U}, \mathcal{F})(V)=\prod_{i_0<\ldots < i_p}\mathcal{F}(U_{i_0}\cap \ldots \cap U_{i_p}\cap V)$ 
	for any open subset $V\subseteq X$. The proof of exactness is the same as in the non-graded case (cf. \cite[Lemma III.4.2]{hart}).
	Note also that $\mathcal{C}^{\bullet}(\mathfrak{U}, \mathcal{F})(X)=C^{\bullet}(\mathfrak{U}, \mathcal{F})$.
	
	Recall that a sheaf $\mathcal{F}$ is said to be \emph{flat} (or \emph{flabby}), if for any open subsets $V\subseteq U\subseteq X$ the "restriction" map $\mathcal{F}(U)\to\mathcal{F}(V)$ is surjective. Using \cite[Lemma 1.2]{zub2}, one can mimic the proof of \cite[Proposition III.4.3]{hart} to show that $\mathrm{\check{H}}^p(\mathfrak{U}, \mathcal{F})=0$ for any $p> 0$, provided $\mathcal{F}$ is flat.
	
	If $0\to\mathcal{F}\to\mathcal{J}^{\bullet}$ is an injective resolution of a sheaf $\mathcal{F}$, then \cite[Lemma XX.5.2]{slang} implies that there is a morphism of
	of $\mathbb{Z}_2$-graded complexes $\mathcal{C}^{\bullet}(\mathfrak{U}, \mathcal{F})\to \mathcal{J}^{\bullet}$, that extends the identity morphism $\mathrm{id}_{\mathcal{F}}$, and it is unique up to a homotopy preserving $\mathbb{Z}_2$-grading.  
	Thus this morphism induces the natural maps $\mathrm{\check{H}}^p(\mathfrak{U}, \mathcal{F})\to \mathrm{H}^p(X, \mathcal{F})$ of $\mathbb{Z}_2$-graded abelian groups for each $p\geq 0$, functorial in $\mathcal{F}$ (cf. \cite[Lemma III.4.4]{hart}). 
	\begin{pr}\label{isomorphism of cohomologies}
		If $X$ is a Noetherian separated superscheme, $\mathfrak{U}$ is an open covering of $X$ by affine supersubschemes,  and $\mathcal{F}$ is a quasi-coherent sheaf of $\mathcal{O}_X$-supermodules, then each map $\mathrm{\check{H}}^p(\mathfrak{U}, \mathcal{F})\to \mathrm{H}^p(X, \mathcal{F})$ is an isomorphism. 	
	\end{pr}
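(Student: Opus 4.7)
The plan is to mimic the classical proof of \cite[Theorem III.4.5]{hart} verbatim, exhibiting the augmented sheaf complex $0\to\mathcal{F}\to\mathcal{C}^{\bullet}(\mathfrak{U}, \mathcal{F})$ as an acyclic resolution of $\mathcal{F}$ for the derived functor $\mathrm{H}^*(X, -)$. Once this is done, the morphism of $\mathbb{Z}_2$-graded complexes $\mathcal{C}^{\bullet}(\mathfrak{U}, \mathcal{F})\to \mathcal{J}^{\bullet}$ already produced in the discussion preceding the proposition (unique up to homotopy preserving parity) will induce a $\mathbb{Z}_2$-graded quasi-isomorphism after taking global sections, which is exactly the asserted natural isomorphism $\mathrm{\check{H}}^p(\mathfrak{U}, \mathcal{F})\simeq\mathrm{H}^p(X, \mathcal{F})$.

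The two key technical ingredients are as follows. First, since $X$ is separated, Lemma \ref{Ex.II.4.3} applied inductively shows that every multi-intersection $U_{i_0\ldots i_p}=U_{i_0}\cap\ldots\cap U_{i_p}$ is again an affine open supersubscheme of $X$; writing $j_{i_0\ldots i_p}:U_{i_0\ldots i_p}\hookrightarrow X$ for the corresponding open immersion, we obtain $\mathcal{C}^p(\mathfrak{U}, \mathcal{F})\simeq \prod_{i_0<\ldots <i_p}(j_{i_0\ldots i_p})_*(\mathcal{F}|_{U_{i_0\ldots i_p}})$. Second, I would establish super-Serre vanishing: for any Noetherian affine superscheme $\mathrm{SSpec}(A)$ and any quasi-coherent $\mathcal{O}_X$-supermodule $\mathcal{G}$, we have $\mathrm{H}^q(\mathrm{SSpec}(A), \mathcal{G})=0$ for all $q>0$. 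This is obtained exactly as in the proof of Proposition \ref{global sections are finite}: filter $\mathcal{G}$ by the powers $\mathcal{J}_X^n\mathcal{G}$ (a finite filtration, since $\mathcal{J}_X$ is nilpotent on a Noetherian superscheme), note that each associated graded quotient is a direct sum of two quasi-coherent $\mathcal{O}_{X_0}$-modules, and apply the classical affine vanishing \cite[Theorem III.3.5]{hart} together with the long exact cohomology sequence, in each parity separately.

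To conclude acyclicity of $\mathcal{C}^p(\mathfrak{U}, \mathcal{F})$, I would combine the two ingredients via the standard fact that for an open immersion $j:U\hookrightarrow X$ of an affine supersubscheme into a Noetherian separated superscheme, $\mathrm{H}^q(X, j_*\mathcal{G})\simeq \mathrm{H}^q(U, \mathcal{G})$: one refines an open affine covering of $X$ using the first ingredient to see that $j$ is quasi-compact and the higher direct images $R^qj_*\mathcal{G}$ vanish (their stalks compute cohomology on affine intersections, which are again affine), so the Leray spectral sequence collapses. Combining with the second ingredient yields vanishing of $\mathrm{H}^q(X, \mathcal{C}^p(\mathfrak{U}, \mathcal{F}))$ for $q>0$, and the usual comparison of the acyclic Čech resolution with an injective resolution completes the proof. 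The main obstacle is really only bookkeeping: one must verify at each step that the maps preserve the $\mathbb{Z}_2$-grading and that all the sheafy constructions (direct image, intersections of Čech pieces, cokernel images at each step of the filtration) respect parity, which they manifestly do because every operation in sight is $\mathcal{O}_X$-superlinear and the gradings on $\mathcal{C}^p$ were defined componentwise.
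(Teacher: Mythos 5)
Your strategy is sound but genuinely different from the paper's. The paper transports Hartshorne's own proof of Theorem III.4.5 verbatim --- embed $\mathcal{F}$ into a flasque quasi-coherent sheaf, use exactness of sections over the (affine) multi-intersections to get a short exact sequence of \v{C}ech complexes of global sections, and dimension-shift against the long exact sequence of derived-functor cohomology --- and the only super-specific inputs it records are exactly your two ingredients: Lemma \ref{Ex.II.4.3} (so that, inductively, every $U_{i_0}\cap\ldots\cap U_{i_p}$ is affine) and the exactness/vanishing of $\Gamma$ for quasi-coherent supermodules on affine superschemes, cited from \cite[Lemma 3.2]{zub2} (your filtration by the powers of $\mathcal{J}_X$ reproves this). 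You instead exhibit the \v{C}ech resolution as a $\Gamma$-acyclic resolution and compare it with an injective one; this buys you Corollary \ref{affine trick} for free along the way (in the paper that corollary is deduced \emph{from} the proposition, so your Leray argument is not circular, merely an independent proof of it).

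One step of your route needs shoring up: for an infinite covering, $\mathcal{C}^p(\mathfrak{U},\mathcal{F})$ is an infinite product of the pushforwards $(j_{i_0\ldots i_p})_*(\mathcal{F}|_{U_{i_0\ldots i_p}})$, and acyclicity of each factor does not formally give acyclicity of the product: infinite products are not exact in categories of sheaves on a topological space, so $\mathrm{H}^q(X,-)$ need not commute with them (nor is an infinite product of quasi-coherent sheaves quasi-coherent). Hartshorne's dimension-shifting argument sidesteps this because it only ever takes products of exact sequences of abelian groups of sections. You can repair your version either by taking $\mathfrak{U}$ finite (always possible since $X$ is Noetherian, and the only case the paper ever uses), or by replacing the acyclic-resolution step with the \v{C}ech-to-derived-functor spectral sequence $\mathrm{\check{H}}^p(\mathfrak{U},\underline{\mathrm{H}}^q(\mathcal{F}))\Rightarrow \mathrm{H}^{p+q}(X,\mathcal{F})$, whose degeneration requires only the vanishing of $\mathrm{H}^q(U_{i_0}\cap\ldots\cap U_{i_p},\mathcal{F})$ for $q>0$, which you have already established. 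The $\mathbb{Z}_2$-grading bookkeeping is indeed harmless, as you say.
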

	\begin{proof}
		The proof can be copied from \cite[Theorem III.4.5]{hart} verbatim. Indeed, all we need is \cite[Lemma 3.2]{zub2} and Lemma \ref{Ex.II.4.3}.
	\end{proof}
	\begin{cor}\label{affine trick}(see \cite[Exercise III.4.1]{hart})
		Let $f : X\to Y$ be an affine morphism of Noetherian separated superschemes. Let $\mathcal{F}$ be a quasi-coherent sheaf of $\mathcal{O}_X$-supermodules. Then
		\[\mathrm{H}^i(X, \mathcal{F})\simeq \mathrm{H}^i(Y, f_*\mathcal{F}).\]
		for any $i\geq 0$.	
	\end{cor}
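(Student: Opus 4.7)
The plan is to reduce both sides to \v{C}ech cohomology via Proposition \ref{isomorphism of cohomologies}, and then observe that the two \v{C}ech complexes are canonically isomorphic. Since $Y$ is Noetherian and separated, one can pick a finite open affine covering $\mathfrak{V}=\{V_j\}_{j\in J}$ of $Y$. Because $f$ is an affine morphism, each $f^{-1}(V_j)$ is an open affine supersubscheme of $X$, so $\mathfrak{U}=\{f^{-1}(V_j)\}_{j\in J}$ is a finite open affine covering of the Noetherian separated superscheme $X$. By Lemma \ref{Ex.II.4.3}, applied to both $X$ and $Y$, all finite intersections $V_{j_0}\cap\cdots\cap V_{j_p}$ and $f^{-1}(V_{j_0})\cap\cdots\cap f^{-1}(V_{j_p})=f^{-1}(V_{j_0}\cap\cdots\cap V_{j_p})$ are again affine, so both coverings satisfy the hypotheses of Proposition \ref{isomorphism of cohomologies}.

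The key step is to identify the two \v{C}ech complexes. By definition of the direct image,
\[
(f_*\mathcal{F})(V_{j_0}\cap\cdots\cap V_{j_p})=\mathcal{F}\bigl(f^{-1}(V_{j_0}\cap\cdots\cap V_{j_p})\bigr)=\mathcal{F}\bigl(f^{-1}(V_{j_0})\cap\cdots\cap f^{-1}(V_{j_p})\bigr),
\]
and these identifications commute with the restriction maps used in the coboundary operator. This yields a canonical isomorphism $C^\bullet(\mathfrak{V},f_*\mathcal{F})\simeq C^\bullet(\mathfrak{U},\mathcal{F})$ of $\mathbb{Z}_2$-graded complexes, hence $\mathrm{\check{H}}^p(\mathfrak{V},f_*\mathcal{F})\simeq\mathrm{\check{H}}^p(\mathfrak{U},\mathcal{F})$ for every $p\geq 0$.

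To conclude, I need to apply Proposition \ref{isomorphism of cohomologies} to both sides, which requires knowing that $f_*\mathcal{F}$ is quasi-coherent on $Y$. This is the only real point where one uses super-specific input: on an affine open $V=\mathrm{SSpec}(B)\subseteq Y$, the preimage $f^{-1}(V)\simeq\mathrm{SSpec}(C)$ is affine (since $f$ is affine), and $\mathcal{F}|_{f^{-1}(V)}\simeq\widetilde{M}$ for a $C$-supermodule $M$ by the superized version of the quasi-coherence description (cf.\ the proof of \cite[Proposition 3.1]{zub2}); then $(f_*\mathcal{F})|_V$ is the sheaf associated to the $B$-supermodule obtained by restriction of scalars, hence quasi-coherent. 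Granted this, Proposition \ref{isomorphism of cohomologies} gives $\mathrm{H}^p(X,\mathcal{F})\simeq\mathrm{\check{H}}^p(\mathfrak{U},\mathcal{F})$ and $\mathrm{H}^p(Y,f_*\mathcal{F})\simeq\mathrm{\check{H}}^p(\mathfrak{V},f_*\mathcal{F})$, and the previous paragraph supplies the desired isomorphism.

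The main (though mild) obstacle is the bookkeeping of $\mathbb{Z}_2$-gradings and sign conventions in verifying that the identification of sections commutes with the coboundary $d$; but since the differential is defined purely in terms of restriction maps, which are already graded, no new signs arise and the identification of complexes is strictly functorial. All the other inputs — affineness of the intersections, quasi-coherence of $f_*\mathcal{F}$, and the \v{C}ech-to-sheaf comparison — have been put in place just before this corollary.
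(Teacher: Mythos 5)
Your proof is correct and follows essentially the same route as the paper: pull back a finite affine covering of $Y$ along the affine morphism $f$ to get an affine covering of $X$, identify the two \v{C}ech complexes via the definition of $f_*\mathcal{F}$, and apply Proposition \ref{isomorphism of cohomologies} on both sides. The only difference is that you spell out the quasi-coherence of $f_*\mathcal{F}$ and the compatibility with the coboundary maps, which the paper leaves implicit.
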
	
	\begin{proof}
		Let $\mathfrak{V}=\{V_i\}_{i\in I}$ be an open covering of $Y$ by affine supersubschemes. Then $\mathfrak{U}=\{f^{-1}(V_i)\}_{i\in I}$ is an open covering of $X$ by affine supersubschemes also. Since $\mathcal{C}^{\bullet}(\mathfrak{U}, \mathcal{F})=\mathcal{C}^{\bullet}(\mathfrak{V}, f_*\mathcal{F})$, Proposition \ref{isomorphism of cohomologies} concludes the proof. 	
	\end{proof}
	
	\section{Pro-representable functors}	
	
	Let $\mathcal{C}$ be a category with finite products and fibered products as well. Let $F : \mathcal{C}\to\mathsf{Sets}$ be a covariant functor. 
	The functor $F$ is said to be \emph{left exact} in the sense of \cite{lev}, provided the following conditions hold :
	\begin{enumerate}
		\item if $A$ is a final object in $\mathcal{C}$, then $|F(A)|=1$;
		\item $F(A\times B)\simeq F(A)\times F(B)$ for any objects $A, B$ from $\mathcal{C}$; 
		\item $F$ takes any equalizer $A\to B\rightrightarrows C$ in $\mathcal{C}$
		to the equalizer $F(A)\to F(B)\rightrightarrows F(C)$ in $\mathsf{Sets}$. 
	\end{enumerate}
	The equivalent definitions are listed in \cite[Proposition 1]{lev}.
	
	Assume that $\mathcal{C}$ is also an \emph{Artinian} category, that is for any object $A\in\mathcal{C}$ any collection of subobjects of $A$ has a minimal element.
	As it has been proved in \cite[Proposition 2]{lev}, a functor $F : \mathcal{C}\to\mathsf{Sets}$ is left exact if and only if there is a \emph{projective system}
	$\{ A_i, \phi_{ij}\}_{i, j\in I}$ in $\mathcal{C}$, such that $F(A)\simeq \varinjlim \mathsf{Mor}_{\mathcal{C}}(A_i, A)$.  
	
	Let $\mathsf{SL}_{\Bbbk}$ denote the category of finite dimensional augmented local superalgebras over a field $\Bbbk$, with local morphisms between them. 
	It is clear that $\mathsf{SL}_{\Bbbk}$ is Artinian, and it has finite and fibered products. 
	
	If $A\in \mathsf{SL}_{\Bbbk}$, then its maxmal superideal is denoted by $\mathfrak{M}_A$ and the even component of $\mathfrak{M}_A$ is denoted by
	$\mathfrak{m}_A$, i.e. $\mathfrak{M}_A=\mathfrak{m}_A\oplus A_1$. The augmentation map $A\to\Bbbk$ is denoted by $\epsilon_A$, respectively.
	The full subcategory of $\mathsf{SL}_{\Bbbk}$ consisting of all $A$ with $\mathfrak{M}_A^{n+1}=0$ is denoted by $\mathsf{SL}_{\Bbbk}(n)$.
	The full subcategories of $\mathsf{SL}_{\Bbbk}$ and  $\mathsf{SL}_{\Bbbk}(n)$, consisting of purely even superalgebras, are denoted by
	$\mathsf{L}_{\Bbbk}$ and  $\mathsf{L}_{\Bbbk}(n)$ respectively.
	
	A topological superalgebra $A$ is called \emph{profinite augmented local} superalgebra (p.a.l. superalgebra), if $A\simeq\varprojlim A_i$, where $\{A_i, \phi_{ij}\}$ is a projective system of superalgebras
	in $\mathsf{SL}_{\Bbbk}$. The maximal superideal $\mathfrak{M}_A$ is isomorphic to $\varprojlim\mathfrak{M}_{A_i}$ and $A$ is Noetherian if and only if $\dim\mathfrak{M}_A/\mathfrak{M}^2_A< \infty$.
	
	By the above, a functor $F : \mathsf{SL}_{\Bbbk}\to \mathsf{Sets}$ is left exact if and only if it is pro-representable, i.e. there is a p.a.l. superalgebra $A\simeq\varprojlim A_i$ such that
	$F(B)$ is the set of all continuous morphisms from $A$ to the discrete superalgebra $B$. 
	
	Let $\Bbbk[\epsilon_0, \epsilon_1]$ be a superalgebra of \emph{dual supernumbers} (cf. \cite{maszub2}), where $|\epsilon_0|=0, |\epsilon_1|=1,  \epsilon_i\epsilon_j=0, i, j=0, 1$. 
	Then $F(\Bbbk[\epsilon_0, \epsilon_1])\simeq (\dim\mathfrak{M}_A/\mathfrak{M}^2_A )^*$. Thus the functor $F$ is pro-representable by a \emph{complete local augmented Noetherian} superalgebra (c.l.a.N. superalgebra),
	that is $F$ is \emph{strictly pro-representable} in the terminology of \cite{mats-oort}, if and only if $F(\Bbbk[\epsilon_0, \epsilon_1])$ is a finite dimensional vector (super)space.
	
	One can extend the above notion of strict pro-representability for arbitrary $\Bbbk$-functor $\mathbb{X}$. In other words, $\mathbb{X}$ is strictly pro-representable by a c.l.a.N. superalgebra $A$, if for any
	$B\in\mathsf{SAlg}_{\Bbbk}$ the set $\mathbb{X}(B)$ consists of all continuous morphisms from $A$ to the discrete superalgebra $B$.   
	
	\section{Formal neighborhoods of $\Bbbk$-points}
	
	Let $\mathbb{X}$ be a $\Bbbk$-functor. If $x\in\mathbb{X}(\Bbbk)$, then we have the functors $\mathbb{X}_{n, x} : \mathsf{SL}_{\Bbbk}(n)\to\mathsf{Sets}$, defined as
	\[\mathbb{X}_{n, x}(A)=\{y\in\mathbb{X}(A)\mid \mathbb{X}(\epsilon_A)(y)=x \}, A\in\mathsf{SL}_{\Bbbk}(n), \]
	and $\mathbb{X}_x : \mathsf{SL}_{\Bbbk}\to \mathsf{Sets}$, defined as
	\[\mathbb{X}_{x}(A)=\{y\in\mathbb{X}(A)\mid \mathbb{X}(\epsilon_A)(y)=x \}, A\in\mathsf{SL}_{\Bbbk}, \]
	respectively. It is clear that $\mathbb{X}_{x}(A)=\mathbb{X}_{n, x}(A)$ provided $A\in\mathsf{SL}_{\Bbbk}(n)$. 
	
	For exmple, if $\mathbb{G}$ is a group $\Bbbk$-functor and $e$ is the unit element of the group $\mathbb{G}(\Bbbk)$, then $\mathbb{G}_e$ is a group functor as well, analogous to the group functor $P_e$ from \cite{mats-oort}.
	
	For any $A\in\mathsf{SAlg}_{\Bbbk}$ set 
	\[<\mathbb{X}_{n, x}>(A)=\cup_{B\in\mathsf{SL}_{\Bbbk}(n), \phi\in\mathrm{SSp}(B)(A)}\mathbb{X}(\phi)(\mathbb{X}_{n, x}(B))\]
	and
	\[<\mathbb{X}_x>(A)=\cup_{B\in\mathsf{SL}_{\Bbbk}, \phi\in\mathrm{SSp}(B)(A)}\mathbb{X}(\phi)(\mathbb{X}_x(B)),\]
	respectively. It is clear that all $<\mathbb{X}_{n, x}>$ and $<\mathbb{X}_x>$ are subfunctors of the functor $\mathbb{X}$. 
	
	
	Let $\mathbb{X}$ be a superscheme. For a point $x\in \mathbb{X}(\Bbbk)$ and an open affine supersubscheme
	$\mathbb{U}$ of $\mathbb{X}$, such that $x\in\mathbb{U}(\Bbbk)$, let $\mathbb{N}_{n, x}(\mathbb{X})$ denote the closed supersubscheme of $\mathbb{U}$ that isomorphic to $\mathrm{SSp}(\mathcal{O}(\mathbb{U})/\mathfrak{M}^{n+1}_x)$, where $\mathfrak{M}_x$ is the kernel of
	$x : \mathcal{O}(\mathbb{U})\to\Bbbk$. Note that $\mathcal{O}(\mathbb{U})/\mathfrak{M}^{n+1}_x\in\mathsf{SL}_{\Bbbk}(k)$.
	\begin{lm}\label{it does not depend on U}
		The definition of $\mathbb{N}_{n, x}(\mathbb{X})$ does not depend on the choice of $\mathbb{U}$. In particular, $\mathbb{N}_{n, x}(\mathbb{X})$
		is a closed supersubscheme of $\mathbb{X}$.
	\end{lm}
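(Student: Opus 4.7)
\emph{Proof plan.} My plan is to show that the closed supersubscheme $\mathbb{N}_{n,x}(\mathbb{X})$ defined by the statement depends only on the local data at $x$, more precisely only on the stalk $\mathcal{O}_{X,x}$. Once this is established, both the independence of $\mathbb{U}$ and the fact that $\mathbb{N}_{n,x}(\mathbb{X})$ is closed in $\mathbb{X}$ will drop out by an essentially formal gluing argument.

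First I would pass to the geometric picture and let $A=\mathcal{O}(\mathbb{U})$. Since $x$ is a $\Bbbk$-point, $\mathfrak{M}_x$ is a maximal superideal of $A$, so the Artinian quotient $A/\mathfrak{M}_x^{n+1}$ is already a local superalgebra with maximal superideal $\mathfrak{M}_x/\mathfrak{M}_x^{n+1}$. Consequently every element of $A\setminus\mathfrak{M}_x$ is invertible modulo $\mathfrak{M}_x^{n+1}$, and the canonical map $A\to A/\mathfrak{M}_x^{n+1}$ factors uniquely through the localization $A_{\mathfrak{M}_x}\simeq \mathcal{O}_{X,x}$, yielding a canonical isomorphism
\[ A/\mathfrak{M}_x^{n+1}\;\simeq\;\mathcal{O}_{X,x}/\mathfrak{M}_{X,x}^{n+1}. \]
The right-hand side depends only on $(\mathbb{X},x)$, so the same is true of the closed supersubscheme $\mathrm{SSpec}(A/\mathfrak{M}_x^{n+1})\hookrightarrow U$. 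To deduce independence of $\mathbb{U}$ strictly, given two affine opens $\mathbb{U},\mathbb{V}\ni x$, pick a common principal open $\mathbb{U}_f\subseteq \mathbb{U}\cap\mathbb{V}$ with $f\in A_0$, $f(x)\neq 0$; applying the above isomorphism to both $A$ and $A_f$ shows that the closed supersubschemes cut out by $\mathfrak{M}_x^{n+1}$ and by $(\mathfrak{M}_x A_f)^{n+1}$ agree on $\mathbb{U}_f$, and running the same argument from the $\mathbb{V}$-side gives compatibility of the two candidates on $\mathbb{U}\cap\mathbb{V}$ as subfunctors of $\mathbb{X}$.

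Finally, I would glue. The singleton $\{x\}\subseteq|\mathbb{X}|$ is closed, since a $\Bbbk$-point is a closed point in every affine open $|\mathbb{U}|$ containing it and meets every other affine open in the empty set. Define the geometric superspace $N$ with $|N|=\{x\}$ and structure sheaf the skyscraper at $x$ with stalk $\mathcal{O}_{X,x}/\mathfrak{M}_{X,x}^{n+1}$. On each affine open $\mathbb{U}\ni x$ this restricts, via the canonical isomorphism of the previous paragraph, to $\mathrm{SSpec}(A/\mathfrak{M}_x^{n+1})$, while on affine opens missing $x$ it restricts to the empty closed supersubscheme. Since being a closed immersion is local on the target, $N\hookrightarrow X$ is a closed immersion, and translating back to $\Bbbk$-functors via the Comparison Theorem produces the desired closed subfunctor $\mathbb{N}_{n,x}(\mathbb{X})$ of $\mathbb{X}$. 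The only potentially delicate point is the compatibility check in the middle paragraph, but it is pure bookkeeping once the intrinsic description of the quotient in terms of the stalk is in hand.
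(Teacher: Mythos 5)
Your proof is correct, and while its computational core coincides with the paper's, the packaging is genuinely different in both halves. The shared kernel is the observation that inverting an element $f\notin\mathfrak{M}_x$ does not change $\mathcal{O}(\mathbb{U})/\mathfrak{M}_x^{n+1}$, since $f$ is already a unit in that local quotient with nilpotent maximal superideal. The paper exploits this chart-by-chart: it reduces to the case $\mathbb{V}\subseteq\mathbb{U}$, uses \cite[Lemma 3.5]{maszub1} to present the open immersion through localizations $A_{a_i}\simeq B_{\phi(a_i)}$, selects an $a_i\notin\mathfrak{M}_x$, and deduces that the induced map $A/\mathfrak{M}_x^{n+1}\to B/\mathfrak{N}_x^{n+1}$ is an isomorphism; the closedness of $\mathbb{N}_{n,x}(\mathbb{X})$ in $\mathbb{X}$ is then obtained functorially, from the locality of this affine functor together with a superized \cite[Lemma I.1.13]{jan}. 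You instead make the quotient intrinsic from the start, identifying it with $\mathcal{O}_{X,x}/\mathfrak{M}_{X,x}^{n+1}$ via the factorization of $A\to A/\mathfrak{M}_x^{n+1}$ through the stalk, so that independence of $\mathbb{U}$ becomes essentially automatic, and you establish closedness geometrically by gluing the skyscraper superspace supported at the closed point $x$ and passing back through the Comparison Theorem. Both routes are complete; yours trades the appeal to locality of $\Bbbk$-functors for a stalkwise verification of the closed-immersion property, at the price of the containment check you rightly call bookkeeping --- namely that the candidate subfunctor attached to $\mathbb{U}$ is contained in every smaller open neighborhood of $x$, which holds because any $y\in\mathbb{N}_{n,x}(\mathbb{U})(B)$ sends each $f\notin\mathfrak{M}_x$ to a unit of $B$.
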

	\begin{proof}
		Let $\mathbb{V}$ be another affine supersubscheme of $\mathbb{X}$, such that $x\in\mathbb{V}(\Bbbk)$. Since there is an open affine supersubscheme $\mathbb{W}\subseteq\mathbb{U}\cap\mathbb{V}$ with $x\in\mathbb{W}(\Bbbk)$, one can assume that $\mathbb{V}\subseteq\mathbb{U}$. The latter embedding is dual to a superalgebra morphism $\phi : A\to B$, where $A=\mathcal{O}(\mathbb{U})$ and
		$B=\mathcal{O}(\mathbb{V})$. Note that the maximal superideal $\mathfrak{M}_x=\ker(x : A\to\Bbbk)$ equals $\phi^{-1}(\mathfrak{N}_x)$, where
		$\mathfrak{N}_x$ is the maximal superideal $\ker(x : B\to\Bbbk)$.
		
		By \cite[Lemma 3.5]{maszub1} there are elements $a_1, \ldots, a_k\in A_0$ such that $\sum_{1\leq i\leq k} B_0\phi(a_i)=B_0$ and $\phi$ induces an isomorphism $A_{a_i}\simeq B_{\phi(a_i)}$ for each $i$. Then the natural embedding $\mathbb{N}_{n, x}(\mathbb{V})\to \mathbb{N}_{n, x}(\mathbb{U})$ is dual to the induced local morphism $\overline{\phi} : A/\mathfrak{M}^{n+1}_x\to B/\mathfrak{N}^{n+1}_x$ of local superalgebras. There is $a_i$ that does not belong to $\mathfrak{M}_x$, hence for its residue $\overline{a_i}$ modulo $\mathfrak{M}^{n+1}_x$ we have the isomorphism
		\[A/\mathfrak{M}^{n+1}_x=(A/\mathfrak{M}^{n+1}_x)_{\overline{a_i}}\to (B/\mathfrak{N}^{n+1}_x)_{\overline{\phi}(\overline{a_i})}=B/\mathfrak{N}^{n+1}_x .\]
		Finally, $\mathbb{N}_{n, x}(\mathbb{X})$ is an affine superscheme, hence a local functor. The obvious superization of \cite[Lemma I.1.13]{jan}
		implies the second statement. Lemma is proven.
	\end{proof}
	\begin{rem}\label{k-th neighborhood}
		If $\mathbb{X}$ is represented by a geometric superscheme $X$, then the closed supersubscheme $\mathbb{N}_{n, x}(\mathbb{X})$ is represented by the {\bf $n$-th neighborhood} of the superscheme morphism $\mathrm{SSpec}(\Bbbk)\to X$ that sends the unique point of the underlying topological space of $\mathrm{SSpec}(\Bbbk)$ to $x$ (see \cite[Section 1.4]{maszub2}).  	
	\end{rem}
	We have an ascending series of closed supersubschemes
	\[\mathbb{N}_{1, x}(\mathbb{X})\subseteq \mathbb{N}_{2, x}(\mathbb{X})\subseteq\ldots   \]
	and hence the subfunctor $\mathbb{N}_{x}(\mathbb{X})=\cup_{n\geq 1}\mathbb{N}_{n, x}(\mathbb{X})$.
	\begin{lm}\label{characterization of neigborhoods}
		If $\mathbb{X}$ is a superscheme, then for any $x\in\mathbb{X}(\Bbbk)$ and $n\geq 1$ we have $\mathbb{N}_x(\mathbb{X})=<\mathbb{X}_x>$	
		and $\mathbb{N}_{n, x}(\mathbb{X})=<\mathbb{X}_{n, x}>$.
	\end{lm}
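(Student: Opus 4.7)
The plan is to establish $\mathbb{N}_{n,x}(\mathbb{X})=<\mathbb{X}_{n,x}>$ as subfunctors of $\mathbb{X}$; the equality $\mathbb{N}_x(\mathbb{X})=<\mathbb{X}_x>$ then follows at once, since each $B\in\mathsf{SL}_{\Bbbk}$ lies in some $\mathsf{SL}_{\Bbbk}(n)$ and both sides are the ascending unions of their $n$-th level pieces. Throughout, fix an open affine $\mathbb{U}\simeq\mathrm{SSp}(R)$ with $x\in\mathbb{U}(\Bbbk)$, so that $\mathbb{N}_{n,x}(\mathbb{X})\simeq\mathrm{SSp}(R/\mathfrak{M}_x^{n+1})$, where $\mathfrak{M}_x=\ker(x:R\to\Bbbk)$.

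To prove $<\mathbb{X}_{n,x}>\subseteq\mathbb{N}_{n,x}(\mathbb{X})$, I would take an arbitrary $y=\mathbb{X}(\phi)(z)$ with $z\in\mathbb{X}_{n,x}(B)$, $B\in\mathsf{SL}_{\Bbbk}(n)$, and $\phi\in\mathrm{SSp}(B)(A)$, and first argue that $z\in\mathbb{U}(B)$. The preimage $z^{-1}(\mathbb{U})$ is an open subfunctor of $\mathrm{SSp}(B)$, cut out by some superideal $J\subseteq B$ via $z^{-1}(\mathbb{U})(C)=\{\psi\in\mathrm{SSp}(B)(C)\mid \psi(J)C=C\}$. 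The hypothesis $\mathbb{X}(\epsilon_B)(z)=x\in\mathbb{U}(\Bbbk)$ says $\epsilon_B\in z^{-1}(\mathbb{U})(\Bbbk)$, which forces $\epsilon_B(J)\neq 0$, i.e.\ $J\not\subseteq\mathfrak{M}_B$; locality of $B$ then gives $J=B$ and $z^{-1}(\mathbb{U})=\mathrm{SSp}(B)$, as wanted. Viewing $z$ now as a superalgebra morphism $u:R\to B$, the relation $\epsilon_B\circ u=x$ gives $u(\mathfrak{M}_x)\subseteq\mathfrak{M}_B$, and nilpotency $\mathfrak{M}_B^{n+1}=0$ forces $u$ to factor through $R/\mathfrak{M}_x^{n+1}$. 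Hence $z\in\mathbb{N}_{n,x}(\mathbb{X})(B)$, and applying $\mathbb{X}(\phi)$ together with the fact that $\mathbb{N}_{n,x}(\mathbb{X})$ is a subfunctor of $\mathbb{X}$ yields $y\in\mathbb{N}_{n,x}(\mathbb{X})(A)$.

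For the reverse inclusion I would use a Yoneda-style argument. Set $B=R/\mathfrak{M}_x^{n+1}\in\mathsf{SL}_{\Bbbk}(n)$, and let $z\in\mathbb{X}(B)$ denote the image of $\mathrm{id}_B\in\mathbb{N}_{n,x}(\mathbb{X})(B)=\mathrm{Hom}(B,B)$ under the inclusion $\mathbb{N}_{n,x}(\mathbb{X})\hookrightarrow\mathbb{X}$. Unwinding the definitions, the composition $\epsilon_B\circ\mathrm{id}_B$ identifies with $x:R\to\Bbbk$, so $\mathbb{X}(\epsilon_B)(z)=x$ and $z\in\mathbb{X}_{n,x}(B)$. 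Any $y\in\mathbb{N}_{n,x}(\mathbb{X})(A)$ corresponds to a morphism $\phi:B\to A$, and naturality of the inclusion $\mathbb{N}_{n,x}(\mathbb{X})\hookrightarrow\mathbb{X}$ gives $y=\mathbb{X}(\phi)(z)\in<\mathbb{X}_{n,x}>(A)$, finishing the proof. The only nonroutine step in the whole argument is the factorization of $z:\mathrm{SSp}(B)\to\mathbb{X}$ through the chosen open affine, where the locality of $B$ combined with the sheaf-theoretic description of open subfunctors does the work; everything else is bookkeeping with Yoneda and nilpotency.
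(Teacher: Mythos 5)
Your proof is correct and follows essentially the same route as the paper: reduce to the fixed open affine containing $x$ (the paper does this by noting that a $B$-point with $B\in\mathsf{SL}_{\Bbbk}(n)$ sends the unique point of $|\mathrm{SSpec}(B)|$ into $|U|$, while you phrase the same fact via the superideal cutting out the open subfunctor and the locality of $B$), and then compare kernel conditions in the affine case. Your Yoneda argument with the universal element of $\mathrm{SSp}(R/\mathfrak{M}_x^{n+1})$ just makes explicit the reverse inclusion that the paper leaves as a one-line remark.
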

	\begin{proof}
		Let $B\in\mathsf{SL}_{\Bbbk}(n)$. An element $y\in\mathbb{X}(B)$ belongs to $\mathbb{X}_{n, x}(B)$ if and only if
		$y$ sends the inique point of the underlying topological space $|\mathrm{SSpec}(B)|$ to $x$. If $U$ is an open supersubscheme of $X$ such that $x\in |U|$, then $y\in\mathbb{U}(B)$, hence $\mathbb{X}_{n, x}(B)\subseteq\mathbb{U}(B)$. Thus $<\mathbb{X}_{n, x}>\subseteq \mathbb{U}$ for any $n$, hence $<\mathbb{X}_x>\subseteq\mathbb{U}$ as well. It remains to show that our statement holds for $\mathbb{X}$ to be affine.
		As above, $x\in\mathbb{X}(\Bbbk)$ is a superalgebra morphism $\mathcal{O}(\mathbb{X})\to\Bbbk$ and the superalgebra morphism 
		$y : \mathcal{O}(\mathbb{X})\to A$ belongs to $\mathbb{N}_{n, x}(\mathbb{X})(A)$ if and only if $(\ker x)^{n+1}\subseteq \ker y\subseteq\ker x$. Lemma is proven.	
	\end{proof}
	For any open affine supersubscheme $\mathbb{U}$ in $\mathbb{X}$ such that $x\in\mathbb{U}(\Bbbk)$, the stalk $\mathcal{O}_{X, x}=\mathcal{O}_x$ is isomorphic to $\mathcal{O}(\mathbb{U})_{\mathfrak{M}_x}$. Moreover, if $\mathbb{X}$ is locally algebraic, then
	$\widehat{\mathcal{O}_x}\simeq \varprojlim_k \mathcal{O}(\mathbb{U})/\mathfrak{M}^{n+1}_x$ is a c.l.a.N. superalgebra.
	
	As in \cite[Section 9]{maszub2} one can note that $\mathbb{N}_x(\mathbb{X})$ is strictly pro-representable, provided $\mathbb{X}$ is a locally algebraic superscheme.  Indeed, for any $A\in\mathsf{SAlg}_{\Bbbk}$ the set $\mathbb{N}_x(\mathbb{X})(A)$ can be naturally identified with the set of all continuous superalgebra morphisms 
	$\widehat{\mathcal{O}_x}\to A$, where $A$ is regarded as a discrete superalgebra. 
	
	Note that an element $a$ of a superalgebra $A$ is nilpotent if and only if its even component $a_0$ is. Thus one can easily derive that
	the set of all nilpotent elements of $A$ is the largest locally nilpotent superideal. We denote this superideal by $\mathrm{nil}(A)$.
	Then $\mathrm{nil}(A)=\mathrm{nil}(A_0)\oplus A_1$.
	
	The following lemma is a folklore.
	\begin{lm}\label{folklore}
		Let $\phi : A\to B$ be an injective superalgebra morphism, such that the induced superscheme morphism $\mathrm{SSpec}(\phi)$
		is surjective on the underlying topological spaces. Then for any (not necessary of locally finite type) superscheme $\mathbb{X}$ the map
		$\mathbb{X}(A)\to\mathbb{X}(B)$ is injective.  
	\end{lm}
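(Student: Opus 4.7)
The plan is to reduce the general case to the affine case, where the statement is essentially tautological: if $\mathbb{X} = \mathrm{SSp}(C)$, then $\mathbb{X}(A) = \mathrm{Hom}_{\mathsf{SAlg}_{\Bbbk}}(C, A)$, and the map $\mathrm{Hom}(C, A) \to \mathrm{Hom}(C, B)$ sending $f \mapsto \phi \circ f$ is obviously injective since $\phi$ is. The task, therefore, is to leverage the local (Zariski sheaf) nature of $\mathbb{X}$ together with the surjectivity of $|\mathrm{SSpec}(\phi)|$.

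Working on the geometric side, I would start with two morphisms $f, g : \mathrm{SSpec}(A) \to X$ that become equal after precomposition with $s = \mathrm{SSpec}(\phi) : \mathrm{SSpec}(B) \to \mathrm{SSpec}(A)$. First, the induced continuous maps satisfy $|f| \circ |s| = |g| \circ |s|$, and surjectivity of $|s|$ forces $|f| = |g|$. Next, I would choose an open affine cover $\{V_j\}$ of $X$ and set $W_j = |f|^{-1}(|V_j|) = |g|^{-1}(|V_j|)$, then cover each $W_j$ by principal open affines $\mathrm{SSpec}(A_{a_\alpha})$ with $a_\alpha \in A_0$. Since $\{V_j\}$ covers $X$, the collection $\{\mathrm{SSpec}(A_{a_\alpha})\}$ is a local covering of $\mathrm{SSpec}(A)$. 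By the sheaf property of $\mathbb{X}$, it suffices to verify that $f$ and $g$ agree on each such chart.

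On $\mathrm{SSpec}(A_{a_\alpha})$, both $f$ and $g$ restrict to morphisms into $V_j$, dual to superalgebra maps $u, v : \mathcal{O}(V_j) \to A_{a_\alpha}$. Precomposition with $s$ restricted to the open $\mathrm{SSpec}(B_{\phi(a_\alpha)})$ corresponds to further composing with the localization morphism $\phi_{a_\alpha} : A_{a_\alpha} \to B_{\phi(a_\alpha)}$, so we know $\phi_{a_\alpha} \circ u = \phi_{a_\alpha} \circ v$. The crux, then, is to check that $\phi_{a_\alpha}$ remains injective. This is a direct calculation: if $r / a_\alpha^n$ maps to $0$, then $\phi(a_\alpha^m r) = \phi(a_\alpha)^m \phi(r) = 0$ in $B$ for some $m$, hence $a_\alpha^m r = 0$ in $A$ by injectivity of $\phi$, so $r / a_\alpha^n = 0$ in $A_{a_\alpha}$.

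With injectivity of each $\phi_{a_\alpha}$ in hand, the affine case gives $u = v$ on every chart, and the sheaf property of $\mathbb{X}$ then yields $f = g$. I do not expect a genuine obstacle in this argument; the only point requiring a little care is confirming that the principal open cover $\{\mathrm{SSpec}(A_{a_\alpha})\}$ really is a local covering in the sense of the topology used to define the sheaf property, which is automatic from the fact that $\{W_j\}$ already covers $|\mathrm{SSpec}(A)|$ and each $W_j$ is a union of such principal opens.
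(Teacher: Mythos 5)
Your proposal is correct and follows essentially the same route as the paper: use surjectivity of $|\mathrm{SSpec}(\phi)|$ to see that the two morphisms agree on underlying spaces, reduce via the local (sheaf) property of $\mathbb{X}$ to principal open affine charts $\mathrm{SSpec}(A_a)$ mapping into an affine chart of $X$, observe that the induced map $A_a\to B_{\phi(a)}$ is again injective, and conclude from the obvious affine case. The only difference is that you spell out the injectivity of the localized map, which the paper leaves implicit.
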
 
	\begin{proof}
		Let $\pi$ denote $\mathrm{SSpec}(\phi)$. The statement of lemma is equivalent to the following : if $x, y\in\mathbb{X}(A)$ such that $x\pi=y\pi$, then
		$x=y$. It is clear that for any open affine supersubscheme $U$ of $X$ there holds $x^{-1}(U)=y^{-1}(U)$. Thus all we need is to show that the restrictions of $x$ and $y$ on any open affine supersubscheme $\mathrm{SSpec}(A_a)\subseteq x^{-1}(U)=y^{-1}(U)$, where $a\in A_0$, coincide to each other. Since $A_a\to B_a$ satisfies the conditions of lemma, the proof reduces to the case when $X$ is affine and the statement of lemma is obvious.   	
	\end{proof}
	The following lemma is a partial superization of \cite[I.3]{mur}. 
	\begin{lm}\label{inductive limit}
		Let $\mathbb{X}$ be a locally algebraic superscheme. Then for any inductive system of superalgebras $\{A_{\alpha}\}_{\alpha\in I}$ the canonical map $p :  \varinjlim\mathbb{X}(A_{\alpha})\to \mathbb{X}(\varinjlim A_{\alpha})$ is bijective. 	
	\end{lm}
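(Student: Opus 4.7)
The plan is to reduce to the affine case and exploit that finitely generated $\Bbbk$-superalgebras are finitely presented (the polynomial superalgebra $\Bbbk[x_1,\ldots,x_m|y_1,\ldots,y_n]$ is Noetherian, hence every ideal of relations is finitely generated). Set $A=\varinjlim A_\alpha$. Using that $\mathbb{X}$ is locally algebraic, choose an open covering of the underlying geometric superscheme $X$ by affine supersubschemes $U_i\simeq\mathrm{SSpec}(B_i)$ with each $B_i$ a finitely generated $\Bbbk$-superalgebra.

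For injectivity, two classes in $\varinjlim\mathbb{X}(A_\alpha)$ can, after passing to a common index, be represented by morphisms $f,g\colon\mathrm{SSpec}(A_\alpha)\to X$ whose base changes to $\mathrm{SSpec}(A)$ coincide. Since $|\mathrm{SSpec}(A_\alpha)|=\mathrm{Spec}((A_\alpha)_0)$ is quasi-compact, one can cover it by finitely many principal open supersubschemes $\mathrm{SSpec}((A_\alpha)_{a_k})$, each contained in some $f^{-1}(U_{i(k)})\cap g^{-1}(U_{i(k)})$. On each such piece, $f$ and $g$ restrict to superalgebra morphisms $B_{i(k)}\to (A_\alpha)_{a_k}$ that become equal after the further composition with $(A_\alpha)_{a_k}\to A_{a_k}$. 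Because $B_{i(k)}$ is finitely generated, equality of these morphisms already holds in $(A_\beta)_{a_k}$ for some $\beta\geq\alpha$; choosing $\beta$ common to all indices $k$ forces $f=g$ in $\mathbb{X}(A_\beta)$.

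For surjectivity, take $h\colon\mathrm{SSpec}(A)\to X$. Quasi-compactness of $|\mathrm{SSpec}(A)|=\mathrm{Spec}(A_0)$ lets us refine the cover $\{h^{-1}(U_i)\}$ by a finite collection of principal opens $\mathrm{SSpec}(A_{a_k})$, $a_k\in A_0$, with $\mathrm{SSpec}(A_{a_k})\subseteq h^{-1}(U_{i(k)})$ and $\sum A_0a_k=A_0$. Each restriction $h|_{\mathrm{SSpec}(A_{a_k})}$ is dual to a superalgebra morphism $B_{i(k)}\to A_{a_k}=\varinjlim(A_\alpha)_{a_k}$, and finite presentation of $B_{i(k)}$ allows this to factor through some $(A_\beta)_{a_k}$. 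The remaining finite list of constraints---the cocycle relations on each double overlap $\mathrm{SSpec}(A_{a_ka_l})$, and the single partition-of-unity relation $\sum A_0a_k=A_0$---involves only finitely many equalities in $A$ or its localisations, hence descends to a common stage $\beta$, yielding a morphism $h_\beta\colon\mathrm{SSpec}(A_\beta)\to X$ whose pullback to $A$ is $h$.

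The main obstacle is the bookkeeping in the surjectivity step: one must guarantee that the locally defined morphisms $B_{i(k)}\to(A_\beta)_{a_k}$ agree on all overlaps and that the chosen $a_k$ already generate the unit ideal at stage $\beta$. Because the cover is finite, each $B_{i(k)}$ has only finitely many generators, and the partition-of-unity relation is itself a finite sum, only finitely many equalities in the colimit need to be realised, so a single sufficiently large $\beta$ suffices. The injectivity half, dealing with two morphisms already defined over $A_\alpha$, is then a direct specialisation of the same quasi-compactness and finite-presentation arguments.
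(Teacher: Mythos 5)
Your proposal is correct and follows essentially the same route as the paper: reduce to finite affine/principal-open coverings via quasi-compactness, use finite generation (and, for surjectivity, finite presentation) of the coordinate superalgebras $\mathcal{O}(U_i)$ to descend the finitely many defining morphisms and compatibility relations to a common finite stage. You are in fact slightly more explicit than the paper on two points it leaves implicit — that surjectivity needs finite presentation (guaranteed by Noetherianity of polynomial superalgebras) rather than mere finite generation, and that the overlap compatibilities and the partition-of-unity relation must also be realised at a finite stage.
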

	\begin{proof}
		Let $A$ denote $\varinjlim A_{\alpha}$. Assume that we have two morphisms $x : \mathrm{SSpec}(A_{\alpha})\to X$ and $y : \mathrm{SSpec}(A_{\beta})\to X$ such that the diagram
		\[\begin{array}{ccc}
			\mathrm{SSpec}(A) & \to & \mathrm{SSpec}(A_{\alpha}) \\ 
			\downarrow &  & \downarrow x\\
			\mathrm{SSpec}(A_{\beta}) & \stackrel{y}{\to} & X
		\end{array}  \] 
		is commutative. There are a finite collection of affine open supersubschemes $U_i$ in $X$ and two finite collections of elements
		$x_{ij}\in A_{\alpha}, y_{is}\in A_{\beta}$, where $1\leq i\leq k, 1\leq j\leq m, 1\leq s\leq n$, such that 
		\begin{enumerate}
			\item[(a)] $\mathrm{SSpec}((A_{\alpha})_{x_{ij}})\subseteq x^{-1}(U_i)$ and $\mathrm{SSpec}((A_{\beta})_{y_{is}})\subseteq y^{-1}(U_i)$ for each  triple $i, j, s$;
			\item[(b)] $\cup_{1\leq i\leq k, 1\leq j\leq m}\mathrm{SSpec}((A_{\alpha})_{x_{ij}})=\mathrm{SSpec}(A_{\alpha})$ and 
			$\cup_{1\leq i\leq k, 1\leq s\leq n}\mathrm{SSpec}((A_{\beta})_{y_{is}})=\mathrm{SSpec}(A_{\beta})$.
		\end{enumerate}
		The commutativity of the above diagram is equivalent to the commutativity of  the diagram 
		\[ \begin{array}{ccc}
			A_{x_{ij}y_{is}} & \leftarrow & (A_{\alpha})_{x_{ij}} \\
			\uparrow & & \uparrow \\
			(A_{\beta})_{y_{is}} & \leftarrow & \mathcal{O}(U_i)  
		\end{array} \]
		for any triple $i, j, s$.  Since each superalgebra $\mathcal{O}(U_i)$ is finitely generated, there is $\gamma\geq\alpha, \beta$ such that
		these diagrams remain commutative even if we replace $A$ by $A_{\gamma}$. In other words, the images of $x$ and $y$ in $\mathbb{X}(A_{\gamma})$ coincide to each other, hence $p$ is injective.
		
		Further, if $x\in\mathbb{X}(A)$, then $x$ is uniquely defined by its restrictions $\mathrm{SSpec}(A_{x_{ij}})\to U_i$, where $\mathrm{SSpec}(A_{x_{ij}})$ form a finite covering of $\mathrm{SSpec}(A)$ by open affine supersubschemes. 
		Again, since each $\mathcal{O}(U_i)$ is finitely generated, all the corresponding superalgebra morphism $\mathcal{O}(U_i)\to A_{x_{ij}}$ factor
		through some $(A_{\alpha})_{x_{ij}}$, such that any $x_{ij}$ belongs to $A_{\alpha}$ as well. Thus $p$ is surjective.     
	\end{proof}
	For any superalgebra $A$ we define a local supersubalgebra $\mathsf{n}(A)=\Bbbk\oplus\mathrm{nil}(A)$. The natural augmentation map $\mathsf{n}(A)\to\Bbbk$ is denoted by the same symbol $\epsilon_A$. The latter does not lead to confusion with the previous notation.
	Let $\pi_A$ and $\iota_A$ also denote the quotient superalgebra morphism
	$A\to A/\mathrm{nil}(A)=\widetilde{A}$ and the natural injection $\mathsf{n}(A)\to A$ respectively.
	\begin{pr}\label{another characterization of neighborhood}
		Let $\mathbb{X}$ be a locally algebraic superscheme. For any $A\in\mathsf{SAlg}_{\Bbbk}$ we define 
		$\mathbb{X}_x(\mathsf{n}(A))=\{y\in\mathbb{X}(\mathsf{n}(A))\mid  \mathbb{X}(\epsilon_A)(y)=x\}$. Then the injection $\iota_A$ induces the bijection $\mathbb{X}_x(\mathsf{n}(A))\to \mathsf{N}_x(\mathbb{X})(A)$. 
	\end{pr}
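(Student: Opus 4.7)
The plan is to establish the bijection in three steps: well-definedness, injectivity, and surjectivity, making essential use of Lemma \ref{characterization of neigborhoods} (which identifies $\mathbb{N}_x(\mathbb{X})$ with $<\mathbb{X}_x>$), Lemma \ref{inductive limit} (to pass to finite-dimensional sub-superalgebras) and Lemma \ref{folklore} (for injectivity).

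First I would check that $\mathbb{X}(\iota_A)$ sends $\mathbb{X}_x(\mathsf{n}(A))$ into $\mathbb{N}_x(\mathbb{X})(A)$. The key point is that $\mathsf{n}(A)=\Bbbk\oplus\mathrm{nil}(A)$ is the filtered colimit of its finite-dimensional unital $\Bbbk$-sub-superalgebras $B$: since every element of $\mathrm{nil}(A)$ is nilpotent, the $\Bbbk$-sub-superalgebra generated by any finite subset of $\mathsf{n}(A)$ is finite-dimensional. For each such $B$ the maximal superideal $\mathfrak{M}_B=B\cap\mathrm{nil}(A)$ is nilpotent, so $B\in\mathsf{SL}_{\Bbbk}$ and $\epsilon_A|_B=\epsilon_B$. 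By Lemma \ref{inductive limit}, $\mathbb{X}(\mathsf{n}(A))=\varinjlim_B\mathbb{X}(B)$, so any $y\in\mathbb{X}_x(\mathsf{n}(A))$ is the image of some $y_B\in\mathbb{X}_x(B)$; then $\mathbb{X}(\iota_A)(y)=\mathbb{X}(\iota_A|_B)(y_B)$ lies in $<\mathbb{X}_x>(A)=\mathbb{N}_x(\mathbb{X})(A)$.

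For injectivity I would invoke Lemma \ref{folklore} applied to the morphism $\iota_A:\mathsf{n}(A)\to A$: it is clearly injective, and $\mathrm{SSpec}(\iota_A)$ is surjective on underlying spaces because $\mathsf{n}(A)_0=\Bbbk\oplus\mathrm{nil}(A_0)$ has only the prime $\mathrm{nil}(A_0)$ (every element outside is a unit), and any prime of $A_0$ contracts to it (it contains $\mathrm{nil}(A_0)$ and meets $\Bbbk^\ast$ trivially). Hence $\mathbb{X}(\iota_A)$ is injective on all of $\mathbb{X}(\mathsf{n}(A))$, in particular on $\mathbb{X}_x(\mathsf{n}(A))$.

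For surjectivity, take $z\in\mathbb{N}_x(\mathbb{X})(A)=<\mathbb{X}_x>(A)$ and write $z=\mathbb{X}(\phi)(y)$ with $B\in\mathsf{SL}_{\Bbbk}$, $\phi\in\mathrm{Hom}(B,A)$ and $y\in\mathbb{X}_x(B)$. Since $\mathfrak{M}_B$ is nilpotent, $\phi(\mathfrak{M}_B)\subseteq\mathrm{nil}(A)$, while $\phi(\Bbbk)=\Bbbk$; thus $\phi(B)\subseteq\Bbbk\oplus\mathrm{nil}(A)=\mathsf{n}(A)$, giving a factorization $\phi=\iota_A\circ\tilde\phi$ with $\tilde\phi:B\to\mathsf{n}(A)$ and $\epsilon_A\circ\tilde\phi=\epsilon_B$. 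Setting $y':=\mathbb{X}(\tilde\phi)(y)$ produces an element of $\mathbb{X}_x(\mathsf{n}(A))$ mapping to $z$. The main obstacle is the well-definedness step, which requires the filtered-colimit presentation of $\mathsf{n}(A)$ together with the hypothesis that $\mathbb{X}$ is locally algebraic (so that Lemma \ref{inductive limit} applies); once that is in place, injectivity and surjectivity follow by direct applications of Lemma \ref{folklore} and the universal property of the inclusion $\mathsf{n}(A)\hookrightarrow A$.
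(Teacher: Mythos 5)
Your proposal is correct and follows essentially the same route as the paper's proof: both present $\mathsf{n}(A)$ as the filtered colimit of its finite-dimensional (augmented) sub-superalgebras lying in $\mathsf{SL}_{\Bbbk}$, use Lemma \ref{inductive limit} to identify $\mathbb{X}_x(\mathsf{n}(A))$ with $\varinjlim\mathbb{X}_x(C_\alpha)$ and hence land in $<\mathbb{X}_x>(A)=\mathbb{N}_x(\mathbb{X})(A)$, invoke Lemma \ref{folklore} for injectivity, and obtain surjectivity by factoring any $\phi:B\to A$ with $B\in\mathsf{SL}_{\Bbbk}$ through $\mathsf{n}(A)$. Your write-up is in fact slightly more explicit than the paper's (e.g., verifying that $\mathrm{SSpec}(\iota_A)$ is surjective on underlying spaces), but there is no substantive difference.
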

	\begin{proof}
		It is clear that the collection of all (augmented) finitely generated supersubalgebras $C_{\alpha}$ of $\mathsf{n}(A)$ form an inductive system, such that $\mathsf{n}(A)=\varinjlim C_{\alpha}=\cup_{\alpha\in I}C_{\alpha}$. Besides, each $C_{\alpha}$ belongs to $\mathsf{SL}_{\Bbbk}$. Lemma \ref{folklore} implies that $\mathbb{X}_x(\mathsf{n}(A))\to\mathbb{X}(A)$ and each $\mathbb{X}_x(C_{\alpha})\to \mathbb{X}(A)$ are injective. Since $\mathbb{X}_x(\mathsf{n}(A))\simeq\varinjlim\mathbb{X}_x(C_{\alpha})$, the map $\mathbb{X}_x(\mathsf{n}(A))\to\mathbb{X}(A)$ is the direct limit of the maps
		$\mathbb{X}_x(C_{\alpha})\to \mathbb{X}(A)$, hence the image $\mathbb{X}_x(\mathsf{n}(A))$ is contained in $\mathsf{N}_x(\mathbb{X})(A)$.
		Conversely, for any $B\in\mathsf{SL}_{\Bbbk}(n)$ and any superalgebra morphism $B\to A$ the map $\mathbb{X}_{n, x}(B)\to\mathbb{X}(A)$
		obviously factors through $\mathbb{X}_x(\mathsf{n}(A))\to\mathbb{X}(A)$. Proposition is proven. 
	\end{proof}
	Let $x_A$ denote the image of $x$ in $\mathbb{X}(A)$ with respect to the natural map $\mathbb{X}(\Bbbk)\to \mathbb{X}(A)$. In particular,
	$x_{\Bbbk}=x$. 
	\begin{pr}\label{final about neighborhood}
		Let $\mathbb{X}$ be a locally algebraic superscheme. Then $\mathbb{N}_x(\mathbb{X})(A)=\mathbb{X}(\pi_A)^{-1}(x_{\widetilde{A}})$.	
	\end{pr}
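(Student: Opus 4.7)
The plan is to prove the two inclusions separately. For the forward direction $\mathbb{N}_x(\mathbb{X})(A)\subseteq\mathbb{X}(\pi_A)^{-1}(x_{\widetilde{A}})$, I would take $y\in\mathbb{N}_x(\mathbb{X})(A)$, invoke Proposition \ref{another characterization of neighborhood} to write $y=\mathbb{X}(\iota_A)(z)$ for some $z\in\mathbb{X}_x(\mathsf{n}(A))$, and then note that the superalgebra morphism $\pi_A\iota_A:\mathsf{n}(A)\to\widetilde{A}$ sends $\lambda+n$ to $\lambda$, so it factors as $\epsilon_A$ followed by the structure map $\Bbbk\to\widetilde{A}$. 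Applying $\mathbb{X}$ and using $\mathbb{X}(\epsilon_A)(z)=x$ then yields $\mathbb{X}(\pi_A)(y)=x_{\widetilde{A}}$.

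The reverse inclusion carries the real content, and I would split it into two steps. The first is a topological reduction: given $y\in\mathbb{X}(A)$ with $\mathbb{X}(\pi_A)(y)=x_{\widetilde{A}}$, I argue that the underlying map $|y|:|\mathrm{SSpec}(A)|\to|X|$ has image $\{|x|\}$. This rests on the fact that $\mathrm{nil}(A)$ lies in every prime superideal, so $|\mathrm{SSpec}(\pi_A)|$ is a homeomorphism, together with the observation that $|x_{\widetilde{A}}|$ has image $\{|x|\}$ because $x_{\widetilde{A}}$ factors through $x:\mathrm{SSpec}(\Bbbk)\to X$. Consequently $y$ factors through every open affine supersubscheme $\mathbb{U}\subseteq\mathbb{X}$ containing $x$.

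The second step is the affine computation. Writing such an $\mathbb{U}=\mathrm{SSp}(C)$, the element $y$ becomes a superalgebra morphism $\phi:C\to A$ with $\pi_A\phi(c)=x(c)$ in $\widetilde{A}$ for every $c\in C$; equivalently $\phi(c)-x(c)\in\mathrm{nil}(A)$, so $\phi(C)\subseteq\Bbbk\oplus\mathrm{nil}(A)=\mathsf{n}(A)$. Hence $\phi$ factors as $\iota_A\psi$ for a unique $\psi:C\to\mathsf{n}(A)$, and the identity $\epsilon_A\psi=x$ places $\psi$ in $\mathbb{X}_x(\mathsf{n}(A))$. A second appeal to Proposition \ref{another characterization of neighborhood} then identifies $y=\mathbb{X}(\iota_A)(\psi)$ with an element of $\mathbb{N}_x(\mathbb{X})(A)$.

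The main obstacle is the first step of the reverse inclusion, namely the reduction to an affine open. The affine step is a short formal manipulation, but the topological argument showing $|y|$ is supported at $|x|$ must be made carefully, since it is the only place where the hypothesis $\mathbb{X}(\pi_A)(y)=x_{\widetilde{A}}$ is used to constrain $y$ geometrically rather than purely algebraically.
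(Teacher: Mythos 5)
Your proposal is correct and follows essentially the same route as the paper's proof: the forward inclusion via Proposition \ref{another characterization of neighborhood} and the factorization of $\pi_A\iota_A$ through $\Bbbk$, and the reverse inclusion by first using that $|\mathrm{SSpec}(\pi_A)|$ is a homeomorphism to reduce to an open affine $\mathbb{U}\ni x$, then observing that $\phi(C)\subseteq\mathsf{n}(A)$ so that $y$ factors through $\iota_A$. The paper phrases the affine step as a diagram chase rather than the explicit computation $\phi(c)-x(c)\in\mathrm{nil}(A)$, but the content is identical.
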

	\begin{proof}
		Recall that an element $y\in\mathbb{X}(A)$ is interpreted as a superscheme morphism $\mathrm{SSpec}(A)\to X$. Then its image in $\mathbb{X}(\widetilde{A})$ coincides with $z=y \mathrm{SSpec}(\pi_A)$. Moreover,
		$z=x_{\widetilde{A}}$ if and only if the diagram 
		\[\begin{array}{ccc}
			X & \stackrel{x}{\leftarrow} & \mathrm{SSpec}(\Bbbk) \\ 
			y \uparrow &  & \uparrow \\
			\mathrm{SSpec}(A) & \stackrel{\mathrm{SSpec}(\pi_A)}{\leftarrow} & \mathrm{SSpec}(\widetilde{A})
		\end{array}  \] 
		is commutative. Similarly, $y$ belongs to $\mathbb{N}_x(\mathbb{X})(A)=\mathbb{X}_x(\mathsf{n}(A))$ if and only if the diagram
		\[ \begin{array}{ccc}
			X & \stackrel{x}{\leftarrow} & \mathrm{SSpec}(\Bbbk) \\
			y \uparrow & \nwarrow  & \downarrow \\
			\mathrm{SSpec}(A) & \stackrel{\mathrm{SSpec}(\iota_A)}{\rightarrow}& \mathrm{SSpec}(\mathsf{n}(A))
		\end{array}\]
		is commutative for some morphism $\mathrm{SSpec}(\mathsf{n}(A))\to X$. Therefore, all we need is to prove that $y$ makes the first diagram commutative if and only if $y$ does the second one.
		
		Since $\mathsf{n}(A)\stackrel{\iota_A}{\to} A\stackrel{\pi_A}{\to}\widetilde{A}$ factors through the natural injection $\Bbbk\to\widetilde{A}$, Proposition \ref{another characterization of neighborhood} implies $\mathbb{N}_x(\mathbb{X})(A)\subseteq \mathbb{X}(\pi_A)^{-1}(x_{\widetilde{A}})$. It remains to prove
		"only if" part. 
		
		Since $|\mathrm{SSpec}(\pi_A)|$ is a homeomorphism of underlying topological spaces, the commutativity of the first diagram 
		implies that $y$ sends all points of $|\mathrm{SSpec}(A)|$ to $x$. Without loss of a generality, one can replace
		$X$ by any open affine supersubscheme $U$ such that $x\in\mathbb{U}(\Bbbk)$. Then we have a commutative diagram
		\[ \begin{array}{ccc}
			\mathcal{O}(U) & \stackrel{x}{\rightarrow} & \Bbbk \\ 
			y \downarrow &  & \downarrow \\
			A & \stackrel{\pi_{A}}{\rightarrow} & \widetilde{A}
		\end{array}.\]
		This immediately implies that $y$ factors through $\iota_A$, so that the diagram
		\[ \begin{array}{ccc}
			\mathcal{O}(U) & \stackrel{x}{\rightarrow} & \Bbbk \\ 
			y \downarrow & \searrow & \uparrow \\
			A & \stackrel{\iota_{A}}{\leftarrow} & \mathsf{n}(A)
		\end{array},\]
		is commutative. Proposition is proven.
	\end{proof}
\begin{rem}\label{N_x as N.f. superscheme}
In terms of Section $3$, $\mathbb{N}_x(\mathbb{X})$ is just the functor of points of the formal completion of $X$ along the closed supersubscheme $Y\simeq \mathrm{SSpec}(\Bbbk)$, such that
$|Y|=\{ x\}$. In fact, it has been already explained in the penultimate paragraph before Lemma \ref{folklore}. Proposition \ref{final about neighborhood} gives a constructive description
of $\mathbb{N}_x(\mathbb{X})$ in terms of $\Bbbk$-functors only.
\end{rem}
	\begin{cor}\label{normality of N_e}(see \cite[Lemma 9.5]{maszub2})
		Let $\mathbb{G}$ be a locally algebraic group superscheme. Then {\bf the formal neighborhood of the identity} $\mathbb{N}_e(\mathbb{G})$ is a normal group subfunctor of $\mathbb{G}$,
		strictly pro-representable by the c.l.a.N. Hopf superalgebra $\widehat{\mathcal{O}_{G, e}}$ (cf. \cite[Definition 3.3]{hmt}).
	\end{cor}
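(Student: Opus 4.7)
The plan is to derive the corollary directly from Proposition \ref{final about neighborhood}, which supplies a functorial description of $\mathbb{N}_e(\mathbb{G})$ as the fiber over a distinguished point. Concretely, for any $A\in\mathsf{SAlg}_{\Bbbk}$, Proposition \ref{final about neighborhood} (applied to $\mathbb{X}=\mathbb{G}$, $x=e$) gives
\[
\mathbb{N}_e(\mathbb{G})(A)=\mathbb{G}(\pi_A)^{-1}(e_{\widetilde{A}}).
\]
Since $\pi_A:A\to\widetilde{A}$ is a morphism of superalgebras, the induced map $\mathbb{G}(\pi_A):\mathbb{G}(A)\to\mathbb{G}(\widetilde{A})$ is a morphism of groups, and its target element $e_{\widetilde{A}}$ is the identity of $\mathbb{G}(\widetilde{A})$. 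Thus $\mathbb{N}_e(\mathbb{G})(A)$ is literally the kernel of a group homomorphism, which immediately gives that it is a normal subgroup of $\mathbb{G}(A)$.

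Next I would upgrade this pointwise statement to the functorial level. Given a morphism $\varphi:A\to B$ in $\mathsf{SAlg}_{\Bbbk}$, the naturality of the assignment $A\mapsto\widetilde{A}$ gives a commutative square with $\mathbb{G}(\pi_A)$ and $\mathbb{G}(\pi_B)$, so $\mathbb{G}(\varphi)$ carries $\ker\mathbb{G}(\pi_A)$ into $\ker\mathbb{G}(\pi_B)$. This shows $\mathbb{N}_e(\mathbb{G})$ is a subfunctor, and the normality is inherited sectionwise; conjugation by any element of $\mathbb{G}(A)$ preserves the kernel of $\mathbb{G}(\pi_A)$, so $\mathbb{N}_e(\mathbb{G})$ is a normal group subfunctor of $\mathbb{G}$.

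For the representability part, I would invoke the discussion immediately preceding Proposition \ref{another characterization of neighborhood}: since $\mathbb{G}$ is locally algebraic, the formal neighborhood $\mathbb{N}_e(\mathbb{G})$ is strictly pro-representable, with representing object $\widehat{\mathcal{O}_{G,e}}=\varprojlim_n \mathcal{O}(\mathbb{U})/\mathfrak{M}_e^{n+1}$, for $\mathbb{U}$ any open affine neighborhood of $e$. This is a complete local augmented Noetherian superalgebra. It remains to endow $\widehat{\mathcal{O}_{G,e}}$ with a Hopf superalgebra structure. The comultiplication, counit and antipode on $\widehat{\mathcal{O}_{G,e}}$ are produced by passing the multiplication, unit and inverse morphisms of the group superscheme $\mathbb{G}$ to the formal completion at $e$: the multiplication $\mu:\mathbb{G}\times\mathbb{G}\to\mathbb{G}$ sends $(e,e)$ to $e$, hence restricts to a morphism of formal neighborhoods $\mathbb{N}_{(e,e)}(\mathbb{G}\times\mathbb{G})\to\mathbb{N}_e(\mathbb{G})$; under the natural isomorphism $\mathbb{N}_{(e,e)}(\mathbb{G}\times\mathbb{G})\simeq\mathbb{N}_e(\mathbb{G})\times\mathbb{N}_e(\mathbb{G})$ this dualizes to a continuous superalgebra morphism $\widehat{\mathcal{O}_{G,e}}\to\widehat{\mathcal{O}_{G,e}}\,\widehat{\otimes}\,\widehat{\mathcal{O}_{G,e}}$, giving the comultiplication. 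Counit and antipode come from the unit section and the inverse morphism of $\mathbb{G}$ the same way, and the Hopf axioms descend from the group axioms on $\mathbb{G}$.

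The only slightly delicate step is the identification $\mathbb{N}_{(e,e)}(\mathbb{G}\times\mathbb{G})\simeq\mathbb{N}_e(\mathbb{G})\times\mathbb{N}_e(\mathbb{G})$ needed to turn the comultiplication on $\mathcal{O}_G$ into a continuous coproduct valued in the completed tensor product rather than the algebraic one; this is where the strict pro-representability (finite-dimensionality of $\mathbb{N}_e(\mathbb{G})(\Bbbk[\epsilon_0,\epsilon_1])$) is genuinely used, since it ensures that the completed tensor product is the coproduct of the two representing objects in the category of c.l.a.N. superalgebras. Once this identification is in place, the rest of the Hopf superalgebra verification is routine, and the statement cited from \cite[Definition 3.3]{hmt} applies verbatim.
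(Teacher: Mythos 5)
Your proposal is correct and matches the route the paper intends: the paper states this corollary without an explicit proof precisely because normality and the subfunctor property fall out of the kernel description $\mathbb{N}_e(\mathbb{G})(A)=\ker(\mathbb{G}(A)\to\mathbb{G}(\widetilde{A}))$ from Proposition \ref{final about neighborhood}, while strict pro-representability by the c.l.a.N. superalgebra $\widehat{\mathcal{O}_{G,e}}$ is exactly the observation made in the paragraphs preceding Lemma \ref{folklore}, and the Hopf structure via $\mathbb{N}_{(e,e)}(\mathbb{G}\times\mathbb{G})\simeq\mathbb{N}_e(\mathbb{G})\times\mathbb{N}_e(\mathbb{G})$ and the completed tensor product is the same mechanism as Lemma \ref{Hopf}. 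The only minor imprecision is your closing remark: the completed tensor product represents the product of pro-representable functors in general, and the finite-dimensionality of $F(\Bbbk[\epsilon_0,\epsilon_1])$ is used only to guarantee that the representing object is Noetherian (i.e., that pro-representability is strict), not to make the completed tensor product the coproduct.
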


	\section{Automorphism group functor of a sheaf on site}
	
	Let $\mathcal{C}$ be a \emph{site}, that is a category equipped with a Grothendieck topology. More precisely, for any object $U$ of $\mathcal{C}$ there is a collection of sets of arrows $\{U_i\to U \}$, called \emph{coverings} of $U$, such that the following conditions hold :
	\begin{enumerate}
		\item if $V\to U$ is an isomorphism, then $\{V\to U \}$ is a covering;
		\item if $\{U_i\to U \}$ is a covering and $V\to U$ is any arrow, then all fibered products $\{ U_i\times_U V\}$ exist and 
		$\{U_i\times_U V\to V \}$ is a covering;
		\item if $\{U_i\to U  \}$ is a covering, and for any $i$ we have a covering $\{V_{ij}\to U_i\}$, then $\{V_{ij}\to U_i\to U \}$ is a covering.
	\end{enumerate}  
	
	A \emph{sheaf} on site $\mathcal{C}$ is a (contravariant) functor such that for any covering $\{ U_i\to U \}$ the natural sequence
	\[ X(U) \to \prod_{i} X(U_i)\rightrightarrows\prod_{i, j} X(U_i\times_U U_j) \] 
	is exact (cf. \cite{fundalg}, 2.3.3).
	
	Let $\mathcal{C}$ be a category and $X : \mathcal{C}^{op}\to\mathsf{Sets}$ be a covariant functor.
	
	For an object $A\in\mathcal{C}$ let $\mathcal{C}_A$ denote a category of pairs $(A', \phi)$, where $A'\in\mathcal{C}, \phi\in\mathsf{Mor}_{\mathcal{C}}(A', A)$, with morphisms $(A', \phi)\to (A'', \gamma)$ are just morphisms $\xi : A'\to A''$ in $\mathcal{C}$ such that $\gamma\xi=\phi$. Any object from $\mathcal{C}_A$ is called an \emph{$A$-object} as well as any morphism in $\mathcal{C}_A$ is called an \emph{$A$-morphism}. The functor $X$ induces a functor $X_A : \mathcal{C}_A^{op}\to\mathsf{Sets}$, such that $X((A', \phi))=X(A')$. 
	
	We define the \emph{automorphism group functor}
	\[\mathfrak{Aut}(X) : \mathcal{C}^{op}\to \mathsf{Gr}\] of the functor $X$ as 
	\[\mathfrak{Aut}(X)(A)=\{f\in\mathfrak{Mor}_{\mathcal{C}_A}(X_A, X_A)\mid f \ \mbox{is \ invertible}\}, A\in\mathcal{C}.\]
	Observe that for any morphism $\alpha : A\to B$ in $\mathcal{C}$, there is a functor $\alpha_* : \mathcal{C}_A\to\mathcal{C}_{B}$ that sends a pair $(A', \phi)$ to $(A', \alpha\phi)$.  
	In particular, any functor morphism $f : X_B\to X_B$ can be "restricted"  to a functor morphism $f_A : X_A\to X_A$, and the map $f\mapsto f_A$ defines a group morphism $\mathfrak{Aut}(X)(B)\to\mathfrak{Aut}(X)(A)$.
	\begin{theorem}\label{category_locality}
		Assume that $\mathcal{C}$ is a site and $X$ is a sheaf on $\mathcal{C}$. Then the group functor $\mathfrak{Aut}(X)$ is a sheaf on $\mathcal{C}$ with respect to the same Grothendieck topology.
	\end{theorem}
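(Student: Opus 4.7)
The plan is to unpack what membership in $\mathfrak{Aut}(X)$ means and then reduce the sheaf condition for $\mathfrak{Aut}(X)$ directly to the sheaf condition for $X$. An element $f \in \mathfrak{Aut}(X)(U)$ consists, by definition, of a family $\{f_{(V,\phi)} : X(V) \to X(V)\}$ indexed by $U$-objects $(V,\phi)$, natural in $(V,\phi)$ and invertible as a whole. Thus I need to verify the two standard clauses of the sheaf axiom for a covering $\{U_i \to U\}$: separatedness and gluing.

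For separatedness, suppose $f, g \in \mathfrak{Aut}(X)(U)$ restrict to the same element in each $\mathfrak{Aut}(X)(U_i)$. Given an arbitrary $U$-object $(V, \phi)$, axiom (2) of a site produces a covering $\{U_i \times_U V \to V\}$. The projections $U_i \times_U V \to U_i$ equip $U_i \times_U V$ with a $U_i$-object structure, and the hypothesis gives $f_{(U_i \times_U V, \mathrm{pr}_2)} = g_{(U_i \times_U V, \mathrm{pr}_2)}$. Naturality of $f$ and $g$ with respect to the $U$-morphism $\mathrm{pr}_2 : U_i \times_U V \to V$ transports the equality, and the sheaf property of $X$ for the covering $\{U_i \times_U V \to V\}$, which makes $X(V) \to \prod_i X(U_i \times_U V)$ injective, forces $f_{(V,\phi)} = g_{(V,\phi)}$.

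For gluing, assume we are given $f_i \in \mathfrak{Aut}(X)(U_i)$ satisfying $f_i|_{U_i \times_U U_j} = f_j|_{U_i \times_U U_j}$ for all $i, j$. For a $U$-object $(V, \phi)$, I would define $f_{(V,\phi)} : X(V) \to X(V)$ as follows: for $x \in X(V)$, restrict along $\mathrm{pr}_2 : U_i \times_U V \to V$ to obtain $x_i \in X(U_i \times_U V)$, then apply the component of $f_i$ at the $U_i$-object $(U_i \times_U V, \mathrm{pr}_1)$ to get $y_i := (f_i)_{(U_i \times_U V, \mathrm{pr}_1)}(x_i)$. The cocycle condition on the $f_i$, together with naturality of each $f_i$ and the identification $(U_i \times_U V) \times_V (U_j \times_U V) \simeq U_i \times_U U_j \times_U V$, shows that the $y_i$ agree on overlaps in $X(U_i \times_U U_j \times_U V)$. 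The sheaf property for $X$ now produces a unique $y \in X(V)$ with $y|_{U_i \times_U V} = y_i$, and I set $f_{(V,\phi)}(x) := y$. The same procedure applied to the $f_i^{-1}$ produces a two-sided inverse, so the resulting $f$ lies in $\mathfrak{Aut}(X)(U)$ once its naturality is checked.

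The main obstacle, and the only part that requires genuine care rather than bookkeeping, is verifying that the collection $\{f_{(V,\phi)}\}$ assembled above is in fact natural in $(V,\phi)$, i.e.\ that for every $U$-morphism $\xi : (V', \phi') \to (V, \phi)$ the square with $X(\xi)$ on both sides and $f_{(V,\phi)}$, $f_{(V',\phi')}$ on top and bottom commutes. The argument is to pull the covering $\{U_i \times_U V \to V\}$ back along $\xi$ to the covering $\{U_i \times_U V' \to V'\}$, apply the local $f_i$-naturality over each $U_i$, and then invoke separatedness (already established) of $X$ on $V'$ to conclude commutativity from its validity after restriction to each $U_i \times_U V'$. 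Once naturality is in hand, one similarly checks that the association $\{f_i\} \mapsto f$ is functorial in the covering and is inverse to restriction, which completes the sheaf axiom for $\mathfrak{Aut}(X)$.
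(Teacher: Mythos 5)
Your proposal is correct and follows essentially the same route as the paper: glue the components $f_{(V,\phi)}$ by restricting $x$ along the pulled-back covering $\{U_i\times_U V\to V\}$, applying $(f_i)$ at the $U_i$-object $(U_i\times_U V,\mathrm{pr}_1)$, checking compatibility on overlaps via the canonical comparison of $(U_i\times_U V)\times_V(U_j\times_U V)$ with $U_i\times_U U_j$, and invoking the sheaf axiom for $X$; naturality and separatedness are likewise handled by the uniqueness clause of that axiom, exactly as in the paper's diagram chase. The only (harmless) differences are that you make the separatedness clause and the invertibility of the glued $f$ explicit, whereas the paper leaves the former implicit and the latter to the reader.
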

	\begin{proof}
		Consider an open covering $\{\alpha_i : A_i\to A\}$ of an object $A$. Assume that $\prod_{i\in I} f_i\in \prod_{i\in I}\mathfrak{Aut}(X)(A_i)$ belongs to
		the kernel of the map
		\[\prod_{i\in I}\mathfrak{Aut}(X)(A_i)\rightrightarrows\prod_{i, j\in I}\mathfrak{Aut}(X)(A_i\times_A A_j),\]
		that is for any couple of indexes $i, j$ there holds $(f_i)_{A_i\times_A A_j}=(f_j)_{A_i\times_A A_j}$. 
		More precisely, for any $A_i\times_A A_j$-object $B$, regarded as $A_i$-object and $A_j$-object via the canonical "projections" $A_i\times_A A_j\to A_i$ and $A_i\times_A A_j\to A_j$ respectively, 
		$f_i(B)=f_j(B)$. 
		
		Let $B$ be an $A$-object. We have an open covering $\{\beta_i : B_i\to B\}$, where $B_i=B\times_A A_i, i\in I$. For any $x\in X(B)$ set $X(\beta_i)(x)=x_i\in X(B_i)$. Then the element $\prod_{i\in I} f_i(x_i)$ belongs to the kernel of the map
		\[\prod_{i\in I}X(B_i)\rightrightarrows\prod_{i, j\in I, i\neq j}X(B_i\times_B B_j).\]
		Indeed, the universal property of fibred products implies that there is a canonical morphism $B_i\times_B B_j\to A_i\times_A A_j$ that makes the diagrams 
		\[\begin{array}{ccc}
			B_i\times_B B_j & \stackrel{\pi_i}{\to} & B_i \\
			\downarrow & & \downarrow  \\
			A_i\times_A A_j & \to & A_i
		\end{array}\]
		and 
		\[\begin{array}{ccc}
			B_i\times_B B_j & \stackrel{\pi_j}{\to} & B_j \\
			\downarrow & & \downarrow  \\
			A_i\times_A A_j & \to & A_j
		\end{array}\]
		commutative, that is $\pi_i$ and $\pi_j$ are $A_i$-morphism and $A_j$-morphism correspondingly. Thus
		\[X(\pi_i)(f_i(x_i))=f_i(X(\pi_i)(x_i))=(f_i)_{A_i\times_A A_j}(X(\pi_i)(x_i))=\]
		\[(f_j)_{A_i\times_A A_j}(X(\pi_j)(x_j))=f_j(X(\pi_j)(x_j))=X(\pi_j)(f_j(x_j)).\]
		Since $X$ is a sheaf, there is the unique element $z\in X(B)$ such that for each $i$ we have 
		$z_i=f_i(x_i)$. Set $f(x)=z$.
		
		First, we claim that $f_{A_i}=f_i$ for each index $i$. Let $B$ be an $A_i$-object, hence an $A$-object as well, via morphism $\alpha_i$. For any indices $i, j$ the composition of $\beta_j$ and the morphism $B\to A_i$ defines a morphism $B_j\to A_i$, which makes the diagram
		\[\begin{array}{ccc}
			B_j & \to & A_j \\
			\downarrow & & \downarrow \\
			A_i & \to & A
		\end{array}\]
		commutative. Therefore, both morphisms $B_j\to A_j$ and $B_j\to A_i$ factor through the unique morphism
		$B_j\to A_i\times_A A_j$, and thus we have
		\[f_j(x_j)=f_i(x_j)=X(\beta_j)(f_i(x)),\]
		whence $f(x)=f_i(x)$. 
		
		Second, for any $A$-morphism $\gamma : B\to B'$ and any index $i$, there is the unique $A_i$-morphism $\gamma_i : B_i\to B'_i$ such that the diagram
		\[\begin{array}{ccc}
			B_i & \stackrel{\gamma_i}{\to} & B'_i \\
			\downarrow & & \downarrow \\
			B & \to & B'
		\end{array}\]
		commutative. 
		
		Continuing as above, one can find the unique morphism 
		\[\gamma_{ij} : B_i\times_B B_j\to B'_i\times_{B'} B'_j\]
		that makes the diagrams 
		\[\begin{array}{ccc}
			B_i\times_B B_j & \to & B'_i\times_{B'} B'_j \\
			\downarrow & & \downarrow \\
			B_i & \to & B'_i \end{array}
		\]
		and
		\[\begin{array}{ccc}
			B_i\times_B B_j & \to & B'_i\times_{B'} B'_j \\
			\downarrow & & \downarrow \\
			B_j & \to & B'_j \end{array}
		\]
		commutative. Combining all, we obtain the commutative diagram 
		\[\begin{array}{ccccccc}
			X(B) & \to & \prod_{i\in I} X(B_i) & \stackrel{\prod_{i\in I} f_i}{\to} & \prod_{i\in I} X(B_i) & \rightrightarrows & \prod_{i, j\in I, i\neq j} X(B_i\times_B B_j) \\
			\downarrow & & \downarrow & & \downarrow & & \downarrow \\
			X(B') & \to & \prod_{i\in I} X(B'_i) & \stackrel{\prod_{i\in I} f_i}{\to} & \prod_{i\in I} X(B'_i) & \rightrightarrows & \prod_{i, j\in I, i\neq j} X(B'_i\times_{B'} B'_j),
		\end{array}\]
		where the vertical arrows are the maps $X(\gamma), \prod_{i\in I} X(\gamma_i)$ and $\prod_{i, j\in I, i\neq j}\gamma_{ij}$ respectively. It obviously follows that $X(\gamma)(f(x))=f(X(\gamma)(x))$.
		
		We left for the reader to check that $f$ is invertible. Theorem is proved.
	\end{proof}
	
	\section{First properties of $\mathfrak{Aut}(\mathbb{X})$}
	
	\begin{lm}\label{specific intersection}
		Let $X$ be a superscheme and $A$ be a superalgebra. Let $V$ be an open affine supersubscheme of $X\times\mathrm{SSpec}(A)$. 
		Then for any superideal $I$ of $A$ the closed, hence affine, supersubscheme $V\cap (X\times \mathrm{SSpec}(A/I))$ of $V$ is defined by the superideal $\mathcal{O}(V)I$.  	
	\end{lm}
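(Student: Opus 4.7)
The plan is to identify the closed supersubscheme $V\cap (X\times\mathrm{SSpec}(A/I))$ of $V$ as a fiber product and then compute it directly from the universal property. First I would interpret the intersection scheme-theoretically. The closed immersion $\mathrm{SSpec}(A/I)\hookrightarrow\mathrm{SSpec}(A)$ is dual to the quotient $A\to A/I$; base-changing by $X$ gives the closed immersion $X\times\mathrm{SSpec}(A/I)\hookrightarrow X\times\mathrm{SSpec}(A)$. Pulling back along the open immersion $V\hookrightarrow X\times\mathrm{SSpec}(A)$ and using that closed immersions are stable under arbitrary base change, I obtain the closed immersion
\[V\cap (X\times\mathrm{SSpec}(A/I))\;\simeq\; V\times_{X\times\mathrm{SSpec}(A)}(X\times\mathrm{SSpec}(A/I))\;\hookrightarrow\; V.\]

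Next I would simplify this fiber product by the standard cancellation identity for iterated fiber products. Since $X\times\mathrm{SSpec}(A/I)\simeq X\times_{\mathrm{SSpec}(A)}\mathrm{SSpec}(A/I)$ as $\mathrm{SSpec}(A)$-superschemes, one has
\[V\times_{X\times\mathrm{SSpec}(A)}(X\times\mathrm{SSpec}(A/I))\;\simeq\; V\times_{\mathrm{SSpec}(A)}\mathrm{SSpec}(A/I),\]
where $V$ is regarded as an $\mathrm{SSpec}(A)$-superscheme via the composition $V\hookrightarrow X\times\mathrm{SSpec}(A)\xrightarrow{\mathrm{pr}_2}\mathrm{SSpec}(A)$, which is dual to the structural superalgebra morphism $A\to\mathcal{O}(V)$. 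All three superschemes appearing on the right are affine, so the fiber product is computed by a tensor product:
\[V\times_{\mathrm{SSpec}(A)}\mathrm{SSpec}(A/I)\;\simeq\; \mathrm{SSpec}\bigl(\mathcal{O}(V)\otimes_A (A/I)\bigr)\;\simeq\; \mathrm{SSpec}\bigl(\mathcal{O}(V)/\mathcal{O}(V)I\bigr).\]
The resulting closed immersion into $V$ is therefore dual to the quotient map $\mathcal{O}(V)\twoheadrightarrow \mathcal{O}(V)/\mathcal{O}(V)I$, which is exactly what the lemma asserts.

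There is no substantive obstacle here: the argument is purely formal, resting on three standard facts (stability of closed immersions under base change, associativity/cancellation of fiber products, and the description of affine fiber products as tensor products). The only conceptual subtlety worth making explicit is that by $V\cap (X\times\mathrm{SSpec}(A/I))$ we mean the scheme-theoretic pullback of the closed supersubscheme along the open immersion $V\hookrightarrow X\times\mathrm{SSpec}(A)$, so that it automatically carries the correct sheaf of ideals -- set-theoretically it is the intersection of underlying spaces, but its superscheme structure is the one given by the fiber product above.
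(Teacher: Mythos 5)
Your argument is correct, but it takes a genuinely different route from the paper. You treat the intersection categorically: identify $V\cap(X\times\mathrm{SSpec}(A/I))$ with the fiber product $V\times_{X\times\mathrm{SSpec}(A)}(X\times\mathrm{SSpec}(A/I))$, cancel to get $V\times_{\mathrm{SSpec}(A)}\mathrm{SSpec}(A/I)$, and compute the affine fiber product as $\mathrm{SSpec}(\mathcal{O}(V)\otimes_A A/I)\simeq\mathrm{SSpec}(\mathcal{O}(V)/\mathcal{O}(V)I)$. The paper instead argues by localization: it covers $X$ by open affines $U$, covers each $V\cap(U\times\mathrm{SSpec}(A))$ by principal opens $\mathrm{SSpec}(\mathcal{O}(V)_g)$, checks the assertion on each such piece, and reduces everything to the statement that for an open affine supersubscheme $V$ of an affine superscheme $\mathrm{SSpec}(B)$ one has $V\cap\mathrm{SSpec}(B/I)=\mathrm{SSpec}(\mathcal{O}(V)/\mathcal{O}(V)I)$, which it derives from its cited structure results on open immersions of affine superschemes (\cite[Lemma 3.5]{maszub1}, \cite[Corollary 1.1]{zub1}). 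Your proof is shorter and avoids the covering machinery entirely, at the cost of leaning on three standard facts (intersection of an open with a closed supersubscheme is the fiber product, stability of closed immersions under base change, and affine fiber products being tensor products), all of which do hold in the super setting and are used elsewhere in the paper; the one point you rightly flag --- that the scheme-theoretic intersection carries the fiber-product structure --- is exactly the point on which the whole argument rests, and you handle it correctly. The paper's more hands-on proof has the side benefit of exhibiting the explicit local description of $V$ inside $X\times\mathrm{SSpec}(A)$ via principal opens, a pattern it reuses in later arguments (e.g.\ in Proposition \ref{nice property of T} and Lemma \ref{describe T_n}), whereas your formal argument yields only the final isomorphism.
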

	\begin{proof}
		Assume that $\{ U\}$ is a covering of $X$ by open affine supersubschemes. Then $X\times\mathrm{SSpec}(A)$ is covered by open affine supersubschemes $U\times\mathrm{SSpec}(A)$ as well as
		$X\times\mathrm{SSpec}(A/I)$ is covered by open affine supersubschemes \[U\times\mathrm{SSpec}(A/I)=(U\times\mathrm{SSpec}(A))\cap (X\times\mathrm{SSpec}(A/I)).\]
		
		Each open supersubscheme $V\cap (U\times\mathrm{SSpec}(A))$ is covered by open affine supersubschemes $\mathrm{SSpec}(\mathcal{O}(V)_g), g\in\mathcal{O}(V)_0$. Collecting all such $\mathrm{SSpec}(\mathcal{O}(V)_g)$ we obtain a covering of $V$, hence $\sum_g \mathcal{O}(V)_0 g=\mathcal{O}(V)_0$. 
		Furthermore, there is  \[W=V\cap (X\times\mathrm{SSpec}(A/I))=\mathrm{SSpec}(\mathcal{O}(V)/J)\] for some superideal $J$ of
		$\mathcal{O}(V)$, and $W$ is covered by (finitely many!) open affine supersubschemes $(\mathrm{SSpec}(\mathcal{O}(V)_g))\cap (U\times\mathrm{SSpec}(A/I))$. All we need is to show that \[(\mathrm{SSpec}(\mathcal{O}(V)_g))\cap(U\times\mathrm{SSpec}(A/I))=
		\mathrm{SSpec}((\mathcal{O}(V)/I)_g)\] 
		for each couple $g, U$ and refer to \cite[Corollary 1.1]{zub1}. 
		In other words, it remains to prove that if $V$ is an open affine supersubscheme of some affine superscheme $\mathrm{SSpec}(B)$ and $I$ is a superideal of $B$, then we have \[V\cap\mathrm{SSpec}(B/I)=\mathrm{SSpec}(\mathcal{O}(V)/\mathcal{O}(V)I).\]   
		This statement is an obvious consequence of \cite[Lemma 3.5]{maszub1} and \cite[Corollary 1.1]{zub1} (see also the proof of Lemma \ref{it does not depend on U}).  	
	\end{proof}
	\begin{lm}\label{cancellation}
		Let $X$ be a superscheme over $\mathrm{SSpec}(A)$. Let $A\to B$ be a superalgebra morphism such that $\mathrm{SSpec}(B)\to\mathrm{SSpec}(A)$
		is surjective on the underlying topological spaces. If $V$ and $V'$ are open supersubschemes in $X$ such that \[V\times_{\mathrm{SSpec}(A)}\mathrm{SSpec}(B)=V'\times_{\mathrm{SSpec}(A)}\mathrm{SSpec}(B),\] then $V=V'$.
	\end{lm}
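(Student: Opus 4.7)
The plan is to reduce the problem to a question about underlying topological spaces. An open supersubscheme of a geometric superscheme $X$ is determined by the open subset of $|X|$ that underlies it (its structure sheaf being just the restriction of $\mathcal{O}_X$), so it suffices to prove $|V|=|V'|$ in $|X|$.

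Let $\pi: X\times_{\mathrm{SSpec}(A)}\mathrm{SSpec}(B)\to X$ denote the first projection. Since open immersions are stable under arbitrary base change, there are natural identifications $V\times_{\mathrm{SSpec}(A)}\mathrm{SSpec}(B)=\pi^{-1}(V)$ and $V'\times_{\mathrm{SSpec}(A)}\mathrm{SSpec}(B)=\pi^{-1}(V')$ as open supersubschemes of $X\times_{\mathrm{SSpec}(A)}\mathrm{SSpec}(B)$. Passing to underlying open sets, the hypothesis yields $|\pi|^{-1}(|V|)=|\pi|^{-1}(|V'|)$.

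It then suffices to show that $|\pi|$ is surjective, for then applying $|\pi|$ to the previous equality gives $|V|=|V'|$. Since $|X\times_{\mathrm{SSpec}(A)}\mathrm{SSpec}(B)|$ coincides with $|X_0\times_{\mathrm{Spec}(A_0)}\mathrm{Spec}(B_0)|$, the question reduces to the purely even situation; and there, surjectivity is preserved by base change of schemes, since for any $x\in|X_0|$ lying over $a\in|\mathrm{Spec}(A_0)|$ and any preimage $b\in|\mathrm{Spec}(B_0)|$ of $a$, the nonzero ring $\kappa(x)\otimes_{\kappa(a)}\kappa(b)$ provides a point of $X_0\times_{\mathrm{Spec}(A_0)}\mathrm{Spec}(B_0)$ above $x$. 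There is no real obstacle here: once the two routine observations — open immersions pull back to open immersions, and surjectivity on topological spaces is base-change stable — are in place, the argument is immediate.
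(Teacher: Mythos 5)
Your proof is correct, but it takes a genuinely different route from the paper's. The paper translates the statement into the category of $\Bbbk$-functors, evaluates at field extensions $\Bbbk\subseteq L$, argues that $\mathrm{SSp}(B)(L)\to\mathrm{SSp}(A)(L)$ is surjective so that $\mathbb{V}(L)=\mathbb{V}'(L)$ for all $L$, and then invokes the superized version of \cite[I.1.7(4)]{jan} (open subfunctors agreeing on all field-valued points coincide). You instead stay entirely on the geometric side: you identify the base-changed open supersubschemes with $|\pi|^{-1}(|V|)$ and $|\pi|^{-1}(|V'|)$ and reduce everything to the surjectivity of $|\pi|$, which you obtain from the base-change stability of surjectivity on underlying spaces via the nonvanishing of $\kappa(x)\otimes_{\kappa(a)}\kappa(b)$. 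Both arguments ultimately rest on the same residue-field computation, but yours is more self-contained: it avoids the appeal to Jantzen's lemma and sidesteps the slightly delicate claim about surjectivity on $L$-points for a fixed $L$ (which, read literally, requires $L$ to be large enough to receive $\kappa(\mathfrak{q})$). The one ingredient you use without proof --- that $|X\times_{\mathrm{SSpec}(A)}\mathrm{SSpec}(B)|$ agrees with the underlying space of the corresponding even (or reduced) fiber product --- is a routine fact that the paper itself uses freely (e.g.\ in Lemma \ref{reducibility}), since locally the relevant superideals are nil; it would be worth one sentence to record this, but it is not a gap.
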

	\begin{proof}
		Translating to the category of $\Bbbk$-functors we have
		\[\mathbb{V}\times_{\mathrm{SSp}(A)}\mathrm{SSp}(B)=\mathbb{V}'\times_{\mathrm{SSp}(A)}\mathrm{SSp}(B).\] 
		In particular, for any field extension $\Bbbk\subseteq L$ there holds
		\[\mathbb{V}(L)\times_{\mathrm{SSp}(A)(L)}\mathrm{SSp}(B)(L)=\mathbb{V}'(L)\times_{\mathrm{SSp}(A)(L)}\mathrm{SSp}(B)(L).\]
		The condition of lemma implies that $\mathrm{SSp}(B)(L)\to \mathrm{SSp}(A)(L)$ is surjective, thus  $\mathbb{V}(L)=\mathbb{V}'(L)$.
		Superizing \cite[I.1.7(4)]{jan} we obtain $\mathbb{V}=\mathbb{V}'$, or $V=V'$.  	
	\end{proof}
	
	For any $A\in\mathsf{SAlg}_{\Bbbk}$ let $\mathfrak{T}(A)$ denote $\ker(\mathfrak{Aut}(\mathbb{X})(\pi_A))$, where $\pi_A : A\to \widetilde{A}$ is the canonical epimorphism.  
	\begin{pr}\label{nice property of T}
		The following statements hold :
		\begin{enumerate}
			\item[(i)]
			for any superalgebra monomorphism $A\to B$ such that $\mathrm{SSpec}(B)\to\mathrm{SSpec}(A)$ is surjective on the underlying topological spaces, the map $\mathfrak{Aut}(\mathbb{X})(A)\to \mathfrak{Aut}(\mathbb{X})(B)$ is injective;
			\item[(ii)] there is $\mathfrak{T}(A)=\mathfrak{Aut}(\mathbb{X})_e(\mathsf{n}(A))=\ker(\mathfrak{Aut}(\mathbb{X})(\epsilon_A))$.	  
		\end{enumerate}	
	\end{pr}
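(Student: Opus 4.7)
For part (i), I would work with the scheme-theoretic description, viewing $f\in\mathfrak{Aut}(\mathbb{X})(A)$ as an automorphism of $X\times\mathrm{SSpec}(A)$ over $\mathrm{SSpec}(A)$, hence determined by the morphism $g=\mathrm{pr}_1\circ f:X\times\mathrm{SSpec}(A)\to X$. If $f$ lies in the kernel of $\mathfrak{Aut}(\mathbb{X})(A)\to\mathfrak{Aut}(\mathbb{X})(B)$, then $g\circ(\mathrm{id}_X\times\mathrm{SSpec}(\phi))=\mathrm{pr}_1$, and the task reduces to concluding $g=\mathrm{pr}_1$. Fixing an open affine cover $X=\cup U_i$ with $U_i=\mathrm{SSpec}(R_i)$, this reduces further to the injectivity of $\mathbb{X}(R_i\otimes A)\to\mathbb{X}(R_i\otimes B)$ for each $i$, which is precisely Lemma \ref{folklore} applied to $R_i\otimes A\to R_i\otimes B$: injectivity of the latter comes from flatness of $R_i$ over the field $\Bbbk$, and $\mathrm{SSpec}(R_i\otimes B)\to\mathrm{SSpec}(R_i\otimes A)$ is surjective on underlying spaces because surjectivity of scheme morphisms is preserved under base change.

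For part (ii), the equality $\mathfrak{Aut}(\mathbb{X})_e(\mathsf{n}(A))=\ker\mathfrak{Aut}(\mathbb{X})(\epsilon_A)$ is tautological, since $e$ is the identity of the group $\mathfrak{Aut}(\mathbb{X})(\Bbbk)$ and $\mathfrak{Aut}(\mathbb{X})(\epsilon_A)$ is a group homomorphism. The main content is the identification $\mathfrak{T}(A)=\mathfrak{Aut}(\mathbb{X})_e(\mathsf{n}(A))$, realized via $\mathfrak{Aut}(\mathbb{X})(\iota_A)$. Since $\pi_A\iota_A$ factors as $\mathsf{n}(A)\stackrel{\epsilon_A}{\to}\Bbbk\hookrightarrow\widetilde{A}$, this map sends $\mathfrak{Aut}(\mathbb{X})_e(\mathsf{n}(A))$ into $\mathfrak{T}(A)$. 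Injectivity of the restriction follows from part (i) applied to $\iota_A$, which is injective and whose codomain has one-point spectrum (the nilradical of $\mathsf{n}(A)_0$ is maximal), so $|\mathrm{SSpec}(A)|\to|\mathrm{SSpec}(\mathsf{n}(A))|$ is trivially surjective.

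Surjectivity is the only substantive step. Given $f\in\mathfrak{T}(A)$ realized as an $A$-automorphism of $X\times\mathrm{SSpec}(A)$, the hypothesis $f_{\widetilde{A}}=\mathrm{id}$ implies $|f|=\mathrm{id}$, since base change along $A\to\widetilde{A}$ is a homeomorphism of underlying spaces. Thus $f$ restricts to an automorphism $f_i$ of each $U_i\times\mathrm{SSpec}(A)$ in an affine cover, corresponding to an $A$-algebra automorphism $\phi_i$ of $R_i\otimes A$ with $\phi_i\equiv\mathrm{id}\pmod{R_i\otimes\mathrm{nil}(A)}$. A direct check shows $\phi_i(R_i\otimes\mathsf{n}(A))\subseteq R_i\otimes\mathsf{n}(A)$, and the same holds for $\phi_i^{-1}$, producing an $\mathsf{n}(A)$-automorphism $\psi_i$ of $R_i\otimes\mathsf{n}(A)$ whose extension $\psi_i\otimes_{\mathsf{n}(A)}A$ recovers $\phi_i$. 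To glue, observe that for any affine $V\subseteq U_i\cap U_j$, the injection $\mathcal{O}(V)\otimes\mathsf{n}(A)\hookrightarrow\mathcal{O}(V)\otimes A$ (again from $\Bbbk$-flatness) forces $\psi_i|_V=\psi_j|_V$, since both share the same base change to $A$. The resulting global $g\in\mathfrak{Aut}(\mathbb{X})(\mathsf{n}(A))$ satisfies $\mathfrak{Aut}(\mathbb{X})(\iota_A)(g)=f$, and its reduction modulo $\mathrm{nil}(\mathsf{n}(A))$ is evidently the identity, whence $g\in\mathfrak{Aut}(\mathbb{X})_e(\mathsf{n}(A))$. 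The principal obstacle is the local-to-global bookkeeping in this surjectivity argument, but once part (i) is in hand no tool beyond $\Bbbk$-flatness of $R_i$ is required.
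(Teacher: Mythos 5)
Your argument is correct, and for part (ii) it is essentially the paper's: identify $\mathfrak{T}(A)$ with automorphisms of $X\times\mathrm{SSpec}(A)$, use $f_{\widetilde{A}}=\mathrm{id}$ and the nilpotence of $\mathrm{nil}(A)\mathcal{O}(V)$ to see that $|f|=\mathrm{id}$ and hence that $f$ preserves each $U_i\times\mathrm{SSpec}(A)$, and then check that the resulting $A$-algebra automorphism of $R_i\otimes A$ lives over $\mathsf{n}(A)$. Where the paper gets the containment $\phi_i(R_i\otimes 1)\subseteq R_i\otimes\mathsf{n}(A)$ by writing $\phi_i$ on a generating set and normalizing the coefficients so that their reductions mod $\mathrm{nil}(A)$ are linearly independent, your direct computation from $\phi_i\equiv\mathrm{id}\pmod{R_i\otimes\mathrm{nil}(A)}$ reaches the same conclusion more cleanly, and your explicit descent-and-gluing of the $\psi_i$ (using injectivity of $\mathcal{O}(V)\otimes\mathsf{n}(A)\to\mathcal{O}(V)\otimes A$) spells out a step the paper dismisses with ``it obviously follows.'' The genuine divergence is in part (i): the paper works with the preimages $V_i=\phi^{-1}(U_i\times\mathrm{SSpec}(A))$, invokes Lemma \ref{cancellation} to force $V_i=U_i\times\mathrm{SSpec}(A)$, and then repeats the generator computation; you instead encode $f$ by $\mathrm{pr}_1\circ f$, restrict to the product cover, and quote Lemma \ref{folklore} for $R_i\otimes A\to R_i\otimes B$, checking its hypotheses via flatness of $R_i$ over $\Bbbk$ and base-change stability of surjectivity. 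This is a legitimate and arguably tidier route, since it reuses a lemma already proved in Section 9 instead of Lemma \ref{cancellation}; the paper's version has the minor advantage of not needing the surjectivity-under-base-change fact. Both arguments prove the same statement with comparable effort.
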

	\begin{proof}
		We start by proving the second assertion, the proof of the first one is similar and will be briefly outlined. 
		
		Recall that the group $\mathfrak{Aut}(\mathbb{X})(A)$ can be identified with the group of $\mathrm{SSpec}(A)$-automorphisms of $X\times \mathrm{SSpec}(A)$, regarded as an  $\mathrm{SSpec}(A)$-superscheme via the projection $\mathrm{pr}_2$. 
		
		Choose a covering of $X$ by open affine supersubschemes $U_i$. The automorphism $\phi\in \mathfrak{Aut}(\mathbb{X})(A)$ is uniquely defined by its restrictions on affine supersubschemes $V_i=\phi^{-1}(U_i\times \mathrm{SSpec}(A))$, i.e. by isomorphisms $\mathcal{O}(U_i)\otimes A\to \mathcal{O}(V_i)$ of $A$-superalgebras. Respectively, Lemma \ref{specific intersection} implies that the image of $\widetilde{\phi}$ in $\mathfrak{Aut}(\mathbb{X})(\widetilde{A})$ is defined by
		the induced morphisms $\mathcal{O}(V_i)/\mathcal{O}(V_i)\mathrm{nil}(A)\to \mathcal{O}(U_i)\otimes\widetilde{A}$.  
		
		If $\widetilde{\phi}$ is the identity automorphism, then \[\mathrm{SSpec}(\mathcal{O}(V_i)/\mathcal{O}(V_i)\mathrm{nil}(A))=\mathrm{SSpec}(\mathcal{O}(U_i))\otimes\widetilde{A}). \]
		On the other hand, since for any $A$-superalgebra $B$ there is $\mathrm{nil}(A)B\subseteq\mathrm{nil}(B)$, it obviously implies that 
		$|V_i|=|U_i\times\mathrm{SSpec}(A)|$, hence 
		$V_i= U_i\times\mathrm{SSpec}(A)$. Therefore, $\phi$ is defined by isomorphisms $\mathcal{O}(U_i)\otimes A\to \mathcal{O}(U_i)\otimes A$
		of $A$-superalgebras. 
		
		Assume that $\mathcal{O}(U_i)$ is generated by the elements $f_1, \ldots, f_t$. Then the isomorphism $\mathcal{O}(U_i)\otimes A\to \mathcal{O}(U_i)\otimes A$ is determined on generators as
		\[f_l\mapsto\sum_k f_{lk}\otimes a_{lk}, f_{lk}\in\mathcal{O}(U_i), a_{lk}\in A, 1\leq l\leq t.  \] 
		Let $\widetilde{a_{lk}}$ denote the images of $a_{lk}$ in $\widetilde{A}$. Without loss of a generality one can assume that the nonzero
		$\widetilde{a_{lk}}$ are linearly independent for each $l$. The condition $\widetilde{\phi}=\mathrm{id}_{X\times \mathrm{SSpec}(\widetilde{A})}$ is equivalent to
		\[f_l\otimes 1= \sum_k f_{lk}\otimes\widetilde{a_{lk}}, 1\leq l\leq t . \]
		In other words, for each $l$ there is only one $k$ such that $\widetilde{a_{lk}}\neq 0$. Moreover, for this index $k$ we have 
		$\widetilde{a_{lk}}=1$ and $f_{lk}=f_l$. Thus it obviously follows that $\phi\in\mathfrak{Aut}(\mathbb{X})_e(\mathsf{n}(A))$ and (ii) is proven. 
		
		Recall that $X\times\mathrm{SSpec}(B)$ is canonically isomorphic to the fibered product \[(X\times\mathrm{SSpec}(A))\times_{\mathrm{SSpec}(A)}\mathrm{SSpec}(B),\]
		so that the image of $\phi$ in $\mathfrak{Aut}(\mathbb{X})(B)$ coincides with $\phi\times_{ \mathrm{SSpec}(A)} \mathrm{id}_{\mathrm{SSpec}(B)}$. If this image is the identity morphism, then \[V_i\times_{\mathrm{SSpec}(A)}\mathrm{SSpec}(B)=(U_i\times\mathrm{SSpec}(A))\times_{\mathrm{SSpec}(A)}\mathrm{SSpec}(B)\]
		for each index $i$. Lemma \ref{cancellation} infers $V_i=U_i\times\mathrm{SSpec}(A)$, and repeating the above arguments with the generators of each $\mathcal{O}(U_i)$, one immediately sees that $\phi=\mathrm{id}_{X\times\mathrm{SSpec}(A)}$. 
	\end{proof}	
	Proposition \ref{nice property of T} implies that the natural group morphism $\mathfrak{Aut}(\mathbb{X})(A_0)\to \mathfrak{Aut}(\mathbb{X})(A)$ is injective for any superalgebra $A$. Therefore, the group functor $A\mapsto \mathfrak{Aut}(\mathbb{X})_{ev}(A)=\mathfrak{Aut}(\mathbb{X})(A_0)$  
	can be regarded as a group subfunctor of $\mathfrak{Aut}(\mathbb{X})$.

	\section{$\mathfrak{Aut}(\mathbb{X})$ commutes with direct limits of superalgebras}
	
	The following lemma is a folklore (see \cite[II, Exercise 3.3]{hart}).
	\begin{lm}\label{finite type property}
		If $Y$ is of finite type over $\mathrm{SSpec}(A)$ and $V$ is an open affine supersubscheme of $Y$, then
		$\mathcal{O}(V)$ is a finitely generated $A$-superalgebra.	
	\end{lm}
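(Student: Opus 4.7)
The plan is to superize the classical proof (Hartshorne II, Ex. 3.3) directly, relying on the structural results on open affine supersubschemes and super-localizations referenced earlier in the paper (notably \cite[Lemma 3.5]{maszub1} and \cite[Corollary 1.1]{zub1}), which make every step transparent.

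First, since $Y$ is of finite type over $\mathrm{SSpec}(A)$, choose a finite cover of $Y$ by open affine supersubschemes $U_i\simeq\mathrm{SSpec}(B_i)$, $1\le i\le n$, with each $B_i$ finitely generated over $A$. For each $i$ the intersection $V\cap U_i$ is open in $U_i$, so it is a union of principal open supersubschemes $\mathrm{SSpec}((B_i)_{g_{ij}})$ with $g_{ij}\in(B_i)_0$, and each $(B_i)_{g_{ij}}$ is finitely generated over $A$. As $V$ is affine, it is quasi-compact, so finitely many such $\mathrm{SSpec}((B_i)_{g_{ij}})$ cover $V$.

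Write $V=\mathrm{SSpec}(C)$ and set $W_{ij}=\mathrm{SSpec}((B_i)_{g_{ij}})\subseteq V$. The main step is to refine: for each $p\in W_{ij}$, the super analog of the standard ``open affine inside open affine'' lemma (a direct consequence of \cite[Lemma 3.5]{maszub1} applied to the inclusion $W_{ij}\hookrightarrow V$) produces an $f\in C_0$ with $p\in\mathrm{SSpec}(C_f)\subseteq W_{ij}$ and $C_f\simeq((B_i)_{g_{ij}})_{f'}$, where $f'$ is the image of $f$ in $(B_i)_{g_{ij}}$. In particular $C_f$ is a finitely generated $A$-superalgebra. Quasi-compactness of $V$ (again exploiting that $V$ is affine) then yields finitely many $f_1,\dots,f_m\in C_0$ with $V=\bigcup_k\mathrm{SSpec}(C_{f_k})$ and each $C_{f_k}$ finitely generated over $A$.

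The final step is the usual gluing argument. Since $V=\bigcup_k\mathrm{SSpec}(C_{f_k})$, the elements $f_1,\dots,f_m$ generate the unit ideal in $C_0$, so one can write $1=\sum_k a_kf_k$ with $a_k\in C_0$. Let $T$ be the $A$-subsuperalgebra of $C$ generated by the $a_k$, the $f_k$, and a finite list of generators for each $C_{f_k}$ over $A$. For any $c\in C$ there is a uniform $M$ with $f_k^Mc\in T$ for every $k$; expanding a sufficiently high power of $\sum_ka_kf_k$ by the multinomial theorem and observing that each monomial contains some $f_k^M$ yields $c\in T$, hence $C=T$ is finitely generated over $A$. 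The only point at which anything ``super'' enters is the choice of even $f_k$, which is automatic since principal open supersubschemes are indexed by homogeneous even elements; there is no substantive obstacle in passing from the classical to the super setting.
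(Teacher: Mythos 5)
Your proof is correct and follows essentially the same route as the paper: cover $Y$ by finitely many affines finitely generated over $A$, cover $V$ by finitely many principal opens of these, and reduce to the affine case. The only difference is that the paper delegates the final step (refining to principal opens of $V$ itself and the partition-of-unity gluing) to a second application of the cited lemma on finite-type morphisms, whereas you write it out explicitly; just note that the ``generators of each $C_{f_k}$'' you adjoin to $T$ should be their numerators in $C$, as in the classical argument.
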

	\begin{proof}
		By \cite[Lemma 1.10]{maszub2} there is a finite covering of $Y$ by open affine supersubschemes $U$ such that each $\mathcal{O}(U)$ is a finitely generated $A$-superalgebra. 
		
		Then each $V\cap U$ can be covered by open affine supersubschemes $\mathrm{SSpec}(\mathcal{O}(U)_h), h\in\mathcal{O}(U)_0$. Since any affine superscheme is quasi-compact, $V$ can be covered by finitely many such supersubschemes (in general, for various $U$ and $h$).	
		Applying \cite[Lemma 1.10]{maszub2} again, we obtain that the induced morphism
		$V\to\mathrm{SSpec}(A)$ is of finite type, hence the $A$-superalgebra $\mathcal{O}(V)$ is finitely generated. 
	\end{proof}	
	\begin{lm}\label{variation on Lemma 5.3}
		Let $Y$ be a superscheme of finite type over $\mathrm{SSpec}(A)$. For any superalgebra morphism $A\to B$ and for arbitrary open quasi-compact supersubschemes
		$V$ and $V'$ of $Y$, such that
		\[V\times_{\mathrm{SSpec}(A)} \mathrm{SSpec}(B)=V'\times_{\mathrm{SSpec}(A)} \mathrm{SSpec}(B),  \]
		there is a finitely generated $A$-supersubalgebra $B'$ of $B$, that satisfies
		\[V\times_{\mathrm{SSpec}(A)} \mathrm{SSpec}(B')=V'\times_{\mathrm{SSpec}(A)} \mathrm{SSpec}(B').  \]  	
	\end{lm}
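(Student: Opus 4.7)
The plan is to reduce the claim to the case of an inclusion $V_0 \subseteq V_1$ of open supersubschemes satisfying $(V_0)_{\mathrm{SSpec}(B)} = (V_1)_{\mathrm{SSpec}(B)}$, and then exploit the fact that an open subfunctor of an affine superscheme is cut out by a superideal, so that the unit-ideal condition pulled back to $B$ is a single finite relation that therefore already lives over a finitely generated $A$-subsuperalgebra of $B$.

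First, I would carry out the reduction. Since base change commutes with intersections of open subfunctors, the hypothesis yields $(V \cap V')_{\mathrm{SSpec}(B)} = V_{\mathrm{SSpec}(B)} = V'_{\mathrm{SSpec}(B)}$. If the inclusion case furnishes finitely generated $A$-subsuperalgebras $B'_1, B'_2 \subseteq B$ with $(V \cap V')_{B'_1} = V_{B'_1}$ and $(V \cap V')_{B'_2} = V'_{B'_2}$, then the $A$-subsuperalgebra $B'$ of $B$ they jointly generate is still finitely generated over $A$ and satisfies $V_{B'} = (V \cap V')_{B'} = V'_{B'}$ by further base change. Note that quasi-compactness of $V \cap V'$ is not required; only $V$ and $V'$ themselves must be quasi-compact, which they are by assumption.

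Second, for the inclusion case $V_0 \subseteq V_1$, I would cover the quasi-compact $V_1$ by finitely many open affine supersubschemes $W_1, \ldots, W_n$ of $Y$; by Lemma \ref{finite type property}, each $\mathcal{O}(W_j)$ is a finitely generated $A$-superalgebra. The open subfunctor $V_0 \cap W_j$ of $W_j \simeq \mathrm{SSp}(\mathcal{O}(W_j))$ is defined by some superideal $I_j \subseteq \mathcal{O}(W_j)$, and the hypothesis $(V_0)_B = (V_1)_B$ restricted to $W_j$ amounts to the unit-ideal condition $I_j \cdot (\mathcal{O}(W_j) \otimes_A B) = \mathcal{O}(W_j) \otimes_A B$. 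A witnessing relation $1 = \sum_\ell r_\ell (a_\ell \otimes 1)$ with $a_\ell \in I_j$ and $r_\ell \in \mathcal{O}(W_j) \otimes_A B$ involves only finitely many elements of $B$; let $B'_j$ be the $A$-subsuperalgebra of $B$ generated by those elements, which is finitely generated over $A$. The same relation holds in $\mathcal{O}(W_j) \otimes_A B'_j$, so $(V_0 \cap W_j)_{B'_j} = (W_j)_{B'_j}$.

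Finally, take $B'$ to be the $A$-subsuperalgebra of $B$ generated by $B'_1 \cup \cdots \cup B'_n$, which remains finitely generated over $A$. Base change from $B'_j$ to $B'$ preserves each equality, so $(V_0)_{B'} \cap (W_j)_{B'} = (V_0 \cap W_j)_{B'} = (W_j)_{B'}$ for every $j$; unioning over $j$ yields $(V_1)_{B'} \subseteq (V_0)_{B'}$, which is the nontrivial inclusion. I do not anticipate a serious obstacle; the key technical point is that open subfunctors of an affine superscheme are cut out by superideals and that the unit-ideal condition descends from $B$ to a finitely generated $A$-subsuperalgebra. The super-commutative setting poses no additional difficulty, since any odd generators contribute only finitely many elements of $B$ to the defining relation.
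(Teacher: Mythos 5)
Your proof is correct and follows essentially the same route as the paper: quasi-compactness plus the finite-type hypothesis reduce everything to finitely many affine pieces, on each of which the hypothesis becomes a unit-ideal condition witnessed by finitely many elements of $B$, which therefore descends to a finitely generated $A$-supersubalgebra. Your reduction to an inclusion via $V\cap V'$ and your use of general defining superideals are only cosmetic variants of the paper's reduction to an affine $Y$ covered by principal opens, and your one small leap --- asserting that the witnessing relation already holds in $\mathcal{O}(W_j)\otimes_A B'_j$ rather than merely after mapping to $\mathcal{O}(W_j)\otimes_A B$ (strictly one should enlarge $B'_j$ within the filtered colimit, since $\mathcal{O}(W_j)\otimes_A B'_j\to\mathcal{O}(W_j)\otimes_A B$ need not be injective) --- is a leap the paper's own proof makes as well.
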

	\begin{proof}
		Choose a covering of $Y$ by open affine supersubschemes $U$ as in Lemma \ref{finite type property}. Since
		\[V\times_{\mathrm{SSpec}(A)} \mathrm{SSpec}(B)=V'\times_{\mathrm{SSpec}(A)} \mathrm{SSpec}(B)  \]
		if and only if
		\[(U\cap V)\times_{\mathrm{SSpec}(A)} \mathrm{SSpec}(B)=(U\cap V')\times_{\mathrm{SSpec}(A)} \mathrm{SSpec}(B)  \]
		for each $U$, one can assume that $Y$ is affine. Then both $V$ and $V'$ are covered by finitely many open supersubschemes $\mathrm{SSpec}(\mathcal{O}(Y)_g)$ and
		$\mathrm{SSpec}(\mathcal{O}(Y)_h)$ respectively, where $g, h\in\mathcal{O}(V)_0$. Again, the original equality is equivalent to the finitely many equalities
		\[\mathrm{SSpec}(\mathcal{O}(Y)_g)\times_{\mathrm{SSpec}(A)} \mathrm{SSpec}(B)=(\cup_h \mathrm{SSpec}(\mathcal{O}(Y)_{gh}))\times_{\mathrm{SSpec}(A)} \mathrm{SSpec}(B)  \]
		for each element $g$. Thus it remains to consider the case when $V=Y$ and $V'=\cup_h \mathrm{SSpec}(\mathcal{O}(Y)_{h})$. We have
		\[\mathrm{SSpec}(\mathcal{O}(Y)\otimes_A B)=\cup_h \mathrm{SSpec}((\mathcal{O}(Y)\otimes_A B)_{h\otimes 1}),  \]
		that is equivalent to
		\[ \sum_h (h\otimes 1)(\sum_i f_{i h}\otimes b_{i h})=\sum_{i, h} hf_{i h}\otimes b_{i h}=1\otimes 1 , \] where $f_{i h}\in\mathcal{O}(V), b_{i h}\in B, |f_{i h}|+|b_{i h}|=0$ for any $i, h$. By setting $B'=A[b_{i h}\mid i, h]$ our lemma follows.
	\end{proof}
	\begin{pr}\label{a typical affine in a product}
		Let $X$ be an algebraic superscheme and let $\{ V_i\}_{1\leq i\leq m}$ be a finite covering of $X\times\mathrm{SSpec}(A)$ by open affine supersubschemes. Then there are a finitely generated $\Bbbk$-supersubalgebra $A'$ of $A$ and a finite covering $\{V'_i \}_{1\leq i\leq m}$ of $X\times\mathrm{SSpec}(A')$ by open affine supersubschemes, such that each diagram
		\[\begin{array}{ccc}
			V_i & \to & X\times\mathrm{SSpec}(A) \\
			\downarrow & & \downarrow \\
			V'_i\times_{\mathrm{SSpec}(A')}\mathrm{SSpec}(A) & \to & (X\times\mathrm{SSpec}(A'))\times_{\mathrm{SSpec}(A')}\mathrm{SSpec}(A)
		\end{array}\]
		is commutative and the left vertical arrow is the isomorphism induced by the natural isomorphism on the right.
	\end{pr}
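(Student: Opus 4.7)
The plan is to encode the covering $\{V_i\}$ by finitely many elements of $A$ and then take $A'$ to be the $\Bbbk$-supersubalgebra they generate. Since $X$ is algebraic, I would start by fixing a finite open affine cover $\{U_j=\mathrm{SSpec}(C_j)\}_{1\leq j\leq n}$ of $X$ with each $C_j$ a finitely generated $\Bbbk$-superalgebra. This gives an open affine cover $\{U_j\times\mathrm{SSpec}(A)\simeq\mathrm{SSpec}(C_j\otimes A)\}$ of $X\times\mathrm{SSpec}(A)$. Since $X\times\mathrm{SSpec}(A)$ is of finite type over $\mathrm{SSpec}(A)$, Lemma \ref{finite type property} gives $V_i=\mathrm{SSpec}(B_i)$ with each $B_i$ a finitely generated $A$-superalgebra.

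For each pair $(i,j)$, the open $V_i\cap(U_j\times\mathrm{SSpec}(A))$ sits inside the quasi-compact affine $V_i$, so it can be covered by finitely many principal opens $\mathrm{SSpec}((B_i)_{h_{ijk}})$. By the argument of \cite[Lemma 3.5]{maszub1} used in the proof of Lemma \ref{it does not depend on U}, each such $\mathrm{SSpec}((B_i)_{h_{ijk}})$ is also a principal open $\mathrm{SSpec}((C_j\otimes A)_{g_{ijk}})$ of $U_j\times\mathrm{SSpec}(A)$, with a compatible superalgebra isomorphism $(C_j\otimes A)_{g_{ijk}}\simeq(B_i)_{h_{ijk}}$. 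The covering condition $X\times\mathrm{SSpec}(A)=\bigcup_i V_i$ amounts to the $g_{ijk}$'s (for varying $i,k$) generating the unit ideal in each $C_j\otimes A$, which is another finite identity. I would then let $A'\subseteq A$ be the $\Bbbk$-supersubalgebra generated by the finitely many coefficients from $A$ needed to encode: (a) the $g_{ijk}$'s, (b) the isomorphisms $(C_j\otimes A)_{g_{ijk}}\simeq(B_i)_{h_{ijk}}$ (i.e.\ images of a fixed finite set of $A$-generators of $B_i$), (c) the cocycle data gluing these principal pieces into the $V_i$'s, and (d) witnesses of the unit-ideal identities for the cover. Then $A'$ is finitely generated over $\Bbbk$.

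By construction, the descended data yields principal open affines $\mathrm{SSpec}((B'_i)_{h_{ijk}})\simeq\mathrm{SSpec}((C_j\otimes A')_{g_{ijk}})\subseteq X\times\mathrm{SSpec}(A')$ that glue to affine open supersubschemes $V'_i=\mathrm{SSpec}(B'_i)$ with $V'_i\times_{\mathrm{SSpec}(A')}\mathrm{SSpec}(A)\simeq V_i$, compatibly with the inclusions into $X\times\mathrm{SSpec}(A)$; and the descended unit-ideal witnesses ensure $\{V'_i\}$ covers $X\times\mathrm{SSpec}(A')$. The main obstacle I anticipate is bookkeeping: verifying that \emph{all} data describing $V_i$ as an open of $X\times\mathrm{SSpec}(A)$ — not merely as an abstract $A$-superscheme — really involves only finitely many elements of $A$, and that the descended gluing actually assembles into an \emph{affine} scheme over $A'$ rather than just an open subscheme. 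What makes this work is that each $V_i$ is affine and quasi-compact, reducing its entire open-immersion description to finitely many pieces of local data, and the descent framework of Lemma \ref{variation on Lemma 5.3} and Lemma \ref{cancellation} then justifies passing these identifications down to $A'$: any topological equality that holds after base change to $A$ already holds after base change to a sufficiently large finitely generated subalgebra.
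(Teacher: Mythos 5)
Your overall strategy -- encode the covering by finitely many elements of $A$, adjoin them to get a finitely generated $A'$, and descend -- is exactly the strategy of the paper's proof, and your use of \cite[Lemma 3.5]{maszub1} to convert each principal open of $V_i$ into a principal open of some $U_j\times\mathrm{SSpec}(A)$ is also how the paper proceeds. But the obstacle you flag at the end is a genuine gap, not mere bookkeeping, and your proposal does not close it. If you define $V_i'$ by gluing the descended principal opens $\mathrm{SSpec}((C_j\otimes A')_{g_{ijk}})$ inside $X\times\mathrm{SSpec}(A')$, you get an open supersubscheme but no reason for it to be affine. And you cannot instead descend $B_i=\mathcal{O}(V_i)$ as an abstract $A$-superalgebra by ``descending a presentation'': $B_i$ is only known to be finitely \emph{generated} over $A$, and $A$ is an arbitrary (possibly non-Noetherian) superalgebra, so there is no finite set of relations to adjoin; the map $B_i'\otimes_{A'}A\to B_i$ would be surjective but its injectivity is exactly what is missing.

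The paper closes this gap by first reducing to $X$ affine (using that the statement is stable under enlarging $A'$) and then never presenting $\mathcal{O}(V_i)$ abstractly: it sets $B_i=\phi_i(\mathcal{O}(X)\otimes A')$, the \emph{image} of the restriction map $\phi_i:\mathcal{O}(X)\otimes A\to\mathcal{O}(V_i)$, so affineness of $V_i'=\mathrm{SSpec}(B_i)$ is built in. Openness of $V_i'$ in $X\times\mathrm{SSpec}(A')$ and the base-change isomorphism $V_i\simeq V_i'\times_{\mathrm{SSpec}(A')}\mathrm{SSpec}(A)$ then come from re-applying the quantitative criterion of \cite[Lemma 3.5]{maszub1}: the certificate consists of the elements $x_{is}$, the partition-of-unity identity $\sum_s b_{is}\phi_i(x_{is})=1$ together with local numerators $y_{iks}$ expressing each $b_{is}$ in $(\mathcal{O}(X)\otimes A)_{x_{ik}}$, and the covering witnesses $d_{is}$ with $\sum x_{is}d_{is}=1$ -- all finite data that can be forced into $\mathcal{O}(X)\otimes A'$. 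If you want to salvage your version, replace ``glue the descended pieces'' by this image-algebra construction; as written, the affineness of $V_i'$ and the identification with $V_i$ after base change are asserted rather than proved.
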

	\begin{proof}
		Let $\{U_j\}_{1\leq j\leq n}$ be a finite covering of $X$ by open affine supersubschemes. For each couple of indices $i, j$ we choose a finite covering $\{V_{ij k}\}_{1\leq k\leq l}$ of the open supersubscheme $V_i\cap (U_j\times\mathrm{SSpec}(A))$ by open affine supersubschemes. Since the statement of lemma is invariant with respect to the replacements of $A'$ by a larger (finitely generated) supersubalgebra $A''$ and $V'$ by $V'\times_{\mathrm{SSpec}(A')}\mathrm{SSpec}(A'')$ respectively, all we need is to prove our statement for the covering $\{V_{ijk} \}_{1\leq i\leq m, 1\leq k\leq l}$ of $U_j\times\mathrm{SSpec}(A), 1\leq j\leq n$. In other words, one can assume that $X$ is affine. Let $\phi_i$ denote the dual superalgebra morphism $\mathcal{O}(X)\otimes A\to \mathcal{O}(V_i), 1\leq i\leq m$. 	
		
		By \cite[Lemma 3.5]{maszub1}, there are elements $x_{i 1}, \ldots, x_{i t}\in (\mathcal{O}(X)\otimes A)_0$ and $b_{i1}, \ldots, b_{it}\in\mathcal{O}(V)_0$, such that 
		$\sum_{1\leq s\leq t} b_{i s}\phi_i(x_{i s})=1$ and $\phi_i$ induces $(\mathcal{O}(X)\otimes A)_{x_{is}}\simeq \mathcal{O}(V)_{\phi_i(x_{is})}$ for any $1\leq i\leq m, 1\leq s\leq t$. Moreover,  there are elements $d_{is}\in (\mathcal{O}(X)\otimes A)_0,  1\leq i\leq m, 1\leq s\leq t,$ such that $\sum_{1\leq i\leq m, 1\leq s\leq t}x_{is}d_{is}=1$.
		
		For arbitrary indices  $1\leq i\leq m, 1\leq s, k\leq t,$ let $\frac{y_{iks}}{x_{ik}^{n_{iks}}}$ be the unique preimage of $\frac{b_{is}}{1}\in\mathcal{O}(V)_{\phi_i(x_{ik})}$, where $y_{iks}\in (\mathcal{O}(X)\otimes A)_0$.  
		There is a finitely generated $\Bbbk$-supersubalgebra $A'$ of $A$ such that all elements $x_{is}, d_{is}$ and $y_{iks}$ belong to $(\mathcal{O}(X)\otimes A')_0$.
		Set $B_i=\phi_i(\mathcal{O}(X)\otimes A'), 1\leq i\leq m$. By the same \cite[Lemma 3.5]{maszub1}, each $\mathrm{SSpec}(B_i)$ is isomorphic to an open supersubscheme $V'_i$ of
		$X\times\mathrm{SSpec}(A')$ and the induced morphism $V_i\to V'_i\times_{\mathrm{SSpec}(A')}\mathrm{SSpec}(A)$ is an isomorphism. It is also clear that $\cup_{1\leq i\leq m} V_i'=X\times\mathrm{SSpec}(A')$.
	\end{proof}
	\begin{pr}\label{Property P_3}
		Let $\mathbb{X}$ be an algebraic superscheme and let $\{A_{\lambda}  \}_{\lambda\in I}$ be an inductive system of $\Bbbk$-superalgebras. Set $A=\varinjlim A_{\lambda}$. Then
		the natural map
		\[ q : \varinjlim \mathfrak{Aut}(\mathbb{X})(A_{\lambda})\to \mathfrak{Aut}(\mathbb{X})(A)  \]
		is a group isomorphism. 
	\end{pr}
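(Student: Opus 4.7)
The plan is to prove injectivity and surjectivity of $q$ separately, in both cases reducing to descent of finitely presented local data along the direct limit $A = \varinjlim A_\lambda$. Throughout, I fix a finite affine open covering $\{U_i\}_{1 \le i \le m}$ of $X$ with each $\mathcal{O}(U_i)$ finitely generated over $\Bbbk$, which exists because $X$ is algebraic. In particular, $X \times \mathrm{SSpec}(R)$ is quasi-compact for every superalgebra $R$, so any open covering admits a finite refinement by principal affines $\mathrm{SSpec}((\mathcal{O}(U_i) \otimes R)_g)$.

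For injectivity, I may assume (after passing to a common index) that two elements $\phi, \psi \in \mathfrak{Aut}(\mathbb{X})(A_\mu)$ have the same image in $\mathfrak{Aut}(\mathbb{X})(A)$. Intersecting the open covers $\{\phi^{-1}(U_{i'} \times \mathrm{SSpec}(A_\mu))\}$ and $\{\psi^{-1}(U_{i'} \times \mathrm{SSpec}(A_\mu))\}$ with $\{U_i \times \mathrm{SSpec}(A_\mu)\}$ and using quasi-compactness, I produce a finite affine open covering $\{V_j\}$ of $X \times \mathrm{SSpec}(A_\mu)$, with each $V_j$ principal in some $U_{k(j)} \times \mathrm{SSpec}(A_\mu)$, such that both $\phi(V_j)$ and $\psi(V_j)$ lie inside a common $U_{i(j)} \times \mathrm{SSpec}(A_\mu)$. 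Then $\phi|_{V_j}$ and $\psi|_{V_j}$ are determined by the images of the finitely many $\Bbbk$-generators of $\mathcal{O}(U_{i(j)})$ in the localization $(\mathcal{O}(U_{k(j)}) \otimes A_\mu)_{g_j}$. Equality after base change to $A$ means the two families of images agree in $\varinjlim_{\lambda \ge \mu} (\mathcal{O}(U_{k(j)}) \otimes A_\lambda)_{g_j}$. Since only finitely many scalars and finitely many charts are involved, a single $\lambda \ge \mu$ witnesses all equalities, giving $(\phi)_{A_\lambda} = (\psi)_{A_\lambda}$.

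For surjectivity, take $\phi \in \mathfrak{Aut}(\mathbb{X})(A)$ and, by the same refinement, a finite affine open covering $\{V_j\}$ of $X \times \mathrm{SSpec}(A)$ whose members are principal in some $U_{k(j)} \times \mathrm{SSpec}(A)$ and satisfy both $\phi(V_j) \subseteq U_{i(j)} \times \mathrm{SSpec}(A)$ and $\phi^{-1}(V_j) \subseteq U_{i'(j)} \times \mathrm{SSpec}(A)$. By Proposition \ref{a typical affine in a product}, this covering descends to a finitely generated $\Bbbk$-supersubalgebra $A' \subseteq A$ and an affine open covering $\{V'_j\}$ of $X \times \mathrm{SSpec}(A')$ whose base change recovers $\{V_j\}$. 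The restriction $\phi|_{V_j}$ is dual to a map $\mathcal{O}(U_{i(j)}) \otimes A \to \mathcal{O}(V'_j) \otimes_{A'} A$ determined by finitely many scalars, each of which, after enlarging $A'$ if necessary, already lies in $\mathcal{O}(V'_j) \otimes_{A'} A'$; the same for $\phi^{-1}$. Lifting the finitely many generators of this enlarged $A'$ to some $A_\lambda$ provides a candidate $\phi_\lambda$ on $X \times \mathrm{SSpec}(A_\lambda)$, once a further enlargement of $\lambda$ guarantees that the gluing compatibilities on the overlaps $V_j \cap V_{j'}$, and the two identities $\phi \circ \phi^{-1} = \mathrm{id}$ and $\phi^{-1} \circ \phi = \mathrm{id}$, already hold over $A_\lambda$; each is a finite set of equalities of morphisms between finitely presented superalgebras, so by directedness of the system each descends to some $A_\lambda$, and a common $\lambda$ works for all of them.

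The main obstacle is surjectivity: it requires simultaneously descending several kinds of finitely presented data (the charts $V_j$ and $V'_j$, the morphisms defining $\phi|_{V_j}$ and $\phi^{-1}|_{V_j}$, and the cocycle and inverse-identity equations). The bookkeeping is organized by using Proposition \ref{a typical affine in a product} and Lemma \ref{variation on Lemma 5.3} to descend the affine charts and open-subscheme equalities to a common finitely generated $A' \subseteq A$, and then invoking the direct limit principle for finitely many equalities in the localized tensor algebras to push everything to a single $A_\lambda$. A subtle point is that the subalgebra $A' \subseteq A$ itself need not embed into any $A_\lambda$, but only its finitely many generators must be lifted, and all remaining relations ultimately hold in $A$, hence in some $A_\lambda$ after enlargement.
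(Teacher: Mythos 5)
Your overall strategy --- descending finitely presented local data (affine charts, the chart-by-chart superalgebra morphisms, the gluing compatibilities and the two inverse identities) along the direct limit, organized by Proposition \ref{a typical affine in a product} and Lemma \ref{variation on Lemma 5.3} --- is exactly the paper's, and the surjectivity half is sound, including your correct remark that the finitely generated subalgebra $A'\subseteq A$ need not embed into any $A_\lambda$ and that only its generators have to be lifted.

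The one step that does not work as written is in the injectivity argument. Intersecting the three covers $\{\phi^{-1}(U_{i'}\times\mathrm{SSpec}(A_\mu))\}$, $\{\psi^{-1}(U_{i'}\times\mathrm{SSpec}(A_\mu))\}$ and $\{U_i\times\mathrm{SSpec}(A_\mu)\}$ produces pieces of the form $\phi^{-1}(U_{i'})\cap\psi^{-1}(U_{i''})\cap U_i$, on which $\phi$ lands in $U_{i'}$ and $\psi$ lands in $U_{i''}$, and there is no reason these two indices can be taken equal: over $A_\mu$ the underlying continuous maps $|\phi|$ and $|\psi|$ need not coincide (they agree only after base change to $A$, and $|X\times\mathrm{SSpec}(A)|\to|X\times\mathrm{SSpec}(A_\mu)|$ need not be surjective), so the diagonal pieces $\phi^{-1}(U_{i'})\cap\psi^{-1}(U_{i'})$ need not cover $X\times\mathrm{SSpec}(A_\mu)$. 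Hence the claim that ``both $\phi(V_j)$ and $\psi(V_j)$ lie inside a common $U_{i(j)}\times\mathrm{SSpec}(A_\mu)$'' is unjustified, and without it you cannot compare the two restrictions as morphisms into a single affine chart by matching images of generators. The repair is cheap and is exactly what the paper does: either reduce to the case $\psi=\mathrm{id}$ (injectivity of a group homomorphism is triviality of the kernel, and then $\psi^{-1}(U_i\times\mathrm{SSpec}(A_\mu))=U_i\times\mathrm{SSpec}(A_\mu)$ makes the chart alignment automatic), or first apply Lemma \ref{variation on Lemma 5.3} to the equalities $\phi^{-1}(U_i\times\mathrm{SSpec}(A_\mu))\times_{\mathrm{SSpec}(A_\mu)}\mathrm{SSpec}(A)=\psi^{-1}(U_i\times\mathrm{SSpec}(A_\mu))\times_{\mathrm{SSpec}(A_\mu)}\mathrm{SSpec}(A)$ to align the preimage charts over some larger index before comparing generators. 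With that patch the rest of your argument goes through and coincides with the paper's proof.
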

	\begin{proof} More generally, let $\mathfrak{End}(\mathbb{X})$ denote the functor from $\mathsf{SAlg}_{\Bbbk}$ to the category of monoids,
		where for any $A\in\mathsf{SAlg}_{\Bbbk}$ the monoid $\mathfrak{End}(\mathbb{X})(A)$ consists of all endomorphisms of the superscheme $X\times\mathrm{SSpec}(A)$ over $\mathrm{SSpec}(A)$.
		As above, we have the natural map 
		\[ p : \varinjlim \mathfrak{End}(\mathbb{X})(A_{\lambda})\to \mathfrak{End}(\mathbb{X})(A). \]
		It is clear that $\mathfrak{Aut}(\mathbb{X})$ is a submonoid functor of $\mathfrak{End}(\mathbb{X})$ and $p|_{\mathfrak{Aut}(\mathbb{X})}=q$.
		
		Let $\phi\in \mathfrak{End}(\mathbb{X})(A_{\lambda})$ and its equivalence class $[\phi]\in \varinjlim \mathfrak{End}(\mathbb{X})(A_{\lambda})$ satisfies $p([\phi])=\mathrm{id}_{X\times\mathrm{SSpec}(A)}$. Arguing as in Proposition \ref{nice property of T} and applying Lemma \ref{variation on Lemma 5.3}, one sees that there is a superalgebra $A_{\beta}, \beta\geq\gamma,$
		such that the endomorphism $\phi'=\phi\times_{\mathrm{SSpec}(A_{\gamma}) }\mathrm{id}_{\mathrm{SSpec}(A_{\beta})}$ maps each open supersubscheme
		\[ U\times\mathrm{SSpec}(A_{\beta})\simeq (U\times\mathrm{SSpec}(A_{\gamma}))\times_{\mathrm{SSpec}(A_{\gamma})} \mathrm{SSpec}(A_{\beta})\] to itself, where $U$ runs over a finite covering of
		$X$ by open affine supersubschemes. Moreover, $\phi'\times_{\mathrm{SSpec}(A_{\beta}) }\mathrm{id}_{\mathrm{SSpec}(A) }=\mathrm{id}_{X\times\mathrm{SSpec}(A)  }$. Assume that $f_1, \ldots, f_l$ generate a superalgebra $\mathcal{O}(U)$. Then the superalgebra endomorphism of $\mathcal{O}(U)\otimes A_{\beta}$, dual to $\phi'|_{U\times\mathrm{SSpec}(A_{\beta})}$, acts on these generators as
		\[f_i\otimes 1\mapsto \sum_j h_{ij}\otimes a_{ij}, h_{ij}\in\mathcal{O}(U), a_{ij}\in A_{\beta}, 1\leq i\leq l .\]
		Besides, for each index $i$ the elements $h_{ij}$ can be chosen linearly independent. We have
		\[\sum_j h_{ij}\otimes [a_{ij}]=f_i\otimes 1, 1\leq i\leq l, \]
		where $[a]$ is the equivalence class of $a\in A_{\beta}$ in $A$. Since the covering $\{ U\}$ is finite, for sufficiently large $\beta'$ there is
		$\phi'\times_{\mathrm{SSpec}(A_{\beta}) }\mathrm{id}_{\mathrm{SSpec}(A_{\beta'}) }=\mathrm{id}_{X\times\mathrm{SSpec}(A_{\beta'}) }$. In particular, $q$ is an injective
		group homomorphism.
		
		It remains to show that $q$ is surjective. 
		
		Condier an element $\phi\in \mathfrak{Aut}(\mathbb{X})(A)$. Let $\{U_i \}$ be a finite covering of $X$ by open affine supersubschemes. 
		For each couple of indices we choose a covering of $U_i\cap U_j$ by open affine supersubschemes $U_{ij k}$. Then $X\times\mathrm{SSpec}(A)$ is covered by open affine supersubschemes $U_i\times\mathrm{SSpec}(A)$ and $V_i=\phi^{-1}(U_i\times\mathrm{SSpec}(A))$ respectively. Moreover, each $V_i\cap V_j=\phi^{-1}(U_i\cap U_j)$ is covered by open
		affine supersubschemes $V_{ij k}=\phi^{-1}(U_{ij k})$ and $\phi$ is uniquely defined by the superalgebra isomorphisms \[\phi^{\sharp}_i : \mathcal{O}(U_i)\otimes A\to \mathcal{O}(V_i) \ \mbox{and} \ \phi_{ijk}^{\sharp} : \mathcal{O}(U_{ijk})\otimes A\to \mathcal{O}(V_{ijk}),\]
		such that for any triple $i, j, k$ we have a commutative diagram
		\[\begin{array}{ccc}
			\mathcal{O}(U_i)\otimes A & \to &\mathcal{O}(V_i) \\
			\downarrow & & \downarrow \\
			\mathcal{O}(U_{ij k})\otimes A & \to & \mathcal{O}(V_{ij k})
		\end{array}.  \]	
		If it does not lead to confusion, 	we omit the indices $i, j, k$ in the notations $\phi_i^{\sharp}$ and $\phi_{ijk}^{\sharp}$. 
		
		Proposition \ref{a typical affine in a product} implies that there are an index $\gamma$ and open affine supersubschemes $V'_i, V'_{ijk}$ in $X\times\mathrm{SSpec}(A_{\gamma})$ such that
		\[\cup_i V_i'=X\times\mathrm{SSpec}(A_{\gamma})\]
		and
		\[V_i\simeq V'_i\times_{\mathrm{SSpec}(A_{\gamma})} \mathrm{SSpec}(A), \ \ V_{ijk}\simeq V'_{ijk}\times_{\mathrm{SSpec}(A_{\gamma})} \mathrm{SSpec}(A)\]
		for any triple $i, j, k$. Besides, $V_i'\cap V'_j=\cup_k V'_{ijk}$ for each couple of indices $i, j$.
		
		Note that if there is an $A$-superalgebra morphism $\psi : B\otimes_{A'} A\to C\otimes_{A'} A$, where $B$ and $C$ are finitely generated $A'$-superalgebras, then there is a finitely generated $A'$-supersubalgebra $A''$ of $A$, such that $\psi$ takes $B\otimes_{A'} A''$ to $C\otimes_{A'} A''$. Thus there is an index $\beta\geq\gamma$ such that $\phi^{\sharp}$ takes
		each $\mathcal{O}(U_i)\otimes A_{\beta}$ to $\mathcal{O}(V'_i)\otimes_{A_{\gamma}} A_{\beta}$ as well as each $\mathcal{O}(U_{ijk})\otimes A_{\beta}$ to $\mathcal{O}(V'_{ijk})\otimes_{A_{\gamma}} A_{\beta}$, so that all diagrams
		\[\begin{array}{ccc}
			\mathcal{O}(U_i)\otimes A_{\beta} & \to & \mathcal{O}(V'_i)\otimes_{A_{\gamma}} A_{\beta} \\
			\downarrow & & \downarrow \\
			\mathcal{O}(U_{ij k})\otimes A_{\beta} & \to & \mathcal{O}(V'_{ij k})\otimes_{A_{\gamma}} A_{\beta}
		\end{array} \]		
		are commutative. Gluing all together, one sees that there is an endomorphism $\phi'\in \mathfrak{End}(\mathbb{X})(A_{\beta})$ such that $\phi'\times_{\mathrm{SSpec}(A_{\beta})} \mathrm{id}_{\mathrm{SSpec}(A)}=\phi$. If $\psi$ is the inverse of $\phi$, then increasing $\beta$ one can construct the similar $\psi'\in \mathfrak{End}(\mathbb{X})(A_{\beta})$ such that
		$\psi'\times_{\mathrm{SSpec}(A_{\beta})} \mathrm{id}_{\mathrm{SSpec}(A)}=\psi$. We have
		\[ \phi'\psi'\times_{\mathrm{SSpec}(A_{\beta})} \mathrm{id}_{\mathrm{SSpec}(A)}=\phi\psi=\]
		\[\mathrm{id}_{X\times\mathrm{SSpec}(A)}=\psi\phi=\psi'\phi'\times_{\mathrm{SSpec}(A_{\beta})} \mathrm{id}_{\mathrm{SSpec}(A)}. \]
		By the above, for some $\alpha\geq\beta$ the morphisms $\phi'\times_{\mathrm{SSpec}(A_{\beta})} \mathrm{id}_{\mathrm{SSpec}(A_{\alpha})}$ and $\psi'\times_{\mathrm{SSpec}(A_{\beta})} \mathrm{id}_{\mathrm{SSpec}(A_{\alpha})}$ are mutually inverse to each other. Proposition is proved.   
	\end{proof}
	In terms of \cite{mats-oort} this proposition states that $\mathfrak{Aut}(\mathbb{X})$ satisfies the super analog of the condition $P_3$. In what follows we call the corresponding super analogs of all conditions $P_i$ from \cite{mats-oort} as \emph{super-$P_i$}.
	
	\section{Property super-$P_2$}
	
	Recall that $X_A$ denote $X\times\mathrm{SSpec}(A)$ for any superalgebra $A$. 
		\begin{lm}\label{open in a specific product}
		Let $X$ be a superscheme and $A$ be a finite dimensional local superalgebra. Then any open supersubscheme $V$ of $X_A\simeq X_{\Bbbk(A)}\times_{\mathrm{SSpec}(\Bbbk(A))}\mathrm{SSpec}(A)$ has a form $U\times_{\mathrm{SSpec}(\Bbbk(A))}\mathrm{SSpec}(A)$, where $U$ is an open supersubscheme of $X_{\Bbbk(A)}$.
	\end{lm}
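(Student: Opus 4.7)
The plan is to exploit the fact that $A$ being a finite dimensional local superalgebra forces its maximal superideal $\mathfrak{M}_A$ to be nilpotent, so that the natural closed immersion $\mathrm{SSpec}(\Bbbk(A))\to\mathrm{SSpec}(A)$ is a homeomorphism on underlying topological spaces. Base-changing along $X\to\mathrm{SSpec}(\Bbbk)$, I expect the induced closed immersion $j:X_{\Bbbk(A)}\to X\times\mathrm{SSpec}(A)$ to still be a homeomorphism on underlying spaces. Reducing to the affine case $X\simeq\mathrm{SSpec}(B)$, this amounts to showing that the kernel of $(B\otimes A)_0\to (B\otimes\Bbbk(A))_0$, which equals $B_0\otimes\mathfrak{m}_A+B_1\otimes A_1$, is nilpotent; this follows from the nilpotence of $\mathfrak{m}_A$ combined with $A_1\cdot A_1\subseteq\mathfrak{m}_A$, so a sufficiently high power of that ideal lands in $B_0\otimes\mathfrak{m}_A^N=0$.

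Next, starting from the given open supersubscheme $V\subseteq X\times\mathrm{SSpec}(A)$, I would transport the underlying open subset $|V|$ through the homeomorphism $|j|:|X_{\Bbbk(A)}|\xrightarrow{\sim}|X\times\mathrm{SSpec}(A)|$ to obtain an open subset $|U|\subseteq |X_{\Bbbk(A)}|$, and define $U$ to be the open supersubscheme $(|U|,\mathcal{O}_{X_{\Bbbk(A)}}|_{|U|})$ of $X_{\Bbbk(A)}$. Since open immersions are stable under base change, $U\times_{\mathrm{SSpec}(\Bbbk(A))}\mathrm{SSpec}(A)$ is canonically an open supersubscheme of $X_{\Bbbk(A)}\times_{\mathrm{SSpec}(\Bbbk(A))}\mathrm{SSpec}(A)\simeq X\times\mathrm{SSpec}(A)$, and a direct unpacking of the fibered product shows that its underlying open set in $|X\times\mathrm{SSpec}(A)|$ is precisely the image of $|U|$ under $|j|$, i.e.\ the set $|V|$.

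Finally, I would invoke the elementary fact that two open supersubschemes of a fixed superscheme that share the same underlying open subset coincide, since both carry the structure sheaf obtained by restriction from the ambient one. Applying this to $V$ and to $U\times_{\mathrm{SSpec}(\Bbbk(A))}\mathrm{SSpec}(A)$, I conclude the desired equality. The only nontrivial step is the first one, namely the verification that $j$ is a homeomorphism on topological spaces; everything after that is a direct consequence of the standard properties of open immersions and base change.
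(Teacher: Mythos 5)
Your proof is correct and rests on exactly the key fact the paper uses: since $\mathfrak{m}_A$ is nilpotent and $A_1^2\subseteq\mathfrak{m}_A$, the kernel of $B\otimes A\to B\otimes\Bbbk(A)$ is nilpotent, so the closed immersion $X_{\Bbbk(A)}\to X\times\mathrm{SSpec}(A)$ is a homeomorphism on underlying spaces and open supersubschemes are determined by their underlying open sets. The paper packages the same observation through localizations instead — reducing to affine $X$ and noting that a principal open $(\mathcal{O}(X)\otimes A)_g$ with $g=\bar g\otimes 1+y$, $y$ nilpotent, coincides with $(\mathcal{O}(X)\otimes\Bbbk(A))_{\bar g}\otimes A$ — so your global, purely topological formulation is an equivalent and, if anything, slightly cleaner rendering of the same argument.
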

	\begin{proof}
		Choose an open covering of $X$ by open affine supersubschemes $U_i$. Since $V=\cup_i (V\cap (U_i\times\mathrm{SSpec}(A)))$, one can assume that $X$ is affine. We have
		$V=\cup_g \mathrm{SSpec}((\mathcal{O}(X)\otimes A)_g)$ for a finite set of elements $g\in (\mathcal{O}(X)\otimes A)_0$. It remains to note that $g=\bar{g}\otimes 1 +y$, where
		$\bar{g}\in\mathcal{O}(X)\otimes\Bbbk(A), y\in\mathcal{O}(X)\otimes\mathfrak{M}_A$, and since $y$ is nilpotent, $(\mathcal{O}(X)\otimes A)_g\simeq (\mathcal{O}(X)\otimes\Bbbk(A))_{\bar{g}}\otimes A$ for each $g$. In other words, $V=(\cup_g \mathrm{SSpec}((\mathcal{O}(X)\otimes\Bbbk(A))_{\bar{g}}))\times\mathrm{SSpec}(A)$. 	
	\end{proof}
	
	\begin{pr}\label{P_2}
		If $\mathbb{X}$ is a proper superscheme, then $\mathfrak{Aut}(\mathbb{X})$ satisfies the super-$P_2$.	
	\end{pr}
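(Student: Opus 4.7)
The most natural reading of super-$P_2$, given its role in the Matsumura--Oort machinery, is that for every complete local Noetherian superalgebra $A$ with maximal superideal $\mathfrak{M}$ and residue field $\Bbbk$, the canonical map
\[
\mathfrak{Aut}(\mathbb{X})(A)\longrightarrow \varprojlim_n \mathfrak{Aut}(\mathbb{X})(A/\mathfrak{M}^{n+1})
\]
is bijective. The plan is to reduce this identification to the superized formal existence theorem of Proposition \ref{endomorphisms of completions}, exploiting the second description of $\mathfrak{Aut}(\mathbb{X})(A)$ as the group of $\mathrm{SSpec}(A)$-automorphisms of $Y=X\times\mathrm{SSpec}(A)$.

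First, I verify the hypotheses of Proposition \ref{endomorphisms of completions} for $Y$ over $S=\mathrm{SSpec}(A)$. Properness of $\mathbb{X}$ is preserved by base change, so $Y$ is proper, and hence separated, over $S$. To match the $I=I_0A$ hypothesis of that proposition, set $J=\mathfrak{M}_0 A$; since $A_1^2\subseteq\mathfrak{m}$, one has $\mathfrak{M}^2\subseteq J\subseteq\mathfrak{M}$, so the $J$-adic and $\mathfrak{M}$-adic topologies on $A$ coincide, and $A$ is $J$-adic with $J=J_0A$. Proposition \ref{endomorphisms of completions} then delivers a bijection $\mathrm{Mor}_S(Y,Y)\stackrel{\sim}{\to}\mathrm{Mor}_{\widehat{S}}(\widehat{Y},\widehat{Y})$, which visibly restricts to a bijection on automorphism groups.

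Second, I identify $\mathrm{Aut}_{\widehat{S}}(\widehat{Y})$ with the desired inverse limit. By Proposition \ref{supermodule over formal as a proj limit}(1), together with Lemma \ref{morphisms to affine} and the universal property of fiber products of N.f.\ superschemes (Proposition \ref{fibered product for fromal}), giving a morphism $\widehat{Y}\to\widehat{Y}$ over $\widehat{S}$ is the same datum as giving a compatible sequence of morphisms $Y_n\to Y_n$ over $\mathrm{SSpec}(A/J^{n+1})$, where $Y_n=X\times\mathrm{SSpec}(A/J^{n+1})$. Moreover, such a system is invertible if and only if every term is, since inverses assemble into an inverse system. By cofinality of the $J$- and $\mathfrak{M}$-adic filtrations, the inverse limit is unchanged if we index by $\mathfrak{M}^{n+1}$ instead, giving
\[
\mathrm{Aut}_{\widehat{S}}(\widehat{Y})\;\cong\;\varprojlim_n \mathfrak{Aut}(\mathbb{X})(A/\mathfrak{M}^{n+1}),
\]
which combined with the previous bijection yields super-$P_2$.

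The entire weight of the argument is borne by Proposition \ref{endomorphisms of completions}, whose proof occupied Section 5; once that is in hand the rest is bookkeeping. The one point demanding care is the discrepancy between $\mathfrak{M}$ and its even counterpart $J=\mathfrak{M}_0 A$, which is absorbed by the fact that they define the same adic topology on the Noetherian superalgebra $A$, so passing between the two filtrations costs nothing at the level of inverse limits.
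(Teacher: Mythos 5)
Your proof follows essentially the same route as the paper: both deduce super-$P_2$ from Proposition \ref{endomorphisms of completions} applied to $X\times\mathrm{SSpec}(A)$ over $S=\mathrm{SSpec}(A)$, and then identify $\mathrm{Aut}_{\widehat{S}}(\widehat{X_A})$ with $\varprojlim_n\mathfrak{Aut}(\mathbb{X})(A/\mathfrak{M}_A^{n})$ (the paper does this by an explicit covering construction of the comparison map $q$ together with an injectivity-plus-commutative-triangle argument, where you assert the limit description of formal morphisms directly). Your extra care with the hypothesis $I=I_0A$, via $J=\mathfrak{m}A$ and the coincidence of the $J$-adic and $\mathfrak{M}_A$-adic topologies, is a correct detail that the paper leaves implicit.
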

	\begin{proof}
	Let $A$ be a c.l.N. superalgebra with the maximal superideal $\mathfrak{M}_A$. The superscheme $X_A$ is proper (and separated) over $S=\mathrm{SSpec}(A)$. 	Let $S'=\mathrm{SSpec}(A/\mathfrak{M}_A)$. By Proposition \ref{endomorphisms of completions}, the natural map
$\mathrm{End}_S(X_A)\to \mathrm{End}_{\widehat{S}}(\widehat{X_A})$ is an isomorphism of monoids. In particular, we have 
\[\mathfrak{Aut}(\mathbb{X})(A)=\mathrm{Aut}_S(X_A)\simeq \mathrm{Aut}_{\widehat{S}}(\widehat{X_A}),\]
and even more, we have a commutative diagram
\[\begin{array}{ccc}
\mathfrak{Aut}(\mathbb{X})(A) & \simeq & \mathrm{Aut}_{\widehat{S}}(\widehat{X_A}) \\
\searrow &  & \nearrow \\
 & \varprojlim_n \mathfrak{Aut}(\mathbb{X})(A/\mathfrak{M}_A^n) &  	
\end{array},\] 
where $q : \varprojlim_n \mathfrak{Aut}(\mathbb{X})(A/\mathfrak{M}_A^n)\to \mathrm{Aut}_{\widehat{S}}(\widehat{X_A})$ is the canonical group morphism, defined as follows.
Let $\varprojlim \phi_n\in \varprojlim \mathfrak{Aut}(\mathbb{X})(A/\mathfrak{M}^{n})$. Lemma \ref{open in a specific product} implies that  
$|X_{A/\mathfrak{M}^n}|=|X_{\Bbbk(A)}|$ and $|\phi_m|=|\phi_n|$ for any $m\geq n\geq 1$. Besides, there is a finite open covering of
$X_{\Bbbk(A)}$ by affine supersubschemes $U_{ijk}$ and $V_{ijk}$, such that :
\begin{enumerate}
	\item $U_i=\cup_{j, k} U_{ijk}$ and $V_i=\cup_{j, k} V_{ijk}$ are open affine for each index $i$;
	\item any $\phi_n^{\sharp}$ is uniquely defined by the collection of superalgebra isomorphisms \[\psi_{i, n} : \mathcal{O}(U_i)\otimes_{\Bbbk(A)} A/\mathfrak{M}^n\to \mathcal{O}(V_i)\otimes_{\Bbbk(A)} A/\mathfrak{M}^n\] and 
	\[\psi_{i, j, k, n} : \mathcal{O}(U_{ijk})\otimes_{\Bbbk(A)} A/\mathfrak{M}^n\to \mathcal{O}(V_{ijk})\otimes_{\Bbbk(A)} A/\mathfrak{M}^n,\] so that  
	all diagrams
	\[\begin{array}{ccc}
		\mathcal{O}(U_i)\otimes_{\Bbbk(A)} A/\mathfrak{M}^n & \stackrel{\psi_{i, n}}{\to} & \mathcal{O}(V_i)\otimes_{\Bbbk(A)} A/\mathfrak{M}^n\\
		\downarrow & & \downarrow  \\
		\mathcal{O}(U_{ijk})\otimes_{\Bbbk(A)} A/\mathfrak{M}^n & \stackrel{\psi_{i, j, k, n}}{\to} & \mathcal{O}(V_{ijk})\otimes_{\Bbbk(A)} A/\mathfrak{M}^n
	\end{array},\]
	\[\begin{array}{ccc}
		\mathcal{O}(U_i)\otimes_{\Bbbk(A)} A/\mathfrak{M}^m & \stackrel{\psi_{i, m}}{\to} & \mathcal{O}(V_i)\otimes_{\Bbbk(A)} A/\mathfrak{M}^m\\
		\downarrow & & \downarrow  \\ 
		\mathcal{O}(U_i)\otimes_{\Bbbk(A)} A/\mathfrak{M}^n & \stackrel{\psi_{i, n}}{\to} & \mathcal{O}(V_i)\otimes_{\Bbbk(A)} A/\mathfrak{M}^n\\
	\end{array}	,		 \]
	and  
	\[\begin{array}{ccc}
		\mathcal{O}(U_{ijk})\otimes_{\Bbbk(A)} A/\mathfrak{M}^m & \stackrel{\psi_{i, j, k, m}}{\to} & \mathcal{O}(V_{ijk})\otimes_{\Bbbk(A)} A/\mathfrak{M}^m\\
		\downarrow & & \downarrow  \\ 
		\mathcal{O}(U_{ijk})\otimes_{\Bbbk(A)} A/\mathfrak{M}^n & \stackrel{\psi_{i, j, k, n}}{\to} & \mathcal{O}(V_{ijk})\otimes_{\Bbbk(A)} A/\mathfrak{M}^n\\
	\end{array}		 \]
	are commutative. 
\end{enumerate} 
Note that each $|\phi_n|$ can be uniquely glued from the maps $|\mathrm{SSpec}(\psi_{i, n})|$, compatible being restricted on $(V_{ij k})_{A/\mathfrak{M}^n}$.
Determine $q(\varprojlim \phi_n)=\phi\in\mathrm{Aut}_{\widehat{S}}(\widehat{X_A})$ by the collection of isomorphisms $\phi^{\sharp}|_{\widehat{\mathcal{O}(U_i)\otimes A}}=\varprojlim \psi_{i, n}$ and $\phi^{\sharp}|_{\widehat{\mathcal{O}(U_{ijk})\otimes A}}=\varprojlim \psi_{i, j, k, n}$. Besides, $|\phi|=|\phi_1|$. Since $q$ is obviously injective, 
then $q$ is bijective, and our statement clearly follows.

	\end{proof}
	\begin{cor}\label{P_2 for even}
		If $\mathbb{X}$ is proper, then $\mathfrak{Aut}(\mathbb{X})_{ev}$ satisfies the condition $P_2$.	
	\end{cor}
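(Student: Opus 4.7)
My plan is to deduce the corollary directly from its super-analogue, Proposition \ref{P_2}, by specializing to purely even superalgebras. The classical condition $P_2$ from \cite{mats-oort}, applied to the group functor $\mathfrak{Aut}(\mathbb{X})_{ev}$, demands that for every complete local Noetherian $\Bbbk$-algebra $A$, the natural map
\[\mathfrak{Aut}(\mathbb{X})_{ev}(A)\to \varprojlim_{n}\mathfrak{Aut}(\mathbb{X})_{ev}(A/\mathfrak{m}_{A}^{n})\]
be bijective, so this is what needs to be checked.

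First I would observe that any such $A$, regarded as a $\Bbbk$-superalgebra concentrated in even degree, is itself a complete local Noetherian superalgebra in the sense used in Proposition \ref{P_2}, with $\mathfrak{M}_{A}=\mathfrak{m}_{A}$. Next I would unwind the definition $\mathfrak{Aut}(\mathbb{X})_{ev}(B)=\mathfrak{Aut}(\mathbb{X})(B_{0})$ recalled in the paragraph following Proposition \ref{nice property of T}; applied to $A$ and to each quotient $A/\mathfrak{m}_{A}^{n}$, it yields
\[\mathfrak{Aut}(\mathbb{X})_{ev}(A)=\mathfrak{Aut}(\mathbb{X})(A),\qquad \mathfrak{Aut}(\mathbb{X})_{ev}(A/\mathfrak{m}_{A}^{n})=\mathfrak{Aut}(\mathbb{X})(A/\mathfrak{m}_{A}^{n}).\]
Consequently the map appearing in $P_{2}$ for $\mathfrak{Aut}(\mathbb{X})_{ev}$ is literally the map appearing in super-$P_{2}$ for $\mathfrak{Aut}(\mathbb{X})$ evaluated at the purely even superalgebra $A$, which is bijective by Proposition \ref{P_2}.

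There is no genuine obstacle in this argument, because the corollary is not an independent statement but a direct specialization. The only things to verify are that the domain of validity of super-$P_{2}$ (all complete local Noetherian superalgebras) contains the domain of validity of the classical $P_{2}$ for $\mathfrak{Aut}(\mathbb{X})_{ev}$ (purely even complete local Noetherian algebras), and that the values of $\mathfrak{Aut}(\mathbb{X})_{ev}$ and $\mathfrak{Aut}(\mathbb{X})$ coincide on purely even superalgebras—both of which are immediate from the definitions. Hence the corollary follows at once.
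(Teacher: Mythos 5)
Your argument is correct and is exactly the specialization the paper intends: the corollary is stated without proof as an immediate consequence of Proposition \ref{P_2}, and your observation that $\mathfrak{Aut}(\mathbb{X})_{ev}(A)=\mathfrak{Aut}(\mathbb{X})(A_0)=\mathfrak{Aut}(\mathbb{X})(A)$ for purely even $A$ (and likewise for the quotients $A/\mathfrak{m}_A^n$) is precisely the missing one-line justification. Nothing further is needed.
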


	\section{More properties of the group functors $\mathfrak{Aut}(\mathbb{X}), \mathfrak{Aut}(\mathbb{X})_{ev}$ and $\mathfrak{T}$}	
	
	\begin{lm}\label{P_4 and P_5}
		The functor $\mathfrak{Aut}(\mathbb{X})$ satisfies the conditions	super-$P_4$ and super-$P_5$. In particular, the functor $\mathfrak{Aut}(\mathbb{X})_{ev}$ satisfies the conditions $P_4$ and $P_5$.	
	\end{lm}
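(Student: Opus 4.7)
The plan is to deduce both super-$P_4$ and super-$P_5$ directly from the general descent principle established in Theorem \ref{category_locality}, rather than from any additional computation specific to the automorphism functor. The key observation is that, by the remark at the end of Section 1, every superscheme $\mathbb{X}$ is not only a sheaf on $\mathcal{A}$ for the topology of local coverings, but also for the fpqc topology. Consequently, applying Theorem \ref{category_locality} to $\mathbb{X}$ with respect to the fpqc topology yields that $\mathfrak{Aut}(\mathbb{X})$ is itself an fpqc sheaf.

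Translated into the language of \cite{mats-oort}, super-$P_4$ and super-$P_5$ assert that for suitable faithfully flat morphisms $A \to B$ in $\mathsf{SAlg}_{\Bbbk}$ the diagram
\[\mathfrak{Aut}(\mathbb{X})(A) \to \mathfrak{Aut}(\mathbb{X})(B) \rightrightarrows \mathfrak{Aut}(\mathbb{X})(B\otimes_A B)\]
is exact; the two conditions differ only in which class of faithfully flat morphisms is considered (general fpqc covers versus the specific descent data required by $P_5$). Since in both cases the required exactness is precisely the fpqc sheaf property, both conditions follow from the previous paragraph. For super-$P_5$ one may additionally need to invoke the existence of fibered products in $\mathcal{A}$, which is built into the site structure described in Section 1.

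For the "in particular" clause, one restricts along the identification $\mathfrak{Aut}(\mathbb{X})_{ev}(A) = \mathfrak{Aut}(\mathbb{X})(A_0)$: any faithfully flat morphism in $\mathsf{Alg}_{\Bbbk}$ is a fortiori a faithfully flat morphism in $\mathsf{SAlg}_{\Bbbk}$ between purely even objects, so the classical $P_4$ and $P_5$ for $\mathfrak{Aut}(\mathbb{X})_{ev}$ are obtained simply by restricting the super-conditions to purely even arguments (noting $A_0\otimes_{C_0}B_0=(A\otimes_C B)_0$ when $A,B,C$ are purely even).

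No serious obstacle is anticipated; the only point requiring minor care is to confirm that the Grothendieck topology on $\mathcal{A}$ used to define super-$P_4$ and super-$P_5$ matches the one for which $\mathbb{X}$ is already known to be a sheaf, but this is immediate from the definitions recalled in Section 1.
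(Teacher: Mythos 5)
Your proposal is correct and follows essentially the same route as the paper: the paper's proof likewise applies Theorem \ref{category_locality} to $\mathbb{X}$ regarded as a sheaf for the fpqc topology (citing the remark that any superscheme is an fpqc sheaf), identifies super-$P_4$ and super-$P_5$ with the resulting exactness of $\mathfrak{Aut}(\mathbb{X})(A)\to\mathfrak{Aut}(\mathbb{X})(B)\rightrightarrows\mathfrak{Aut}(\mathbb{X})(B\otimes_A B)$ for faithfully flat coverings, and obtains the statement for $\mathfrak{Aut}(\mathbb{X})_{ev}$ by restriction to $\mathsf{Alg}_{\Bbbk}$.
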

	\begin{proof}
		Theorem \ref{category_locality} implies that $\mathfrak{Aut}(\mathbb{X})$ is a sheaf on 
		$\mathsf{SAlg}^{opp}_{\Bbbk}$, regarded as a site with respect to fpqc coverings, thus it satisfies super-$P_4$ and super-$P_5$ (use \cite[Corollary 1.1 and the discussion on p.721]{zub1}).
		In particular, $\mathfrak{Aut}(\mathbb{X})_{ev}$ is a sheaf on $\mathsf{Alg}_{\Bbbk}$ with respect to the same Grothendieck topology on $\mathsf{Alg}_{\Bbbk}^{opp}$, that is $\mathfrak{Aut}(\mathbb{X})_{ev}$
		satisfies the conditions $P_4$ and $P_5$. 
	\end{proof}	
	\begin{lm}\label{P_3}
		If $\mathbb{X}$ is algebraic, then the functor $\mathfrak{Aut}(\mathbb{X})_{ev}$ satisfies the condition $P_3$ from \cite{mats-oort}.	
	\end{lm}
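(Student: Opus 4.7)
The plan is to reduce this to Proposition \ref{Property P_3}, which we have already established for the ambient functor $\mathfrak{Aut}(\mathbb{X})$. Recall the condition $P_3$ asks that for any inductive system $\{A_\lambda\}_{\lambda\in I}$ in $\mathsf{Alg}_\Bbbk$ with $A=\varinjlim A_\lambda$, the natural map
\[q_{ev}:\varinjlim \mathfrak{Aut}(\mathbb{X})_{ev}(A_\lambda)\to \mathfrak{Aut}(\mathbb{X})_{ev}(A)\]
is a bijection.

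First, I would unwind the definition: by construction $\mathfrak{Aut}(\mathbb{X})_{ev}(B)=\mathfrak{Aut}(\mathbb{X})(B_0)$ for every superalgebra $B$. When $B$ is purely even this reads $\mathfrak{Aut}(\mathbb{X})_{ev}(B)=\mathfrak{Aut}(\mathbb{X})(B)$, and the direct limit of a diagram of purely even algebras, computed in $\mathsf{SAlg}_\Bbbk$, is again purely even and agrees with the direct limit computed in $\mathsf{Alg}_\Bbbk$. Therefore the map $q_{ev}$ is literally the same map as
\[q:\varinjlim \mathfrak{Aut}(\mathbb{X})(A_\lambda)\to \mathfrak{Aut}(\mathbb{X})(A)\]
from Proposition \ref{Property P_3}, specialized to the situation where every $A_\lambda$ happens to be concentrated in even degree.

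Consequently, the assertion is immediate from Proposition \ref{Property P_3}, which already handles arbitrary inductive systems in $\mathsf{SAlg}_\Bbbk$ under the algebraicity hypothesis on $\mathbb{X}$. There is essentially no obstacle to overcome here: the only point to double-check is that forming the filtered colimit commutes with the forgetful passage between $\mathsf{Alg}_\Bbbk$ and $\mathsf{SAlg}_\Bbbk$, which is standard since $\mathsf{Alg}_\Bbbk$ is a full subcategory of $\mathsf{SAlg}_\Bbbk$ closed under filtered colimits (the odd part of a colimit of purely even superalgebras is zero). Hence $\mathfrak{Aut}(\mathbb{X})_{ev}$ satisfies $P_3$.
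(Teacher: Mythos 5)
Your argument is correct and coincides with the paper's own proof: the paper likewise observes that $\mathfrak{Aut}(\mathbb{X})_{ev}(A)=\mathfrak{Aut}(\mathbb{X})(A)$ for any purely even algebra $A$ and then invokes Proposition \ref{Property P_3}. Your extra remark that filtered colimits of purely even superalgebras remain purely even is a harmless (and valid) elaboration of the same reduction.
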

	\begin{proof}
		Since $\mathfrak{Aut}(\mathbb{X})(A)=\mathfrak{Aut}(\mathbb{X})_{ev}(A)$ for any algebra $A$, Proposition \ref{Property P_3} concludes the proof. 
	\end{proof}
	\begin{lm}\label{pro-representability}
		If $\mathbb{X}$ is algebraic, then the functor $\mathfrak{T}$ satisfies the super-$P_3$. In particular, it is strictly pro-representable if and only if $\mathfrak{T}|_{\mathsf{SL}_{\Bbbk}}$ is.
	\end{lm}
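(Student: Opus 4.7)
The plan is to reduce super-$P_3$ for $\mathfrak{T}$ to super-$P_3$ for $\mathfrak{Aut}(\mathbb{X})$, which is already established in Proposition \ref{Property P_3}. The key bridge is Proposition \ref{nice property of T}(ii): for every $A\in\mathsf{SAlg}_{\Bbbk}$, the inclusion $\iota_A : \mathsf{n}(A)\to A$ induces a bijection $\mathfrak{Aut}(\mathbb{X})_e(\mathsf{n}(A))\simeq\mathfrak{T}(A)$. Since $\mathfrak{Aut}(\mathbb{X})_e(B)$ is defined as the kernel of the group morphism $\mathfrak{Aut}(\mathbb{X})(B)\to\mathfrak{Aut}(\mathbb{X})(\Bbbk)$ for augmented $B$, and filtered colimits in $\mathsf{Sets}$ commute with kernels of morphisms of groups, super-$P_3$ for $\mathfrak{Aut}(\mathbb{X})$ immediately yields super-$P_3$ for $\mathfrak{Aut}(\mathbb{X})_e$ on the category of augmented superalgebras.

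It remains to check that the construction $A\mapsto\mathsf{n}(A)$ commutes with filtered colimits. This follows from the observation that $\mathsf{nil}(\varinjlim A_{\lambda})=\varinjlim\mathsf{nil}(A_{\lambda})$: any element $a$ of the colimit satisfying $a^n=0$ is represented by some $a_{\mu}\in A_{\mu}$ whose $n$-th power dies in a sufficiently large $A_{\nu}$, which gives $a$ as the image of a nilpotent element. Chaining these ingredients,
\[\mathfrak{T}(\varinjlim A_{\lambda})\simeq\mathfrak{Aut}(\mathbb{X})_e(\mathsf{n}(\varinjlim A_{\lambda}))\simeq \mathfrak{Aut}(\mathbb{X})_e(\varinjlim \mathsf{n}(A_{\lambda}))\simeq\varinjlim\mathfrak{Aut}(\mathbb{X})_e(\mathsf{n}(A_{\lambda}))\simeq\varinjlim\mathfrak{T}(A_{\lambda}),\]
proving super-$P_3$ for $\mathfrak{T}$.

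For the ``in particular'' clause, the direction ``$\mathfrak{T}$ strictly pro-representable $\Rightarrow$ so is $\mathfrak{T}|_{\mathsf{SL}_{\Bbbk}}$'' is immediate from restriction. For the converse, assume $\mathfrak{T}|_{\mathsf{SL}_{\Bbbk}}$ is strictly pro-represented by a c.l.a.N.\ superalgebra $R$, so that $\mathfrak{T}(C)=\mathrm{Hom}_{\mathrm{cont}}(R,C)$ for $C\in\mathsf{SL}_{\Bbbk}$; note that for such $C$ one has $\widetilde{C}=\Bbbk$, hence $\mathfrak{T}(C)=\mathfrak{Aut}(\mathbb{X})_e(C)$, which matches the restriction. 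For an arbitrary $A\in\mathsf{SAlg}_{\Bbbk}$, present $\mathsf{n}(A)=\varinjlim C_{\alpha}$ as the filtered union of its finitely generated augmented sub-superalgebras, each of which is finite-dimensional local and so lies in $\mathsf{SL}_{\Bbbk}$. Noting that $\mathsf{n}(\mathsf{n}(A))=\mathsf{n}(A)$, so $\mathfrak{T}(A)=\mathfrak{T}(\mathsf{n}(A))$, super-$P_3$ (just proven) gives
\[\mathfrak{T}(A)=\varinjlim\mathfrak{T}(C_{\alpha})=\varinjlim\mathrm{Hom}_{\mathrm{cont}}(R,C_{\alpha})=\mathrm{Hom}_{\mathrm{cont}}(R,\mathsf{n}(A)).\]
Finally, any continuous morphism $R\to A$ (with $A$ discrete) factors through $R/\mathfrak{M}_R^n$ for some $n$, so its image is augmented by $\Bbbk$ with a nilpotent complement contained in $\mathsf{nil}(A)$; hence it factors through $\mathsf{n}(A)$. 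Thus $\mathrm{Hom}_{\mathrm{cont}}(R,A)=\mathrm{Hom}_{\mathrm{cont}}(R,\mathsf{n}(A))=\mathfrak{T}(A)$, and $\mathfrak{T}$ is strictly pro-represented by $R$.

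No single step is a serious obstacle; the main care needed is purely bookkeeping, namely verifying that all the identifications (kernels versus augmentation, $\mathsf{n}$ versus colimits, continuous versus discrete-target morphisms) are functorially compatible so that the colimit diagrams assemble correctly.
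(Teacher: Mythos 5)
Your argument is correct, and it is in fact the route the paper itself flags in one sentence as an ``alternative proof'': combining Proposition \ref{nice property of T}(ii) with Proposition \ref{Property P_3} and the identity $\mathsf{n}(\varinjlim A_{\lambda})=\varinjlim\mathsf{n}(A_{\lambda})$. The paper's primary argument is slightly different: it writes the rows $1\to\mathfrak{T}(A_{\lambda})\to\mathfrak{Aut}(\mathbb{X})(A_{\lambda})\to\mathfrak{Aut}(\mathbb{X})(\widetilde{A_{\lambda}})$ directly from the definition $\mathfrak{T}=\ker(\mathfrak{Aut}(\mathbb{X})(\pi_A))$, notes $\widetilde{A}\simeq\varinjlim\widetilde{A_{\lambda}}$, applies Proposition \ref{Property P_3} to the middle and right columns, and concludes for the kernels; this avoids the functor $\mathsf{n}$ altogether and needs only that filtered colimits preserve kernels. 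Your version pays for the extra identification $\mathfrak{T}(A)=\mathfrak{Aut}(\mathbb{X})_e(\mathsf{n}(A))$ but is then better positioned for the ``in particular'' clause, which you actually prove in full (presentation of $\mathsf{n}(A)$ as the filtered union of its finite-dimensional augmented local sub-superalgebras, idempotency of $\mathsf{n}$, and the observation that a continuous morphism from the c.l.a.N.\ superalgebra $R$ to a discrete $A$ factors through $R/\mathfrak{M}_R^n$ and hence through $\mathsf{n}(A)$), whereas the paper dismisses that clause with ``also follows by Proposition \ref{nice property of T}(ii)''. All the individual identifications you invoke are justified (nilradicals and fibers over a fixed point commute with filtered colimits; finitely many supercommuting nilpotent generators span a finite-dimensional algebra), so there is no gap.
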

	\begin{proof}	
		For any inductive system of $\Bbbk$-superalgebra $\{A_{\lambda} \}_{\lambda\in I}$, let $A$ denote $\varinjlim A_{\lambda}$. Then $\mathrm{nil}(A)=\varinjlim \mathrm{nil}(A_{\lambda})$
		and $\widetilde{A}\simeq \varinjlim \widetilde{A_{\lambda}}$. We have a commutative diagram 
		\[\begin{array}{ccccccccc}
			1 & \to & \varinjlim\mathfrak{T}(A_{\lambda}) & \to & \varinjlim\mathfrak{Aut}(\mathbb{X})(A_{\lambda}) & \to & \varinjlim\mathfrak{Aut}(\mathbb{X})(\widetilde{A_{\lambda}}) & \to & 1 \\  
			&  & \downarrow & & \downarrow  &  & \downarrow & \\
			1 & \to & \mathfrak{T}(A) &\to & \mathfrak{Aut}(\mathbb{X})(A) & \to &  \mathfrak{Aut}(\mathbb{X})(\widetilde{A}) & \to & 1
		\end{array}  \]
		By Proposition \ref{Property P_3} the middle and right vertical arrows are isomorphisms, hence the left one is an isomorphism as well.
		
		An alternative proof can be given by combining  Proposition \ref{nice property of T}(ii) with Proposition \ref{Property P_3}, and with the fact that $\mathsf{n}(A)=\varinjlim \mathsf{n}(A_{\lambda})$ .  The second statement also follows by Proposition \ref{nice property of T}(ii).	
	\end{proof}
	For any integer $n\geq 0$, let $\mathfrak{T}_n$ denote the subfunctor of $\mathfrak{T}$ defined as $\mathfrak{T}_n(A)=\varinjlim_{B\subseteq A, B\in\mathsf{SAlg}_{\Bbbk}(n)} \mathfrak{T}(B), A\in\mathsf{SAlg}_{\Bbbk}$. 
	\begin{lm}\label{describe T_n}
		We still assume that $\mathbb{X}$ is algebraic.	An element $\phi\in\mathfrak{T}(A)$ belongs to $\mathfrak{T}_n(A)$ if and only if the following conditions hold :
		\begin{enumerate}
			\item $|\phi|$ is the identity map of the topological space $|X\times\mathrm{SSpec}(A)|$;
			\item there is a supersubalgebra $B\subseteq A$ such that
			$B\in\mathsf{SL}_{\Bbbk}(n)$, and $\phi^{\sharp}$ induces an identity map of $\mathcal{O}(V)$ modulo $\mathcal{O}(V)\mathfrak{M}_B$ for arbitrary affine supersubscheme $V$ of $X\times\mathrm{SSpec}(A)$.
		\end{enumerate}
	\end{lm}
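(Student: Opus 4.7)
The plan is to argue both implications directly using that $\mathfrak{T}_n(A)$ is realized as the union of the images of $\mathfrak{T}(B)\to\mathfrak{T}(A)$ as $B$ ranges over $\mathsf{SL}_{\Bbbk}(n)$-subalgebras of $A$.

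For the ``only if'' direction, suppose $\phi\in\mathfrak{T}_n(A)$ is the base change of some $\phi'\in\mathfrak{T}(B)$ with $B\in\mathsf{SL}_{\Bbbk}(n)$, $B\subseteq A$. Since $B$ is augmented local with $\mathfrak{M}_B^{n+1}=0$, we have $\widetilde B=\Bbbk$, hence $\phi'$ reduces to the identity on $X$ modulo $\mathfrak{M}_B$. Base changing along $B\hookrightarrow A$ shows that $\phi$ restricts to the identity on $X\times\mathrm{SSpec}(A/\mathfrak{M}_B A)$. The ideal $\mathfrak{M}_B A$ is nilpotent (because $(\mathfrak{M}_B A)^{n+1}=\mathfrak{M}_B^{n+1}A=0$), so the closed immersion $X\times\mathrm{SSpec}(A/\mathfrak{M}_B A)\hookrightarrow X\times\mathrm{SSpec}(A)$ is a homeomorphism on underlying spaces; this yields (1). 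For (2), any affine supersubscheme $V$ of $X\times\mathrm{SSpec}(A)$ meets $X\times\mathrm{SSpec}(A/\mathfrak{M}_B A)$ along $\mathrm{SSpec}(\mathcal{O}(V)/\mathcal{O}(V)\mathfrak{M}_B)$ by Lemma \ref{specific intersection}, and $\phi$ acts as identity on this closed subscheme; thus $\phi^{\sharp}$ is identity modulo $\mathcal{O}(V)\mathfrak{M}_B$.

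For the ``if'' direction, assume (1) and (2) hold with witness $B_0\in\mathsf{SL}_{\Bbbk}(n)$ inside $A$. Since $\mathbb{X}$ is algebraic, $X$ is Noetherian; fix a finite open affine cover $\{U_1,\ldots,U_m\}$ with each $\mathcal{O}(U_i)$ finitely generated, and finite covers of each $U_i\cap U_j$ by open subsupersuchemes $U_{ijk}=(U_i)_{g_{ijk}}=(U_j)_{h_{ijk}}$ principal in both $U_i$ and $U_j$. Condition (1) forces $\phi$ to preserve each $U_i\times\mathrm{SSpec}(A)$, giving $A$-superalgebra automorphisms $\phi_i^{\sharp}$ of $\mathcal{O}(U_i)\otimes A$. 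Applying (2) with $V=U_i\times\mathrm{SSpec}(A)$ to the finitely many generators of $\mathcal{O}(U_i)$, and likewise to the elements $g_{ijk}$, $h_{ijk}$, produces finite lists of ``correction coefficients'' $c\in\mathfrak{M}_{B_0}A$ expressing the action of $\phi_i^{\sharp}$ on these generators; the same is true for the inverse, since $(\phi^\sharp)^{-1}$ is automatically the identity on $\mathcal{O}(V)/\mathcal{O}(V)\mathfrak{M}_{B_0}$ whenever $\phi^\sharp$ is.

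Let $B\subseteq A$ be the unital $\Bbbk$-subalgebra generated by these finitely many $c$'s. Since every $c$ lies in $\mathfrak{M}_{B_0}A$ and $(\mathfrak{M}_{B_0}A)^{n+1}=0$, $B$ is finite-dimensional, augmented, and local with $\mathfrak{M}_B^{n+1}=0$, i.e.\ $B\in\mathsf{SL}_{\Bbbk}(n)$. By construction each $\phi_i^{\sharp}$ descends to a $B$-algebra endomorphism $\psi_i^{\sharp}$ of $\mathcal{O}(U_i)\otimes B$; its inverse is the corresponding restriction of $(\phi_i^{-1})^\sharp$, and $\psi_i^\sharp(g_{ijk}\otimes 1)$ is invertible in $\mathcal{O}(U_i)_{g_{ijk}}\otimes B$ by the geometric series applied to the nilpotent correction. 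Compatibility of the $\psi_i^\sharp$'s on overlaps is inherited from that of the $\phi_i^\sharp$'s, so they glue to a $B$-automorphism $\phi'$ of $X\times\mathrm{SSpec}(B)$. Because each $\psi_i^\sharp$ is identity modulo $\mathfrak{M}_B$ and $\widetilde B=\Bbbk$, we have $\phi'\in\mathfrak{T}(B)$, and its base change to $A$ is $\phi$; hence $\phi\in\mathfrak{T}_n(A)$. The principal obstacle is controlling the nilpotency index of the constructed $B$: one must verify not merely that $B$ is finite-dimensional and local, but that $\mathfrak{M}_B^{n+1}=0$ with the same $n$ as in the hypothesis, which is exactly ensured by the fact that the correction coefficients lie in $\mathfrak{M}_{B_0}A$ with $\mathfrak{M}_{B_0}^{n+1}=0$.
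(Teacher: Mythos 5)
Your proof is correct, but it distributes the work differently from the paper's and uses different tools at the key steps. For the ``only if'' direction the paper stays with the standard covering $\{U\times\mathrm{SSpec}(A)\}$, covers $V\cap(U\times\mathrm{SSpec}(A))$ by principal opens $\mathrm{SSpec}(\mathcal{O}(V)_h)$, transfers the congruence through these localizations, and then recovers the congruence on all of $\mathcal{O}(V)$ via the partition-of-unity identity $\sum_h a_h h^l=1$, writing $t-\phi^{\sharp}(t)=\sum_h(t-\phi^{\sharp}(t))h^l a_h\in\mathcal{O}(V)\mathfrak{M}_B$. You bypass all of this by invoking Lemma \ref{specific intersection} globally: $V\cap(X\times\mathrm{SSpec}(A/\mathfrak{M}_BA))=\mathrm{SSpec}(\mathcal{O}(V)/\mathcal{O}(V)\mathfrak{M}_B)$, and the reduction of $\phi$ modulo $\mathfrak{M}_BA$ is the identity automorphism, so $\phi^{\sharp}\equiv\mathrm{id}$ modulo $\mathcal{O}(V)\mathfrak{M}_B$ at once; this is shorter and is exactly how Lemma \ref{specific intersection} is already exploited in the proof of Proposition \ref{nice property of T}. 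Conversely, for the ``if'' direction the paper's proof is silent: it declares the equivalence with the standard-covering formulation to be ``by definition,'' which tacitly requires descending $\phi$ to an actual subalgebra in $\mathsf{SL}_{\Bbbk}(n)$. You supply this step explicitly with the finitely-many-correction-coefficients argument (in the spirit of Proposition \ref{Property P_3} and Proposition \ref{nice property of T}), and you correctly isolate the one point that makes the index work out, namely that the coefficients lie in $\mathfrak{M}_{B_0}A$ with $(\mathfrak{M}_{B_0}A)^{n+1}=0$, so the subalgebra they generate again lies in $\mathsf{SL}_{\Bbbk}(n)$. The net effect is that your write-up is somewhat more complete on the converse, while the paper's carries the local-to-global content that you replace by the earlier lemma; both are valid.
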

	\begin{proof}
		By the definition, an element $\phi\in\mathfrak{T}(A)$ belongs to $\mathfrak{T}_n(A)$ if and only if $|\phi|$ is an identity map of the topological space $|X\times\mathrm{SSpec}(A)|$. Moreover, there is a supersubalgebra $B\subseteq A$ such that
		$B\in\mathsf{SL}_{\Bbbk}(n)$, and $\phi^{\sharp}$ induces the identity map of $\mathcal{O}(U)\otimes B$ modulo $\mathcal{O}(U)\otimes\mathfrak{M}_B$ for each affine supersubscheme $U$ from a finite open covering of $X$.	In turn, for arbitrary open affine supersubscheme $V\subseteq X\times\mathrm{SSpec}(A)$ each open supersubscheme $V\cap U\times\mathrm{SSpec}(A)$ can be covered by open affine supersubschemes $\mathrm{SSpec}((\mathcal{O}(U)\otimes A)_g), g\in (\mathcal{O}(U)\otimes A)_0$. Moreover, $\phi^{\sharp}$ induces the identity map of $(\mathcal{O}(U)\otimes A)_g$ modulo
		$(\mathcal{O}(U)\otimes A)_g\mathfrak{M}_B$. 
		
		Using \cite[Lemma 3.5]{maszub1} one can cover each $\mathrm{SSpec}((\mathcal{O}(U)\otimes A)_g)$ by open supersubschemes $\mathrm{SSpec}(\mathcal{O}(V)_h), h\in \mathcal{O}(V)_0$, so that $\sum_h \mathcal{O}(V)_0 h=\mathcal{O}(V)_0$ and $\mathcal{O}(V)_h\simeq  ((\mathcal{O}(U)\otimes A)_g)_{\overline{h}}$, where $\overline{h}$ is the image of $h$ in the corresponding $(\mathcal{O}(U)\otimes A)_g$. Thus $\phi^{\sharp}$ induces the identity map of
		each $\mathcal{O}(V)_h$ modulo $\mathcal{O}(V)_h\mathfrak{M}_B$. Therefore, for any $t\in\mathcal{O}(V)$ there is a nonnegative integer $l$ such that
		$(t-\phi^{\sharp}(t))h^l\in \mathcal{O}(V)\mathfrak{M}_B$ for each $h$. Since $\sum_h a_h h^l=1$ for some $a_h\in \mathcal{O}(V)_0$, it follows that
		\[t-\phi^{\sharp}(t)=\sum_h (t-\phi^{\sharp}(t))h^l a_h\in \mathcal{O}(V)\mathfrak{M}_B. \]  
	\end{proof}
	The following corollary is now obvious (compare with \cite[Lemma 9.5]{maszub2}).  
	\begin{cor}\label{normality of T_n}
	For arbitrary integer $n\geq 1$ the subfunctor $\mathfrak{T}_n$ is invariant under the action of $\mathfrak{Aut}(\mathbb{X})$ by conjugations. 	
	\end{cor}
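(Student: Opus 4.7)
The plan is to verify invariance under conjugation by working through the explicit characterization of $\mathfrak{T}_n$ given by Lemma \ref{describe T_n}. Fix $\phi\in\mathfrak{T}_n(A)$, with witnessing supersubalgebra $B\subseteq A$, $B\in\mathsf{SL}_{\Bbbk}(n)$, and pick $\psi\in\mathfrak{Aut}(\mathbb{X})(A)$; I will show that the same $B$ witnesses $\psi\phi\psi^{-1}\in\mathfrak{T}_n(A)$.

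First I would verify condition (1) of Lemma \ref{describe T_n} for the conjugate: since $|\phi|$ is the identity on $|X\times\mathrm{SSpec}(A)|$, the equality $|\psi\phi\psi^{-1}|=|\psi|\circ|\phi|\circ|\psi|^{-1}$ is immediately the identity map. A consequence, that I would use repeatedly, is that $\phi$ preserves every open supersubscheme $W$ of $X\times\mathrm{SSpec}(A)$ set-theoretically, so that $\phi^{\sharp}$ restricts to an automorphism of $\mathcal{O}(W)$ over $A$ for every open $W$.

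Next, for condition (2), let $V$ be an arbitrary open affine supersubscheme of $X\times\mathrm{SSpec}(A)$. Set $V'=\psi^{-1}(V)$, which is again an open affine supersubscheme (since $\psi$ is an $A$-superscheme isomorphism). The conjugate $(\psi\phi\psi^{-1})^{\sharp}$ on $\mathcal{O}(V)$ factors as
\[\mathcal{O}(V)\xrightarrow{\psi^{\sharp}}\mathcal{O}(V')\xrightarrow{\phi^{\sharp}}\mathcal{O}(V')\xrightarrow{(\psi^{-1})^{\sharp}}\mathcal{O}(V).\]
By the hypothesis on $\phi$ applied to $V'$, we have $\phi^{\sharp}(s)-s\in\mathcal{O}(V')\mathfrak{M}_B$ for every $s\in\mathcal{O}(V')$. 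Taking $s=\psi^{\sharp}(t)$ with $t\in\mathcal{O}(V)$, and applying the $A$-superalgebra isomorphism $(\psi^{-1})^{\sharp}$, I obtain
\[(\psi\phi\psi^{-1})^{\sharp}(t)-t=(\psi^{-1})^{\sharp}\bigl(\phi^{\sharp}(\psi^{\sharp}(t))-\psi^{\sharp}(t)\bigr)\in(\psi^{-1})^{\sharp}\bigl(\mathcal{O}(V')\mathfrak{M}_B\bigr).\]
The key observation, which I would emphasise, is that $(\psi^{-1})^{\sharp}$ is $A$-linear, so it fixes $\mathfrak{M}_B\subseteq A$ pointwise; hence $(\psi^{-1})^{\sharp}(\mathcal{O}(V')\mathfrak{M}_B)=\mathcal{O}(V)\mathfrak{M}_B$. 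This gives $(\psi\phi\psi^{-1})^{\sharp}(t)-t\in\mathcal{O}(V)\mathfrak{M}_B$, which is condition (2) for $\psi\phi\psi^{-1}$ with witness $B$.

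There is no serious obstacle: the content lies entirely in Lemma \ref{describe T_n}, and the argument reduces to the $A$-linearity of $(\psi^{-1})^{\sharp}$ together with the elementary fact that $\phi$ preserves every open supersubscheme once $|\phi|$ is trivial. By Lemma \ref{describe T_n} applied in the other direction, $\psi\phi\psi^{-1}\in\mathfrak{T}_n(A)$, which is precisely the normality statement of the corollary.
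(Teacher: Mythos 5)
Your proof is correct and follows essentially the route the paper intends: after Lemma \ref{describe T_n} the paper simply declares the corollary ``now obvious,'' and your argument is precisely the verification that both conditions of that lemma are preserved under conjugation, using that $(\psi^{-1})^{\sharp}$ is an $A$-superalgebra isomorphism and hence carries $\mathcal{O}(V')\mathfrak{M}_B$ onto $\mathcal{O}(V)\mathfrak{M}_B$. The only point left implicit (and it is trivial) is that $\psi\phi\psi^{-1}$ lies in $\mathfrak{T}(A)$ in the first place, which holds because $\mathfrak{T}(A)=\ker(\mathfrak{Aut}(\mathbb{X})(\pi_A))$ is the kernel of a group homomorphism.
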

	If $X$ is a superscheme (not necessary algebraic), let $\mathfrak{RDer}(\mathcal{O}_X)$ denote the sheaf of \emph{right superderivations} of the superalgebra sheaf $\mathcal{O}_X$. In other words, for any open subsets $V\subseteq U\subseteq |X|$ we have
	\[\mathfrak{RDer}(\mathcal{O}_X)_i(U)=\{d\in \mathrm{End}_{\Bbbk}(\mathcal{O}_X)_i(U)\mid (fg)d=f (g)d+(-1)^{i |g|}(f)d g, f, g\in\mathcal{O}(V)\},\]
	where $i\in\mathbb{Z}_2$. Note that $\mathfrak{RDer}(\mathcal{O}_X)$ has the natural structure of a right $\mathcal{O}_X$-supermodule, hence it has the structure of left $\mathcal{O}_X$-supermodule as well. 
	
	Symmetrically, one can  define the sheaf of \emph{left superderivations} $\mathfrak{LDer}(\mathcal{O}_X)$. There is a canonical isomorphism $\mathfrak{RDer}(\mathcal{O}_X)\simeq \mathfrak{LDer}(\mathcal{O}_X)$ of $\mathcal{O}_X$-supermodules, say $d\mapsto d'$, where $d'(f)=(-1)^{|d||f|} (f)d, f\in\mathcal{O}_X$ (see Remark \ref{left is right}).
	
	Assume that $X$ is proper. Let $\delta : X\to X\times X$ be a diagonal closed immersion. Let $\mathcal{I}_X$ denote the superideal sheaf that defines the closed supersubscheme $\delta(X)$.
	The universality of $\mathcal{O}_X$-supermodule of Kahler superdifferentials $\Omega^1_{X/\Bbbk}\simeq \delta^*(\mathcal{I}_X/\mathcal{I}_X^2)$ (see \cite[Section 3]{maszub4} for more details) imply that  $\mathfrak{LDer}(\mathcal{O}_X)$ is isomorphic to the $\mathcal{O}_{X}$-supermodule sheaf $\mathcal{T}_X=\mathrm{Hom}_{\mathcal{O}_X} (\mathcal{I}_X/\mathcal{I}_X^2, \mathcal{O}_X)$, that is called the \emph{tangent} sheaf of $X$.
	\begin{lm}\label{Der}
		If $X$ is a proper superscheme, then $\mathfrak{LDer}(\mathcal{O}_X)(|X|)$ is finite dimensional.
	\end{lm}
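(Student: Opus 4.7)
The plan is to identify $\mathfrak{LDer}(\mathcal{O}_X)$ with a coherent sheaf on $X$ and then apply the finiteness result for global sections.

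First, I would recall the identification established just before the statement: $\mathfrak{LDer}(\mathcal{O}_X)$ is canonically isomorphic to the tangent sheaf $\mathcal{T}_X = \mathrm{Hom}_{\mathcal{O}_X}(\mathcal{I}_X/\mathcal{I}_X^2, \mathcal{O}_X)$, where $\mathcal{I}_X$ is the superideal sheaf of the diagonal $\delta(X) \subseteq X \times X$. This reduces the problem to showing that $\mathrm{H}^0(X, \mathcal{T}_X)$ is a finite dimensional $\Bbbk$-superspace.

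Next, I would verify that $\mathcal{T}_X$ is a coherent $\mathcal{O}_X$-supermodule. Since $X$ is proper, it is of finite type over $\Bbbk$, hence Noetherian, and the diagonal $\delta : X \to X \times X$ is a closed immersion whose defining superideal $\mathcal{I}_X$ is coherent. Therefore $\delta^*(\mathcal{I}_X/\mathcal{I}_X^2) \simeq \Omega^1_{X/\Bbbk}$ is a coherent $\mathcal{O}_X$-supermodule. Since $\mathcal{O}_X$ is itself coherent, Lemma \ref{dual of coherent} applies to give that $\mathcal{T}_X = \mathrm{Hom}_{\mathcal{O}_X}(\Omega^1_{X/\Bbbk}, \mathcal{O}_X)$ is coherent.

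Finally, applying Proposition \ref{global sections are finite} to the proper superscheme $X$ and the coherent sheaf $\mathcal{T}_X$ yields that $\mathrm{H}^0(X, \mathcal{T}_X) = \mathcal{T}_X(|X|) \simeq \mathfrak{LDer}(\mathcal{O}_X)(|X|)$ is finite dimensional, completing the argument. The only slightly delicate step is the first identification, but this has already been noted in the paragraph preceding the lemma; everything else is a direct application of results developed earlier in Sections 6 and 7.
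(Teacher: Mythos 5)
Your proposal is correct and follows the paper's own argument exactly: identify $\mathfrak{LDer}(\mathcal{O}_X)$ with the tangent sheaf $\mathcal{T}_X$, note that $\mathcal{I}_X/\mathcal{I}_X^2$ is coherent, apply Lemma \ref{dual of coherent} to get coherence of $\mathcal{T}_X$, and conclude with Proposition \ref{global sections are finite}. The extra verification of coherence via $\Omega^1_{X/\Bbbk}$ is a harmless elaboration of the same step.
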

	\begin{proof}
		The sheaf $\mathcal{I}_X/\mathcal{I}_X^2$ is coherent (see \cite[Section 3]{maszub4}). By Lemma \ref{dual of coherent} the sheaf $\mathcal{T}_X$ is coherent. Proposition \ref{global sections are finite} concludes the proof.
	\end{proof}
	Note that by Cohen's theorem any c.l.N. superalgebra $A$ contains a \emph{field of representatives}, that is isomorphic to $\Bbbk(A)$. In particular, it is valid for any finite dimensional local superalgebra.
	
	Let $\mathbb{F}$ denote the functor $\mathfrak{Aut}(\mathbb{X})_e : \mathsf{SL}_{\Bbbk}\to \mathsf{Gr}$.
	
	\begin{lm}\label{even P_1} 
		If $\mathbb{X}$ is a proper superscheme, then the functor $\mathbb{F}$ is strictly pro-representable.	In other words, $\mathfrak{Aut}(\mathbb{X})$ satisfies the condition super-$P_1$.
	\end{lm}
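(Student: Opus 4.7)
The plan is to invoke the criterion at the end of Section 8: $\mathbb{F}$ is strictly pro-representable by a c.l.a.N.\ superalgebra if and only if $\mathbb{F}$ is left exact in Levelt's sense and the tangent space $\mathbb{F}(\Bbbk[\epsilon_0,\epsilon_1])$ is a finite-dimensional vector superspace. I would verify these two conditions in turn.

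For left exactness, the three Levelt axioms must be checked. The terminal object condition $|\mathbb{F}(\Bbbk)|=1$ is immediate: any element satisfies $\mathbb{X}(\epsilon_\Bbbk)(\phi)=e$, and since $\epsilon_\Bbbk=\mathrm{id}_\Bbbk$ the element is forced to be the identity automorphism. Preservation of finite products follows from the decomposition $\mathrm{SSpec}(A\times B)\simeq\mathrm{SSpec}(A)\sqcup\mathrm{SSpec}(B)$: the $\mathrm{SSpec}(A\times B)$-superscheme $X\times\mathrm{SSpec}(A\times B)$ splits as a disjoint union and any base-preserving automorphism respects the splitting. The delicate axiom is preservation of equalizers. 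Suppose $A$ equalizes $f,g\colon B\rightrightarrows C$ in $\mathsf{SL}_\Bbbk$ and let $\phi\in\mathbb{F}(B)$ satisfy $f_*(\phi)=g_*(\phi)$ in $\mathbb{F}(C)$. Uniqueness of a lift $\phi'\in\mathbb{F}(A)$ is furnished by Proposition \ref{nice property of T}(i): the inclusion $A\hookrightarrow B$ is injective and the spectra are singletons, so $\mathbb{F}(A)\to\mathbb{F}(B)$ is injective. For existence, since $\phi\in\mathfrak{T}(B)$ (the restriction to $\mathsf{SL}_\Bbbk$ coincides with $\mathfrak{T}$ by Proposition \ref{nice property of T}(ii) because $\mathsf{n}(B)=B$ for $B\in\mathsf{SL}_\Bbbk$), the proof of Proposition \ref{nice property of T} forces $|\phi|$ to be the identity on $|X\times\mathrm{SSpec}(B)|=|X|$. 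Hence on each member $U$ of a finite open affine covering of $X$, $\phi$ is encoded by a superalgebra automorphism $\phi_U^\sharp\colon\mathcal{O}(U)\otimes_\Bbbk B\to\mathcal{O}(U)\otimes_\Bbbk B$. Writing $\phi_U^\sharp(x\otimes 1)=\sum_j y_j\otimes b_j$ with the $y_j\in\mathcal{O}(U)$ linearly independent, the hypothesis $f_*(\phi)=g_*(\phi)$ forces $f(b_j)=g(b_j)$, hence $b_j\in A$. Since tensor product over a field is exact, $\mathcal{O}(U)\otimes_\Bbbk A$ is the equalizer of $\mathcal{O}(U)\otimes B\rightrightarrows\mathcal{O}(U)\otimes C$, so $\phi_U^\sharp$ restricts to an endomorphism of $\mathcal{O}(U)\otimes_\Bbbk A$; running the same argument for $\phi^{-1}$ provides a two-sided inverse, and compatibility on intersections inherited from $\phi$ glues the local data into the required $\phi'\in\mathbb{F}(A)$.

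For the tangent-space computation, I would unfold the definition on each affine open $U\subset X$: an element of $\mathbb{F}(\Bbbk[\epsilon_0,\epsilon_1])$ produces a superalgebra map $f\otimes 1\mapsto f\otimes 1+d_0(f)\otimes\epsilon_0+d_1(f)\otimes\epsilon_1$. The relations $\epsilon_i\epsilon_j=0$ collapse the multiplicativity constraint to the super-Leibniz identity on $d_0$ (even) and $d_1$ (odd), and compatibility on overlaps identifies the pair with a global section of $\mathfrak{LDer}(\mathcal{O}_X)$. Thus $\mathbb{F}(\Bbbk[\epsilon_0,\epsilon_1])\simeq\mathfrak{LDer}(\mathcal{O}_X)(|X|)$, which is finite-dimensional by Lemma \ref{Der} as $X$ is proper. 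Invoking the criterion from Section 8 then yields the strict pro-representability of $\mathbb{F}$, that is, super-$P_1$.

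The main obstacle is the equalizer axiom, which is the only step where the non-affine geometry of $X$ enters non-trivially; the argument above reduces it to componentwise exactness of $-\otimes_\Bbbk-$ together with the uniqueness statement of Proposition \ref{nice property of T}(i). The finite-dimensionality of the tangent space is, by contrast, a clean consequence of properness through Lemma \ref{Der}.
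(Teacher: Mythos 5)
Your overall strategy --- Levelt left exactness plus finite-dimensionality of $\mathbb{F}(\Bbbk[\epsilon_0,\epsilon_1])$ --- coincides with the paper's, and your tangent-space computation (identifying $\mathbb{F}(\Bbbk[\epsilon_0,\epsilon_1])$ with global superderivations and invoking Lemma \ref{Der}) is exactly the paper's argument. Your equalizer argument is also essentially sound: it is a hands-on version of what the paper obtains formally, using that $|\phi|=\mathrm{id}$ and that the coefficients of $\phi^{\sharp}(x\otimes 1)$ must land in the equalizer subalgebra.

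There is, however, a concrete error in your verification of the product axiom. Every object of $\mathsf{SL}_{\Bbbk}$ is local and augmented, so the ring-theoretic direct product $A\times B$ is not an object of this category (it has two maximal ideals); the categorical product of $A$ and $B$ in $\mathsf{SL}_{\Bbbk}$ is the fibered product $A\times_{\Bbbk}B=\{(a,b)\mid \epsilon_A(a)=\epsilon_B(b)\}$ over the terminal object $\Bbbk$. This superalgebra is local, its spectrum is a single point, and it does not decompose as $\mathrm{SSpec}(A)\sqcup\mathrm{SSpec}(B)$, so the disjoint-union argument you give addresses the wrong limit. What must be proved is $\mathbb{F}(A\times_{\Bbbk}B)\simeq\mathbb{F}(A)\times\mathbb{F}(B)$, and the nontrivial direction is surjectivity: assembling one automorphism over $A\times_{\Bbbk}B$ from a pair of automorphisms over $A$ and over $B$, and verifying it is invertible. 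This is precisely the point the paper's proof is built to handle: it first identifies $\mathbb{F}(A)$ with the set of augmented superalgebra sheaf morphisms $\psi:\mathcal{O}_X\to\mathcal{O}_X\otimes A$, the key step being that any such $\psi$ extends to an invertible $A$-linear automorphism $\psi\otimes\mathrm{id}_A$ because the latter is unipotent with respect to the $\mathfrak{M}_A$-adic filtration; after that identification, left exactness (including the product and fibered-product conditions) is formal, since $\mathcal{O}_X\otimes(-)$ and Hom preserve finite limits. As written, your proposal leaves the product axiom unverified; the fix is either to adopt the paper's Hom-type description of $\mathbb{F}$ or to run your coefficient argument again for $A\times_{\Bbbk}B$, including the invertibility check.
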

	\begin{proof}
		If $A\in\mathsf{SL}_{\Bbbk}$, then Lemma \ref{open in a specific product} infers $|X\times\mathrm{SSpec}(A)|=|X|$. Thus for any $\phi\in\mathbb{F}(A)$ there holds $|\phi|=\mathrm{id}_{|X\times\mathrm{SSpec}(A)|}$. 	
		By \cite[Lemma 9.6]{maszub2} we have $\mathcal{O}(U\times\mathrm{SSpec}(A))\simeq\mathcal{O}(U)\otimes A$ for any open supersubscheme $U$ of $X$. Therefore, $\mathbb{F}(A)$ can be naturally identified with the group of $A$-linear automorphisms $\phi$ of the superalgebra sheaf $\mathcal{O}_X\otimes A$, such that $(\mathrm{id}_{\mathcal{O}_X}\otimes\epsilon_A)\phi |_{\mathcal{O}_X}=
		\mathrm{id}_{\mathcal{O}_X}$. In partcular, $\phi\mapsto \phi|_{\mathcal{O}_X\otimes 1}$ is an injective  map of $\mathbb{F}(A)$ into the set of all superalgebra sheaf morphisms 
		$\psi : \mathcal{O}_X\to  \mathcal{O}_X\otimes A$ such that $(\mathrm{id}_{\mathcal{O}_X}\otimes\epsilon_A)\psi=\mathrm{id}_{\mathcal{O}_X}$. Moreover, this map is also surjective.
		In fact, just set $\phi=\psi\otimes\mathrm{id}_A$ and note that $\phi$ induces the identity morphism of the graded superalgebra sheaf $\oplus_{n\geq 0}\mathcal{O}_X\otimes \mathfrak{M}_A^n /\mathcal{O}_X\otimes \mathfrak{M}_A^{n+1}$.
		
		As in \cite[Lemma 3.4]{mats-oort}, it is easy to see that $\mathbb{F}$ is left exact, hence pro-representable. It remains to show that 
		$\mathbb{F}(\Bbbk[\epsilon_0, \epsilon_1])$ is a finite dimensional $\Bbbk$-vector space. Since any $\psi\in \mathbb{F}(\Bbbk[\epsilon_0, \epsilon_1])$ obviously has a form $\mathrm{id}_{\mathcal{O}_X}+ d_0\otimes\epsilon_0+d_1\otimes\epsilon_1$, where $d\in\mathfrak{RDer}(\mathcal{O}_X)_i(|X|), i=0, 1$, Lemma \ref{Der}
		concludes the proof.
	\end{proof}	
	\begin{cor}\label{P_1}
		If 	$\mathbb{X}$ is a proper superscheme, then $\mathfrak{Aut}(\mathbb{X})_{ev}$ satisfies the condition $P_1$.
	\end{cor}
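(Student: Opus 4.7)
The plan is to deduce $P_1$ for $\mathfrak{Aut}(\mathbb{X})_{ev}$ from the super version just proved in Lemma \ref{even P_1}, by restriction to purely even test superalgebras. First I would observe that for any $A\in\mathsf{L}_{\Bbbk}$ we have $A_0=A$ and hence
\[\mathfrak{Aut}(\mathbb{X})_{ev}(A)=\mathfrak{Aut}(\mathbb{X})(A_0)=\mathfrak{Aut}(\mathbb{X})(A),\]
so that the germ functor at the identity of $\mathfrak{Aut}(\mathbb{X})_{ev}$ restricted to $\mathsf{L}_{\Bbbk}$ coincides with $\mathbb{F}|_{\mathsf{L}_{\Bbbk}}$, where $\mathbb{F}=\mathfrak{Aut}(\mathbb{X})_e$ is as in Lemma \ref{even P_1}.

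Next I would invoke Lemma \ref{even P_1} to obtain a c.l.a.N. superalgebra $R$ strictly pro-representing $\mathbb{F}$, i.e. $\mathbb{F}(B)\simeq\mathrm{Hom}_{cont}(R,B)$ for every $B\in\mathsf{SL}_{\Bbbk}$. For $A\in\mathsf{L}_{\Bbbk}$, any continuous graded morphism $R\to A$ kills $R_1$ (since $A_1=0$), hence factors uniquely through the quotient $\overline{R}=R/RR_1$. Thus
\[\mathbb{F}|_{\mathsf{L}_{\Bbbk}}(A)\simeq\mathrm{Hom}_{cont}(\overline{R},A),\]
and $\overline{R}$, being a quotient of a c.l.a.N. superalgebra, is itself a complete local augmented Noetherian purely even algebra.

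It remains to verify strict pro-representability, i.e.\ finite dimensionality of the tangent space. By the eighth section, the tangent space is identified with $\mathbb{F}(\Bbbk[\epsilon_0])\simeq (\mathfrak{m}_{\overline{R}}/\mathfrak{m}_{\overline{R}}^2)^*$, which injects into $\mathbb{F}(\Bbbk[\epsilon_0,\epsilon_1])\simeq (\mathfrak{M}_R/\mathfrak{M}_R^2)^*$ via the natural embedding $\Bbbk[\epsilon_0]\hookrightarrow\Bbbk[\epsilon_0,\epsilon_1]$; the latter is finite dimensional by Lemma \ref{even P_1} (equivalently, by the finite dimensionality of $\mathfrak{LDer}(\mathcal{O}_X)(|X|)$ established in Lemma \ref{Der}). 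Hence $\mathfrak{Aut}(\mathbb{X})_{ev}$ satisfies $P_1$. I do not anticipate a genuine obstacle here; the whole point is that once the super case has been settled, the purely even case is a formal consequence obtained by modding out the odd part of the representing superalgebra.
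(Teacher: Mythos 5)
Your proposal is correct and follows the route the paper intends: the corollary is stated without proof precisely because, for purely even $A$, one has $\mathfrak{Aut}(\mathbb{X})_{ev}(A)=\mathfrak{Aut}(\mathbb{X})(A)$, so the germ functor at the identity is $\mathbb{F}|_{\mathsf{L}_{\Bbbk}}$, which is strictly pro-represented by the even quotient $\overline{R}=R/RR_1$ of the c.l.a.N. superalgebra $R$ from Lemma \ref{even P_1}. Your final tangent-space check is harmless but already implied by the fact that $\overline{R}$, as a quotient of a complete local augmented Noetherian superalgebra by a closed superideal, is itself complete local augmented Noetherian.
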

	
	Recall that a group functor $\mathbb{G}$ satisfies the condition $P_{orb}$, if for every algebra $A$ over an algebraically closed field extension $L$ of $\Bbbk$, and any
	$g\in \mathbb{G}(A)\setminus 1$, there is an algebraic scheme $\mathbb{Y}$ (over $L$) on which $\mathbb{G}$ acts on the left, and $y\in\mathbb{Y}(L)$, such that $gy_A\neq y_A$, where
	$y_A$ is the image of $y$ in $\mathbb{Y}(A)$ (cf. \cite{mats-oort}). It is clear that if $\mathbb{H}\leq \mathbb{G}$ and $\mathbb{G}$ satisfies $P_{orb}$, then $\mathbb{H}$ does.
	
	Let $X$ be an algebraic superscheme.  Let $M$ and $\mathcal{T}$ denote $X_0$ and $(\mathcal{O}_X)_1$ respectively.
	We define the \emph{Grassman inflation} $\Lambda(X)$ of $X$ as 
	$(|M|, \Lambda_{\mathcal{O}_M}(\mathcal{T}))$. More precisely, there is a nonnegative integer $t$ such that $\mathcal{T}^{t+1}=0$ but $\mathcal{T}^t\neq 0$.
	Then $\Lambda_{\mathcal{O}_M}(\mathcal{T})$ is the sheafification of the superalgebra presheaf
	\[U\mapsto \oplus_{0\leq k\leq t}\wedge^k_{\mathcal{O}_M(U)}(\mathcal{T}(U)), U\subseteq |X|. \]
	
	If $X$ is algebraic or proper, then $\Lambda(X)$ is. Since $\mathrm{id}_{\mathcal{T}}$ is naturally extended to the surjective sheaf morphism $\Lambda_{\mathcal{O}_M}(\mathcal{T})\to\mathcal{O}_X$, $X$ is a closed supersubscheme of $\Lambda(X)$.   
	
	Furthermore, for any algebra $A\in\mathsf{Alg}_{\Bbbk}$ the superscheme $X\times\mathrm{SSpec}(A)$ is isomorphic to $(|M\times\mathrm{Spec}(A)|, \mathcal{O}_{M\times\mathrm{Spec}(A)}\oplus\mathcal{T}_A )$, where $\mathcal{T}_A=\mathrm{pr}_1^*\mathcal{T}$ and $\mathrm{pr}_1 : M\times\mathrm{Spec}(A)\to M$ is the canonical projection.
	
	Thus each $\phi\in\mathfrak{Aut}(\mathbb{X})(A)$ is uniquely extended to an automorphism $\Lambda(\phi)\in \mathfrak{Aut}(\Lambda(\mathbb{X}))(A)$ and the map $\phi\mapsto\Lambda(\phi)$ is a group monomorphism, functorial in $A$. More precisely, $|\Lambda(\phi)|=|\phi|$ and $\Lambda(\phi)^{\sharp}$ acts on
	$|\phi|_*\wedge_{\mathcal{O}_{M\times\mathrm{Spec}(A)}} (\mathcal{T}_A)$ as $\wedge\phi^{\sharp}$. 
	In particular, $\mathfrak{Aut}(\mathbb{X})_{ev}$ is a subgroup functor of $\mathfrak{Aut}(\Lambda(\mathbb{X}))_{ev}$.
	
	Consider a closed (and proper) supersubscheme $Z$ of $\Lambda(X)$, defined by the superideal sheaf  $\Lambda_{\mathcal{O}_M}(\mathcal{T})\mathcal{T}^2$. 
	As above, for arbitrary $A\in\mathsf{Alg}_{\Bbbk}$ the supersubscheme $Z\times\mathrm{SSpec}(A)$ is defined by the superideal sheaf $\Lambda_{\mathcal{O}_{M\times\mathrm{Spec}(A)}}(\mathcal{T}_A)\mathcal{T}_A^2$. 
	Since each automorphism
	$\phi\in \mathfrak{Aut}(\Lambda(\mathbb{X}))_{ev}(A)$ takes $|\phi|_*\mathcal{T}_A$ to $\Lambda_{\mathcal{O}_{M\times\mathrm{Spec}(A)}}(\mathcal{T}_A)\mathcal{T}_A$, it induces the automorphism $\overline{\phi}\in \mathfrak{Aut}(\mathbb{Z})_{ev}(A)$. Besides, $\phi\mapsto\overline{\phi}$ is a group homomorphism, functorial in $A$. Moreover, the induced  morphism $\mathfrak{Aut}(\mathbb{X})_{ev}\to \mathfrak{Aut}(\mathbb{Z})_{ev}$ of group functors is obviously injective (not necessary isomorphism!).
	
	\begin{theorem}\label{even is a group scheme}
		If $\mathbb{X}$ is proper, then $\mathfrak{Aut}(\mathbb{X})_{ev}$ is a locally algebraic group scheme.	
	\end{theorem}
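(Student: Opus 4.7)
The strategy is to apply the Matsumura-Oort representability criterion from \cite{mats-oort}: a group functor $\mathbb{G}:\mathsf{Alg}_{\Bbbk}\to\mathsf{Gr}$ satisfying $P_1$--$P_5$ and $P_{orb}$ is a locally algebraic group scheme. Conditions $P_1$--$P_5$ for $\mathfrak{Aut}(\mathbb{X})_{ev}$ are already in hand: Corollary \ref{P_1}, Corollary \ref{P_2 for even}, Lemma \ref{P_3}, and Lemma \ref{P_4 and P_5} establish them under the hypothesis that $\mathbb{X}$ is proper. Thus everything reduces to verifying $P_{orb}$.

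For $P_{orb}$ I would use the Grassmann inflation trick set up in the paragraphs immediately preceding the theorem. Since $P_{orb}$ is inherited by subgroup functors (any $\mathbb{G}$-action on an algebraic scheme restricts to any $\mathbb{H}\leq\mathbb{G}$), and since $\mathfrak{Aut}(\mathbb{X})_{ev}\hookrightarrow\mathfrak{Aut}(\mathbb{Z})_{ev}$, it is enough to verify $P_{orb}$ for $\mathfrak{Aut}(\mathbb{Z})_{ev}$. The advantage of $\mathbb{Z}$ over $\mathbb{X}$ is that $\mathcal{T}_Z^2=0$, so $\mathcal{O}_Z=\mathcal{O}_M\oplus\mathcal{T}$ as $\mathcal{O}_M$-module with trivial product among odd sections. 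Consequently each $\phi\in\mathfrak{Aut}(\mathbb{Z})_{ev}(A)$ is encoded by a pair $(\phi_0,\phi_1)$, where $\phi_0\in\mathfrak{Aut}(\mathbb{M})(A)$ and $\phi_1$ is a $\phi_0$-compatible $\mathcal{O}_{M_A}$-linear automorphism of $\mathrm{pr}_1^*\mathcal{T}$; this yields a canonical group morphism $\pi:\mathfrak{Aut}(\mathbb{Z})_{ev}\to\mathfrak{Aut}(\mathbb{M})$.

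I would then verify $P_{orb}$ by a case split on $\phi\in\mathfrak{Aut}(\mathbb{Z})_{ev}(A)\setminus\{1\}$. If $\pi(\phi)=\phi_0\neq\mathrm{id}$, Lemma \ref{reducibility} guarantees that $M=X_0$ is proper, so the classical Matsumura-Oort theorem makes $\mathfrak{Aut}(M)$ a locally algebraic group scheme, hence $P_{orb}$ is available for it; a separating pair $(\mathbb{Y},y)$ for $\phi_0$ then separates $\phi$ once $\mathfrak{Aut}(\mathbb{Z})_{ev}$ is made to act on $\mathbb{Y}$ via $\pi$. If instead $\phi_0=\mathrm{id}$ but $\phi_1\neq\mathrm{id}$, then $\phi_1-\mathrm{id}$ is a non-zero element of the $A$-supermodule of $\mathcal{O}_{M_A}$-linear endomorphisms of $\mathrm{pr}_1^*\mathcal{T}$, which by Lemma \ref{represented by a space} is naturally $V\otimes A$ for a finite-dimensional $\Bbbk$-superspace $V$. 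A suitable algebraic test scheme is the affine space $\mathbb{V}_0$ corresponding to $V_0$, and one defines a $\mathfrak{Aut}(\mathbb{Z})_{ev}$-action on it that combines the base action of $\phi_0$ on $M$ with the fibre action of $\phi_1$ on $\mathcal{T}$, so that the test point $y=0\in\mathbb{V}_0(\Bbbk)$ is moved whenever $\phi_1\neq\mathrm{id}$.

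The main obstacle will be this second (kernel) case: one must produce a \emph{genuine} left action of the whole group $\mathfrak{Aut}(\mathbb{Z})_{ev}$ on a finite-dimensional test scheme such that non-identity elements of $\ker\pi$ move the chosen base-point. Mere conjugation on $\ker\pi$ does not suffice because it is trivial on the centre of $\ker\pi$, and one must instead exploit the natural linear representation of $\mathfrak{Aut}(\mathbb{Z})_{ev}$ on the coherent sheaf $\mathcal{T}$, whose space of global sections is finite-dimensional by Proposition \ref{global sections are finite}. Once $P_{orb}$ is verified for $\mathfrak{Aut}(\mathbb{Z})_{ev}$ it restricts to $\mathfrak{Aut}(\mathbb{X})_{ev}$, the Matsumura-Oort criterion applies, and the theorem follows.
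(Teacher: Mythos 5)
Your overall framework is the right one and matches the paper: reduce to checking $P_{orb}$ for $\mathfrak{Aut}(\mathbb{X})_{ev}$ via \cite[Corollary 2.5]{mats-oort} (the conditions $P_1$--$P_5$ being supplied by Corollary \ref{P_1}, Corollary \ref{P_2 for even}, Lemma \ref{P_3} and Lemma \ref{P_4 and P_5}), and then use the injection $\mathfrak{Aut}(\mathbb{X})_{ev}\hookrightarrow\mathfrak{Aut}(\mathbb{Z})_{ev}$ coming from the Grassmann inflation to reduce further to $P_{orb}$ for $\mathfrak{Aut}(\mathbb{Z})_{ev}$. However, your verification of $P_{orb}$ for $\mathfrak{Aut}(\mathbb{Z})_{ev}$ has a genuine gap, and you name it yourself: in the case $\phi\in\ker\pi$ you never actually produce an algebraic scheme with a left $\mathfrak{Aut}(\mathbb{Z})_{ev}$-action and a $\Bbbk$-point moved by $\phi$. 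The candidate you suggest --- the linear action on global sections of $\mathcal{T}$ (or of $\mathrm{End}_{\mathcal{O}_M}(\mathcal{T})$) --- does not work as stated: a nontrivial $\mathcal{O}_{M_A}$-linear automorphism of $\mathrm{pr}_1^*\mathcal{T}$ can perfectly well act trivially on $\mathrm{H}^0(M,\mathcal{T})\otimes A$ (for instance when $\mathcal{T}$ has no nonzero global sections at all), so the chosen base point is not moved and $P_{orb}$ is not verified. Fixing this would require twisting by auxiliary coherent sheaves or some other device, and none of that is carried out; as written, the second case is an acknowledged hole rather than a proof.

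The paper closes exactly this gap with a one-line observation that makes the whole case analysis unnecessary: since $\mathcal{T}^2=0$ in $\mathcal{O}_Z$ and $\mathrm{char}\,\Bbbk\neq 2$, the sign discrepancy between supercommutativity and commutativity disappears (for odd sections $a,b$ one has $ab=-ba$ and $ab\in\mathcal{T}^2=0$), so $\mathcal{O}_Z\simeq\mathcal{O}_M\oplus\mathcal{T}$ is a sheaf of honestly commutative algebras and $Z$ is an ordinary proper scheme. The classical \cite[Theorem 3.7]{mats-oort} then says the full automorphism group functor of $Z$ as a scheme is a locally algebraic group scheme, hence satisfies $P_{orb}$ by \cite[Corollary 2.5]{mats-oort}, and $\mathfrak{Aut}(\mathbb{Z})_{ev}$, being a subgroup functor of it, inherits $P_{orb}$ --- using precisely the inheritance principle you state at the start of your own argument. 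In short: your reduction is correct, but you should replace the case split and the unproved kernel construction by the observation that $Z$ is an ordinary proper scheme, which lets the classical theorem do all the work.
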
	
	\begin{proof}
		Since $\mathfrak{Aut}(\mathbb{X})_{ev}$ satisfies the conditions $P_1-P_5$, by \cite[Corollary 2.5]{mats-oort} all we need is to show that $\mathfrak{Aut}(\mathbb{X})_{ev}$ satisfies $P_{orb}$. By the above remarks, all we need is to show that $\mathfrak{Aut}(\mathbb{Z})_{ev}$ satisfies $P_{orb}$.
		
		The superscheme $Z$ can be regarded as an ordinary (proper) scheme. In fact, the sheaf $\mathcal{O}_Z\simeq \mathcal{O}_M\oplus\mathcal{T}$ is a sheaf of commutative algebras, since $\mathcal{T}^2=0$ in $\mathcal{O}_Z$. Note also that $\mathfrak{Aut}(\mathbb{Z})_{ev}$ is a group subfunctor of the automorphism group functor of $\mathbb{Z}$, regarded as such a scheme. By \cite[Theorem 3.7]{mats-oort} the latter group functor is isomorphic to a locally algebraic group scheme, hence by \cite[Corollary 2.5]{mats-oort} it satisfies $P_{orb}$. Theorem is proven. 
	\end{proof}		
	
	\section{The structure of $\mathfrak{Aut}(\mathbb{X})_{ev}$}
	
	We still assume that $X$ is a proper algebraic superscheme. Since any automorphism  of $X\times\mathrm{SSpec}(A)$ over $\mathrm{SSpec}(A)$ induces the automorphism of $X_{ev}\times\mathrm{Spec}(A)$ over $\mathrm{Spec}(A)$, provided $A$ is purely even, thus there is the natural morphism 
	$\mathfrak{Aut}(\mathbb{X})_{ev}\to\mathfrak{Aut}(\mathbb{X}_{ev})$ of group schemes. Set $\mathfrak{N}_t=\ker(\mathfrak{Aut}(\mathbb{X})_{ev}\to\mathfrak{Aut}(\mathbb{X}_{ev}))$ (the index $t$ refers to the fact that $\mathcal{T}^{t+1}=0$). As a closed group subfunctor of $\mathfrak{Aut}(\mathbb{X})_{ev}$, $\mathfrak{N}_t$ is a locally algebraic group scheme. 		
	
	Each group $\mathfrak{N}_t(A)$ can be identified with the group of $\mathcal{O}_{\mathrm{Spec}(A)}$-linear automorphisms of superalgebra sheaf $\mathcal{O}_{X\times\mathrm{SSpec}(A)}\simeq \mathcal{O}_{M\times\mathrm{Spec}(A)}\oplus \mathcal{T}_A$, those induce the identity morphisms modulo  $\mathcal{T}_A^2\oplus\mathcal{T}_A$ . 
	
	For any $1\leq l\leq t$, let $X^{\leq l}$ denote the superscheme $(|X|, \mathcal{O}_X/\mathcal{O}_X\mathcal{T}^{l+1})$. Set
	$\mathfrak{N}_l=\ker(\mathfrak{Aut}(\mathbb{X}^{\leq l})_{ev}\to\mathfrak{Aut}(\mathbb{X}_{ev}))$. Note that this is compatible with the definition of $\mathfrak{N}_t$. Moreover,
	the natural morphism $\mathfrak{Aut}(\mathbb{X})_{ev}\to\mathfrak{Aut}(\mathbb{X}^{\leq l})_{ev}$ of group schemes makes the diagram
	\[\begin{array}{ccc}
		\mathfrak{Aut}(\mathbb{X})_{ev} & \to & \mathfrak{Aut}(\mathbb{X}^{\leq l})_{ev}	\\
		\searrow & & \swarrow \\
		& \mathfrak{Aut}(\mathbb{X}_{ev}) &
	\end{array} \]
	to be commutative, hence it induces a group scheme morphism $\mathfrak{N}_t\to\mathfrak{N}_l$. Set $\mathfrak{N}_{t, l}=\ker(\mathfrak{N}_t\to\mathfrak{N}_l)$.
	\begin{lm}\label{central series}
		The series of normal locally algebraic group subschemes
		\[\mathfrak{N}_t\geq \mathfrak{N}_{t, 1}\geq \mathfrak{N}_{t, 2} \geq \ldots \geq \mathfrak{N}_{t, t}=1\]
		is central, starting from the second term. In other words, for any $1\leq l <t$ there is $[\mathfrak{N}_t, \mathfrak{N}_{t, l}]\leq \mathfrak{N}_{t, l+1}$. 
	\end{lm}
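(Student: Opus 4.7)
The plan is to reduce to a local computation on an affine open $U = \mathrm{SSpec}(B) \subseteq X$ and argue via a standard filtration-shift estimate for commutators. Set $R = B \otimes A$, $\mathfrak{t} = B_1$, and let $I_k = B\mathfrak{t}^k \otimes A$, so $I_0 = R \supseteq I_1 \supseteq \cdots \supseteq I_{t+1} = 0$. Unwinding the kernel definition of $\mathfrak{N}_{t,l}$, an element $\phi \in \mathfrak{N}_{t,l}(A)$ is characterised by $(\phi - \mathrm{id})(R) \subseteq I_{l+1}$ (the condition that $\phi$ induces the identity on $\mathcal{O}_X/\mathcal{O}_X\mathcal{T}^{l+1}$); in particular $\phi \in \mathfrak{N}_t = \mathfrak{N}_{t,1}$ corresponds to the smallest nontrivial shift, $(\phi - \mathrm{id})(R) \subseteq I_1$.

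Any $\phi \in \mathfrak{N}_t(A)$ is a graded $A$-superalgebra automorphism, hence sends $\mathfrak{t} \otimes A$ to itself and preserves each ideal $I_k$. A Leibniz-type induction on $k$, using $I_k = I_1 \cdot I_{k-1}$ together with the identity
\[
(\phi - \mathrm{id})(yz) = \phi(y)(\phi - \mathrm{id})(z) + (\phi - \mathrm{id})(y) \, z,
\]
yields the key filtration-shift statement: if $D = \phi - \mathrm{id}$ satisfies $D(R) \subseteq I_m$, then $D(I_k) \subseteq I_{k+m}$ for every $k \geq 0$. The inverse $\phi^{-1} = \mathrm{id} - D + D^2 - \cdots$ (a finite sum, since the $I_k$ eventually vanish) satisfies the same shift property.

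Given $\phi \in \mathfrak{N}_t(A)$ and $\psi \in \mathfrak{N}_{t,l}(A)$, set $D = \phi - \mathrm{id}$ and $E = \psi - \mathrm{id}$, so $D(R) \subseteq I_1$ and $E(R) \subseteq I_{l+1}$. A short expansion gives
\[
[\phi, \psi] - \mathrm{id} = \bigl(\phi E \phi^{-1} - E\bigr) \circ \psi^{-1}.
\]
For $x \in R$, writing $y = \phi^{-1}(x) \in R$, one finds
\[
\bigl(\phi E \phi^{-1} - E\bigr)(x) = D\bigl(E(y)\bigr) - E\bigl(D(y)\bigr),
\]
which lies in $I_{1 + (l+1)} = I_{l+2}$ by the shift estimate. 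Since $\psi^{-1}$ preserves $R$, precomposing with it keeps the value in $I_{l+2}$. Therefore $([\phi, \psi] - \mathrm{id})(R) \subseteq I_{l+2}$, meaning $[\phi, \psi] \in \mathfrak{N}_{t,l+1}(A)$, which is exactly the centrality condition $[\mathfrak{N}_t, \mathfrak{N}_{t,l}] \leq \mathfrak{N}_{t,l+1}$.

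The main obstacle is the careful bookkeeping around the $\mathbb{Z}_2$-parity of the pieces of the filtration (since $\mathcal{O}_X\mathcal{T}^{l+1}$ alternates between the even piece $\mathcal{T}^{l+1}$ and the odd piece $\mathcal{T}^{l+2}$) and the precise matching of conventions between the sheaf-level kernel description and the local algebraic statement. Once the shift estimate $D(I_k) \subseteq I_{k+m}$ is in hand, the commutator calculation is the textbook argument for the descending central series of unipotent groups attached to filtered algebras. Because the estimate is local on $X$ and functorial in $A$, the local identities glue to the required inclusion of group subschemes; normality of the $\mathfrak{N}_{t,l}$ is automatic from their description as kernels.
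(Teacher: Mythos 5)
Your reduction to a local computation on $R=B\otimes A$ and the commutator identity $[\phi,\psi]-\mathrm{id}=(\phi E\phi^{-1}-E)\circ\psi^{-1}$ are fine, but the load-bearing step --- the uniform shift estimate ``$D(R)\subseteq I_m$ implies $D(I_k)\subseteq I_{k+m}$'' --- fails in exactly the parity-sensitive situation you defer as bookkeeping. The ideal $I_m=B\mathfrak{t}^m\otimes A$ splits as $(\mathfrak{t}^m\oplus\mathfrak{t}^{m+1})\otimes A$ with the two summands of opposite parity, so for an odd section $f$ the (odd) element $D(f)$ is only constrained to lie in the odd part of $I_m$, which is $\mathfrak{t}^m\otimes A$ when $m$ is odd; the Leibniz induction then yields $D(I_k)\subseteq I_{k+m-1}$, not $I_{k+m}$. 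For $\phi\in\mathfrak{N}_t$ one has $m=1$ and the estimate degenerates completely: the odd part of $I_1$ is all of $(B\otimes A)_1$, so $D(\mathfrak{t}\otimes A)\subseteq\mathfrak{t}\otimes A$ carries no shift whatsoever (the kernel condition defining $\mathfrak{N}_t$ constrains $\phi$ only on $\mathcal{O}_{X_{ev}}$, i.e.\ on even sections; a $\phi$ that merely permutes odd generators of a Grassmann factor already violates $D(I_1)\subseteq I_2$). Consequently $D(E(y))\in D(I_{l+1})\subseteq I_{l+1}$ only, and your computation establishes $([\phi,\psi]-\mathrm{id})(R)\subseteq I_{l+1}$, i.e.\ membership in $\mathfrak{N}_{t,l}$ rather than in $\mathfrak{N}_{t,l+1}$.

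The missing input is precisely the identification $\mathfrak{N}_t=\mathfrak{N}_{t,1}$, which you invoke in passing but which is inconsistent with your own dictionary: by your characterization $\mathfrak{N}_{t,1}$ corresponds to $(\phi-\mathrm{id})(R)\subseteq I_2$, hence to $D(\mathfrak{t}\otimes A)\subseteq\mathfrak{t}^3\otimes A$, whereas $\mathfrak{N}_t$ corresponds only to $(\phi-\mathrm{id})(R)\subseteq I_1$. If one grants $D(R)\subseteq I_2$ for the first entry of the commutator, both terms $D(E(y))$ and $E(D(y))$ do land in $I_{l+2}$ and your argument closes; without it, it does not. This is also where your route diverges from the paper's: the paper does not use a uniform shift estimate but argues by direct congruences, splitting into the cases $l$ odd and $l$ even and measuring even sections $a$ and odd sections $f$ against different powers $\mathcal{T}_A^{l+1}$, $\mathcal{T}_A^{l+2}$, $\mathcal{T}_A^{l+3}$ --- exactly the parity bookkeeping your single estimate collapses. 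To repair your proof you must either justify $\mathfrak{N}_t=\mathfrak{N}_{t,1}$ (that is, show that an automorphism inducing the identity on $X_{ev}\times\mathrm{Spec}(A)$ automatically moves odd sections only by elements of $\mathcal{T}_A^3$), or restate and prove the shift estimate with the correct parity-dependent exponents; as written, the key inequality is simply not available for elements of $\mathfrak{N}_t$.
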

	\begin{proof}
		Consider $g\in\mathfrak{N}_t(A)$ and $h\in\mathfrak{N}_{t, l}(A)$. 
		If $l=2m+1$, then for any $a\in \mathcal{O}_{M\times \mathrm{Spec}(A)}$ and $f\in\mathcal{T}_A$ we have 
		\[h(a)\equiv a\pmod{\mathcal{T}_A^{l+1}}, \ h(f)\equiv f\pmod{\mathcal{T}_A^{l+2} }. \]	
		Thus $h(f)\equiv f\pmod{\mathcal{T}_A^{l+3}}$ for any $f\in\mathcal{T}_A^2$, that implies
		\[gh(a)\equiv g(a) \pmod{\mathcal{T}_A^{l+1} },\]
		and \[ hg(a)=h(a+(g(a)-a))\equiv a+(g(a)-a)=g(a)\pmod{\mathcal{T}_A^{l+1}  }. \]
		Similarly, we have
		\[gh(f)\equiv g(f)\pmod{\mathcal{T}_A^{l+2} }  \]	
		and
		\[hg(f)=h(f+(g(f)-f))\equiv g(f)\pmod{\mathcal{T}_A^{l+2} }. \]	
		We leave the case $l=2m$ to the reader. 
	\end{proof}
	Assume that $t>1$ and consider the central group subscheme $\mathfrak{N}_{t, t-1}$. If $t=2m+1$, then any $g\in\mathfrak{N}_{t, t-1}$ is characterized by the the conditions
	$g(a)=a, g(f)\equiv f\pmod{\mathcal{T}_A^t}, a\in \mathcal{O}_{M\times\mathrm{Spec}(A)}, f\in\mathcal{T}_A$. Since $g$ is an automorphism of superalgebra sheaf $\mathcal{O}_{X\times\mathrm{SSpec}(A)}$, $g-1$ is just
	$\mathcal{O}_{M\times\mathrm{Spec}(A)}$-linear sheaf morphism $\mathcal{T}_A\to\mathcal{T}_A^t$. In other words, the group $\mathfrak{N}_{t, t-1}(A)$ is isomorphic to
	the abelian group of global sections of the sheaf 
	\[\mathrm{Hom}_{\mathcal{O}_{M\times\mathrm{Spec}(A)}}(\mathcal{T}_A, \mathcal{T}_A^t)\simeq \mathrm{Hom}_{\mathcal{O}_{X_{ev}\times\mathrm{Spec}(A) }}((\mathcal{J}_X/\mathcal{J}_X^2)_A,  (\mathcal{J}_X^t)_A). \]
	
	Finally, if $t=2m$, then any $g\in\mathfrak{N}_{t, t-1}$ is characterized by the the conditions $g(a)\equiv a\pmod{\mathcal{T}_A^t}, g(f)=f$. Moreover, $g-1$ is an $\mathcal{O}_{\mathrm{Spec}(A)}$-linear derivation $\mathcal{O}_{M\times\mathrm{Spec}(A)}\to \mathcal{T}_A^t$. Since any such derivation vanishes on $\mathcal{T}_A^2$, it implies that 
	$\mathfrak{N}_{t, t-1}$ can be identified with the sheaf of $\mathcal{O}_{\mathrm{Spec}(A)}$-linear derivations $\mathcal{O}_{X_{ev}\times\mathrm{Spec}(A)}\to (\mathcal{J}^t_X)_A$. 
	Using \cite[Proposition II.8.10]{hart}, one sees that  $\mathfrak{N}_{t, t-1}(A)$ is isomorphic to the abelian group
	of global sections of the sheaf $\mathrm{Hom}_{\mathcal{O}_{X_{ev}\times\mathrm{Spec}(A)}}((\mathcal{I}_{X_{ev}}/\mathcal{I}_{X_{ev}}^2)_A, (\mathcal{J}_X^t)_A)$.
	
	Recall that $\mathbb{G}_a$ denote the one dimensional unipotent group, i.e. $\Bbbk[\mathbb{G}_a]\simeq \Bbbk[z]$, where $z$ is a primitive element of Hopf algebra $\Bbbk[z]$.
	\begin{lm}\label{N_{t, t-1}}
	If $t>1$, then the group scheme $\mathfrak{N}_{t, t-1}$ is isomorphic to the finite direct product of several copies of $\mathbb{G}_a$.	
	\end{lm}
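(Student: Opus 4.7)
The plan is to combine the explicit identifications of $\mathfrak{N}_{t,t-1}(A)$ given just before the statement with Lemma \ref{represented by a space}, and then verify that group composition becomes addition. In both parity cases (whether $t=2m+1$ or $t=2m$), the preceding discussion already identifies $\mathfrak{N}_{t,t-1}(A)$ with the abelian group of global sections of a Hom sheaf $\mathrm{Hom}_{\mathcal{O}_{X_{ev}\times\mathrm{Spec}(A)}}(\mathrm{pr}_1^*\mathcal{F},\mathrm{pr}_1^*\mathcal{G})$, where $\mathcal{F}$ is either $\mathcal{J}_X/\mathcal{J}_X^2$ or $\mathcal{I}_{X_{ev}}/\mathcal{I}_{X_{ev}}^2$, and $\mathcal{G}=\mathcal{J}_X^t$. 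I would first remark that all of these are coherent $\mathcal{O}_{X_{ev}}$-(super)modules on the proper scheme $X_{ev}$, so Lemma \ref{represented by a space} applies verbatim and yields a natural $A$-linear bijection $\mathfrak{N}_{t,t-1}(A)\simeq V\otimes A$ for some finite dimensional vector superspace $V$; since the functor is defined on $\mathsf{Alg}_{\Bbbk}$, only $V_0$ matters and $V\otimes A$ is an additive group that is representable by $\mathbb{G}_a^{\dim V_0}$ as a scheme.

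The second step, which is really the only thing to check, is that the bijection is a \emph{group} homomorphism, with the group operation on the right being addition. For $g\in\mathfrak{N}_{t,t-1}(A)$ the map $g\mapsto g-\mathrm{id}$ lands in the appropriate Hom sheaf: in the odd case it is the $\mathcal{O}_{M\times\mathrm{Spec}(A)}$-linear map $\mathcal{T}_A\to\mathcal{T}_A^t$, and in the even case it is the $\mathcal{O}_{\mathrm{Spec}(A)}$-linear derivation $\mathcal{O}_{M\times\mathrm{Spec}(A)}\to\mathcal{T}_A^t$ which factors through $\mathcal{I}_{X_{ev}}/\mathcal{I}_{X_{ev}}^2\otimes A$. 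Writing $g_i=\mathrm{id}+\phi_i$ one has $g_1g_2=\mathrm{id}+\phi_1+\phi_2+\phi_1\phi_2$, and the cross term has image in $\mathcal{T}_A^{2t-1}$ (odd case) or $\mathcal{T}_A^{2t}$ (even case), both of which are contained in $\mathcal{T}_A^{t+1}=0$ as soon as $t\geq 2$. So composition becomes addition and the identification with $V\otimes A$ becomes a natural isomorphism of abelian group functors.

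Putting these two steps together, $\mathfrak{N}_{t,t-1}$ is representable by an additive functor $A\mapsto V_0\otimes A$ on $\mathsf{Alg}_{\Bbbk}$, which after choosing a basis of $V_0$ is exactly $\mathbb{G}_a^{\dim V_0}$. The only genuine obstacle is making sure Lemma \ref{represented by a space} is applicable; this reduces to the coherence of $\mathcal{J}_X/\mathcal{J}_X^2$, $\mathcal{I}_{X_{ev}}/\mathcal{I}_{X_{ev}}^2$ and $\mathcal{J}_X^t$ as $\mathcal{O}_{X_{ev}}$-modules, which is standard (and already used implicitly in Lemma \ref{Der}), plus the fact that the coherent sheaf of homomorphisms actually has the pullback form $\mathrm{pr}_1^*\mathcal{F}$ when we pass from $X$ to $X\times\mathrm{Spec}(A)$ — this is immediate because $A$ is purely even and flat over $\Bbbk$. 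Everything else is a direct application of the results already available in the paper.
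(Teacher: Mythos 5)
Your proposal is correct and follows essentially the same route as the paper: the preceding identifications of $\mathfrak{N}_{t,t-1}(A)$ with global sections of a Hom sheaf, coherence of $\mathcal{J}_X/\mathcal{J}_X^2$, $\mathcal{J}_X^t$ and $\mathcal{I}_{X_{ev}}/\mathcal{I}_{X_{ev}}^2$, and Lemma \ref{represented by a space}. Your extra verification that composition of the $g=\mathrm{id}+\phi$ becomes addition (the cross term dying in $\mathcal{T}_A^{t+1}=0$) is a detail the paper leaves implicit in the phrase ``abelian group of global sections,'' and it is carried out correctly.
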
	
	\begin{proof}
		Since $\mathcal{J}_X/\mathcal{J}_X^2, \mathcal{J}_X^t$ and $\mathcal{I}_{X_{ev}}/\mathcal{I}_{X_{ev}}^2$ are coherent sheaves of $\mathcal{O}_{X_{ev}}$-(super)modules, Lemma \ref{represented by a space} implies the statement. 
	\end{proof}
	\begin{pr}\label{R is almost unipotent}
	If $t>1$, then $\mathfrak{N}^0_{t, 1}$ is an (affine) algebraic unipotent group.	
	\end{pr}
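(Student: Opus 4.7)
The plan is to run the analysis of $\mathfrak{N}_{t,t-1}$ from Lemma \ref{N_{t, t-1}} along every step of the central series in Lemma \ref{central series} and then assemble the result from the subquotients.

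First, for each $1 \le l < t$, I would show that the central subquotient $\mathfrak{N}_{t,l}/\mathfrak{N}_{t,l+1}$ is canonically a finite-dimensional vector group over $\Bbbk$, i.e.\ isomorphic to a finite direct product of copies of $\mathbb{G}_a$. The argument parallels Lemma \ref{N_{t, t-1}} almost verbatim. An element $g \in \mathfrak{N}_{t,l}(A)$ differs from the identity on $\mathcal{O}_{M\times\mathrm{Spec}(A)}$ or on $\mathcal{T}_A$ by a contribution landing in $(\mathcal{J}_X^{l+1}/\mathcal{J}_X^{l+2})_A$; because $g$ is an $A$-superalgebra automorphism that becomes trivial on $\mathcal{O}_X/\mathcal{O}_X\mathcal{T}^{l+2}$, its class modulo $\mathfrak{N}_{t,l+1}$ is either an $\mathcal{O}_{\mathrm{Spec}(A)}$-linear derivation $\mathcal{O}_{X_{ev}\times\mathrm{Spec}(A)} \to (\mathcal{J}_X^{l+1}/\mathcal{J}_X^{l+2})_A$ (in the parity case where $g$ fixes $\mathcal{T}_A$) or an $\mathcal{O}_{M\times\mathrm{Spec}(A)}$-linear morphism $(\mathcal{J}_X/\mathcal{J}_X^2)_A \to (\mathcal{J}_X^{l+1}/\mathcal{J}_X^{l+2})_A$ (in the other parity case). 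By Proposition \ref{global sections are finite} together with Lemma \ref{represented by a space}, the resulting functor in $A$ is of the form $A \mapsto V_l \otimes_{\Bbbk} A$ for some finite-dimensional $\Bbbk$-superspace $V_l$, that is, $\mathbb{G}_a^{d_l}$ with $d_l = \dim_{\Bbbk} V_l$.

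Second, with this established, $\mathfrak{N}_t$ is an iterated central extension, in the category of closed subgroup schemes of the locally algebraic group scheme $\mathfrak{Aut}(\mathbb{X})_{ev}$, of the affine algebraic connected unipotent groups $\mathbb{G}_a^{d_l}$ for $l=1,\ldots,t-1$. Since the classes of affine algebraic groups, of unipotent groups and of connected groups are each closed under extensions, it follows by induction on the length of the series that the connected component $\mathfrak{N}_t^0$ is an affine algebraic unipotent group, as required.

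The main obstacle I expect is the parity bookkeeping in the first step: the identification of the difference $g - \mathrm{id}$ with a morphism of coherent $\mathcal{O}_{X_{ev}}$-(super)modules must be carried out separately in the two cases of Lemma \ref{central series} to produce honest $A$-linear sheaf maps between the pullbacks $\mathrm{pr}_1^*\mathcal{F}$ and $\mathrm{pr}_1^*\mathcal{G}$ required by Lemma \ref{represented by a space}. Once this is spelled out, the coherence of the relevant sheaves (using \cite[Section 3]{maszub4} and Lemma \ref{dual of coherent}) and the finite-dimensionality of their global sections (Proposition \ref{global sections are finite}) make the rest of the argument formal.
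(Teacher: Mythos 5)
Your plan has a genuine gap in its first step, and the conclusion you draw from it would actually prove more than the proposition claims --- indeed more than the paper believes to be true. You assert that each layer $\mathfrak{N}_{t,l}/\mathfrak{N}_{t,l+1}$ of the central series is a \emph{full} vector group $A\mapsto V_l\otimes A$. What the computation behind Lemma \ref{N_{t, t-1}} actually gives for $l<t-1$ is only an \emph{injection} of $\mathfrak{N}_{t,l}/\mathfrak{N}_{t,l+1}$ into the vector group $\mathfrak{N}_{l+1,l}=\ker(\mathfrak{N}_{l+1}\to\mathfrak{N}_l)$: the class of $g$ modulo $\mathfrak{N}_{t,l+1}$ is a derivation or module map into $(\mathcal{J}_X^{l+1}/\mathcal{J}_X^{l+2})_A$, but not every such map lifts to an automorphism of the whole sheaf $\mathcal{O}_{X\times\mathrm{SSpec}(A)}$ --- there are obstructions in the higher graded pieces. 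The case $l=t-1$ is special precisely because the perturbation lands in the top piece $\mathcal{T}_A^t$, which is annihilated by $\mathcal{T}_A$, so $\mathrm{id}+\delta$ is automatically an automorphism and the map onto the vector group is surjective. For $l<t-1$ the subquotient is merely the image of a \emph{locally} algebraic group in $\mathbb{G}_a^{d_l}$; since $\mathfrak{N}_{t,l}$ may have infinitely many components, this image need not be closed, connected, or even algebraic (think of $\mathbb{Z}\subset\mathbb{G}_a$ in characteristic zero, or non-smooth/\'etale subgroups of $\mathbb{G}_a$ in odd characteristic). A sanity check that something must be wrong: if every layer really were $\mathbb{G}_a^{d_l}$ and the series consisted of honest extensions, your second step would show that $\mathfrak{N}_t$ itself is connected, affine, algebraic and unipotent --- but the paper explicitly states that $\mathfrak{R}/\mathfrak{R}^0$ need not be finite and records the algebraicity of $\mathfrak{N}_t$ as an open question (Question \ref{unipotent entirely?}).

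The paper's proof is organized to dodge exactly this. It uses only the top layer $\mathfrak{N}_{t,t-1}$, where surjectivity onto the vector group holds (Lemma \ref{N_{t, t-1}}), and inducts on $t$ through the truncated superschemes $X^{\leq l}$ and their kernels $\mathfrak{N}_l$, not through the central series inside the single group $\mathfrak{N}_t$. Concretely: $\mathfrak{N}_{t,t-1}$ is connected, hence lies in $\mathfrak{N}_t^0$, which is a clopen affine algebraic subgroup by \cite[Theorem 2.4.1]{mbrion}; the quotient $\mathfrak{N}_t^0/\mathfrak{N}_{t,t-1}$ is then only claimed to be a \emph{closed algebraic subgroup} of $\mathfrak{N}_{t-1}^0$ (via \cite[Proposition 2.7.1(1)]{mbrion}), not all of it, and the inductive hypothesis that $\mathfrak{N}_{t-1}^0$ is affine and unipotent is inherited by closed subgroups. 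If you want to salvage your approach, you would have to replace ``the layer is $\mathbb{G}_a^{d_l}$'' by ``the layer embeds in $\mathbb{G}_a^{d_l}$'' and then restrict throughout to identity components --- at which point you essentially recover the paper's argument.
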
		
	\begin{proof}
If $t=2$, then our stetment follows by Lemma \ref{N_{t, t-1}}. Assume that $t>2$. Again, by Lemma \ref{N_{t, t-1}} the group subscheme $\mathfrak{N}_{t, t-1}$ is connected, hence $\mathfrak{N}_{t, t-1}\leq \mathfrak{N}_{t, 1}^0$. Moreover, $\mathfrak{N}_{t, 1}^0$ is a clopen affine algebraic subgroup of  $\mathfrak{N}_{t, 1}$ (cf. \cite[Theorem 2.4.1]{mbrion}). 

It is clear that the group scheme morphism $\mathfrak{N}_t\to \mathfrak{N}_{t-1}$ takes $\mathfrak{N}_{t, 1}^0$ to $\mathfrak{N}_{t-1, 1}^0$. Then \cite[Proposition 2.7.1(1)]{mbrion} implies that the faisceau quotient $\mathfrak{N}_{t, 1}^0/\mathfrak{N}_{t, t-1}$ is a closed algebraic subgroup of $\mathfrak{N}^0_{t-1, 1}$. By the induction on $t$, $\mathfrak{N}^0_{t-1, 1}$ is affine and unipotent, hence 
$\mathfrak{N}_{t, 1}^0/\mathfrak{N}_{t, t-1}$ is. The latter infers the statement.   
		\end{proof}	
	\begin{pr}\label{t=1}
If $t=1$, then 	for any algebra $A$ the group $\mathfrak{N}_1(A)$ is isomorphic to \[\mathrm{End}_{\mathcal{O}_{M\times\mathrm{Spec}(A)}}(\mathcal{T}_A)(|M\times\mathrm{Spec}(A)|)^{\times}\simeq (\mathrm{End}_{\mathcal{O}_M}(\mathcal{T})(|M|)\otimes A)^{\times},\] 
that is $\mathfrak{N}_1$ is isomorphic to $\mathbb{GL}_{\mathcal{O}_M}(\mathcal{T})$. In particular, the group functor $\mathfrak{N}_t/\mathfrak{N}_{t, 1}$ is embedded into the locally algebraic group scheme $\mathbb{GL}_{\mathcal{O}_{X_{ev}}}(\mathcal{J}_X/\mathcal{J}_X^2)$.	
	\end{pr}
\begin{proof}
Arguing as above, one easily sees that $\mathfrak{N}_1(A)$ is isomorphic to the group of invertible $\mathcal{O}_{M\times\mathrm{Spec}(A)}$-linear sheaf endomorphisms	of $\mathcal{T}_A$. Lemma \ref{represented by a space} implies the first statement. The second statement is now obvious.
\end{proof}	
\begin{question}\label{unipotent entirely?}
Proposition \ref{R is almost unipotent} rises the natural question whether $\mathfrak{N}_t$ or $\mathfrak{N}_{t, 1}$ are algebraic for any $t\geq 1$?
\end{question}	
	
	\section{Strictly pro-representable group $\Bbbk$-functors}
	
	Let $\mathbb{X}$ be a strictly pro-representable $\Bbbk$-functor, and let  $A$ be a c.l.a.N. superalgebra, that represents $\mathbb{X}$. 
	The maximal superideal of $A$ is denoted by $\mathfrak{M}_A$. We also call $A$ the \emph{coordinate superalgebra} of $\mathbb{X}$.
	\begin{lm}\label{morphisms of strictly pro-representable functors}
		The category of strictly representable $\Bbbk$-functors is anti-equivalent to the category of c.l.a.N. superalgebras (with continuous local morphisms between them).
	\end{lm}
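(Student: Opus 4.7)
The plan is to construct a contravariant functor
\[
\Phi : (\text{c.l.a.N. superalgebras})^{op} \longrightarrow (\text{strictly pro-representable }\Bbbk\text{-functors})
\]
defined on objects by $\Phi(A) = \mathbb{X}_A$ with $\mathbb{X}_A(B) := \mathrm{Hom}_{\mathrm{cts}}(A, B)$ ($B$ discrete), and on morphisms by precomposition, and to show that it is an anti-equivalence. Essential surjectivity is immediate from the definition of strict pro-representability itself, so the content lies in fully faithfulness, which I would prove by a pro-Yoneda argument.

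Given c.l.a.N. superalgebras $A, A'$, I write $A' = \varprojlim_n A'_n$ with $A'_n = A'/\mathfrak{M}_{A'}^{n+1} \in \mathsf{SL}_{\Bbbk}$, and identify the canonical projections $\pi'_n \in \mathbb{X}_{A'}(A'_n) = \mathrm{Hom}_{\mathrm{cts}}(A', A'_n)$ as universal elements: any continuous $\phi : A' \to B$ kills some $\mathfrak{M}_{A'}^{n+1}$ and hence factors as $\phi = \bar\phi \circ \pi'_n$ for a (unique) superalgebra morphism $\bar\phi : A'_n \to B$. Naturality of an arbitrary $\eta : \mathbb{X}_{A'} \to \mathbb{X}_A$ at $\bar\phi$ then forces
\[
\eta_B(\phi) = \bar\phi \circ \eta_{A'_n}(\pi'_n),
\]
so $\eta$ is determined by the family $f_n := \eta_{A'_n}(\pi'_n) \in \mathrm{Hom}_{\mathrm{cts}}(A, A'_n)$. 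Naturality at the transitions $A'_n \to A'_m$ ($n \geq m$) makes this family compatible, so it assembles into $f := \varprojlim_n f_n \in \mathrm{Hom}_{\mathrm{cts}}(A, A')$. Conversely, any such $f$ yields a natural transformation $\Phi(f)$ by precomposition, and a short calculation gives $\Phi(f)_{A'_n}(\pi'_n) = f_n$, showing the two constructions are mutually inverse.

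The one point I expect to require care is locality of the reconstructed $f$, but it turns out to come for free: naturality of $\eta$ at the augmentation $\epsilon_{A'_n} : A'_n \to \Bbbk$ yields
\[
\epsilon_{A'_n} \circ f_n \,=\, \eta_{\Bbbk}(\epsilon_{A'_n} \circ \pi'_n) \,=\, \eta_{\Bbbk}(\epsilon_{A'}).
\]
Since $A$ is local Noetherian with residue field $\Bbbk$, the set $\mathbb{X}_A(\Bbbk) = \mathrm{Hom}_{\mathrm{cts}}(A, \Bbbk)$ is a singleton whose unique element is $\epsilon_A$, so the right-hand side equals $\epsilon_A$. Hence each $f_n$ preserves augmentations, giving $f_n(\mathfrak{M}_A) \subseteq \mathfrak{M}_{A'_n}$, and passage to the limit yields $f(\mathfrak{M}_A) \subseteq \mathfrak{M}_{A'}$. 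Combined with the identity $\Phi(f) = \eta$, this establishes the claimed anti-equivalence.
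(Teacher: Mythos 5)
Your argument is correct and is essentially the paper's own proof: the paper likewise evaluates a natural transformation ${\bf f}:\mathbb{X}\to\mathbb{Y}$ on the canonical elements $\mathrm{id}_{A/\mathfrak{M}_A^{n+1}}\in\mathbb{X}(A/\mathfrak{M}_A^{n+1})$ to obtain a compatible family $\{\phi_n\}$ whose limit is the dual morphism, leaving the remaining checks to the reader. Your write-up merely spells out one of those omitted details (that the reconstructed morphism is local, via naturality at the augmentations), so no further comparison is needed.
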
 
	\begin{proof}
		We outline the proof and leave details for the reader. 
		
		Let $\mathbb{X}$ and $\mathbb{Y}$ be $\Bbbk$-functors, represented by superalgebras $A$ and $B$ respectively.
		Let ${\bf f} : \mathbb{X}\to\mathbb{Y}$ be a morphism of $\Bbbk$-functors.
		
		For any couple of nonnegative integers $n< t$ and any superalgebra $C\in\mathsf{SL}_{\Bbbk}(n)$, there is \[\mathbb{X}(C)=\mathrm{Hom}_{\mathsf{SL}_{\Bbbk}}(A/\mathfrak{M}_A^{n+1} , C)=\mathrm{Hom}_{\mathsf{SL}_{\Bbbk}}(A/\mathfrak{M}_A^{t+1} , C),\]
		and the similar statement holds for $\mathbb{Y}$. In particular, we have a commutative diagram
		\[\begin{array}{ccc}
			\mathrm{Hom}_{\mathsf{SL}_{\Bbbk}}(A/\mathfrak{M}_A^{t+1} , A/\mathfrak{M}_A^{t+1}) & \to & \mathrm{Hom}_{\mathsf{SL}_{\Bbbk}}(B/\mathfrak{M}_B^{t+1} , A/\mathfrak{M}_A^{t+1})\\
			\downarrow & & \downarrow \\
			\mathrm{Hom}_{\mathsf{SL}_{\Bbbk}}(A/\mathfrak{M}_A^{n+1} , A/\mathfrak{M}_A^{n+1}) & \to & \mathrm{Hom}_{\mathsf{SL}_{\Bbbk}}(B/\mathfrak{M}_B^{n+1} , A/\mathfrak{M}_A^{n+1})
		\end{array}\]
		Let $\phi_n$ denote the image of $\mathrm{id}_{A/\mathfrak{M}_A^{n+1}}$ in $\mathrm{Hom}_{\mathsf{SL}_{\Bbbk}}(B/\mathfrak{M}_B^{n+1} , A/\mathfrak{M}_A^{n+1}), n\geq 1$.
		Then the collection $\{\phi_n\}_{n\geq 1}$ defines the dual morphism $\phi : B\to A$.  
	\end{proof}
	This lemma allows us to define a Zariski topology on arbitrary strictly pro-representable functor $\mathbb{X}$, so that a closed subfunctor $\mathbb{Y}\subseteq\mathbb{X}$ is again strictly pro-representable by a c.l.a.N. superalgebra $A/I$, where $I$ is a closed superideal of $A$. For example, $\mathbb{X}_{ev}$ is defined by the superideal $J_A$.
	\begin{lm}\label{Hopf}
		If $\mathbb{X}$ is strictly pro-representable, then it is a group functor if and only if its coordinate superalgebra $A$ is a c.l.a.N.
		Hopf superalgebra. Thus the category of strictly pro-representable group $\Bbbk$-functors is anti-equivalent to the category of c.l.a.N. Hopf superalgebras (with continuous local Hopf superalgebra morphisms between them).   	
	\end{lm}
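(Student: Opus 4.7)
The plan is to deduce Lemma \ref{Hopf} from Lemma \ref{morphisms of strictly pro-representable functors} by the standard Yoneda dictionary between group objects in a category with products and cogroup objects in the opposite category, with the only serious novelty being that one must work with completed tensor products rather than ordinary ones. First I would observe that if $A$ and $B$ are c.l.a.N. superalgebras representing $\mathbb{X}$ and $\mathbb{Y}$ respectively, then the product functor $\mathbb{X}\times\mathbb{Y}$ is strictly pro-representable by the completed tensor product $A\,\widehat{\otimes}\,B=\varprojlim_{m,n} A/\mathfrak{M}_A^{m+1}\otimes B/\mathfrak{M}_B^{n+1}$. Indeed, for any $C\in\mathsf{SL}_{\Bbbk}$ of depth $\leq n$ a pair of continuous local morphisms $A\to C$ and $B\to C$ factors through $A/\mathfrak{M}_A^{n+1}\otimes B/\mathfrak{M}_B^{n+1}$, so by the proof of Lemma \ref{morphisms of strictly pro-representable functors} the identification $(\mathbb{X}\times\mathbb{Y})(C)=\mathbb{X}(C)\times\mathbb{Y}(C)$ is exactly the set of continuous local morphisms $A\,\widehat{\otimes}\,B\to C$ into the discrete superalgebra $C$. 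The resulting object $A\,\widehat{\otimes}\,B$ is again a c.l.a.N.\ superalgebra because $\dim_\Bbbk\mathfrak{M}/\mathfrak{M}^2$ of the completed tensor product equals the sum of the corresponding dimensions for $A$ and $B$.

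Next I would translate each of the group structure morphisms on $\mathbb{X}$ to a continuous superalgebra morphism going the other way: the multiplication ${\bf m}:\mathbb{X}\times\mathbb{X}\to\mathbb{X}$ dualizes to a \emph{comultiplication} $\Delta:A\to A\,\widehat{\otimes}\,A$, the unit $\mathrm{SSp}(\Bbbk)\to\mathbb{X}$ dualizes to the augmentation \emph{counit} $\epsilon_A:A\to\Bbbk$ already built into the c.l.a.N.\ structure, and the inverse ${\bf i}:\mathbb{X}\to\mathbb{X}$ dualizes to an \emph{antipode} $S:A\to A$. That each of these is automatically continuous and local is immediate from Lemma \ref{morphisms of strictly pro-representable functors}. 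Then the group axioms (associativity, unit, inverse) translate via the Yoneda lemma applied to the (strictly pro-)representable functors $\mathbb{X}\times\mathbb{X}\times\mathbb{X}$, $\mathbb{X}$, and so on, into the coassociativity, counit, and antipode identities for $(A,\Delta,\epsilon_A,S)$, giving $A$ the structure of a c.l.a.N.\ Hopf superalgebra in the sense of \cite[Definition 3.3]{hmt}. Conversely, given such a Hopf structure on $A$, the same Yoneda argument equips $\mathbb{X}(C)=\mathrm{Hom}_{\mathrm{cont}}(A,C)$ with a functorial group law via convolution.

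Finally, to get the anti-equivalence on morphisms, I would check that a natural transformation ${\bf f}:\mathbb{X}\to\mathbb{Y}$ is a group morphism precisely when its dual morphism $\phi:B\to A$ (constructed in Lemma \ref{morphisms of strictly pro-representable functors}) commutes with $\Delta$, $\epsilon$, and $S$; this is again pure Yoneda once the compatibility of the dualization with formation of completed tensor products is in hand, i.e.\ that the morphism dual to ${\bf f}\times{\bf f}$ is $\phi\,\widehat{\otimes}\,\phi:B\,\widehat{\otimes}\,B\to A\,\widehat{\otimes}\,A$.

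The main obstacle I anticipate is purely technical and concerns the completed tensor product: one must verify that the canonical map $\mathbb{X}(C)\times\mathbb{Y}(C)\to \mathrm{Hom}_{\mathrm{cont}}(A\,\widehat{\otimes}\,B,C)$ is bijective for every $C\in\mathsf{SL}_{\Bbbk}$, and that $\widehat{\otimes}$ is functorial and associative on the category of c.l.a.N.\ superalgebras, so that the diagrams encoding the Hopf axioms indeed make sense and are equivalent to the group axioms. Once these bookkeeping facts are established (using that any $C\in\mathsf{SL}_{\Bbbk}$ has $\mathfrak{M}_C^{N+1}=0$ for some $N$, which reduces every continuous morphism to a morphism of finite-dimensional truncations), both directions of the biconditional and the anti-equivalence follow formally.
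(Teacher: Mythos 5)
Your proposal is correct and follows essentially the same route as the paper: the paper's entire proof consists of observing that $\mathbb{X}\times\mathbb{X}$ is represented by the completed tensor product $\widehat{A\otimes A}$ (the completion in the $(A\otimes\mathfrak{M}_A+\mathfrak{M}_A\otimes A)$-adic topology, which agrees with your $\varprojlim_{m,n}$ description) and then declaring the Yoneda translation of the group axioms into the Hopf axioms obvious. Your write-up simply spells out the bookkeeping the paper leaves implicit.
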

	\begin{proof}
		Observe that if $\mathbb{X}$ is represented by $A$, then $\mathbb{X}\times\mathbb{X}$ is represented by $\widehat{A\otimes  A}$, the completion of $A\otimes  A$ in the $(A\otimes  \mathfrak{M}_A + \mathfrak{M}_A\otimes  A)$-adic topology. The second statement is obvious.
	\end{proof}
	From now on $\mathbb{X}$ is a strictly pro-representable group $\Bbbk$-functor. Let $\Delta=\Delta_A$ and $S=S_A$ denote the coproduct and the antipode of its coordinate superalgebra $A$. We use the \emph{Sweedler}'s notations, i.e. $\Delta(a)=\sum a_{(0)}\otimes a_{(1)}$ is a convergent series
	in the $(A\otimes\mathfrak{M}_A+\mathfrak{M}_A\otimes A)$-adic topology.
	
	Following \cite[Section 9]{maszub2}, one can construct a Hopf superalgebra $\mathrm{hyp}(\mathbb{X})=\cup_{n\geq 1}\mathrm{hyp}_n(\mathbb{X})$, where 
	$\mathrm{hyp}_n(\mathbb{X})=(A/\mathfrak{M}^{n+1}_A)^*$. In other words, $\mathrm{hyp}(\mathbb{X})$ coincides with the vector space $A^{\underline{*}}$ consisting of all
	continuous linear maps $A\to\Bbbk$, where $\Bbbk$ is regarded as a discrete vector space. The product and coproduct of $\mathrm{hyp}(\mathbb{X})$ are defined as
	\[\phi\psi(a)=\sum (-1)^{|\psi||a_{(0)}|}\phi(a_{(0)})\psi(a_{(1)}),  \]
	and $\Delta^*(\phi)=\sum \phi_{(0)}\otimes\phi_{(1)}$ if
	\[\phi(ab)=\sum (-1)^{|\phi_{(1)}||a|}\phi_{(0)}(a)\phi_{(1)}(b), a, b\in A, \phi, \psi\in\mathrm{hyp}(\mathbb{X}). \] 
	Besides, $S^*(\phi)(a)=\phi(S(a))$. The following lemma is a copy of \cite[Lemma 9.2]{maszub2}.
	\begin{lm}\label{Lemma 9.2}
		For any nonnegative integers $k$ and $t$ we have :
		\begin{enumerate}
			\item $\mathrm{hyp}_k(\mathbb{X})\mathrm{hyp}_t(\mathbb{X})\subseteq \mathrm{hyp}_{k+t}(\mathbb{X})$;
			\item $\Delta^*(\mathrm{hyp}_k(\mathbb{X}))\subseteq \sum_{0\leq s\leq k} \mathrm{hyp}_s(\mathbb{X})\otimes \mathrm{hyp}_{k-s}(\mathbb{X})$;
			\item $S^*(\mathrm{hyp}_k(\mathbb{X}))\subseteq \mathrm{hyp}_k(\mathbb{X})$.	
		\end{enumerate}		
	\end{lm}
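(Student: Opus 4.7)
The plan is to identify $\mathrm{hyp}_k(\mathbb{X})$ with the annihilator $(\mathfrak{M}_A^{k+1})^{\perp}$ inside $A^{\underline{*}}$, and then deduce each of the three inclusions from how the multiplication, comultiplication and antipode of $A$ interact with the $\mathfrak{M}_A$-adic filtration.

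For (1), I would first establish the standard fact $\Delta(\mathfrak{M}_A^n)\subseteq \sum_{i+j=n}\mathfrak{M}_A^i\,\widehat{\otimes}\,\mathfrak{M}_A^j$ inside $A\,\widehat{\otimes}\,A$. The case $n=1$ is a consequence of the counit identity $(\epsilon\otimes\mathrm{id})\Delta=\mathrm{id}$, and the general case follows by induction using that $\Delta$ is a superalgebra morphism (each $\widehat{\otimes}$-factor is $\mathfrak{M}_A$-adically closed, so products of elements from lower stages of the filtration land in the expected piece). Then for $\phi\in\mathrm{hyp}_k$, $\psi\in\mathrm{hyp}_t$ and $a\in\mathfrak{M}_A^{k+t+1}$, the convergent expansion $\phi\psi(a)=\sum (-1)^{|\psi||a_{(0)}|}\phi(a_{(0)})\psi(a_{(1)})$ vanishes term by term, since whenever $a_{(0)}\in\mathfrak{M}_A^i$ and $a_{(1)}\in\mathfrak{M}_A^j$ with $i+j\geq k+t+1$, either $i\geq k+1$ or $j\geq t+1$.

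For (2), it is immediate that $\Delta^*(\phi)$ is annihilated by $\sum_{i+j=k+1}\mathfrak{M}_A^i\otimes\mathfrak{M}_A^j$: multiplication sends this into $\mathfrak{M}_A^{k+1}$ on which $\phi$ vanishes. The content is then to identify this annihilator with $\sum_{s=0}^k \mathrm{hyp}_s\otimes\mathrm{hyp}_{k-s}$. One direction is easy because $\mathrm{hyp}_s\otimes\mathrm{hyp}_{k-s}$ kills $\mathfrak{M}_A^i\otimes\mathfrak{M}_A^{k+1-i}$ (either $i\geq s+1$ or $k+1-i\geq k-s+1$). For the reverse inclusion I would fix a vector-space splitting of the $\mathfrak{M}_A$-adic filtration on the finite-dimensional quotient $A_k=A/\mathfrak{M}_A^{k+1}$, producing a grading $A_k=\bigoplus_{a=0}^k V_a$ with $\mathfrak{M}_A^i/\mathfrak{M}_A^{k+1}=\bigoplus_{a\geq i}V_a$. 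Then $A_k\otimes A_k=\bigoplus_{a,b}V_a\otimes V_b$ splits as the annihilated piece $\bigoplus_{a+b\geq k+1}V_a\otimes V_b$ plus a complement $\bigoplus_{a+b\leq k}V_a\otimes V_b$, and the dual of the complement is manifestly $\sum_{s=0}^k\mathrm{hyp}_s\otimes\mathrm{hyp}_{k-s}$.

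For (3), the antipode axiom together with $\epsilon\circ S=\epsilon$ gives $S(\mathfrak{M}_A)\subseteq\mathfrak{M}_A$; since $S$ is a superalgebra anti-morphism, iterating yields $S(\mathfrak{M}_A^{k+1})\subseteq\mathfrak{M}_A^{k+1}$, so $S^*(\phi)=\phi\circ S$ vanishes on $\mathfrak{M}_A^{k+1}$ whenever $\phi$ does. The only step requiring any thought is the identification of the annihilator in (2): the needed splitting of the filtration is non-canonical, but it exists trivially since $A_k$ is a finite-dimensional $\Bbbk$-vector space, so no genuine obstacle arises.
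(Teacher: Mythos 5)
Your proof is correct. Note that the paper itself gives no argument for this lemma: it is stated as a verbatim copy of \cite[Lemma 9.2]{maszub2}, so there is no in-paper proof to compare against; your filtration argument is the standard one and is surely what the cited source does. The three ingredients you use are exactly the right ones: the identification $\mathrm{hyp}_k(\mathbb{X})=(\mathfrak{M}_A^{k+1})^{\perp}$, the compatibility $\Delta(\mathfrak{M}_A^n)\subseteq\sum_{i+j=n}\mathfrak{M}_A^i\,\widehat{\otimes}\,\mathfrak{M}_A^j$ (counit axiom for $n=1$, multiplicativity of $\Delta$ for the induction), and $S(\mathfrak{M}_A)\subseteq\mathfrak{M}_A$ from $\epsilon\circ S=\epsilon$. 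The only point worth making explicit in (2) is that $\phi\circ m$ already annihilates $\mathfrak{M}_A^{k+1}\otimes A+A\otimes\mathfrak{M}_A^{k+1}$, hence factors through the finite-dimensional space $A/\mathfrak{M}_A^{k+1}\otimes A/\mathfrak{M}_A^{k+1}$; this both guarantees that $\Delta^*(\phi)$ exists as an element of $\mathrm{hyp}_k(\mathbb{X})\otimes\mathrm{hyp}_k(\mathbb{X})$ and legitimizes the finite-dimensional annihilator computation via your (non-canonical but harmless) splitting of the filtration.
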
 
	Let $R$ be a superalgebra. The $R$-superalgebras $A\otimes R$ and $\mathrm{hyp}(\mathbb{X})\otimes R$ are Hopf $R$-superalgebras, i.e. their products, coproducts, counits and antipodes are just $R$-linear extensions of products, coproducts, counits and antipodes of $A$ and $\mathrm{hyp}(\mathbb{X})$ respectively. Moreover, we have the pairing 
	\[(\mathrm{hyp}(\mathbb{X})\otimes R)\times (A\otimes R)\to R, \ <\phi\otimes r, a\otimes r'>=(-1)^{|r||a|}\phi(a)rr',\]
	that is the Hopf superalgebra pairing in the sense of \cite{hmt}. The proof of the following lemma can be copied from \cite[Lemma 9.3]{maszub2}
	\begin{lm}\label{another def of X}
		The group functor $\mathbb{X}$ is isomorphic to $R\mapsto \mathsf{Gpl}(\mathrm{hyp}(\mathbb{X})\otimes R)$, where
		\[\mathsf{Gpl}(\mathrm{hyp}(\mathbb{X})\otimes R)=\{ u\in (\mathrm{hyp}(\mathbb{X})\otimes R)^{\times} \mid  (\Delta^*\otimes\mathrm{id}_R)(u)=u\otimes_R u\}.\]
	\end{lm}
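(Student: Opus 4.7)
The plan is to unwind both sides of the claimed isomorphism level by level, by working with the finite dimensional quotients $A_n = A/\mathfrak{M}_A^{n+1}$ of the coordinate superalgebra $A$ of $\mathbb{X}$, and then passing to the direct limit. By strict pro-representability, an element of $\mathbb{X}(R)$ is a continuous superalgebra morphism $\psi \colon A \to R$, where $R$ is discrete; since $\ker\psi$ must be open it contains $\mathfrak{M}_A^{n+1}$ for some $n$, so $\psi$ factors through $A_n$. This gives the identification
\[\mathbb{X}(R) \simeq \varinjlim_{n\geq 1} \mathrm{Hom}_{\mathsf{SAlg}_{\Bbbk}}(A_n, R).\]

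Next, for each fixed $n$, finite dimensionality of $A_n$ furnishes a natural isomorphism $\mathrm{Hom}_{\Bbbk}(A_n, R) \simeq A_n^{*} \otimes R = \mathrm{hyp}_n(\mathbb{X}) \otimes R$ of $R$-supermodules: a $\Bbbk$-linear map $\psi \colon A_n \to R$ corresponds to the unique element $u = u_\psi \in \mathrm{hyp}_n(\mathbb{X}) \otimes R$ satisfying $\psi(a) = \langle u, a\otimes 1\rangle$ for the pairing introduced immediately before the lemma. The key computation, using the explicit formula for the product on $\mathrm{hyp}(\mathbb{X})\otimes R$ and the super-signs of the pairing, is that $\psi$ preserves multiplication and unit if and only if $u_\psi$ is invertible, satisfies $\epsilon^*(u_\psi) = 1$, and is group-like in the sense $(\Delta^* \otimes \mathrm{id}_R)(u_\psi) = u_\psi \otimes_R u_\psi$; invertibility is then automatic once $u_\psi$ is group-like with counit $1$, via the antipode $S^*$ of the Hopf superalgebra $\mathrm{hyp}(\mathbb{X})$ (Lemma \ref{Lemma 9.2}(3)). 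Thus $\mathrm{Hom}_{\mathsf{SAlg}_{\Bbbk}}(A_n, R)$ is in bijection with $\mathsf{Gpl}(\mathrm{hyp}_n(\mathbb{X}) \otimes R)$.

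Passing to the direct limit over $n$ and invoking Lemma \ref{Lemma 9.2}(1, 2) to see that $\mathrm{hyp}(\mathbb{X})\otimes R = \bigcup_n \mathrm{hyp}_n(\mathbb{X})\otimes R$ as Hopf $R$-superalgebras (with $\Delta^*$ compatible with the filtration), one obtains a bijection $\mathbb{X}(R) \simeq \mathsf{Gpl}(\mathrm{hyp}(\mathbb{X})\otimes R)$, manifestly functorial in $R$. To conclude it is a group isomorphism, one observes that the product on $\mathbb{X}(R)$ is dual to $\Delta_A$, while the product of two group-likes in $\mathrm{hyp}(\mathbb{X})\otimes R$ is precisely the convolution product defined via $\Delta_A$; the two group laws therefore coincide tautologically. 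The only delicate step in this entire outline is the super-sign bookkeeping needed to verify that multiplicativity of $\psi$ translates exactly into the group-like identity for $u_\psi$; the Hopf pairing conventions were set up so that the signs cancel, and the analogous argument in \cite[Lemma 9.3]{maszub2} supplies the precise calculation.
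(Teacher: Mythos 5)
Your proof is correct and takes essentially the same route as the paper, which simply defers to the cited result \cite[Lemma 9.3]{maszub2}: the level-by-level duality argument (factoring a continuous morphism through the finite-dimensional quotients $A/\mathfrak{M}_A^{n+1}$, matching superalgebra morphisms with group-like elements via the Hopf superalgebra pairing, and getting invertibility for free from the antipode $S^*$) is exactly the content of that reference. No gaps; the one point worth keeping explicit is the sign bookkeeping in the pairing, which you correctly flag as the only delicate step.
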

	The Hopf superalgebra $\mathrm{hyp}(\mathbb{X})$ has another filtration with $\mathrm{hyp}^{(k)}(\mathbb{X})=\{\phi\in\mathrm{hyp}(\mathbb{X})\mid \phi(J_A^{k+1})=0 \}, k\geq 0$.
	Since $A_1$ is a finitely generated $A_0$-module, this filtration is finite. The proof of the following lemma is similar to the proof of Lemma \ref{Lemma 9.2}.
	\begin{lm}\label{Lemma 11.4}
		For any nonnegative interegrs $k, l$ there is :
		\begin{enumerate}
			\item  $\mathrm{hyp}^{(k)}(\mathbb{X})\mathrm{hyp}^{(l)}(\mathbb{X})\subseteq \mathrm{hyp}^{(k+l)}(\mathbb{X})$;
			\item $\Delta^*(\mathrm{hyp}^{(k)}(\mathbb{X}))\subseteq \sum_{0\leq s\leq k} \mathrm{hyp}^{(s)}(\mathbb{X})\otimes \mathrm{hyp}^{(k-s)}(\mathbb{X})$;
			\item $S^*(\mathrm{hyp}^{(k)}(\mathbb{X}))\subseteq \mathrm{hyp}^{(k)}(\mathbb{X})$.
		\end{enumerate}
	\end{lm}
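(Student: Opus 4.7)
My plan is to mimic the proof of Lemma \ref{Lemma 9.2} almost verbatim, with the maximal superideal $\mathfrak{M}_A$ replaced by the superideal $J_A=AA_1$. The whole argument hinges on a single structural observation: the Hopf superalgebra operations of $A$ respect $J_A$ just as well as they respect $\mathfrak{M}_A$. Concretely, because $\Delta=\Delta_A$ is a parity-preserving superalgebra morphism we have $\Delta(A_1)\subseteq A_0\hat{\otimes}A_1+A_1\hat{\otimes}A_0$, and multiplicativity then yields $\Delta(J_A)\subseteq J_A\hat{\otimes}A+A\hat{\otimes}J_A$. A straightforward induction on $n$ upgrades this to $\Delta(J_A^n)\subseteq \sum_{s+t=n}J_A^s\hat{\otimes}J_A^t$. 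Similarly, $S=S_A$ preserves parity, so $S(J_A)\subseteq J_A$ and thus $S(J_A^{k+1})\subseteq J_A^{k+1}$.

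Granted these compatibilities, parts $(1)$ and $(3)$ are immediate. For $(3)$: if $\phi\in\mathrm{hyp}^{(k)}(\mathbb{X})$ and $a\in J_A^{k+1}$, then $S^*(\phi)(a)=\phi(S(a))=0$. For $(1)$: given $\phi\in\mathrm{hyp}^{(k)}(\mathbb{X})$, $\psi\in\mathrm{hyp}^{(l)}(\mathbb{X})$ and $a\in J_A^{k+l+1}$, write $\Delta(a)=\sum a_{(0)}\otimes a_{(1)}$ as a convergent sum of tensors lying in $J_A^s\hat{\otimes}J_A^t$ with $s+t=k+l+1$; in each such tensor either $s\geq k+1$ (so $\phi(a_{(0)})=0$) or $t\geq l+1$ (so $\psi(a_{(1)})=0$), hence $(\phi\psi)(a)=\sum(-1)^{|\psi||a_{(0)}|}\phi(a_{(0)})\psi(a_{(1)})=0$.

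The substantive step is $(2)$. The plan is to identify $\sum_{s+t=k}\mathrm{hyp}^{(s)}(\mathbb{X})\otimes\mathrm{hyp}^{(t)}(\mathbb{X})$, under the natural pairing $(\mathrm{hyp}(\mathbb{X})\otimes\mathrm{hyp}(\mathbb{X}))\times(A\hat{\otimes}A)\to\Bbbk$, with the annihilator of the subspace $\sum_{s+t=k+1}J_A^s\hat{\otimes}J_A^t$ of $A\hat{\otimes}A$. Once that identification is in hand, the conclusion is forced: $\Delta^*(\phi)$ pairs with $a\otimes b$ to $\phi(ab)$, and whenever $a\in J_A^s$, $b\in J_A^{k+1-s}$ one has $ab\in J_A^{k+1}$, so that $\Delta^*(\phi)$ annihilates the required subspace and hence lies in $\sum_{s+t=k}\mathrm{hyp}^{(s)}(\mathbb{X})\otimes\mathrm{hyp}^{(t)}(\mathbb{X})$.

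The main obstacle is therefore this duality identification. The easy direction (pure tensors in the sum annihilate every $J_A^u\hat{\otimes}J_A^{k+1-u}$) is a routine case split on $u\geq s+1$ versus $u\leq s$. For the reverse inclusion I will pass to the associated bigraded object: in bi-degree $(a,b)$, membership in $\sum_{s+t=k+1}J_A^s\hat{\otimes}J_A^t$ is equivalent to $a+b\geq k+1$, while a pure tensor of functionals of filtration indices $(\alpha,\beta)$ annihilates all such $(a,b)$ precisely when $\alpha+\beta\leq k$, giving the desired match. The Noetherianity of $A$, together with the fact that in zero or odd characteristic every odd element is nilpotent (so that $A_1$ is a finitely generated $A_0$-module consisting of nilpotents and $J_A^N=0$ for some $N$), guarantees that the filtration $\{\mathrm{hyp}^{(k)}(\mathbb{X})\}_{k\geq 0}$ is exhausting and makes the associated-graded argument rigorous without any convergence subtleties.
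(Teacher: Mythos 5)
Your proposal is correct and follows exactly the route the paper intends: the paper's ``proof'' of this lemma is the single remark that it is similar to the proof of Lemma \ref{Lemma 9.2}, i.e.\ one replaces the $\mathfrak{M}_A$-adic filtration by the $J_A$-adic one and uses that $\Delta$ and $S$ preserve parity, hence respect $J_A$, together with the finiteness of the filtration coming from $A_1$ being a finitely generated $A_0$-module of square-zero elements. Your write-up simply makes explicit (notably the duality/annihilator identification needed for part (2)) what the paper leaves to the reader.
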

	Lemma \ref{Lemma 11.4} implies that $\mathsf{gr}(\mathrm{hyp}(\mathbb{X}))=\oplus_{k\geq 0}\mathrm{hyp}^{(k)}(\mathbb{X})/\mathrm{hyp}^{(k+1)}(\mathbb{X})$ is a \emph{graded} Hopf superalgebra
	(cf. \cite{maszub2}).
	
	Next, $\mathsf{gr}(A)$ is a c.l.N. Hopf superalgebra with the maximal superideal 
	\[\mathfrak{M}_{\mathsf{gr}(A)}=\overline{\mathfrak{m}_A}\oplus (\oplus_{k\geq 1} J_A^k/J_A^{k+1}),\] 
	where $\overline{\mathfrak{m}_A}=\mathfrak{m}_A/A_1^2$. Recall that $\mathfrak{M}_{\mathsf{gr}(A)}$-adic topology of $\mathsf{gr}(A)$ coincides with the $\overline{\mathfrak{m}_A}$-adic one (cf. \cite[Section 1.6]{maszub4}). 
	
	The comultiplication of $\mathsf{gr}(A)$ is defined as follows. For any $x\in J_A^k$ there is $\Delta(x)=\sum_{0\leq s\leq k} x_{(0)}^{(s)}\otimes x_{(1)}^{(k-s)}$, where $x_{(0)}^{(s)}, x_{(1)}^{(s)}\in J_A^s, 0\leq s\leq k$.   
	Then 
	\[\mathsf{gr}(\Delta)(x+J_A^{k+1})=\sum_{0\leq s\leq k} (x_{(0)}^{(s)}+J_A^{s+1})\otimes (x_{(1)}^{(k-s)}+J_A^{k+1-s}),\]
	so that $\mathsf{gr}(A)$ is a graded Hopf superalgebra as well.  The antipode of $\mathsf{gr}(A)$ is defined as $\mathsf{gr}(S)(x+J_A^{k+1})=S(x)+J_A^{k+1}$. 
	
	The superalgebra $\mathsf{gr}(A)$ represents a strictly pro-representable group $\Bbbk$-functor, that is denoted by $\mathsf{gr}(\mathbb{X})$. Moreover, $\mathbb{X}\to\mathsf{gr}(\mathbb{X})$ is an endofunctor of the category of strictly pro-representable group $\Bbbk$-functors, such that ${\bf f} : \mathbb{X}\to\mathbb{Y}$ is an isomorphism if and only if $\mathsf{gr}({\bf f})$ is.   
	
	Recall that the c.l.a.N. Hopf algebra $\overline{A}$ represents the group subfunctor $\mathbb{X}_{ev}$. Moreover, we have (local) Hopf superalgebra morphisms
	$q : \overline{A}\to \mathsf{gr}(A)$ and $i : \mathsf{gr}(A)\to \overline{A}$ such that $iq=\mathrm{id}_{\overline{A}}$. Dualizing, the embedding $\mathbb{X}_{ev}\to\mathsf{gr}(\mathbb{X})$ is the right inverse of the morphism ${\bf q} : \mathsf{gr}(\mathbb{X})\to\mathbb{X}_{ev}$, induced by $q$. Therefore, $\mathsf{gr}(\mathbb{X})$ is isomorphic to the semi-direct product $\mathbb{X}_{ev}\ltimes\ker{\bf q}$.  
	\begin{lm}\label{Prop 11.1}
		The group $\Bbbk$-functor $\ker{\bf q}$ is isomorphic to the purely odd unipotent group superscheme, represented by a (finite dimensional) local Hopf superalgebra $\Lambda(V)$, where
		$V=(\mathfrak{M}_A/\mathfrak{M}_A^2)_1\simeq A_1/\mathfrak{m}_A A_1$ and all elements of $V$ are primitive.  		
	\end{lm}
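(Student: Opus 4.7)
The plan is to identify $\ker{\bf q}$ as a strictly pro-representable group functor, read off its coordinate Hopf superalgebra explicitly, and match it with $\Lambda(V)$. By Lemma~\ref{morphisms of strictly pro-representable functors} and Lemma~\ref{Hopf}, a closed normal subfunctor of $\mathsf{gr}(\mathbb{X})$ corresponds to a closed Hopf ideal of $\mathsf{gr}(A)$. Unwinding the definition, an element $f\in\ker{\bf q}(R)$ is precisely a continuous augmented superalgebra morphism $f:\mathsf{gr}(A)\to R$ with $f\circ q=\epsilon_{\overline{A}}$, hence $f$ factors uniquely through the quotient $B:=\mathsf{gr}(A)/\mathsf{gr}(A)\cdot q(\overline{\mathfrak{m}}_A)\simeq\mathsf{gr}(A)\otimes_{\overline{A}}\Bbbk$. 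Since $\overline{\mathfrak{m}}_A$ is the augmentation ideal of the Hopf subalgebra $q(\overline{A})\subset\mathsf{gr}(A)$, the ideal $\mathsf{gr}(A)\cdot q(\overline{\mathfrak{m}}_A)$ is a Hopf ideal, and $B$ inherits the structure of a c.l.a.N.\ Hopf superalgebra which strictly pro-represents $\ker{\bf q}$.

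Next I would analyse $B$ as a graded Hopf superalgebra. From $J_A=AA_1=A_1+A_1^2$ together with the inclusions $A_1^{n+2}\subseteq A_1^n$ (a consequence of $A_1^2\subseteq A_0$ and supercommutativity), one computes $J_A^n/J_A^{n+1}\simeq A_1^n/A_1^{n+2}$ as $\overline{A}$-modules. Since $A_1$ is finitely generated over the local ring $A_0$, Nakayama's lemma ensures that any lift $v_1,\ldots,v_m\in A_1$ of a $\Bbbk$-basis of $V=A_1/\mathfrak{m}_A A_1$ already generates $A_1$ over $A_0$; supercommutativity ($v_iv_j+v_jv_i=0$ and $v_i^2=0$) then forces each graded piece $B_n$ to be spanned over $\Bbbk$ by the ordered monomials $\bar{v}_{i_1}\cdots\bar{v}_{i_n}$ with $i_1<\cdots<i_n$. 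In particular $B$ is generated as a $\Bbbk$-algebra by $B_1\simeq V$, and the universal property of the exterior algebra yields a surjective graded superalgebra morphism $\rho:\Lambda(V)\to B$. To promote $\rho$ to a Hopf superalgebra morphism, I would check that each $\bar{v}_i$ is primitive in $B$: the identity $\Delta_A(v)-v\otimes 1-1\otimes v\in\mathfrak{M}_A\otimes\mathfrak{M}_A$, valid for any $v\in\mathfrak{M}_A$ in a c.l.a.N.\ Hopf superalgebra, yields after applying $\mathsf{gr}(\Delta)$ and restricting to the component of total degree $1$ that the correction term lies in $\overline{\mathfrak{m}}_A\otimes V+V\otimes\overline{\mathfrak{m}}_A$, and hence vanishes in $B\otimes B$.

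The remaining step, injectivity of $\rho$, is the main obstacle. My plan is to dualize: the graded $\Bbbk$-linear dual $B^*$ is a connected cocommutative graded Hopf superalgebra containing $V^*$ in degree $1$ as primitive elements, and the degree-$1$ generation of $B$ together with supercommutativity implies $P(B^*)=V^*$, a purely odd abelian Lie superalgebra (the odd-odd bracket of $\mathrm{Lie}(\mathsf{gr}(\mathbb{X}))$ lands in $\mathrm{Lie}(\mathbb{X}_{ev})$, which is killed inside $\ker{\bf q}$). The super Milnor--Moore / Cartier theorem, valid in characteristic $0$ or odd (the setting of the paper), then identifies $B^*$ with the universal enveloping superalgebra $U(V^*)$; for abelian purely odd $V^*$ this is $\Lambda(V^*)$, of dimension $2^m$. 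Dualizing yields $\dim_{\Bbbk}B=2^m=\dim_{\Bbbk}\Lambda(V)$, which combined with the spanning result above forces $\rho$ to be an isomorphism, with every element of $V$ primitive. Should one prefer to bypass Milnor--Moore entirely, an alternative route is to invoke the structure theorem in \cite[Section 12]{maszub2} producing a graded $\overline{A}$-linear Hopf splitting $\mathsf{gr}(A)\simeq \overline{A}\otimes \Lambda(V)$, from which $B\simeq\Lambda(V)$ follows by base change to $\Bbbk$.
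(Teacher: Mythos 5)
Your identification of the coordinate Hopf superalgebra $B=\mathsf{gr}(A)/\mathsf{gr}(A)\overline{\mathfrak{m}_A}$, the verification that $B$ is generated by its purely odd degree-one component $B_1\simeq A_1/\mathfrak{m}_AA_1=V$, and the primitivity of $V$ reproduce exactly the first three steps of the paper's (terse) proof; the paper gets primitivity even more cheaply, since in a connected graded Hopf superalgebra every degree-one element is primitive for degree reasons. The divergence is only in the final step, where the paper defers to the concluding arguments of \cite[Proposition 11.1]{maszub2} and you propose a Milnor--Moore dimension count on the dual.

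That final step is where the gap sits. The Milnor--Moore/Cartier theorem in the form you invoke (connected graded cocommutative $\Rightarrow$ universal envelope of the primitives) is a characteristic-zero statement; it fails in odd characteristic (divided power algebras are the standard counterexample), and the paper explicitly allows odd characteristic. Knowing $P(B^*)=V^*$ does not by itself bound $\dim B^*$ from below in characteristic $p$, and the subalgebra of $B^*$ generated by $V^*$ being all of $\Lambda(V^*)$ is essentially the statement you are trying to prove. Your fallback --- quoting the splitting $\mathsf{gr}(A)\simeq\overline{A}\otimes\Lambda(V)$ --- is also delicate: in the present paper that splitting is \emph{derived from} this lemma (see the paragraph immediately following it), so you must check that the version you import from \cite{maszub2} is proved independently and transfers from the algebraic to the pro-representable setting. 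Fortunately your own setup already contains an elementary, characteristic-free repair: $I=\ker\rho$ is a graded Hopf ideal of $\Lambda(V)$ with $I_0=I_1=0$; if $I\neq 0$, a nonzero homogeneous element of minimal degree $n\geq 2$ is forced to be primitive, because its mixed Sweedler components of bidegree $(s,n-s)$ with $0<s<n$ lie in $I_s\otimes\Lambda(V)_{n-s}+\Lambda(V)_s\otimes I_{n-s}=0$ by minimality; but $P(\Lambda(V))=V$ is concentrated in degree one (this holds in any characteristic $\neq 2$, as $\Lambda(V)$ is a tensor product of copies of $\Bbbk[v]/(v^2)$), a contradiction. With that substitution the argument closes in all characteristics the paper allows.
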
	
	\begin{proof}
		We have $\ker{\bf q}(C)=\{\phi\in \mathsf{gr}(\mathbb{X})(C)\mid \phi|_{\overline{A}}=\epsilon_A \}$ for arbitrary superalgebra $C$.	Thus $\ker{\bf q}$ is represented by
		the finite dimensional local Hopf superalgebra $B=\mathsf{gr}(A)/\mathsf{gr}(A)\overline{\mathfrak{m}_A}$. The Hopf superalgebra $B$ is generated by its (purely odd) degree one component $(A_1/A_1^3)/(\mathfrak{m}_A A_1/A_1^3)\simeq A_1/\mathfrak{m}_A A_1$. Since $B$ is also a graded Hopf superalgebra, the elements of $V$ are primitive. The final arguments from \cite[Proposition 11.1]{maszub2} conclude the proof. 
	\end{proof}
	\begin{rem}\label{odd unipotent}
		In the notations from \cite{maszub3}, $\ker{\bf q}$ is isomorphic to the direct product of $\dim V$ copies of the purely odd unipotent group $\mathbb{G}^-_a$ (of superdimension $0|1$).
		Recall that $\Bbbk[\mathbb{G}^-_a]\simeq\Bbbk[t]$, where $t$ is an odd primitive element.
	\end{rem}
	Combining Lemma \ref{Prop 11.1} with Lemma \ref{Hopf}, one sees that $\mathsf{gr}(A)\simeq\overline{A}\otimes \Lambda(V)$. Equivalently, there is a minimal generating set $v_1+J_A^2, \ldots , v_l+J_A^2$ of $\overline{A}$-module $J_A/J_A^2$, such that any element $f\in A$ has a "canonical" form
	\[ f=\sum_{0\leq s\leq l, 1\leq i_1<\ldots < i_s\leq l} f_{i_1, \ldots , i_s} v_{i_1}\ldots v_{i_s},\]
	where the even coefficients  $f_{i_1, \ldots , i_s}$ are uniquely defined by $f$, modulo $A_1^2$. With this remark in mind, the proof of the following lemma can be copied from
	\cite[Proposition 11.5]{maszub2}. We outline the principal steps only. 
	\begin{lm}\label{Prop 11.5} 
		There is a natural isomorphism $\mathsf{gr}(\mathrm{hyp}(\mathbb{X}))\simeq\mathrm{hyp}(\mathsf{gr}(\mathbb{X}))$ of graded Hopf superalgebras.	
	\end{lm}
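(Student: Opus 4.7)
The plan is to exhibit both sides of the asserted isomorphism as the graded direct sum of continuous $\Bbbk$-linear duals of the homogeneous pieces $J_A^k/J_A^{k+1}$ of $\mathsf{gr}(A)$, and then to verify that the graded Hopf superalgebra structures coincide under this common identification. Throughout, the underlying bookkeeping is driven by the fact that $\mathsf{gr}(\Delta_A)=\Delta_{\mathsf{gr}(A)}$, $\mathsf{gr}(\mu_A)=\mu_{\mathsf{gr}(A)}$, and $\mathsf{gr}(S_A)=S_{\mathsf{gr}(A)}$, which are consequences of the fact that the structure maps of $A$ respect the $J_A$-adic filtration.

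First I would treat $\mathrm{hyp}(\mathsf{gr}(\mathbb{X}))$. Since $\mathsf{gr}(A)=\bigoplus_{k\geq 0}J_A^k/J_A^{k+1}$ is a graded Hopf superalgebra and, as already recalled in the text, its $\mathfrak{M}_{\mathsf{gr}(A)}$-adic topology coincides with the $\overline{\mathfrak{m}_A}$-adic one, every continuous functional on $\mathsf{gr}(A)$ factors through some finite-dimensional quotient $\mathsf{gr}(A)/\mathfrak{M}_{\mathsf{gr}(A)}^{n+1}$. Decomposing such functionals according to the grading of $\mathsf{gr}(A)$ yields a canonical grading
\[
\mathrm{hyp}(\mathsf{gr}(\mathbb{X}))=\bigoplus_{k\geq 0}\mathrm{hyp}(\mathsf{gr}(\mathbb{X}))_k,\qquad \mathrm{hyp}(\mathsf{gr}(\mathbb{X}))_k\simeq (J_A^k/J_A^{k+1})^{\underline{\ast}},
\]
where the continuous dual is taken with respect to the $\overline{\mathfrak{m}_A}$-adic topology.

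Next I would treat $\mathsf{gr}(\mathrm{hyp}(\mathbb{X}))$. For each $k\geq 0$, any $\phi\in\mathrm{hyp}^{(k)}(\mathbb{X})$ vanishes on $J_A^{k+1}$, so its restriction to $J_A^k$ induces a continuous functional on $J_A^k/J_A^{k+1}$; this restriction vanishes precisely when $\phi\in\mathrm{hyp}^{(k-1)}(\mathbb{X})$. This yields an injective degree-preserving $\Bbbk$-linear map from the $k$-th graded piece of $\mathsf{gr}(\mathrm{hyp}(\mathbb{X}))$ into $(J_A^k/J_A^{k+1})^{\underline{\ast}}$. Surjectivity is obtained by extending any continuous functional on $J_A^k/J_A^{k+1}$ to a continuous functional on $A$ that vanishes on $J_A^{k+1}$, using Cohen's theorem to realize a field of representatives in $A$ together with the decomposition $\mathsf{gr}(A)\simeq\overline{A}\otimes\Lambda(V)$ provided by Lemma \ref{Prop 11.1} to produce a $\Bbbk$-linear splitting of $A\to\mathsf{gr}(A)$ in each $J_A$-degree. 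Composing the two identifications delivers a natural, degree-preserving isomorphism of graded vector superspaces $\mathsf{gr}(\mathrm{hyp}(\mathbb{X}))\simeq\mathrm{hyp}(\mathsf{gr}(\mathbb{X}))$, functorial in $\mathbb{X}$.

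Finally I would verify compatibility with the Hopf structures. The product on $\mathrm{hyp}(\mathbb{X})$ is obtained by dualizing $\Delta_A$, and Lemma \ref{Lemma 11.4}(1) ensures it is filtered; for $\phi\in\mathrm{hyp}^{(k)}(\mathbb{X})$, $\psi\in\mathrm{hyp}^{(l)}(\mathbb{X})$ and $x\in J_A^{k+l}$, the only contribution to $(\phi\psi)(x)$ modulo $\mathrm{hyp}^{(k+l-1)}(\mathbb{X})$ comes from the $(s,t)=(k,l)$ term of $\Delta_A(x)$, and this is precisely what the product on $\mathrm{hyp}(\mathsf{gr}(\mathbb{X}))$ computes using $\mathsf{gr}(\Delta_A)=\Delta_{\mathsf{gr}(A)}$. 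The same matching of leading-order terms, applied to $\mu_A$ and to $S_A$ together with Lemma \ref{Lemma 11.4}(2,3), yields compatibility of the coproduct and antipode. The main technical point—and the only place where some care is required—is precisely this identification of leading-order terms: one must unfold the definition of $\mathsf{gr}(\Delta)$ given just before Lemma \ref{Prop 11.1} and check that the signs introduced by the Sweedler convention of $\mathrm{hyp}(\mathbb{X})$ transfer correctly to the signs appearing in $\mathrm{hyp}(\mathsf{gr}(\mathbb{X}))$, but this is a routine if slightly tedious sign verification that closely parallels the purely even argument in \cite[Proposition 11.5]{maszub2}.
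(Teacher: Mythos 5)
Your proposal is correct and follows essentially the same route as the paper: both identify $\mathrm{hyp}(\mathsf{gr}(\mathbb{X}))$ with $\oplus_k (J_A^k/J_A^{k+1})^{\underline{*}}$, use the Hopf pairing to embed each graded piece of $\mathsf{gr}(\mathrm{hyp}(\mathbb{X}))$ into the corresponding dual, prove surjectivity via the canonical decomposition $\mathsf{gr}(A)\simeq\overline{A}\otimes\Lambda(V)$ (the paper phrases this as a choice of "canonical" basis $\overline{f}\,v_{i_1}\cdots v_{i_s}$, which amounts to the same filtered splitting you construct), and deduce Hopf compatibility from the definition of the graded Hopf structure on $\mathsf{gr}(A)$. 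Your appeal to Cohen's theorem is harmless but unnecessary, since $A$ is already augmented and so contains $\Bbbk$ as a field of representatives.
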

	\begin{proof}
		As it has been already observed, one can work in the $\overline{\mathfrak{m}_A}$-adic topology. 
		In particular, $\mathrm{hyp}(\mathsf{gr}(\mathbb{X}))$ is isomorphic to $\oplus_{0\leq k\leq l} (J_A^k/J_A^{k+1})^{\underline{*}}$ as a superspace.
		
		The above Hopf superalgebra pairing  $< , >$ induces an embedding of $\Bbbk$-vector (super)spaces
		\[\mathrm{hyp}^{(k)}(\mathbb{X})/\mathrm{hyp}^{(k+1)}(\mathbb{X})\to (J_A^k/J_A^{k+1})^{\underline{*}}.\]
		For each $t\geq 0$ we choose a basis $B_t$ of $\overline{\mathfrak{m}_A}^t/\overline{\mathfrak{m}_A}^{t+1}$. Set $B=\cup_{t\geq 0} B_t$.
		Using a "canonical" basis of $\mathsf{gr}(A)$ consisting of the elements $\overline{f} v_{i_1}\ldots v_{is}$, where $\overline{f}\in B$ and $0\leq s\leq l$,
		one can show that the above emebedding is an isomorphism. In particular, we obtain a pairing
		\[\mathsf{gr}(\mathrm{hyp}(\mathbb{X}))\times\mathsf{gr}(A)\to\Bbbk,\]
		that is a Hopf superalalgebra pairing due the definition of Hopf superalgebra structure of $\mathsf{gr}(A)$.
	\end{proof}
	The definition of $\Delta^*$ implies that a (homogeneous) element $\phi\in\mathrm{hyp}(\mathbb{X})$ is primitive if and only if 
	$\phi$ is a $\Bbbk$-superderivation of $A$, i.e. 
	\[\phi(ab)=\phi(a)\epsilon_A(b) +(-1)^{|\phi||a|}\epsilon_A(a)\phi(b), a, b\in A.\]
	Thus the superspace of primitive elements of $\mathrm{hyp}(\mathbb{X})$ coincides with $\mathrm{hyp}_1(\mathbb{X})^+=\{\phi\in \mathrm{hyp}_1(\mathbb{X}) \mid \phi(1)=0\}$.
	In partcular, $\mathrm{hyp}_1(\mathbb{X})^+$ is a Lie superalgebra with respect to the standard Lie superbracket $[\phi, \psi]=\phi\psi -(-1)^{|\phi||\psi|}\psi\phi, \phi, \psi\in \mathrm{hyp}_1(\mathbb{X})^+$. We call it a \emph{Lie superalgebra} of $\mathbb{X}$ and denote by $\mathrm{Lie}(\mathbb{X})$.
	Choose a basis $\gamma_1, \ldots, \gamma_l$ of $\mathrm{Lie}(\mathbb{X})_1\simeq (A_1/\mathfrak{m}_A A_1)^*$. 
	Note also that $\mathrm{hyp}^{(0)}(\mathbb{X})\simeq \mathrm{hyp}(\mathbb{X}_{ev})$.
	\begin{lm}\label{Prop 11.6}
		Every element $\phi\in\mathrm{hyp}_k(\mathbb{X})$ has the unique form
		\[\phi=\sum_{0\leq s\leq k, 1\leq i_1<\ldots <i_s}\phi_{i_1, \ldots , i_s}\gamma_{i_1}\ldots \gamma_{i_s}, \phi_{i_1, \ldots , i_s}\in\mathrm{hyp}_{k-s}(\mathbb{X}_{ev}). \]	
	\end{lm}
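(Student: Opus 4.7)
My plan is to combine the graded identification
\[
\mathsf{gr}(\mathrm{hyp}(\mathbb{X}))\simeq\mathrm{hyp}(\mathsf{gr}(\mathbb{X}))\simeq\mathrm{hyp}(\mathbb{X}_{ev})\otimes\Lambda(V^*)
\]
provided by Lemmas~\ref{Prop 11.5} and~\ref{Prop 11.1} (with the semidirect decomposition $\mathsf{gr}(\mathbb{X})\simeq\mathbb{X}_{ev}\ltimes\ker\mathbf{q}$, where $\ker\mathbf{q}$ is represented by $\Lambda(V)$), together with the filtration of $\mathrm{hyp}_k(\mathbb{X})$ by $\mathrm{hyp}_k^{(j)}(\mathbb{X}):=\mathrm{hyp}_k(\mathbb{X})\cap\mathrm{hyp}^{(j)}(\mathbb{X})$, to extract the canonical form iteratively. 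Since $J_A\subseteq\mathfrak{M}_A$ we have $\mathrm{hyp}_k(\mathbb{X})\subseteq\mathrm{hyp}^{(k)}(\mathbb{X})$, and in particular each $\gamma_i\in\mathrm{hyp}_1(\mathbb{X})\subseteq\mathrm{hyp}^{(1)}(\mathbb{X})$; under the above identification, $\gamma_i$ maps to $1\otimes\gamma_i$ in the degree $1$ component, so $\gamma_{i_1}\cdots\gamma_{i_s}$ (for $i_1<\cdots<i_s$) maps to $1\otimes\gamma_{i_1}\wedge\cdots\wedge\gamma_{i_s}$ in the degree $s$ component.

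The key claim, to be verified using Lemmas~\ref{Lemma 9.2} and~\ref{Lemma 11.4}, is that the multiplication map
\[
\bigoplus_{|I|=j}\mathrm{hyp}_{k-j}(\mathbb{X}_{ev})\cdot\gamma_I\longrightarrow\mathrm{hyp}_k^{(j)}(\mathbb{X})/\mathrm{hyp}_k^{(j-1)}(\mathbb{X})
\]
is an isomorphism for every $0\leq j\leq l$. Granting this, existence follows by downward induction on $j$: given $\phi\in\mathrm{hyp}_k^{(j)}(\mathbb{X})$, pick the uniquely determined sum $\sum_{|I|=j}\phi_I\gamma_I$ with $\phi_I\in\mathrm{hyp}_{k-j}(\mathbb{X}_{ev})$ representing the class of $\phi$ in the $j$-th graded piece, subtract, and recurse on $\phi-\sum_{|I|=j}\phi_I\gamma_I\in\mathrm{hyp}_k^{(j-1)}(\mathbb{X})$, with base case $j=0$ giving $\phi\in\mathrm{hyp}_k(\mathbb{X})\cap\mathrm{hyp}^{(0)}(\mathbb{X})=\mathrm{hyp}_k(\mathbb{X}_{ev})$. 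Uniqueness follows by the same mechanism: if $\sum_I\phi_I\gamma_I=0$ and $s$ is the largest $|I|$ with some $\phi_I\neq 0$, then $\sum_{|I|=s}\phi_I\gamma_I$ produces a nonzero class in $\mathrm{hyp}_k^{(s)}/\mathrm{hyp}_k^{(s-1)}$ by injectivity of the map above, contradicting the vanishing.

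The main obstacle lies in verifying this graded claim, particularly that classes in $\mathrm{hyp}_k^{(j)}/\mathrm{hyp}_k^{(j-1)}$ admit representatives of the form $\sum_{|I|=j}\phi_I\gamma_I$ with $\phi_I$ killing $\overline{\mathfrak{m}_A}^{k-j+1}$, i.e., lying in $\mathrm{hyp}_{k-j}(\mathbb{X}_{ev})$ and not merely in $\mathrm{hyp}_k(\mathbb{X}_{ev})$. The bookkeeping rests on the canonical form of elements of $A\pmod{\mathfrak{M}_A^{k+1}}$ recalled just before Lemma~\ref{Prop 11.5}: any $f\in A$ decomposes as $\sum_S f_S v_{i_1}\cdots v_{i_{|S|}}$ with $f_S\in\overline{A}$ uniquely determined modulo $A_1^2$, and modulo $\mathfrak{M}_A^{k+1}$ the coefficient $f_S$ is determined in $\overline{A}/\overline{\mathfrak{m}_A}^{k-|S|+1}$. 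Pairing $\phi\in\mathrm{hyp}_k^{(j)}(\mathbb{X})$ against this basis, and using the convolution formula for the product in $\mathrm{hyp}(\mathbb{X})$ together with the primitivity of the $\gamma_i$'s, a direct computation yields explicit formulas for the $\phi_I$, $|I|=j$, that simultaneously verify the asserted degree constraint $\phi_I\in\mathrm{hyp}_{k-j}(\mathbb{X}_{ev})$ and the congruence $\phi\equiv\sum_{|I|=j}\phi_I\gamma_I\pmod{\mathrm{hyp}_k^{(j-1)}(\mathbb{X})}$, closing the induction.
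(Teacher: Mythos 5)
Your plan is correct and follows essentially the same route as the paper, whose proof simply invokes Lemma \ref{Prop 11.5} and defers to \cite[Proposition 11.6]{maszub2}: you use the identification $\mathsf{gr}(\mathrm{hyp}(\mathbb{X}))\simeq\mathrm{hyp}(\mathsf{gr}(\mathbb{X}))\simeq\mathrm{hyp}(\mathbb{X}_{ev})\otimes\Lambda(V^*)$ together with the filtration $\mathrm{hyp}^{(j)}$ and the canonical form of elements of $A$, which is exactly the machinery the paper sets up for this purpose. Your identification of the genuine point to check --- that the coefficients land in $\mathrm{hyp}_{k-s}(\mathbb{X}_{ev})$ and not merely $\mathrm{hyp}_k(\mathbb{X}_{ev})$, via the observation that the coefficient of $v_{i_1}\cdots v_{i_s}$ in an element of $\mathfrak{M}_A^{k+1}$ is determined in $\overline{A}/\overline{\mathfrak{m}_A}^{\,k-s+1}$ --- is the correct one.
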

	\begin{proof}
		Use Lemma \ref{Prop 11.5} and copy the proof of \cite[Proposition 11.6]{maszub2}.	
	\end{proof}
	Following \cite{maszub2}, we define the group-like elements 
	\[f(b, x)=\epsilon_A\otimes 1+x\otimes b, \ e(a, v)=\epsilon_A\otimes 1+v\otimes a,\] 
	where $x\in \mathrm{Lie}(\mathbb{X})_0, v\in \mathrm{Lie}(\mathbb{X})_1, b\in A_0, b^2=0, a\in A_1$. Mimic the proof of \cite[Lemma 4.2]{masshib} we obtain the following relations :
	\begin{enumerate}
		\item $[e(a, v), e(a', v')]=f(-aa', [v, v'])$;
		\item $[f(b, x), e(a, v)]=e(ba, [x, v])$;
		\item $[f(b, x), f(b', x')]=f(bb, [x, x'])$;
		\item $e(a, v)e(a', v)=f(-aa', \frac{1}{2}[v, v])e(a+a', v)$ .
	\end{enumerate}
	Further, let $\Sigma_{\mathbb{X}}$ denote the group subfunctor of $\mathbb{X}$ such that $\Sigma_{\mathbb{X}}(R)$ is generated by all elements $f(b, x)$ and $e(a, v), a, b\in R$, for arbitrary superalgebra $R$.
	\begin{lm}\label{Lemma 12.2}
		We have the following :
		\begin{enumerate}
			\item if $\Sigma_{\mathbb{X}, ev}$ denotes $\Sigma_{\mathbb{X}}\cap\mathbb{X}_{ev}$, then for any superalgebra $R$ the group $\Sigma_{\mathbb{X}, ev}(R)=\Sigma_{\mathbb{X}}(R_0)$ is generated by the elements $f(b, x), b\in R$;
			\item each element of $\Sigma_{\mathbb{X}}(R)$ is uniquely expressed in the form
			\[f e(a_1, \gamma_1)\ldots e(a_l, \gamma_l), f\in\Sigma_{\mathbb{X}, ev}(R), a_1, \ldots, a_l\in R_1.\]
		\end{enumerate}	
	\end{lm}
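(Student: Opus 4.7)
The plan is to treat parts (1) and (2) separately. Part (1) is essentially immediate from parity constraints on the generators: the element $e(a, v) = \epsilon_A \otimes 1 + v \otimes a$ with $v \in \mathrm{Lie}(\mathbb{X})_1$ odd lies in the even part of $\mathrm{hyp}(\mathbb{X}) \otimes R$ only when $a \in R_1$, so specializing to $R_0$ (viewed as a purely even superalgebra, hence $(R_0)_1 = 0$) collapses every $e$-generator to the identity while leaving the $f(b, x)$ with $b \in R_0$. Since by definition $\Sigma_{\mathbb{X}, ev}(R) = \Sigma_{\mathbb{X}}(R) \cap \mathbb{X}_{ev}(R) = \Sigma_{\mathbb{X}}(R_0)$, part (1) follows.

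For the existence portion of (2), I would iterate the four commutator relations displayed just above the lemma. Relation (2), $[f(b,x), e(a,v)] = e(ba, [x,v])$, lets any $f$-generator be commuted past an $e$-generator at the cost of an extra $e$-generator; together with the linearity of $v \mapsto e(a, v)$ modulo the quadratic corrections handled by relation (4), this lets me iteratively rewrite any word as a product of $f$-generators followed by a product $e(a_1, \gamma_1) \cdots e(a_l, \gamma_l)$ in the fixed basis $\gamma_1, \ldots, \gamma_l$ of $\mathrm{Lie}(\mathbb{X})_1$. Reordering via relation (1) and collecting repeated indices via relation (4) only introduces additional $f$-generators, which are moved back to the left by a second pass using relation (2). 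An induction on word length terminates the procedure and delivers the normal form.

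Uniqueness is the main obstacle. My plan is to use the embedding $\mathbb{X}(R) \hookrightarrow (\mathrm{hyp}(\mathbb{X}) \otimes R)^{\times}$ provided by Lemma \ref{another def of X}. Under this embedding, $f \in \Sigma_{\mathbb{X}, ev}(R) = \mathbb{X}(R_0)$ corresponds to an element of $\mathrm{hyp}(\mathbb{X}_{ev}) \otimes R_0 \subseteq \mathrm{hyp}(\mathbb{X}) \otimes R$ (since morphisms factoring through $A \to \overline{A}$ kill $J_A$), and each $e(a_i, \gamma_i) = 1 + \gamma_i \otimes a_i$. Passing to the associated graded of $\mathrm{hyp}(\mathbb{X})$ with respect to the filtration by $\mathrm{hyp}^{(k)}(\mathbb{X})$ and invoking Lemma \ref{Prop 11.5} (which identifies the associated graded with $\mathrm{hyp}(\mathbb{X}_{ev}) \otimes \Lambda(V^{*})$, where $V^{*} = \mathrm{Lie}(\mathbb{X})_1$), the image of the product $f \cdot e(a_1, \gamma_1) \cdots e(a_l, \gamma_l)$ takes the form
\[
\sum_{S = \{i_1 < \cdots < i_s\} \subseteq \{1, \ldots, l\}} \epsilon_S \cdot f \cdot (a_{i_1} \cdots a_{i_s}) \otimes (\bar{\gamma}_{i_1} \cdots \bar{\gamma}_{i_s})
\]
with signs $\epsilon_S \in \{\pm 1\}$ forced by super-commutativity. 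The monomials $\bar{\gamma}_{i_1} \cdots \bar{\gamma}_{i_s}$ are $\Bbbk$-linearly independent in $\Lambda(V^{*})$, so the decomposition granted by Lemma \ref{Prop 11.6} (tensored with $R$) is unique, and one reads off the coefficients stratum by stratum: the $S = \emptyset$ stratum recovers $f$ itself; then, knowing $f$ and exploiting that $f$ is a unit in $\mathrm{hyp}(\mathbb{X}) \otimes R$ as a group-like element, the $S = \{i\}$ strata recover each $a_i$. This determines the tuple $(f, a_1, \ldots, a_l)$ uniquely from the normal form, giving injectivity and completing the proof.
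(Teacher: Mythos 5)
Your proposal follows the route the paper itself takes: the paper's proof of this lemma is literally the instruction to combine Lemma \ref{Prop 11.6} with the proof of \cite[Proposition 4.3]{masshib}, and your argument --- existence of the normal form by iterated use of the four commutator relations, uniqueness by expanding $f\,e(a_1,\gamma_1)\cdots e(a_l,\gamma_l)$ inside $\mathrm{hyp}(\mathbb{X})\otimes R$ and reading off the components of the decomposition $\mathrm{hyp}(\mathbb{X})=\bigoplus_S\mathrm{hyp}(\mathbb{X}_{ev})\gamma_{i_1}\cdots\gamma_{i_s}$ furnished by Lemma \ref{Prop 11.6}, using that the group-like $f$ is a unit --- is exactly that argument.

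Two points deserve attention. In part (1), the equality $\Sigma_{\mathbb{X}}(R)\cap\mathbb{X}_{ev}(R)=\Sigma_{\mathbb{X}}(R_0)$ is not ``by definition'': the inclusion $\supseteq$ is clear, but $\subseteq$ asserts that a word in the generators over $R$ which happens to lie in $\mathbb{X}_{ev}(R)$ is already a word over $R_0$, and the only available proof is via the normal form of part (2) (write the element as $f\,e(a_1,\gamma_1)\cdots e(a_l,\gamma_l)$ and compare $\{i\}$-components to force each $a_i\in R_0\cap R_1=0$); so (1) should be established after, or together with, (2), not dismissed beforehand. Second, you should justify termination of the rewriting procedure in (2): swapping two $e$-generators via relation (1) creates an $f$-generator whose parameter is a product of two of the odd parameters, and pushing it back to the left via relation (2) creates $e$-generators whose odd parameters are ever longer products of the finitely many odd parameters occurring in the original word; the process stops because any product of more than $N$ elements from this finite set repeats a factor and hence vanishes ($a^2=0$ for odd $a$, since $2$ is invertible). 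Finally, the detour through $\mathsf{gr}(\mathrm{hyp}(\mathbb{X}))$ and Lemma \ref{Prop 11.5} in the uniqueness step is unnecessary and slightly delicate: Lemma \ref{Prop 11.6}, tensored with $R$, already gives the direct-sum decomposition in which $f$ and the $a_i$ are read off exactly, whereas passing to the associated graded a priori determines them only modulo lower filtration terms.
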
 
	\begin{proof}
		Use Lemma \ref{Prop 11.6} and copy the proof of \cite[Proposition 4.3]{masshib}. 	
	\end{proof}
	\begin{lm}\label{action of X}
		The group functor $\mathbb{X}$ acts linearly on $\mathrm{Lie}(\mathbb{X})$ by conjugations. 	
	\end{lm}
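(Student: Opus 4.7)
The plan is to exploit the identification $\mathbb{X}(R) \simeq \mathsf{Gpl}(\mathrm{hyp}(\mathbb{X}) \otimes R)$ from Lemma \ref{another def of X} and to realize the adjoint action as ordinary conjugation inside the associative $R$-superalgebra $\mathrm{hyp}(\mathbb{X}) \otimes R$. Given $g \in \mathbb{X}(R)$, let $u_g \in (\mathrm{hyp}(\mathbb{X}) \otimes R)^\times$ denote the corresponding group-like unit; for $\phi \in \mathrm{Lie}(\mathbb{X})$, I would set
\[
\mathrm{Ad}(g)(\phi) := u_g \cdot (\phi \otimes 1) \cdot u_g^{-1}.
\]

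The central step is to verify that $\mathrm{Ad}(g)(\phi)$ in fact lies in $\mathrm{Lie}(\mathbb{X}) \otimes R$. Viewing $\mathrm{hyp}(\mathbb{X}) \otimes R$ as a Hopf $R$-superalgebra, and using that $\Delta^* \otimes \mathrm{id}_R$ is an $R$-superalgebra morphism, $u_g$ is group-like (so $(\Delta^* \otimes \mathrm{id}_R)(u_g) = u_g \otimes_R u_g$) and $\phi \otimes 1$ is $R$-primitive (since $\phi$ is $\Bbbk$-primitive), a direct manipulation yields
\[
(\Delta^* \otimes \mathrm{id}_R)(u_g(\phi \otimes 1)u_g^{-1}) = u_g(\phi \otimes 1)u_g^{-1} \otimes_R 1 + 1 \otimes_R u_g(\phi \otimes 1)u_g^{-1},
\]
so $\mathrm{Ad}(g)(\phi)$ is $R$-primitive. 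Since the primitivity condition is $\Bbbk$-linear and $R$ is $\Bbbk$-flat, the $R$-submodule of primitives in $\mathrm{hyp}(\mathbb{X}) \otimes R$ coincides with $\mathrm{Lie}(\mathbb{X}) \otimes R$, which gives the desired containment.

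Next, $R$-linearity of $\phi \mapsto \mathrm{Ad}(g)(\phi)$ is immediate from $R$-bilinearity of the product in $\mathrm{hyp}(\mathbb{X}) \otimes R$; the identities $\mathrm{Ad}(gh) = \mathrm{Ad}(g)\,\mathrm{Ad}(h)$ and $\mathrm{Ad}(e) = \mathrm{id}$ follow from the fact that $g \mapsto u_g$ is a group isomorphism by Lemma \ref{another def of X}; and functoriality in $R$ is automatic from functoriality of the construction $R \mapsto \mathrm{hyp}(\mathbb{X}) \otimes R$.

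The one point requiring care is the identification of $R$-primitives in $\mathrm{hyp}(\mathbb{X}) \otimes R$ with $\mathrm{Lie}(\mathbb{X}) \otimes R$; I would handle it by choosing a homogeneous $\Bbbk$-basis of $\mathrm{hyp}(\mathbb{X})$ extending a basis of $\mathrm{Lie}(\mathbb{X})$ (which is finite dimensional because $\mathfrak{M}_A/\mathfrak{M}_A^2$ is, $A$ being Noetherian), writing an arbitrary element of $\mathrm{hyp}(\mathbb{X}) \otimes R$ in this basis with $R$-coefficients, and observing that the $R$-primitivity condition forces each non-primitive basis coefficient to vanish. This is the only nontrivial issue; everything else reduces to standard Hopf-algebraic formalism already set up in Lemmas \ref{Lemma 9.2}--\ref{another def of X}.
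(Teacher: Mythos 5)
Your proposal is correct and follows essentially the same route as the paper: identify $\mathbb{X}(R)$ with the group-likes of $\mathrm{hyp}(\mathbb{X})\otimes R$ via Lemma \ref{another def of X}, conjugate $\phi\otimes 1$ by the group-like unit, and check primitivity by the same one-line Hopf-algebraic computation, with linearity immediate from the product structure. The extra care you take in identifying the $R$-primitives of $\mathrm{hyp}(\mathbb{X})\otimes R$ with $\mathrm{Lie}(\mathbb{X})\otimes R$ (which follows from flatness of $R$ over $\Bbbk$) is a point the paper leaves implicit, and is a welcome addition rather than a deviation.
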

	\begin{proof}
		For any $\phi\in \mathrm{Lie}(\mathbb{X})$ and arbitrary $x\in \mathsf{Gpl}(\mathrm{hyp}(\mathbb{X})\otimes R)$, there is
		\[\Delta^*(x(\phi\otimes 1)x^{-1})=\]
		\[(x\otimes_R x)((\phi\otimes 1)\otimes_R (\epsilon_A\otimes 1)+((\epsilon_A\otimes 1)\otimes_R (\phi\otimes 1))(x^{-1}\otimes_R x^{-1})=\]
		\[(x(\phi\otimes 1)x^{-1})\otimes _R(\epsilon_A\otimes 1)+(\epsilon_A\otimes 1)\otimes_R (x(\phi\otimes 1)x^{-1}).\]
		The lineariry is obvious by the definition of the product in $\mathrm{hyp}(\mathbb{X})\otimes R$. 
	\end{proof}	
	Define the subfunctor ${\bf E}_{\mathbb{X}}$ of $\mathbb{X}$ such that 
	\[{\bf E}_{\mathbb{X}}(R)=\{e(a_1, \gamma_1)\ldots e(a_l, \gamma_l)\mid a_1, \ldots , a_l\in R_1 \}, R\in\mathsf{SAlg}_{\Bbbk}.\]
	Note that any morphism ${\bf g} : \mathbb{X}\to\mathbb{Y}$ of strictly pro-representable group $\Bbbk$-functors induces a morphism of filtred Hopf superalgebras
	$\mathrm{hyp}({\bf g}) : \mathrm{hyp}(\mathbb{X})\to \mathrm{hyp}(\mathbb{X})$, hence a Lie superalgebra morphism $\mathrm{d}{\bf g} : \mathrm{Lie}(\mathbb{X})\to \mathrm{Lie}(\mathbb{Y})$. The latter is dual to the induced morphism $\mathfrak{M}_B/\mathfrak{M}_B^2\to \mathfrak{M}_A/\mathfrak{M}_A^2$ of $\Bbbk$-superspaces.
	
	Furthermore, ${\bf g}$ induces the morphism of group functors $\Sigma_{\mathbb{X}}\to \Sigma_{\mathbb{Y}}$, defined on the generators by  
	\[f(b, x)\mapsto f(b, \mathrm{d}{\bf g}(x)), \ e(a, v)\mapsto e(a, \mathrm{d}{\bf g}(v)). \]
	If we choose the basis of $\mathrm{Lie}(\mathbb{X})$ so that some of its vectors form a basis of the kernel of $\mathrm{d}{\bf g}$, then ${\bf g}$ induces morphism  ${\bf E}_{\mathbb{X}}\to {\bf E}_{\mathbb{Y}}$ of $\Bbbk$-functors.
	
	Lemma \ref{action of X} implies that $\mathbb{Y}=\mathbb{X}_{ev}{\bf E}_{\mathbb{X}}$ is a group subfunctor of $\mathbb{X}$. Moreover, Lemma \ref{Lemma 12.2} infers that each element of $\mathbb{Y}$ can be uniquely expressed as 
	$x=x' e, x'\in\mathbb{X}_{ev}, e\in {\bf E}_{\mathbb{X}}$, i.e.  $\mathbb{Y}\simeq \mathbb{X}_{ev}\times {\bf E}_{\mathbb{X}}$.
	\begin{lm}\label{equality}
		We have $\mathbb{X}=\mathbb{Y}$.	
	\end{lm}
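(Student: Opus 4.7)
My plan is to prove surjectivity of the canonical injection $\mathbb{Y}=\mathbb{X}_{ev}{\bf E}_{\mathbb{X}}\hookrightarrow\mathbb{X}$; injectivity is already furnished by Lemma \ref{Lemma 12.2}(2). Fix $R\in\mathsf{SAlg}_{\Bbbk}$ and $\phi\in\mathbb{X}(R)=\mathrm{Hom}^{\mathrm{cont}}(A,R)$. Guided by the shape of the decomposition in Lemma \ref{Lemma 12.2}(2), I set $a_i:=\phi(v_i)\in R_1$ for $i=1,\ldots,l$ and define
\[\phi':=\phi\cdot e(a_l,\gamma_l)^{-1}\cdots e(a_1,\gamma_1)^{-1}\in\mathbb{X}(R).\]
Since $\mathbb{X}_{ev}(R)=\mathrm{Hom}^{\mathrm{cont}}(\overline{A},R)$ consists exactly of those morphisms $A\to R$ vanishing on $J_A$, everything is reduced to showing $\phi'(J_A)=0$.

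Step 1 is the computation $\phi'(v_j)=0$ for each $j$. By Lemma \ref{Prop 11.1}, $v_j+J_A^2$ is primitive in $\mathsf{gr}(A)$, so $\Delta_A(v_j)=v_j\otimes 1+1\otimes v_j+\rho_j$ with $\rho_j\in J_A^2\otimes A+J_A\otimes J_A+A\otimes J_A^2$. Each $\gamma_i$ annihilates $J_A^2$, so the morphism $e(a_i,\gamma_i):A\to R$, $f\mapsto\epsilon_A(f)+\gamma_i(f)a_i$, kills $J_A^2$ as well. An iterative convolution computation, stripping off the factors $e(a_i,\gamma_i)^{-1}=e(-a_i,\gamma_i)$ one at a time and invoking the commutator relations (1)--(4) from the paragraph after Lemma \ref{action of X}, together with the observation that every element of $\Sigma_{\mathbb{X},ev}(R)$ kills every $v_j$ (it factors through $\overline{A}$), causes the $\rho_j$-contributions and the quadratic $f$-corrections to collapse, leaving $\phi'(v_j)=\phi(v_j)-a_j=0$.

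Step 2 upgrades this to the whole of $J_A$. Let $I\subseteq A$ be the (closed) superideal generated by $v_1,\ldots,v_l$; Step 1 yields $\phi'(I)=0$. By Lemma \ref{Prop 11.1} the $\overline{A}$-module $J_A/J_A^2$ is generated by $\{v_j+J_A^2\}$, whence $J_A=I+J_A^2$. Iterating,
\[J_A\subseteq I+J_A^2\subseteq I+(I+J_A^2)^2\subseteq I+J_A^4\subseteq\cdots\subseteq I+J_A^{2^k}\]
for every $k\geq 0$. Since $\phi'$ is continuous into the discrete $R$, it factors through $A/\mathfrak{M}_A^{N+1}$ for some $N$; choosing $2^k>N$ makes $J_A^{2^k}\subseteq\mathfrak{M}_A^{2^k}$ vanish in this quotient, so $\phi'(J_A)\subseteq\phi'(I)=0$, as required.

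The main obstacle is Step 1: the convolution $(\psi_1\psi_2)(v_j)$ involves all of $\Delta(v_j)$, not just the linear part, and the product $e(a_1,\gamma_1)\cdots e(a_l,\gamma_l)$ differs from an ``additive'' substitute by quadratic corrections $f(-aa',\tfrac12[v,v])$ from relation (4). That both error sources cancel rests on a single formal principle: every such correction lies in $\Sigma_{\mathbb{X},ev}(R)$, hence factors through $\overline{A}$, hence evaluates trivially on the odd generators $v_j$. This isolates the bare linear cancellation $\phi(v_j)-a_j=0$ and delivers Step 1.
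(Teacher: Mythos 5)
Your reduction to showing $\phi'(J_A)=0$, and your Step 2 (a morphism killing the generators $v_j$ kills the superideal $I$ they generate, and $J_A\subseteq I+J_A^{2^k}$ together with continuity finishes), are sound. The genuine gap is in Step 1: the parameters $a_i:=\phi(v_i)$ are not the correct ones, and the claimed identity $\phi'(v_j)=\phi(v_j)-a_j$ fails. Write $\Delta_A(v_j)=v_j\otimes 1+1\otimes v_j+\rho_j$ with $\rho_j\in(\mathfrak{M}_A\widehat{\otimes}\mathfrak{M}_A)_1=\mathfrak{m}_A\widehat{\otimes}A_1+A_1\widehat{\otimes}\mathfrak{m}_A$. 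The convolution $(\phi\cdot\eta)(v_j)$, with $\eta=e(a_l,\gamma_l)^{-1}\cdots e(a_1,\gamma_1)^{-1}$, contains the cross-terms $\sum\pm\phi(\rho_{j,(0)})\,\eta(\rho_{j,(1)})$. Your cancellation principle (``every correction factors through $\overline{A}$, hence kills the $v_j$'') legitimately disposes of the $f(-aa',\tfrac12[v,v])$ corrections from relation (4), and also of the cross-terms arising when the $e(a_i,\gamma_i)$ are convolved with one another, since each $\gamma_i$ annihilates $\mathfrak{m}_A$ and $\mathfrak{M}_A^2$; but it does not apply to the cross-terms above, because $\phi$ itself does not annihilate $\mathfrak{m}_A$. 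Concretely, take $\mathbb{X}=\mathbb{N}_e(\mathrm{GL}(1|1))$, $v_1=t_{12}$, $v_2=t_{21}$, $R=\Bbbk[\epsilon]/(\epsilon^2)\otimes\Lambda(\theta)$ and $\phi=\begin{pmatrix}1+\epsilon&\theta\\0&1\end{pmatrix}$: the true decomposition has $a_1=(1+\epsilon)^{-1}\theta$, whereas your choice $a_1=\theta$ gives $\phi'=\begin{pmatrix}1+\epsilon&-\epsilon\theta\\0&1\end{pmatrix}$, so $\phi'(v_1)=-\epsilon\theta\neq 0$ and $\phi'\notin\mathbb{X}_{ev}(R)$. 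The underlying difficulty is that in $\phi=\psi\,e(a_1,\gamma_1)\cdots e(a_l,\gamma_l)$ one has $a_j=\pm(\psi^{-1}\phi)(v_j)$, which depends on the unknown even factor $\psi$; the $a_j$ cannot be read off from $\phi$ by a single linear formula, and a repair along your lines would require a successive-approximation argument modulo the powers $\mathfrak{M}_A^n$.

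For comparison, the paper's proof avoids this computation altogether. It notes that $\mathbb{Y}=\mathbb{X}_{ev}\times{\bf E}_{\mathbb{X}}$ is strictly pro-representable with coordinate superalgebra $B\simeq\overline{A}\otimes\Lambda(V)$, and that the dual morphism $A\to B$ of the inclusion $\mathbb{Y}\to\mathbb{X}$ induces an isomorphism $\overline{A}\to\overline{B}$ (because $\mathbb{X}_{ev}\to\mathbb{Y}\to\mathbb{X}$ is the canonical embedding) and an isomorphism $\mathrm{Lie}(\mathbb{Y})\to\mathrm{Lie}(\mathbb{X})$ (because ${\bf E}_{\mathbb{Y}}={\bf E}_{\mathbb{X}}$, via Lemma \ref{Prop 11.6}); hence $\mathsf{gr}(A)\to\mathsf{gr}(B)$ is an isomorphism, hence so is $A\to B$. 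That ``reduced part plus tangent space'' rigidity is exactly the substitute for the explicit cancellation your argument needs but does not have.
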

	\begin{proof}
		The functor $\mathbb{Y}$ is strictly pro-representable as a direct product of two strictly pro-representable functors. Its coordinate superalgebra $B$ is isomorphic to
		$\overline{A}\otimes\Lambda(V)$. The composition of two embeddings $\mathbb{X}_{ev}\to \mathbb{Y}$ and $\mathbb{Y}\to\mathbb{X}$ coincides with the canonical embedding $\mathbb{X}_{ev}\to\mathbb{X}$, hence the dual morphism 
		$A\to B$ induces the isomorphism $\overline{A}\to \overline{B}$. 
		
		Similarly, since ${\bf E}_{\mathbb{Y}}={\bf E}_{\mathbb{X}}$, the above remarks and Lemma \ref{Prop 11.6} imply that $\mathrm{Lie}(\mathbb{Y})\to \mathrm{Lie}(\mathbb{X})$ is an isomorphism of Lie superalgebras. Thus the induced morphism $\mathsf{gr}(A)\to\mathsf{gr}(B)$ is an isomorphism of graded superalgebras, hence $A\to B$ is. Lemma is proved.
	\end{proof}
	
	\section{Matsumura-Oort criterion and representability of $\mathfrak{Aut}(\mathbb{X})$}	
	
	\begin{lm}\label{Lemma 1.2}
	Let ${\bf f} : \mathbb{G}\to\mathbb{H}$ be a morphism of $\Bbbk$-functors. Assume that both $\mathbb{G}$ and $\mathbb{H}$ satisfy the conditions super-$P_i, i=2, 3, 4, 5$. Then 	
	${\bf f}$ is an isomorphism if and only if for any finitely dimensional local $\Bbbk$-algebra $A$ the map ${\bf f}(A)$ is bijective.
\end{lm}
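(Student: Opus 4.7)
The proof will follow the strategy of \cite[Lemma 1.2]{mats-oort}, adapted to the super-setting by successively shrinking the class of test superalgebras using the conditions super-$P_i$. Let $\mathcal{C}\subseteq\mathsf{SAlg}_{\Bbbk}$ denote the class of superalgebras $R$ for which ${\bf f}(R)$ is bijective; by hypothesis $\mathcal{C}$ contains every finite-dimensional local superalgebra, and we aim to show $\mathcal{C}=\mathsf{SAlg}_{\Bbbk}$ (the "only if" direction being trivial). First, super-$P_3$ asserts that both $\mathbb{G}$ and $\mathbb{H}$ commute with filtered colimits; writing any $R$ as the filtered colimit of its finitely generated supersubalgebras then reduces us to the case where $R$ is finitely generated, and in particular Noetherian.

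Next, for a finitely generated Noetherian $R$, I would use super-$P_4$ and super-$P_5$ to descend to the complete-local case. The morphism $R\to S:=\prod_{\mathfrak{M}}\widehat{R_{\mathfrak{M}}}$, where $\mathfrak{M}$ ranges over the maximal superideals of $R$, is faithfully flat, and the fpqc-sheaf axiom super-$P_5$ yields equalizer diagrams $\mathbb{G}(R)\hookrightarrow\mathbb{G}(S)\rightrightarrows\mathbb{G}(S\otimes_R S)$ and analogously for $\mathbb{H}$; together with the product identification $\mathbb{G}(S)\simeq\prod_{\mathfrak{M}}\mathbb{G}(\widehat{R_{\mathfrak{M}}})$ coming from super-$P_4$, this reduces bijectivity of ${\bf f}(R)$ to bijectivity on every $\widehat{R_{\mathfrak{M}}}$ and on the fiber products $\widehat{R_{\mathfrak{M}}}\otimes_R \widehat{R_{\mathfrak{M}'}}$. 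The fiber products are again Noetherian and can be handled by iterating the descent argument, or equivalently by the Matsumura--Oort observation that the "isomorphism locus" of ${\bf f}$ is Zariski-open in $\mathrm{SSpec}(R_0)$ (via super-$P_4$) and therefore equals the whole space once it contains every closed point.

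For the final reduction to finite-dimensional local superalgebras, note that since $R$ is finitely generated over $\Bbbk$, each residue field $R/\mathfrak{M}$ is a finite extension of $\Bbbk$ (finitely generated $\Bbbk$-superalgebras being Jacobson, by even reducibility). Thus each $\widehat{R_{\mathfrak{M}}}$ is a complete local Noetherian superalgebra with residue field finite over $\Bbbk$, and super-$P_2$ supplies isomorphisms $\mathbb{G}(\widehat{R_{\mathfrak{M}}})\simeq\varprojlim_n \mathbb{G}(\widehat{R_{\mathfrak{M}}}/\mathfrak{M}^{n+1})$ and similarly for $\mathbb{H}$. Each truncation $\widehat{R_{\mathfrak{M}}}/\mathfrak{M}^{n+1}$ is a finite-dimensional local $\Bbbk$-superalgebra on which ${\bf f}$ is bijective by assumption; since an inverse limit of bijections of sets with surjective transitions is a bijection, ${\bf f}$ is bijective on every $\widehat{R_{\mathfrak{M}}}$, and climbing back up the descent chain concludes the proof.

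The main technical obstacle I foresee is precisely this descent step, namely verifying that the fiber products $\widehat{R_{\mathfrak{M}}}\otimes_R \widehat{R_{\mathfrak{M}'}}$ (and further terms in the \v{C}ech nerve) already lie in $\mathcal{C}$. In Matsumura--Oort this is bypassed by a Noetherian induction on the support of the "non-isomorphism locus", combined with the fact that super-$P_4$ makes this locus closed; the same idea should transpose verbatim to the super setting, since super-$P_2$ forces bijectivity of ${\bf f}$ at every closed point of $\mathrm{SSpec}(R_0)$ through its formal neighborhood, and super-$P_4$ then propagates bijectivity to a Zariski-open neighborhood.
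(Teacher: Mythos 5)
Your reduction chain (super-$P_3$ to pass to finitely generated superalgebras, faithfully flat maps into completions, then super-$P_2$ to land in finite-dimensional local superalgebras) is exactly the Matsumura--Oort skeleton that the paper follows; the paper's own proof is literally ``copy \cite[Lemma 1.2]{mats-oort} verbatim'' plus two super-specific remarks, namely that the completion of a local Noetherian superalgebra is a faithfully flat supermodule over it, and that $\prod_{\mathfrak{M}}A_{\mathfrak{M}}$ is faithfully flat over a finitely generated $A$. So the strategy is right, but the descent step --- the one you yourself flag --- is not actually closed. First, the fiber products $\widehat{R_{\mathfrak{M}}}\otimes_R\widehat{R_{\mathfrak{M}'}}$ (let alone $S\otimes_R S$ for $S=\prod_{\mathfrak{M}}\widehat{R_{\mathfrak{M}}}$) are \emph{not} Noetherian, so ``iterating the descent argument'' does not terminate in a class you control. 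Second, the fallback you propose --- that the ``isomorphism locus'' of ${\bf f}$ is Zariski-open in $\mathrm{SSpec}(R_0)$ and therefore everything once it contains the closed points --- is not a valid argument here: ${\bf f}$ is a morphism of bare $\Bbbk$-functors, there is no subset of $|\mathrm{SSpec}(R_0)|$ on which ${\bf f}$ ``is an isomorphism'', and neither super-$P_4$ nor super-$P_5$ manufactures one. Third, the identification $\mathbb{G}(S)\simeq\prod_{\mathfrak{M}}\mathbb{G}(\widehat{R_{\mathfrak{M}}})$ for an \emph{infinite} product is not what super-$P_4$ gives you (in this paper super-$P_4$/super-$P_5$ are the fpqc-sheaf conditions, which only yield finite products and faithfully-flat injectivity); the paper gets what it needs from such products via super-$P_3$, by exhibiting the relevant superalgebra as a filtered colimit.

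The way the circularity is actually broken, both in \cite[Lemma 1.2]{mats-oort} and in the paper, is by separating injectivity from surjectivity and doing them in the right order. One first proves that ${\bf f}(B)$ is injective for \emph{every} superalgebra $B$: for $B$ local Noetherian with $\Bbbk(B)$ finite over $\Bbbk$ this follows from faithful flatness of $B\to\widehat{B}$ together with bijectivity of ${\bf f}(\widehat{B})$ (super-$P_2$ plus the hypothesis); for $B$ finitely generated it follows from faithful flatness of $B\to\prod_{\mathfrak{M}}B_{\mathfrak{M}}$ combined with super-$P_3$ (this is precisely the paper's second remark); for arbitrary $B$ it then follows from super-$P_3$ again, since a filtered colimit of injections is an injection. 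Only after injectivity is available on \emph{all} superalgebras --- in particular on $S\otimes_R S$, Noetherian or not --- does one run the super-$P_5$ equalizer along $R\to S$ to handle surjectivity, exactly as in \cite[Lemma 1.2]{mats-oort}. Reorganized in this order, your argument closes and coincides with the paper's; as written, the step you identify as the main obstacle is a genuine gap.
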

\begin{proof}
	The proof of \cite[Lemma 1.2]{mats-oort} can be copied per verbatim. However, some parts of this proof require additional comments in the wider super-context. First, by \cite[Lemma 6.1(1)]{kolzub} if $A$ is a local Noetherian superalgebra, then its completion $\widehat{A}$ in the $\mathfrak{M}_A$-adic topology is a faithfully flat $A$-supermodule. 
	Second, if ${\bf f}(B)$ is injective for any local Noetherian superalgebra $B$, such that the extension $\Bbbk\subseteq\Bbbk(B)$ is finite, then ${\bf f}(A)$ is injective for any finitely generated superalgebra $A$. In fact, let $\mathrm{Max}(A)$ denote the set of all maximal superideals of $A$, and let $B$ denote the superalgebra \[\times_{\mathfrak{M}\in\mathrm{Max}(A)} A_{\mathfrak{M}}=\varinjlim_{S\subseteq\mathrm{Max}(A), |S|<\infty}\times_{\mathfrak{M}\in S} A_{\mathfrak{M}}.\]   
	The condition super-$P_3$ implies that ${\bf f}(B)$ is injective. On the other hand, by \cite[Lemma 1.4]{zub1} the superalgebra $B$ is a faithfully flat $A$-supermodule, hence 
	the induced maps $\mathbb{G}(A)\to\mathbb{G}(B)$ and $\mathbb{H}(A)\to\mathbb{H}(B)$ are injective, and in its turn, ${\bf f}(A)$ is. These two remarks are sufficient to reproduce the proof of \cite[Lemma 1.2]{mats-oort}  in the super-context.	
\end{proof}
Let $\mathbb{G}$ be a group $\Bbbk$-functor. We determine $\mathbb{G}_{ev}$ as a group subfunctor of $\mathbb{G}$, such that $\mathbb{G}_{ev}(A)=\mathbb{G}(\iota_A)(\mathbb{G}(A_0))$, where $\iota_A : A_0\to A$ is the natural embedding. Unlike group morphism $\mathfrak{Aut}(\mathbb{X})(\iota_A)$, the group morphism $\mathbb{G}(\iota_A)$ is not necessary injective.

The following proposition gives a criterion of representability of $\mathbb{G}$ that is partially similar to \cite[Theorem 1.4]{mats-oort}. 
\begin{pr}\label{super criterion}(Matsumura-Oort criterion)
The group functor $\mathbb{G}$ is isomorphic to a locally algebraic group superscheme if and only if the following conditions hold :
\begin{enumerate}
	\item $\mathbb{G}_{ev}$ is isomorphic to a locally algebraic group scheme;
	\item $\mathbb{G}$ satisfies the conditions super-$P_i, $, $1\leq i\leq 5$.  
\end{enumerate}	
\end{pr}
\begin{proof}
Only the part "if" needs the proof. As above, we define a normal group subfunctor $\mathfrak{T}$ of $\mathbb{G}$ by $\mathfrak{T}(A)=\ker(\mathbb{G}(\pi_A)), A\in\mathsf{SAlg}_{\Bbbk}$.
The conditions super-$P_1$ and super-$P_3$ imply that $\mathfrak{T}$, or equivalently, $\mathfrak{T}|_{\mathsf{SL}_{\Bbbk} }=\mathbb{G}_e$, are strictly pro-representable. In turn, Lemma \ref{equality} implies that the group subfunctor $\mathbb{G}_{ev}\mathfrak{T}$ is isomorphic to  $\mathbb{G}_{ev}\times {\bf E}_{\mathfrak{T}}$. Since ${\bf E}_{\mathfrak{T}}$ is a purely odd algebraic superscheme, the group subfunctor $\mathbb{G}_{ev}\mathfrak{T}$ is a locally algebraic group superscheme. 

It remains to show that $\mathbb{G}=\mathbb{G}_{ev}\mathfrak{T}$. It is clear that the latter is equivalent to the following :  for any superalgbra $R$ and for any $\phi\in \mathbb{G}(R)$ there is $\psi\in \mathbb{G}_{ev}(R)$ such that 	$\mathbb{G}(\pi_R)(\phi)=\mathbb{G}_{ev}(\pi_{R_0})(\psi)$.	On the other hand, both $\mathbb{G}$ and $\mathbb{G}_{ev}\mathfrak{T}$ satisfy the conditions of Lemma \ref{Lemma 1.2}, hence one needs to consider only a superalgebra $R$ that is finitely dimensional and local. 
In this case $\mathfrak{M}_R=\mathrm{nil}(R)$, as well as $\mathfrak{m}_R=\mathrm{nil}(R_0)$, and by Cohen's theorem $\pi_R$, as well as $\pi_{R_0}$, are both split, whence $\mathbb{G}(\pi_R)$ and $\mathbb{G}_{ev}(\pi_{R_0})$ are surjective. Proposition is proved.
\end{proof}
\begin{tr}\label{main}
Let $X$ be a proper superscheme. Then the group functor $\mathfrak{Aut}(\mathbb{X})$ is a locally algebraic group superscheme.	
\end{tr}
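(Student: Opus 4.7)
The plan is to apply Lemma \ref{Lemma 1.2} to the natural inclusion morphism $\mathbf{i} : \mathfrak{Aut}(\mathbb{X})_{la} \hookrightarrow \mathfrak{Aut}(\mathbb{X})$ of group $\Bbbk$-functors, where $\mathfrak{Aut}(\mathbb{X})_{la} = \mathfrak{Aut}(\mathbb{X})_{ev}\mathfrak{T}$ is the largest locally algebraic group supersubscheme exhibited in Proposition \ref{one step to}(i). For this, I must verify that both $\mathfrak{Aut}(\mathbb{X})$ and $\mathfrak{Aut}(\mathbb{X})_{la}$ satisfy super-$P_i$ for $i=2,3,4,5$. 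For $\mathfrak{Aut}(\mathbb{X})$ these have already been established (super-$P_2$ by Proposition \ref{P_2}, super-$P_3$ by Proposition \ref{Property P_3}, super-$P_4$ and super-$P_5$ by Lemma \ref{P_4 and P_5}). For the representable functor $\mathfrak{Aut}(\mathbb{X})_{la}$, super-$P_4$ and super-$P_5$ follow from its fpqc-sheaf property, while super-$P_2$ and super-$P_3$ reduce to standard limit/colimit arguments for functors represented by locally algebraic superschemes: any morphism out of $\mathrm{SSpec}$ of a complete local Noetherian, respectively directed-colimit, superalgebra factors through a quasi-compact open and then satisfies the usual descriptions (mirroring the arguments of Propositions \ref{P_2} and \ref{Property P_3}, but more easily since representability is already in hand).

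Once these preliminaries are dispatched, Lemma \ref{Lemma 1.2} reduces the theorem to showing that $\mathbf{i}(A)$ is a bijection for every finite-dimensional local superalgebra $A$. Since $\mathbf{i}$ is an inclusion, only surjectivity $\mathfrak{Aut}(\mathbb{X})(A) \subseteq \mathfrak{Aut}(\mathbb{X})_{la}(A)$ requires proof. By the criterion of Proposition \ref{one step to}(ii), it suffices to produce, for every $\phi \in \mathfrak{Aut}(\mathbb{X})(A)$, some $\psi \in \mathfrak{Aut}(\mathbb{X})_{ev}(A) = \mathfrak{Aut}(\mathbb{X})(A_0)$ whose image in $\mathfrak{Aut}(\mathbb{X})(\widetilde{A})$ coincides with that of $\phi$, where $\pi_A : A \to \widetilde{A} = A/\mathrm{nil}(A)$ is the canonical projection.

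For a finite-dimensional local superalgebra $A$, the even part $A_0$ is a finite-dimensional (hence Artinian) local commutative $\Bbbk$-algebra, so $\mathrm{nil}(A_0) = \mathfrak{m}_A$ and therefore $\widetilde{A} = A_0/\mathfrak{m}_A = \Bbbk(A)$ is a finite field extension of $\Bbbk$; in particular $\widetilde{A}$ is purely even, so the canonical projection $\pi_A$ restricts to $\pi_A|_{A_0}\colon A_0 \to \widetilde{A}$. Since $A_0$ is a complete Noetherian local $\Bbbk$-algebra of equal characteristic, Cohen's structure theorem furnishes a ring-theoretic section $s : \widetilde{A} \to A_0$ of $\pi_A|_{A_0}$. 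Setting $\widetilde{\phi} = \mathfrak{Aut}(\mathbb{X})(\pi_A)(\phi)$, the candidate lift is
\[
\psi := \mathfrak{Aut}(\mathbb{X})(s)(\widetilde{\phi}) \in \mathfrak{Aut}(\mathbb{X})(A_0) = \mathfrak{Aut}(\mathbb{X})_{ev}(A),
\]
and functoriality combined with $\pi_A \circ s = \mathrm{id}_{\widetilde{A}}$ yields $\mathfrak{Aut}(\mathbb{X})(\pi_A)(\psi) = \widetilde{\phi} = \mathfrak{Aut}(\mathbb{X})(\pi_A)(\phi)$, completing the surjectivity check and hence the theorem.

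The main obstacle I anticipate is the preliminary verification that $\mathfrak{Aut}(\mathbb{X})_{la}$ inherits super-$P_2$ and super-$P_3$ from its representability: these are morally automatic, but a careful argument is needed to handle the quasi-compactness of images of $\mathrm{SSpec}$ of local or directed-colimit superalgebras inside the locally algebraic (not a priori quasi-compact) group superscheme $\mathfrak{Aut}(\mathbb{X})_{la}$. Once this bookkeeping is in place, the Cohen-theoretic splitting argument above is short and formal.
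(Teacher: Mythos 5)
Your proposal follows essentially the same route as the paper's proof: both reduce, via Lemma \ref{Lemma 1.2} and the super-$P_i$ conditions, to verifying the criterion of Proposition \ref{one step to}(ii) for finite dimensional local superalgebras, and both conclude by observing that $\widetilde{A}=\Bbbk(A)$ and that Cohen's theorem splits $\pi_A$ through $A_0$, so that the required $\psi\in\mathfrak{Aut}(\mathbb{X})_{ev}(A)$ exists. Your extra care about why $\mathfrak{Aut}(\mathbb{X})_{la}$ itself satisfies super-$P_2$ and super-$P_3$ is a point the paper passes over silently, but your sketch (representability plus the arguments of Lemma \ref{inductive limit} and Proposition \ref{P_2}) is the intended justification, so the argument is correct.
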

\begin{proof}
 Combine Proposition \ref{super criterion} with Proposition \ref{Property P_3}, Proposition \ref{P_2}, Lemma \ref{P_4 and P_5}, Lemma \ref{even P_1} and Theorem \ref{even is a group scheme}.
\end{proof}	
	Let us give few comments on the structure of $\mathfrak{Aut}(\mathbb{X})$, for which we refer to \cite{maszub2}. As we have alreay noted, the formal neighborhood of the identity of $\mathfrak{Aut}(\mathbb{X})$ coincides with $\mathfrak{T}$. 
	
	By \cite[Lemma 9.7]{maszub2} the conjugation action of $\mathfrak{Aut}(\mathbb{X})$ on $\mathfrak{T}$ factors through the \emph{largest affine quotient} $\mathfrak{Aut}(\mathbb{X})^{aff}$ of $\mathfrak{Aut}(\mathbb{X})$. In the strict sense of the word, this is not a quotient, but the universal morphism
$\pi_{aff} : \mathfrak{Aut}(\mathbb{X})\to \mathfrak{Aut}(\mathbb{X})^{aff}$ of group superschemes, such that any morphism of $\mathfrak{Aut}(\mathbb{X})$ to an affine group superscheme factors through $\pi_{aff}$. 
	
For any superalgebra $R$, the group
	$\mathfrak{Aut}(\mathbb{X})^{aff}(R)$ acts on $A\otimes R$ by $R$-linear Hopf superalgebra automorphsms, preserving the $\mathfrak{M}_A\otimes R$-adic filtration. 
	In turn, the Hopf superalgebra pairing $< , >$ induces the action of
	$\mathfrak{Aut}(\mathbb{X})^{aff}(R)$ on $\mathrm{hyp}(\mathfrak{T})\otimes R$ by $R$-linear Hopf superalgebra automorphisms, preserving the filtration $\{\mathrm{hyp}_k(\mathfrak{T}) \}_{k\geq 0}$ as well (see \cite[Lemma 9.8]{maszub2}).  
	In particular, the adjoint action of $\mathfrak{Aut}(\mathbb{X})(R)$ on $\mathrm{Lie}(\mathfrak{Aut}(\mathbb{X}))(R)$
	is naturally identified with the conjugation action of $\mathfrak{Aut}(\mathbb{X})(R)$ on $\mathrm{hyp}_1(\mathfrak{T})^+\otimes R\simeq \mathrm{Lie}(\mathfrak{Aut}(\mathbb{X}))(R)$ (cf. \cite[Corollary 9.9]{maszub2}).
	
	\section{Extensions of morphisms}
	
	The aim of this section is to superize some fundamental results from \cite[III]{sga 1}. Based on these results, we formulate a sufficient condition for $\mathfrak{Aut}(\mathbb{X})$ to be a smooth group superscheme.
	
	Let $Y$ be  a superscheme. With any quasi-coherent sheaf of $\mathcal{O}_Y$-superalgebras $\mathcal{A}$ one can associate a $Y$-superscheme $\mathrm{SSpec}(\mathcal{A})$, called a \emph{spectrum} of $\mathcal{A}$ (see \cite[Exercise II.5.17(c)]{hart}). More precisely, $|Y|=|\mathrm{SSpec}(\mathcal{A})|$ and for any affine open supersubscheme $U$ in $Y$ set $\mathrm{SSpec}(\mathcal{A})|_U=\mathrm{SSpec}(\mathcal{A}(U))$. It is easy to check that the data $\{\mathrm{SSpec}(\mathcal{A}(U)), \mathrm{SSpec}(\mathcal{A}(U\cap U'))\}_{U, U'}$ satisfies the conditions of \emph{Glueing Lemma} (cf. \cite[Exercise II.2.12]{hart}), hence there is a superscheme $\mathrm{SSpec}(\mathcal{A})$, that is glued from all $\mathrm{SSpec}(\mathcal{A}(U))$. 
	Finally, gluing together the affine morphisms $\mathrm{SSpec}(\mathcal{A}(U))\to U$ we obtain the unique affine morphism $\mathrm{SSpec}(\mathcal{A})\to Y$. 
	\begin{lm}\label{extension of spectrum}
		For any morphism $f : X\to Y$ of superscheme and for arbitrary quasi-coherent sheaf of $\mathcal{O}_Y$-superalgebras $\mathcal{A}$, there is $\mathrm{SSpec}(\mathcal{A})\times_Y X\simeq \mathrm{SSpec}(f^*\mathcal{A})$.  
	\end{lm}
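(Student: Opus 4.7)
The strategy is to reduce the statement to the affine case by compatible open coverings, verify it there by the universal property of fiber products of affine superschemes, and then glue.

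First I would observe that the formation of $\mathrm{SSpec}(\mathcal{A})$, of $f^*\mathcal{A}$, and of the fiber product $\mathrm{SSpec}(\mathcal{A})\times_Y X$ are all compatible with passing to open supersubschemes of $Y$ and their preimages in $X$. Hence if $\{V_i\}$ is an open affine covering of $Y$ and $\{U_{ij}\}$ is an open affine covering of $f^{-1}(V_i)$ for each $i$, it suffices to construct canonical isomorphisms
\[ \mathrm{SSpec}(\mathcal{A}|_{V_i})\times_{V_i} U_{ij} \simeq \mathrm{SSpec}(f^*\mathcal{A}|_{U_{ij}}) \]
for all $i,j$, and verify that on the overlaps $U_{ij}\cap U_{ij'}$ (covered in turn by affine opens) these isomorphisms agree, so that the \emph{Glueing Lemma} yields the desired global isomorphism over $X$.

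The affine case is essentially a restatement of the basic affine fiber product formula. Write $V_i=\mathrm{SSpec}(B)$, $U_{ij}=\mathrm{SSpec}(C)$, and $\mathcal{A}|_{V_i}=\widetilde{A}$ for some $B$-superalgebra $A$. Then by construction $\mathrm{SSpec}(\mathcal{A}|_{V_i})=\mathrm{SSpec}(A)$, while the pullback of a quasi-coherent sheaf along an affine morphism of affine superschemes is given by base change of supermodules, so $f^*\mathcal{A}|_{U_{ij}}\simeq\widetilde{A\otimes_B C}$ as a sheaf of $\mathcal{O}_{U_{ij}}$-superalgebras. The desired isomorphism then reduces to
\[ \mathrm{SSpec}(A)\times_{\mathrm{SSpec}(B)}\mathrm{SSpec}(C)\simeq\mathrm{SSpec}(A\otimes_B C), \]
which is the standard fiber product of affine superschemes (cf.\ the description of $\mathcal{A}$ via $A\otimes_B C$ in the category of $\Bbbk$-functors and the \emph{Comparison Theorem}). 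Alternatively, one may invoke the universal property: morphisms $T\to \mathrm{SSpec}(\mathcal{A})\times_Y X$ correspond to pairs of morphisms $T\to\mathrm{SSpec}(\mathcal{A})$ over $Y$ and $h:T\to X$, which by the universal property of $\mathrm{SSpec}$ of a quasi-coherent superalgebra correspond to $\mathcal{O}_Y$-superalgebra morphisms $\mathcal{A}\to (fh)_*\mathcal{O}_T$ together with $h$; by the adjunction $(f^*,f_*)$, these correspond to $\mathcal{O}_X$-superalgebra morphisms $f^*\mathcal{A}\to h_*\mathcal{O}_T$, i.e.\ to morphisms $T\to \mathrm{SSpec}(f^*\mathcal{A})$ over $X$.

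The main obstacle is not the affine case, which is formal, but rather the bookkeeping required to verify that the locally defined isomorphisms are compatible on overlaps, so that Gluing Lemma applies to produce a canonical global isomorphism; this amounts to checking that the universal property characterization above is functorial in the affine open $V_i$ (respectively $U_{ij}$). Since both sides are characterized by the same functor on affine $X$-superschemes, this compatibility is automatic, and the proof concludes.
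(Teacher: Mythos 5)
Your proposal is correct and follows essentially the same route as the paper: cover $\mathrm{SSpec}(\mathcal{A})\times_Y X$ by affine pieces $\mathrm{SSpec}(\mathcal{A}(V))\times_V W$, identify each with $\mathrm{SSpec}(\mathcal{A}(V)\otimes_{\mathcal{O}(V)}\mathcal{O}(W))\simeq\mathrm{SSpec}(f^*\mathcal{A}(W))$ via the affine fiber product formula, and glue. Your additional universal-property/adjunction argument is a clean way to see that the compatibility on overlaps is automatic, which the paper leaves implicit.
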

	\begin{proof}
		The superscheme $\mathrm{SSpec}(\mathcal{A})\times_Y X$ is covered by open affine supersubschemes
		$\mathrm{SSpec}(\mathcal{A})|_V\times_V W\simeq \mathrm{SSpec}(\mathcal{A}(V))\times_V W$, where $V$ runs over open affine supersubschemes of $Y$ and $W$ runs over open affine supersubschemes of $f^{-1}(V)$. It remains to note that 
		\[\mathrm{SSpec}(\mathcal{A}(V))\times_V W\simeq \mathrm{SSpec}(\mathcal{A}(V)\otimes_{\mathcal{O}(V)} \mathcal{O}(W))\simeq \mathrm{SSpec}(f^*\mathcal{A}(W)).\]
	\end{proof}
	
	Let $X$ and $Y$ be superschemes over a superscheme $S$. Let $Y_0$ be a closed supersubscheme of $Y$ that is determined by a superideal sheaf $\mathcal{J}$ such that $\mathcal{J}^2=0$. 
	For any $S$-morphism $g_0 : Y_0\to X$ define a sheaf $\mathcal{P}(g_0)$ on the topological space $|Y|$, such that for any $U\subseteq |Y|$ the set  $\mathcal{P}(g_0)(U)$
	consists of all $S$-morphism $g : U\to X$ such that $g|_{U\cap Y_0}=g_0|_{U\cap Y_0}$.
	
	The following proposition is a superization of \cite[Proposition III.5.1]{sga 1}. Since the proof is just outlined therein, for the reader convenience we reproduce it with few more comments in the wider super context. For the notion of a \emph{principal homogeneous sheaf} or \emph{principal homogeneous torsor} we refer to the introduction of \cite[III.5]{sga 1}.
	\begin{pr}\label{extensions1}
		$\mathcal{P}(g_0)$ is a principal homogeneous sheaf with respect to the natural action of the additive group sheaf $\mathcal{G}=\mathrm{Hom}_{\mathcal{O}_{Y_0}}(g^*_0\Omega^1_{X/S}, \mathcal{J})$. 	
	\end{pr}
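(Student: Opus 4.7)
\medskip

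The plan is to show that (i) two extensions of $g_0$ over the same open set differ by a canonical element of $\mathcal{G}$, (ii) any element of $\mathcal{G}$ acts on the set of such extensions to produce a new extension, and (iii) these two constructions are mutually inverse, yielding a simply transitive action. This matches the usual torsor structure on deformations under a square-zero thickening.

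First I would observe that since $\mathcal{J}^2=0$, the closed immersion $Y_0\hookrightarrow Y$ is a homeomorphism on underlying topological spaces. Consequently, for any $g\in\mathcal{P}(g_0)(U)$ the continuous map $|g|$ is forced to coincide with $|g_0|$ on $U$, so that two extensions $g_1,g_2\in\mathcal{P}(g_0)(U)$ share the same underlying map and the difference $D=g_2^{\sharp}-g_1^{\sharp}$ is a well-defined $\Bbbk$-linear (actually $\mathcal{O}_S$-linear) morphism of sheaves $\mathcal{O}_X\to g_{0*}\mathcal{O}_Y|_U$. Since $g_1^{\sharp}\equiv g_2^{\sharp}\pmod{\mathcal{J}}$, the image of $D$ lies in $g_{0*}\mathcal{J}|_U$. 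Using $\mathcal{J}^2=0$ to discard the cross term $D(a)D(b)$ in the expansion of $(g_1^{\sharp}+D)(ab)$, one checks the super-Leibniz rule
\[D(ab)=D(a)\,g_0^{\sharp}(b)+(-1)^{|D||a|}g_0^{\sharp}(a)\,D(b),\]
so $D$ is an $\mathcal{O}_S$-linear superderivation $\mathcal{O}_X\to g_{0*}\mathcal{J}|_U$. By the universal property of the super sheaf of K\"ahler differentials $\Omega^1_{X/S}$ (see the reference to \cite[Section 3]{maszub4}), superderivations $\mathcal{O}_X\to g_{0*}\mathcal{J}$ correspond to $\mathcal{O}_X$-linear maps $\Omega^1_{X/S}\to g_{0*}\mathcal{J}$; applying the adjunction between $g_0^{*}$ and $g_{0*}$ (valid for quasi-coherent sheaves), these correspond in turn to $\mathcal{O}_{Y_0}$-linear morphisms $g_0^{*}\Omega^1_{X/S}\to\mathcal{J}|_U$, i.e.\ to elements of $\mathcal{G}(U)$.

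Conversely, given $g\in\mathcal{P}(g_0)(U)$ and $D\in\mathcal{G}(U)$, I would define $g+D$ to be the morphism with the same underlying continuous map $|g|$ and sheaf comorphism $g^{\sharp}+\widetilde{D}$, where $\widetilde{D}$ is the superderivation attached to $D$ by the adjunction above. The expansion
\[(g^{\sharp}+\widetilde{D})(ab)=g^{\sharp}(a)g^{\sharp}(b)+\widetilde{D}(a)g_0^{\sharp}(b)+(-1)^{|\widetilde{D}||a|}g_0^{\sharp}(a)\widetilde{D}(b)+\widetilde{D}(a)\widetilde{D}(b),\]
together with $\widetilde{D}(a)\widetilde{D}(b)\in\mathcal{J}^2=0$ and the fact that $g^{\sharp}\equiv g_0^{\sharp}\pmod{\mathcal{J}}$ (so that replacing $g_0^{\sharp}$ by $g^{\sharp}$ in the cross terms introduces only terms in $\mathcal{J}^2$), shows that $g^{\sharp}+\widetilde{D}$ is again a morphism of sheaves of $\mathcal{O}_S$-superalgebras. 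Its reduction modulo $\mathcal{J}$ equals $g_0^{\sharp}$, so $g+D\in\mathcal{P}(g_0)(U)$. Locality of the construction shows that $+$ defines a morphism of sheaves $\mathcal{G}\times\mathcal{P}(g_0)\to\mathcal{P}(g_0)$. The two constructions invert each other by inspection: $(g+D)-g=D$ and $g+(g'-g)=g'$, which gives that $\mathcal{G}$ acts freely and transitively on every non-empty $\mathcal{P}(g_0)(U)$.

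The main subtleties are bookkeeping rather than conceptual. The principal obstacle I anticipate is making the adjunction between superderivations and Hom-sheaves of K\"ahler differentials genuinely sheaf-theoretic and compatible with restriction to open subsets of $|Y|$ (not merely stalkwise); this forces one to set things up in a way that the correspondence $D\leftrightarrow\widetilde{D}$ is functorial in $U$ so that the action of $\mathcal{G}$ on $\mathcal{P}(g_0)$ is indeed a morphism of sheaves. A secondary but routine issue is keeping track of the Koszul signs throughout the Leibniz computations; one must verify that the sign conventions in the definitions of superderivation and of $\mathrm{Hom}_{\mathcal{O}_{Y_0}}$ (cf.\ Remark \ref{left is right}) are compatible so that no spurious signs appear in the formula for $g+D$.
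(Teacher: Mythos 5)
Your argument is correct, but it follows a genuinely different route from the paper's. You work entirely at the level of comorphisms: two extensions share the same underlying continuous map (since $\mathcal{J}^2=0$ makes $|Y_0|=|Y|$), their difference $D=g_2^{\sharp}-g_1^{\sharp}$ lands in $\mathcal{J}$ and satisfies the Leibniz rule because the cross term dies in $\mathcal{J}^2$, and the universal property of $\Omega^1_{X/S}$ plus the $(g_0^{*},g_{0*})$ adjunction identifies such $D$ with sections of $\mathcal{G}$. The paper instead follows \cite[Proposition III.5.1]{sga 1} geometrically: a second extension $g'$ is encoded as a morphism $h:Y\to X\times_S X$ with $\mathrm{pr}_1h=g$ and $hi=(g_0,g_0)$; the congruence $g^{\sharp}\equiv g'^{\sharp}\pmod{\mathcal{J}}$ forces $h$ to factor through the first infinitesimal neighborhood of the diagonal, which is identified with $X'=\mathrm{SSpec}(\mathcal{O}_X\oplus\Omega^1_{X/S})$, and after base change along $g$ the set of such $h$ becomes the set of augmented sections of $\mathrm{SSpec}(\mathcal{O}_Y\oplus g^{*}\Omega^1_{X/S})\to Y$, i.e.\ $\mathcal{O}_{Y_0}$-linear maps $g_0^{*}\Omega^1_{X/S}\to\mathcal{J}$. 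The two proofs rest on the same universal property, but the paper's packaging via $\mathrm{SSpec}$ of the trivial square-zero extension automatically delivers the sheaf-theoretic functoriality in $U$ that you correctly flag as the delicate point of your approach, and it reuses the machinery (Lemma \ref{extension of spectrum}) needed in the smoothness argument of the final section; your version is more elementary and self-contained but requires you to carry out that functoriality check by hand. Two small remarks: the difference of two superscheme morphisms is automatically an \emph{even} derivation, so the torsor is really under the even part of $\mathcal{G}$ and your Koszul signs are vacuous here; and the adjunction you invoke holds for arbitrary $\mathcal{O}$-module sheaves, so the quasi-coherence caveat is unnecessary.
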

	\begin{proof}
		Without loss of generality, one can suppose that $U=|Y|$. Fix some $g\in\mathcal{P}(g_0)(|Y|)$. The $S$-morphisms  $g'\in\mathcal{P}(g_0)(|Y|)$ are in one-to-one correspondence with $S$-morphism $h : Y\to X\times_S X$ such that $\mathrm{pr}_1 h=g$ and $hi=(g_0, g_0)$, where $i : Y_0\to Y$ is the correponding closed immersion and $(g_0, g_0)$
		is the unique morphism $Y_0\to X\times_S X$ with $\mathrm{pr}_j(g_0, g_0)=g_0, j=1,2$. This  data can be represented locally by the commutative diagrams
		\[\begin{array}{ccc}
			\mathcal{O}(U)\otimes_{\mathcal{O}(V)}\mathcal{O}(U) & \stackrel{(g_0, g_0)^{\sharp}}{\to} & \mathcal{O}(W)/\mathcal{J}(W) \\
			\uparrow &  \searrow & \uparrow \\
			\mathcal{O}(U) & \stackrel{g^{\sharp}}{\to} & \mathcal{O}(W) 	
		\end{array}, \]
		where $(g_0, g_0)^{\sharp}(a\otimes b)=h^{\sharp}(a\otimes b)+\mathcal{J}(W)$, the middle diagonal map is defined by $h^{\sharp}(a\otimes b)=g^{\sharp}(a)g'^{\sharp}(b), a, b\in \mathcal{O}(U)$,
		$U$ is an open affine supersubscheme of $X$, $W=|g|^{-1}(U)\cap |g'|^{-1}(U)$ and $V$ is the preimage of $U$ in $S$. 
		
		Since $g^{\sharp}=g'^{\sharp}$ modulo $\mathcal{J}(W)$, $h^{\sharp}(\mathcal{I}_X(U\times_V U))\subseteq\mathcal{J}(W)$, that implies $\mathcal{I}_X(U\times_V U)^2\subseteq\ker h^{\sharp}$, that is $h^{\sharp}$ and $(g_0, g_0)^{\sharp}$ factor through $1$-st neighborhood of the diagonal morphism $\delta : X\to X\times_SX$. Moreover, the induced morphism
		\[\mathcal{O}(U)\otimes\mathcal{O}(U)/\mathcal{I}(U\times_V U)^2\to\mathcal{O}(U)\] splits (via $\mathrm{pr}_1^{\sharp}$), hence this neighborhood can be naturally identified with
		$X'=\mathrm{SSpec}(\mathcal{O}_X\oplus\Omega^1_{X/S})$. 
		
		Set $Y'=X'\times_X Y$ and $Y'_0=X'\times_X Y_0$. Any $S$-morphism $h : Y\to X'$, such that $\mathrm{pr}_1h=g$ and $hi=(g_0, g_0)$, is uniquely extended to the section $h' : Y\to Y'$, that is $\mathrm{pr}_2 h'=\mathrm{id}_Y$, with the additional property that $h' i$ coincides with the similar unique section $Y_0\to Y'$, induced by $hi$. The inverse of the map $h\mapsto h'$ is given by $h'\mapsto \mathrm{pr}_1 h'$.
		
		By Lemma \ref{extension of spectrum} we have $Y'\simeq\mathrm{SSpec}(g^*(\mathcal{O}_X\oplus\Omega^1_{X/S}))=\mathrm{SSpec}(\mathcal{O}_Y\oplus g^*\Omega^1_{X/S})$ and similarly, $Y'_0\simeq \mathrm{SSpec}(\mathcal{O}_{Y_0}\oplus g_0^*\Omega^1_{X/S})$. The above sections are in one-to-one correspondence with the superalgebra sheaf morphisms
		$\mathcal{O}_Y\oplus g^*\Omega^1_{X/S}\to \mathcal{O}_Y$, those are equal to the identity map, being restricted on $\mathcal{O}_Y$. Additionally, they induce the canonical augmentation $\mathcal{O}_{Y_0}\oplus g_0^*\Omega^1_{X/S}\to \mathcal{O}_{Y_0}$. Thus the concluding arguments are trivial and can be copied verbatim from \cite[Proposition III.5.1]{sga 1}.
	\end{proof}	
	Let $X$ be a proper superscheme. Let $A$ be a superalgebra and $I$ be a square zero superideal in $A$. Consider $\phi\in \mathfrak{Aut}(\mathbb{X})(A/I)$. The composition of $\phi$ with the closed embedding $X\times\mathrm{SSpec}(A/I)\to X\times\mathrm{SSpec}(A)$ is denoted by $\phi_0$.   
	
	The morphism $\phi_0$ can be extended to a $\mathrm{SSpec}(A)$-endomorphism of $X\times\mathrm{SSpec}(A)$ if and only if $\mathcal{P}(\phi_0)(|X\times\mathrm{SSpec}(A)|)\neq\emptyset$ if and only if $\mathcal{P}(\phi_0)$ is a trivial $\mathcal{G}$-torsor, where $\mathcal{G}=\mathrm{Hom}_{\mathcal{O}_{X\times\mathrm{SSpec}(A/I)}}(\phi^*_0\Omega^1_{X\times\mathrm{SSpec}(A)/\mathrm{SSpec}(A)}, \mathcal{J})$ and $\mathcal{J}=\mathcal{O}_{X\times\mathrm{SSpec}(A)}I$. Note that $\mathcal{J}$ has a natural structure of $\mathcal{O}_{X\times\mathrm{SSpec}(A/I)}$-supermodule.
	
	Recall that $\mathrm{pr}_1$ denote the canonical projection $X\times\mathrm{SSpec}(A)\to X$.
	
	Assume that $A$ is finitely generated. Then the superscheme $X\times\mathrm{SSpec}(A)$ is Noetherian and separated (use \cite[Corollary 2.5]{maszub2}). Since the superscheme $X\times\mathrm{SSpec}(A)$ is of finite type over $\mathrm{SSpec}(A)$, the sheaf $\Omega^1_{X\times\mathrm{SSpec}(A)/\mathrm{SSpec}(A)}$ of $\mathcal{O}_{X\times\mathrm{SSpec}(A)}$-supermodules is coherent (cf. \cite[Section 3]{maszub4}). By the remark $(2)$ in the end of \cite[Section 3]{zub2}, $\phi_0^*\Omega^1_{X\times\mathrm{SSpec}(A)/\mathrm{SSpec}(A)}$ is a coherent sheaf of $\mathcal{O}_{X\times\mathrm{SSpec}(A/I)}$-supermodules.  In turn, Lemma \ref{dual of coherent}
	implies that $\mathcal{G}$ is a coherent sheaf of $\mathcal{O}_{X\times\mathrm{SSpec}(A)}$-supermodules as well.
	Finally, the morphism
	$\mathrm{pr}_1$ is obviously affine, hence Corollary \ref{affine trick} implies
	\[\mathrm{H}^1(X\times\mathrm{SSpec}(A), \mathcal{G})\simeq \mathrm{H}^1(X, (\mathrm{pr}_1)_*\mathcal{G}). \]
	
	By \cite[Lemma 21.4.3]{stack} if $\mathrm{H}^1(X\times\mathrm{SSpec}(A), \mathcal{G})=0$, then all $\mathcal{G}$-torsors are trivial. Therefore, the above remarks infer that if  
	$\mathrm{H}^1(X, (\mathrm{pr}_1)_*\mathcal{G})=0$, then $\phi_0$ can be extended to a $\mathrm{SSpec}(A)$-endomorphism of $X\times\mathrm{SSpec}(A)$. 
	
	For arbitrary open affine supersubscheme $U$ in $X$ we have
	$(\mathrm{pr}_1)_*\mathcal{G}|_{U}\simeq \widetilde{M}$, where 
	\[M=\mathrm{Hom}_{\mathcal{O}(U\times\mathrm{SSpec}(A))}(\phi_0^*\Omega^1_{X\times\mathrm{SSpec}(A)/\mathrm{SSpec}(A)}(U\times\mathrm{SSpec}(A)), \mathcal{O}(U)\otimes I)|_{\mathcal{O}(U)}.\]
	Note that $V=\phi(U\times \mathrm{SSpec}(A/I))$ is an affine supersubscheme of $X\times\mathrm{SSpec}(A/I)$. Since $|X\times\mathrm{SSpec}(A)|=|X\times\mathrm{SSpec}(A/I)|$, there is an open supersubscheme $W$ in $X\times\mathrm{SSpec}(A)$ such that $V=W\cap (X\times\mathrm{SSpec}(A/I))$ and $|V|=|W|$. In other words, $V$ is defined in $W$ by the square zero superideal sheaf $\mathcal{J}\cap \mathcal{O}_W$, as well as $V_{ev}$ is defined in $W_{ev}$ by the square zero superideal sheaf $(\mathcal{J}\cap \mathcal{O}_W + \mathcal{J}_W)/\mathcal{J}_W$. Since $V_{ev}$ is affine,  \cite[Proposition II.5.9]{hart} and \cite[I, section 2, Theorem 6.1]{gabdem} imply that $W_{ev}$ is affine, hence by \cite[Theorem 3.1]{zub2} the superscheme $W$ is affine as well.
	
	Observe that if $|\phi_0|(|U\times\mathrm{SSpec}(A/I)|)\subseteq T$ for an open subset $T\subseteq |X\times\mathrm{SSpec}(A)|$, then $|W|\subseteq T$. It infers that
	\[\phi_0^*\Omega^1_{X\times\mathrm{SSpec}(A)/\mathrm{SSpec}(A)}(U\times\mathrm{SSpec}(A))\simeq\] 
	\[\Omega^1_{X\times\mathrm{SSpec}(A)/\mathrm{SSpec}(A)}(W)\otimes_{\mathcal{O}(W)} (\mathcal{O}(U)\otimes A/I).\]   
	Next, $\mathcal{O}(W)/\mathcal{O}(W) I\simeq \mathcal{O}(V)$ is isomorphic to $\mathcal{O}(U)\otimes A/I$ via the superalgebra morphism $(\phi^{\sharp})^{-1}$, whence $\mathcal{O}(V)\simeq (\phi^{\sharp})^{-1}(\mathcal{O}(U)\otimes 1)\otimes A/I$. Let $C$ denote $(\phi^{\sharp})^{-1}(\mathcal{O}(U)\otimes 1)$.
	
	We have
	\[\Omega^1_{X\times\mathrm{SSpec}(A)/\mathrm{SSpec}(A)}(W)\otimes_{\mathcal{O}(W)} (\mathcal{O}(U)\otimes A/I)=\Omega^1_{\mathcal{O}(W)/A}\otimes_{\mathcal{O}(W)} (\mathcal{O}(U)\otimes A/I). \]
	By \cite[Lemma 3.6]{maszub4} the latter supermodule is isomorphic to 
	\[\Omega^1_{\mathcal{O}(V)/(A/I)}\otimes_{\mathcal{O}(V)} (\mathcal{O}(U)\otimes A/I)\simeq (\Omega^1_{C/\Bbbk}\otimes A/I)\otimes_{(C\otimes A/I)} (\mathcal{O}(U)\otimes A/I).\]
	Thus the $\mathcal{O}(U)$-supermodule $M$ is isomorphic to $\mathrm{Hom}_{\mathcal{O}(U)}(\Omega^1_{C/\Bbbk}, \mathcal{O}(U)\otimes I)$, where $\Omega^1_{C/\Bbbk}$ has a structure of $\mathcal{O}(U)$-supermodule via the isomorphism $(\phi^{\sharp})^{-1}$, so that \[M\simeq \mathrm{Hom}_{\mathcal{O}(U)}(\Omega^1_{C/\Bbbk}, \mathcal{O}(U)\otimes I)\simeq \mathrm{Hom}_{\mathcal{O}(U)}(\Omega^1_{\mathcal{O}(U)/\Bbbk}, \mathcal{O}(U))\otimes I .\]
	In other words, $M=(\mathrm{pr}_1)_*\mathcal{G}(U)\simeq \mathcal{T}_X(U)\otimes I$. Moreover, if we choose a finite open covering $\mathfrak{U}$ of $X$ by affine supersubschemes, then
	for arbitrary $p\geq 0$ there is $C^p(\mathfrak{U}, (\mathrm{pr}_1)_*\mathcal{G}\simeq C^p(\mathfrak{U}, \mathcal{T}_X)\otimes I$, hence
	\[\mathrm{H}^1(X, (\mathrm{pr}_1)_*\mathcal{G})\simeq \mathrm{\check{H}}^1(\mathfrak{U}, (\mathrm{pr}_1)_*\mathcal{G})\simeq \mathrm{\check{H}}^1(\mathfrak{U}, \mathcal{T}_X)\otimes I\simeq \mathrm{H}^1(X, \mathcal{T}_X)\otimes I.\] 
	\begin{theorem}\label{smoothness of Aut}
		If $\mathrm{H}^1(X, \mathcal{T}_X)=0$, then $\mathfrak{Aut}(\mathbb{X})(A)\to \mathfrak{Aut}(\mathbb{X})(A/I)$ is a surjective group morphism for arbitrary superalgebra $A$, and for any nil-superideal $I$ in $A$. In particular, $\mathfrak{Aut}(\mathbb{X})$ is a smooth locally algebraic group superscheme.		
	\end{theorem}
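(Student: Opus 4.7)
The plan is to establish formal smoothness of $\mathfrak{Aut}(\mathbb{X})$---the asserted surjectivity on points---and combine this with the local algebraicity from Theorem \ref{main} to conclude smoothness. The central input is the obstruction computation assembled in the paragraphs just before the statement, which identifies the obstruction to lifting an automorphism across a square-zero thickening with a class in $\mathrm{H}^1(X, \mathcal{T}_X) \otimes I$; this vanishes by hypothesis.

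My first move is a double reduction. Given $\phi \in \mathfrak{Aut}(\mathbb{X})(A/I)$ with $I$ a nil superideal of an arbitrary superalgebra $A$, write $A = \varinjlim A_\lambda$ over the filtered family of its finitely generated $\Bbbk$-supersubalgebras, and set $J_\lambda = I \cap A_\lambda$. Each $A_\lambda$ is Noetherian, so the nil superideal $J_\lambda$ is finitely generated and hence nilpotent. Since $A/I \simeq \varinjlim A_\lambda/J_\lambda$, super-$P_3$ (Proposition \ref{Property P_3}) represents $\phi$ by some $\phi_\lambda \in \mathfrak{Aut}(\mathbb{X})(A_\lambda/J_\lambda)$; any lift of $\phi_\lambda$ to $\mathfrak{Aut}(\mathbb{X})(A_\lambda)$ then yields by functoriality a preimage of $\phi$ in $\mathfrak{Aut}(\mathbb{X})(A)$. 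Thus I may assume $A$ finitely generated and $I$ nilpotent. A further induction on the nilpotency index, lifting stepwise from $\mathfrak{Aut}(\mathbb{X})(A/I^k)$ to $\mathfrak{Aut}(\mathbb{X})(A/I^{k+1})$ through thickenings with the square-zero kernel $I^k/I^{k+1}$, reduces the problem to the single case $I^2 = 0$.

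In this case Proposition \ref{extensions1} presents the set of $\mathrm{SSpec}(A)$-extensions of $\phi_0$ as a torsor under the coherent sheaf $\mathcal{G}$ described just before the theorem. The chain of identifications spelled out there---Corollary \ref{affine trick} applied to the affine projection $\mathrm{pr}_1$, followed by the \v{C}ech identification of Proposition \ref{isomorphism of cohomologies}---yields $\mathrm{H}^1(X \times \mathrm{SSpec}(A), \mathcal{G}) \simeq \mathrm{H}^1(X, \mathcal{T}_X) \otimes I$, which vanishes under the hypothesis. The torsor is therefore trivial, and an $\mathrm{SSpec}(A)$-endomorphism $\widetilde{\phi}$ of $X \times \mathrm{SSpec}(A)$ extending $\phi_0$ exists.

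The final step, which I expect to demand the most care, is promoting $\widetilde{\phi}$ from an endomorphism to an automorphism. The same existence result applied to $\phi^{-1}$ produces an endomorphism $\widetilde{\psi}$ extending $(\phi^{-1})_0$. Both $\widetilde{\phi}\widetilde{\psi}$ and $\widetilde{\psi}\widetilde{\phi}$ are $\mathrm{SSpec}(A)$-endomorphisms of $X \times \mathrm{SSpec}(A)$ reducing to the identity modulo $I$. Affine-locally, such a morphism $\alpha^{\sharp}$ of $A$-superalgebras differs from the identity by a map $\delta = \alpha^{\sharp} - \mathrm{id}$ landing in $I \cdot \mathcal{O}$; because $I^2 = 0$, a direct calculation shows that $\delta$ is a superderivation and that $r \mapsto 2r - \alpha^{\sharp}(r)$ is a two-sided algebra inverse of $\alpha^{\sharp}$. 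These local formulae are functorial in the affine open and glue to inverses of $\widetilde{\phi}\widetilde{\psi}$ and $\widetilde{\psi}\widetilde{\phi}$, so both compositions are automorphisms, whence $\widetilde{\phi}$ is as well. This completes the proof of surjectivity; combining it with the locally-of-finite-type property granted by Theorem \ref{main} yields the smoothness of $\mathfrak{Aut}(\mathbb{X})$.
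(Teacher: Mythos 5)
Your proposal follows the paper's proof essentially step for step: the same reduction via direct limits and super-$P_3$ to $A$ finitely generated with $I$ nilpotent, the same induction down to the square-zero case, the same appeal to the torsor description of Proposition \ref{extensions1} together with the vanishing of $\mathrm{H}^1(X,\mathcal{T}_X)\otimes I$, and the same trick of extending both $\phi$ and $\phi^{-1}$ and observing that the compositions are invertible because they reduce to the identity modulo the square-zero ideal. Your explicit local formula $r\mapsto 2r-\alpha^{\sharp}(r)$ for the inverse merely fills in a detail the paper leaves as an assertion; the argument is correct.
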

	\begin{proof}
		Since $A$ is a direct limit of its finitely generated supersubalgebras, the same arguments as in Lemma \ref{pro-representability} show that the general case can be reduced to the case when $A$ is finitely generated and $I$ is nilpotent. Moreover, the obvious induction on the nilpotency degree of $I$ allows us to suppose that $I$ is square zero. The above discussion implies that
		for any $\phi\in \mathfrak{Aut}(\mathbb{X})(A/I)$ the corresponding $\phi_0$ can be extended to a morphism $\psi\in\mathfrak{End}(\mathbb{X})(A)$. Similarly, one can find $\psi'\in \mathfrak{End}(\mathbb{X})(A)$, that corresponds to $\phi^{-1}$. The image $\psi\psi'$ (as well as the image of $\psi'\psi$) in $\mathfrak{End}(\mathbb{X})(A/I)$  is equal to
		$\mathrm{id}_{X\times\mathrm{SSpec}(A/I)}$, hence both $\psi\psi'$ and $\psi'\psi$ are invertible. Thus our statement follows. 
		\end{proof}
	
\end{document}